\documentclass[10pt]{amsart}
\usepackage{amssymb, latexsym}
\usepackage{hyperref}

\usepackage{mathrsfs} 

\usepackage{backref}

\allowdisplaybreaks

%

\newcommand{\R}{{\mathbb{R}}}


\def\XXint#1#2#3{{\setbox0=\hbox{$#1{#2#3}{\int}$} \vcenter{\hbox{$#2#3$}}\kern-.5\wd0}}
 

\renewcommand{\phi}{\varphi}


\newcommand{\nm}[1]{\left\|#1\right\|} 
\newcommand{\abs}[1]{\left|#1\right|} 
\newcommand{\f}{\frac}

\renewcommand{\bar}[1]{\overline{#1}}

\newcommand{\lec}{\lesssim}


\newcommand{\Th}{\Theta}

\newcommand{\paren}[1]{\left(#1\right)}
\newcommand{\bracket}[1]{\left[#1\right]}
\newcommand{\braces}[1]{\left\{#1\right\}}


\newcommand{\HH}{\mathbb{H}}
\newcommand{\af}{\mathfrak{a}}
\newcommand{\aft}{\mathfrak a_t}
\newcommand{\taf}{\tilde{\mathfrak{a}}}
\newcommand{\taft}{\tilde{\mathfrak a}_t}
\newcommand{\tb}{\tilde b}
\newcommand{\Ac}{{\mathcal A}}
\newcommand{\tAc}{\tilde {\mathcal A}}
\newcommand{\tAone}{\tilde {A_1}}
\renewcommand{\th}{\tilde h}

\newcommand{\tD}{\tilde  D}

\newcommand{\Zf}{{\mathfrak{Z}}}
\newcommand{\zf}{{\mathfrak{z}}}

\newcommand{\tTheta}{\tilde{\Theta}}
\newcommand{\ttheta}{\tilde{\theta}}
\newcommand{\tEf}{\tilde{\mathfrak E}}

\renewcommand{\a}{\alpha} 
\renewcommand{\aa}{{\alpha '}} 

\newcommand{\bb}{{\beta '}} 

\renewcommand{\vec}[1]{{\bf #1}}

%
\let\Re=\undefined\DeclareMathOperator*{\Re}{Re}
\let\Im=\undefined\DeclareMathOperator*{\Im}{Im}


\theoremstyle{plain}
\newtheorem{theorem}{Theorem}
\newtheorem{proposition}[theorem]{Proposition}
\newtheorem{lemma}[theorem]{Lemma}

\theoremstyle{definition}
\newtheorem{definition}[theorem]{Definition}
\newtheorem{remark}[theorem]{Remark}

%
%
\newcounter{smalllist}

%
%

\numberwithin{equation}{section} \numberwithin{theorem}{section}
\voffset=-0.5in \setlength{\textheight}{8.6in} \hoffset=-.7in \setlength{\textwidth}{6.5in}


\begin{document}

\title[Wellposedness of the 2D water waves in a regime allowing for angled crests]{Wellposedness of the 2D full water wave equation in a regime that allows  for non-$C^1$ interfaces}
\author{Sijue Wu
}
\address{Department of Mathematics, University of Michigan, Ann Arbor, MI}

\thanks{Financial support in part by NSF grants DMS-1101434, DMS-1361791 and a Simons fellowship.}

\begin{abstract}
We consider the two dimensional gravity water wave equation in a regime where the free interface is allowed to be non-$C^1$.   
In this regime, only a degenerate Taylor inequality $-\frac{\partial P}{\partial\bold{n}}\ge 0$  holds, with degeneracy  at the singularities.  In \cite{kw} an energy functional $\mathcal E(t)$ was constructed 
and an a-prori estimate was proved. The energy functional $\mathcal E(t)$ is not only finite for interfaces and velocities in Sobolev spaces, but also finite for a class of non-$C^1$ interfaces with angled crests.  
In this paper  we prove the existence, uniqueness and stability of the solution of the 2d gravity water wave equation in the class where $\mathcal E(t)<\infty$, locally in time, 
 for any given data satisfying $\mathcal E(0)<\infty$.

\end{abstract}

\maketitle

\baselineskip15pt

\section{Introduction}

A class of water wave problems concerns the
motion of the 
interface separating an inviscid, incompressible, irrotational fluid,
under the influence of gravity, 
from a region of zero density (i.e. air) in 
$n$-dimensional space. It is assumed that the fluid region is below the
air region. Assume that
the density  of the fluid is $1$, the gravitational field is
$-{\bold k}$, where ${\bold k}$ is the unit vector pointing in the  upward
vertical direction, and at  
 time $t\ge 0$, the free interface is $\Sigma(t)$, and the fluid
occupies  region
$\Omega(t)$. When surface tension is
zero, the motion of the fluid is  described by 
\begin{equation}\label{euler}
\begin{cases}   \ \bold v_t + (\bold v\cdot \nabla) \bold v = -\bold k-\nabla P
\qquad  \text{on } \Omega(t),\ t\ge 0,
\\
\ \text{div}\,\bold v=0 , \qquad \text{curl}\,\bold v=0, \qquad  \text{on }
\Omega(t),\ t\ge 
0,
\\  
\ P=0, \qquad\qquad\qquad\qquad\qquad\text{on }
\Sigma(t) \\ 
\ (1, \bold v) \text{ 
is tangent to   
the free surface } (t, \Sigma(t)),
\end{cases}
\end{equation}
where $ \bold v$ is the fluid velocity, $P$ is the fluid
pressure. 
There is an important condition for these problems:
\begin{equation}\label{taylor}
-\frac{\partial P}{\partial\bold n}\ge 0
\end{equation}
pointwise on the interface, where $\bold n$ is the outward unit normal to the fluid interface 
$\Sigma(t)$ \cite{ta};
it is well known that when surface tension is neglected and the Taylor sign condition \eqref{taylor} fails, the water wave motion can be subject to the Taylor instability \cite{ ta, bi, bhl, ebi}.

The study on water waves dates back centuries. Early mathematical works include  Newton \cite{newton}, Stokes \cite{st}, Levi-Civita \cite{le}, and G.I. Taylor \cite{ta}.   Nalimov
\cite{na},  Yosihara \cite{yo} and Craig \cite{cr} proved local in time existence and uniqueness of solutions for the 2d water wave equation \eqref{euler} for small and smooth initial data. In  \cite{wu1, wu2}, we showed that for dimensions $n\ge 2$, the strong Taylor sign condition
 \begin{equation}\label{taylor-s}
-\frac{\partial P}{\partial\bold n}\ge c_0>0
\end{equation}
always holds for the
 infinite depth water wave problem \eqref{euler},  as long as the interface is in $C^{1+\epsilon}$, $\epsilon>0$; and the initial value problem of equation \eqref{euler} is locally well-posed in Sobolev spaces $H^s$, $s\ge 4$ for arbitrary given data.  Since then,
local wellposedness for water waves with additional effects such as the surface tension, bottom and non-zero vorticity,  under the assumption \eqref{taylor-s},\footnote{When there is surface tension, or bottom, or vorticity,  \eqref{taylor-s} does not always hold, it needs to be assumed.} were obtained, c.f. \cite{am, cl, cs, ig1, la, li, ot, sz, zz}.  Alazard, Burq \& Zuily \cite{abz, abz14} proved local wellposedness of \eqref{euler} in low regularity Sobolev spaces where the interfaces are only in $C^{3/2}$. Hunter, Ifrim \& Tararu \cite{hit} obtained a low regularity result for the 2d water waves that improves on \cite{abz}. 
The author \cite{wu3, wu4}, Germain, Masmoudi \& Shatah \cite{gms}, Ionescu \& Pusateri  \cite{ip} and Alazard \& Delort  \cite{ad} obtained almost global and global existence for two and three dimensional water wave equation \eqref{euler} for  small, smooth and localized data;  see \cite{hit, it, dipp, wang1, wang2, bmsw} for some additional developments. Furthermore in \cite{cf},
Castro, C\'ordoba, Fefferman, Gancedo and G\'omez-Serrano proved that for the 2d water wave equation \eqref{euler}, there exist initially non-self-intersecting interfaces that become self-intersecting at a later time; 
and as was shown in \cite{cs1}, the same result holds in 3d. 

All these work either prove or assume the strong Taylor sign condition \eqref{taylor-s}, and the lowest regularity 
considered are $C^{3/2}$ interfaces. 

A common phenomena  we observe in the ocean are waves with angled crests, with the interface  possibly non-$C^1$. 
A natural question is:
is the water wave equation \eqref{euler} well-posed in any class that includes non-$C^1$ interfaces?  

We focus on the two dimensional case in this paper.
 
 As was explained in \cite{kw}, the main difficulty in allowing for non-$C^1$ interfaces with angled crests is that in this case, both the quantity $-\frac{\partial P}{\partial \bf{n}}$ and the Dirichlet-to-Neumann operator $\nabla_{\bf n}$ degenerate, with degeneracy at the singularities on the interface;\footnote{We assume the acceleration is finite.}  and  only a weak Taylor inequality $-\f{\partial P}{\partial \vec{n}}\ge 0$ holds.  From earlier work \cite{wu1, wu2, am, la, abz, sz}, we know
the problem of solving the water wave equation \eqref{euler} can be reduced to solving a quasilinear equation of the interface $z=z(\alpha,t)$, of type
\begin{equation}\label{quasi1}
\partial_t^2\frak u+ a \nabla_{\bf n} \frak u=f(\frak u, \partial_t \frak u)
\end{equation}
where $a=-\frac{\partial P}{\partial \bf{n}}$. 
When the strong Taylor sign condition \eqref{taylor-s} holds and $\nabla_{\bf n}$ is non-degenerative,  equation \eqref{quasi1} is of the hyperbolic type with the right hand side consisting of lower order terms, and the Cauchy problem can be solved using classical tools.  In the case where the solution dependent quantity $a=-\frac{\partial P}{\partial \bf{n}}$ and  operator $\nabla_{\bf n}$ 
degenerate,  equation \eqref{quasi1} losses its hyperbolicity,  classical tools do not apply. 
New ideas are required to solve the problem.

In \cite{kw}, R. Kinsey and the author constructed an energy functional $\mathcal E(t)$ and proved an a-priori estimate,  
which states that for  solutions of the water wave equation \eqref{euler}, if $\mathcal E(0)<\infty$, then $\mathcal E(t)$ remains finite for a time period that depends only on $\mathcal E(0)$.
The energy functional $\mathcal E(t)$ 
is finite for interfaces and velocities in Sobolev classes,  and most importantly, it is also finite for a class of non-$C^1$ interfaces with angled crests.\footnote{In particular, the class where $\mathcal E(t)<\infty$ allows for angled crest type interfaces with interior angles at the crest $<\frac\pi 2$, which coincides with the range of the angles of the self-similar solutions in \cite{wu5}. Stokes extreme waves is not in the class where $\mathcal E(t)<\infty$. }
In this paper, we show that for any given data satisfying $\mathcal E(0)<\infty$, there is a $T>0$, depending only on $\mathcal E(0)$, such that 
the 2d water wave equation 
\eqref{euler} has a unique solution in the class where $\mathcal E(t)<\infty$ for time $0\le t\le T$, and the solution is stable.  We will work on the free surface equations that were  derived in \cite{wu1, wu2}. The novelty of this paper is that we study the degenerative case, and solve the equation in a broader class that includes non-$C^1$ interfaces.

\subsection{Outline of the paper} In \S\ref{notation1} we introduce some basic notations and conventions; further notations will be introduced  throughout  the paper.  In \S\ref{prelim}  we recall the  results in \cite{wu1, wu2}, and derive the free surface equation and its quasi-linearization,  
from system \eqref{euler}, in both the Lagrangian and Riemann mapping variables, for  interfaces and velocities in Sobolev spaces. 
We derived the quasilinear equation  in terms of the horizontal component in the Riemann mapping variable in \cite{wu1}, and in terms of   full components in the Lagrangian coordinates  in \cite{wu2}.
 Here we re-derive the equations for the sake of coherence. In \S\ref{general-soln} we will recover the water wave equation \eqref{euler} from the interface equation \eqref{interface-r}-\eqref{interface-holo}-\eqref{a1}-\eqref{b}, showing the equivalence of the two systems for smooth and non-self-intersecting interfaces. In \S\ref{a priori},
we present the energy functional $\mathcal E(t)$ constructed and the a-priori estimate proved in \cite{kw}.   
In \S\ref{prelim-result}, we  give a blow-up criteria in terms of the energy functional $\mathcal E(t)$ and 
a stability inequality for solutions of the interface equation \eqref{interface-r}-\eqref{interface-holo}-\eqref{a1}-\eqref{b} with a bound depending only on  $\mathcal E(t)$. In \S\ref{main} we present the main result, that is, 
the local in time wellposedness of the Cauchy problem for the water wave equation \eqref{euler}  in the class where $\mathcal E(t)<\infty$.  
In \S\ref{proof}, we give  the proof for the blow-up criteria,  Theorem~\ref{blow-up} and in \S\ref{proof3}, the stability inequality, Theorem~\ref{unique}. For the sake of completeness, we will also provide a proof for the a-priori estimate of \cite{kw} in the current setting in \S\ref{proof}.
In \S\ref{proof2}, we will prove the main result, Theorem~\ref{th:local}.

Some basic preparatory results are given in Appendix~\ref{ineq}; various identities that are useful for the paper are derived in Appendix~\ref{iden}. And in Appendix~\ref{quantities}, we  list the quantities that are controlled by $\mathcal E$. A majority of these are already shown in \cite{kw}.

{\bf Remark}: The blow-up criteria and the proof for the existence part of Theorem~\ref{th:local} are from  the unpublished manuscript of the author \cite{wu7}, with some small modifications. 

\subsection{Notation and convention}\label{notation1}
We consider solutions of the water wave equation \eqref{euler} in the setting where the fluid domain $\Omega(t)$ is simply connected, with the free interface $\Sigma(t):=\partial\Omega(t)$ being a Jordan curve,\footnote{That is, $\Sigma(t)$ is homeomorphic to the line $\mathbb R$.}
 $${\bold v}(z, t)\to 0,\qquad\text{as } |z|\to\infty$$
 and the interface $\Sigma(t)$ tending to horizontal lines at infinity.\footnote{The problem with velocity $\bold v(z,t)\to (c,0)$ as $|z|\to\infty$ can be reduced to the one with $\bold v\to 0$ at infinity by studying  the solutions in a moving frame. $\Sigma(t)$ may tend to two different lines at $+\infty$ and $-\infty$.} 
 
 We use the following notations and conventions:  $[A, B]:=AB-BA$ is the commutator of operators $A$ and $B$.  $H^s=H^s(\mathbb R)$ is the Sobolev space with norm $\|f\|_{H^s}:=(\int (1+|\xi|^2)^s|\hat f(\xi)|^2\,d\xi)^{1/2}$, $\dot H^{s}=\dot H^{s}(\mathbb R)$ is the Sobolev space with norm $\|f\|_{\dot H^{s}}:= c(\int |\xi|^{2s} |\hat f(\xi)|^2\,d\xi)^{1/2}$, $L^p=L^p(\mathbb R)$ is the $L^p$ space with $\|f\|_{L^p}:=(\int|f(x)|^p\,dx)^{1/p}$ for $1\le p<\infty$, and $f\in L^\infty$ if $\|f\|_{L^\infty}:=\text{ sup }|f(x)|<\infty$. When not specified, all the 
 norms $\|f\|_{H^s}$, $\|f\|_{\dot H^{s}}$, $\|f\|_{L^p}$, $1\le p\le\infty$ are in terms of the spatial variable only, and $\|f\|_{H^s(\mathbb R)}$, $\|f\|_{\dot H^{s}(\mathbb R)}$, $\|f\|_{L^p(\mathbb R)}$,  $1\le p\le\infty$ are in terms of the spatial variables. We say $f\in C^j([0, T], H^s)$ if the mapping $f=f(t):=f(\cdot, t): t\in [0, T]\to H^s$ is $j$-times continues differentiable, with $\sup_{[0, T], \ 0\le k\le j}\|\partial_t^k f(t)\|_{H^s}<\infty$; we say $f\in L^\infty([0, T], H^s)$ if $\sup_{[0, T]}\|f(t)\|_{H^s}<\infty$. 
 $C^j(X)$ is the space of $j$-times continuously differentiable functions on the set $X$; $C^j_0(\mathbb R)$ is the space of $j$-times continuously differentiable functions that decays at the infinity.

 Compositions are always in terms of the spatial variables and we write for $f=f(\cdot, t)$, $g=g(\cdot, t)$, $f(g(\cdot,t),t):=f\circ g(\cdot, t):=U_gf(\cdot,t)$. 
We identify $(x,y)$ with the complex number $x+iy$; $\Re z$, $\Im z$ are the real and imaginary parts of $z$; $\bar z=\Re z-i\Im z$ is the complex conjugate of $z$. $\overline \Omega$ is the closure of the domain $\Omega$, $\partial\Omega$ is the boundary of $\Omega$, ${\mathscr P}_-:=\{z\in \mathbb C: \Im z<0\}$ is the lower half plane. We write 
\begin{equation}\label{eq:comm}
[f,g; h]:=\frac1{\pi i}\int\frac{(f(x)-f(y))(g(x)-g(y))}{(x-y)^2}h(y)\,dy.
\end{equation}

We use $c$, $C$  to denote universal constants. $c(a_1,  \dots )$, $C(a_1, \dots)$, $M(a_1, \dots)$  are constants depending on $a_1, \dots $; constants appearing in different contexts need not be the same. We write $f\lec g$ if there is a universal constant $c$, such that $f\le cg$.  

\section{Preliminaries}\label{prelim}

Equation \eqref{euler} is a nonlinear equation defined on moving domains, it is difficult to study it directly. A classical approach is to reduce from \eqref{euler} to an equation on the interface, and study solutions of the interface equation. Then use the incompressibility and irrotationality of the velocity field to recover the velocity in the fluid domain by solving a boundary value problem for the Laplace equation. 

In what follows we derive the interface equations from \eqref{euler},  and vice versa; 
we  assume that the interface, velocity and acceleration are in Sobolev spaces.

\subsection{The equation for the free surface in Lagrangian variable}\label{surface-equation-l}

Let the free interface $\Sigma(t): z=z(\alpha, t)$, $\alpha\in\mathbb R$ be given by Lagrangian parameter $\alpha$, so $z_t(\alpha, t)={\bold v}(z(\alpha,t);t)$ is the velocity  of the fluid particles on the interface, $z_{tt}(\alpha,t)={\bold v_t + (\bold v\cdot \nabla) \bold v}(z(\alpha,t); t)$ is the acceleration. 
Notice that $P=0$ on $\Sigma(t)$ implies that $\nabla P$ is normal to $\Sigma(t)$, therefore $\nabla P=-i\frak a z_\alpha$, where 
\begin{equation}\label{frak-a}
\frak a =-\frac1{|z_\alpha|}\frac{\partial P}{\partial {\bold n}};
\end{equation}
 and the first and third equation of \eqref{euler} gives 
\begin{equation}\label{interface-l}
z_{tt}+i=i\frak a z_\alpha.
\end{equation}
The second equation of \eqref{euler}: $\text{div } \bold v=\text{curl } \bold v=0$ implies that $\bar {\bold v}$ is holomorphic in the fluid domain $\Omega(t)$, hence $\bar z_t$ is the boundary value of a holomorphic function in $\Omega(t)$. 

Let $\Omega\subset \mathbb C$ be a domain with boundary $\Sigma: z=z(\alpha)$, $\alpha\in  I$, oriented clockwise. Let $\mathfrak H$ be the Hilbert transform associated to $\Omega$:
\begin{equation}\label{hilbert-t}
\frak H f(\alpha)=\frac1{\pi i}\, \text{pv.}\int\frac{z_\beta(\beta)}{z(\alpha)-z(\beta)}f(\beta)\,d\beta
\end{equation}
We have the following characterization of the trace of a holomorphic function on $\Omega$.

\begin{proposition}\cite{jour}\label{prop:hilbe}
a.  Let $g \in L^p$ for some $1<p <\infty$. 
  Then $g$ is the boundary value of a holomorphic function $G$ on $\Omega$ with $G(z)\to 0$ at infinity if and only if 
  \begin{equation}
    \label{eq:1571}
    (I-\mathfrak H) g = 0.
  \end{equation}

b. Let $ f \in L^p$ for some $1<p<\infty$. Then $ \frac12(I+\mathfrak H)  f$ is the boundary value of a holomorphic function $\frak G$ on $\Omega$, with $\frak G(z)\to 0$ at infinity. 

c. $\mathfrak H1=0$.
\end{proposition}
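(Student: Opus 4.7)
The plan is to deduce all three parts from the classical Plemelj--Sokhotski jump relations for the Cauchy integral
\[
G(z) := \frac{1}{2\pi i}\int_\Sigma \frac{g(w)}{z - w}\,dw, \qquad z \in \mathbb C\setminus\Sigma,
\]
which is holomorphic off $\Sigma$ and vanishes at infinity whenever $g \in L^p$ and $\Sigma$ is chord-arc (so the Cauchy integral operator is bounded on $L^p$). With the stated clockwise orientation of $\Sigma$ (so $\Omega$ lies to the right of the direction of travel), Plemelj identifies the boundary value of $G$ from inside $\Omega$ as $\tfrac12(I + \mathfrak H)g$ and from outside $\Omega$ as $-\tfrac12(I - \mathfrak H)g$. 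This is the one ingredient on which everything hangs.

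For the forward direction of (a), given holomorphic $G$ on $\Omega$ with boundary value $g$ and $G(z) \to 0$ at infinity, I would apply Cauchy's integral formula with clockwise orientation to write
\[
G(z_0) = \frac{1}{2\pi i}\int_\Sigma \frac{g(w)}{z_0 - w}\,dw, \qquad z_0 \in \Omega.
\]
Since $\Omega$ is unbounded, I would first establish this on the truncated region $\Omega \cap \{|z|<R\}$ and then pass $R\to\infty$, using the decay of $G$ at infinity and the asymptotic horizontality of $\Sigma$ to kill the contribution from the truncation arc. Taking the boundary value at $z(\alpha)$ from $\Omega$ and invoking Plemelj gives $g(\alpha) = \tfrac12(I+\mathfrak H)g(\alpha)$, which rearranges to $(I-\mathfrak H)g = 0$.

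For (b), the function $\mathfrak G$ is defined simply as the restriction of $G$ (with $g$ replaced by $f$) to $\Omega$: it is holomorphic there, vanishes at infinity by construction, and Plemelj exhibits its boundary value as $\tfrac12(I+\mathfrak H)f$. The converse direction of (a) then comes for free: if $(I-\mathfrak H)g=0$, i.e.\ $g = \mathfrak H g$, then $g = \tfrac12(I+\mathfrak H)g$, and (b) supplies the required holomorphic extension.

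Part (c) cannot be read off from (a) or (b) since $1 \notin L^p$. I would instead compute $\mathfrak H 1(\alpha) = \frac{1}{\pi i}\,\text{pv}\!\int_\Sigma (z(\alpha)-w)^{-1}\,dw$ by hand. In the model case $\Sigma=\mathbb R$, a symmetric truncation yields $\log\frac{R+\alpha}{R-\alpha}\to 0$, so $\mathfrak H 1 \equiv 0$. For a general Jordan curve asymptotic to two horizontal lines at infinity, I would close the truncation $\Sigma_R$ by arcs in $\Omega^c$, apply Cauchy's theorem on the closed loop (indented at $z(\alpha)$ by a small semicircle contributing a half-residue), and show that the closing-arc contribution tends to zero using the asymptotic shape of $\Sigma$. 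This last step, namely justifying the passage to the limit at infinity on an unbounded curve, is the main technical obstacle, and the same issue also underlies the application of Cauchy's formula on the unbounded $\Omega$ in part (a).
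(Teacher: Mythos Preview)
The paper does not supply a proof of this proposition; it is quoted from Journ\'e's lecture notes \cite{jour} and used as a black box. Your outline via the Cauchy integral and the Plemelj--Sokhotski jump relations is exactly the standard route to these facts and is what underlies the cited reference, so there is nothing to compare against in the paper itself. The technical caveats you flag---justifying Cauchy's formula on the unbounded region and controlling the tail contributions for part (c)---are the genuine points where work is needed, and you have identified them correctly.
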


By Proposition~\ref{prop:hilbe}  the second equation of \eqref{euler} is equivalent to 
$\bar z_t=\mathfrak H {\bar z_t}$. 
So the motion of the fluid interface $\Sigma(t): z=z(\alpha,t)$ is given by 
\begin{equation}\label{interface-e}
\begin{cases}
z_{tt}+i=i\frak a z_\alpha\\
\bar z_t=\frak H \bar z_t.
\end{cases}
\end{equation}
\eqref{interface-e} is a fully nonlinear equation.  In \cite{wu1},  Riemann mapping was introduced to analyze equation \eqref{interface-e} and to derive the quasilinear equation. 

\subsection{The free surface equation in Riemann mapping variable}\label{surface-equation-r}
Let $\Phi(\cdot, t): \Omega(t)\to  {\mathscr P}_-$ be the Riemann mapping taking $\Omega(t)$ to  the lower half plane ${\mathscr P}_-$, 
satisfying $\Phi(z(0,t),t)=0$ and $\lim_{z\to\infty}\Phi_z(z,t)=1$. Let 
\begin{equation}\label{h}
h(\alpha,t):=\Phi(z(\alpha,t),t),
\end{equation}
so $h(0,t)=0$ and $h:\mathbb R\to\mathbb R$ is a homeomorphism. Let $h^{-1}$ be defined by 
$$h(h^{-1}(\alpha',t),t)=\alpha',\quad \alpha'\in \mathbb R;$$
and 
\begin{equation}\label{1001}
Z(\alpha',t):=z\circ h^{-1}(\alpha',t),\quad Z_t(\alpha',t):=z_t\circ h^{-1}(\alpha',t),\quad Z_{tt}(\alpha',t):=z_{tt}\circ h^{-1}(\alpha',t)
\end{equation}
be the reparametrization of the position, velocity and acceleration of the interface in the Riemann mapping variable $\alpha'$. Let
\begin{equation}\label{1002}
Z_{,\alpha'}(\alpha', t):=\partial_{\alpha'}Z(\alpha', t),\quad Z_{t,\alpha'}(\alpha', t):=\partial_{\alpha'}Z_t(\alpha',t), \quad Z_{tt,\alpha'}(\alpha', t):=\partial_{\alpha'}Z_{tt}(\alpha',t), \text{ etc.}
\end{equation} 
Notice that ${\bar {\bold v}}\circ \Phi^{-1}: {\mathscr P}_-\to \mathbb C$ is holomorphic in the lower half plane ${\mathscr P}_-$ with  ${\bar {\bold v}}\circ \Phi^{-1}(\alpha', t)={\bar Z}_t(\alpha',t)$. Precomposing \eqref{interface-l}  with $h^{-1}$ and applying Proposition~\ref{prop:hilbe} to $
{\bar {\bold v}}\circ \Phi^{-1}$ in ${\mathscr P}_-$, we have the free surface equation in the Riemann mapping variable:
\begin{equation}\label{interface-r}
\begin{cases}
Z_{tt}+i=i\mathcal AZ_{,\alpha'}\\
\bar{Z}_t=\mathbb H \bar{Z}_t
\end{cases}
\end{equation}
where $\mathcal A\circ h=\frak a h_\alpha$ and $\mathbb H$ is the Hilbert transform associated with the lower half plane ${\mathscr P}_-$:
\begin{equation}\label{ht}
\mathbb H f(\alpha')=\frac1{\pi i}\text{pv.}\int\frac1{\alpha'-\beta'}\,f(\beta')\,d\beta'.
\end{equation}
Observe that $\Phi^{-1}(\alpha', t)=Z(\alpha', t)$ and $(\Phi^{-1})_{z'}(\alpha',t)=Z_{,\alpha'}(\alpha',t)$. So 
$Z_{,\alpha'}$, $\dfrac1{Z_{,\alpha'}}$ are boundary values of the holomorphic functions $(\Phi^{-1})_{z'}$ and $\dfrac1{(\Phi^{-1})_{z'}}$, tending to 1 at the spatial infinity. By Proposition~\ref{prop:hilbe}, \footnote{We work in the regime where $\frac1{Z_{,\alpha'}}-1\in L^2(\mathbb R)$. }
\begin{equation}\label{interface-holo}  
\frac1{Z_{,\alpha'}}-1=\mathbb H\paren{\frac1{Z_{,\alpha'}}-1}. \end{equation}
By the chain rule, we know  for any function $f$,  $U_h^{-1}\partial_t U_h f =(\partial_t+ b \partial_{\alpha'})f$, where 
 $$b:=h_t\circ h^{-1}.$$
 So $Z_{tt}=(\partial_t+b\partial_{\alpha'})Z_t$, and $Z_{t}=(\partial_t+b\partial_{\alpha'})Z$.  
 
 \subsubsection{Some additional notations}
We will often use the fact that $\mathbb H$ is purely imaginary,  and decompose a function into the sum of its holomorphic and antiholomorphic parts. We define 
the projections to the space of holomorphic functions in the lower, and respectively,  upper half planes  by
\begin{equation}\label{proj}
\mathbb P_H :=\frac12(I+\mathbb H),\qquad\text{and }\quad \mathbb P_A:=\frac12(I-\mathbb H).
\end{equation}

We also define \begin{equation}\label{da-daa}
D_\a =\dfrac {1}{z_\a}\partial_\a ,\quad \text{ and } \quad  D_\aa = \dfrac { 1}{Z_{,\aa}}\partial_\aa .
\end{equation}
We know by the chain rule that $\paren{D_\a f} \circ h^{-1}= D_\aa \paren{f \circ h^{-1}}$; and for any holomorphic
function $G$ on $\Omega(t)$ with boundary value $g(\alpha,t):=G(z(\alpha,t),t)$, $D_\a g=G_z\circ z$, and $D_\aa (g\circ h^{-1})=G_z\circ Z$. Hence $D_\a$, $D_\aa$ preserves the holomorphicity of $g$, $g\circ h^{-1}$.

 \subsubsection{The formulas for  $A_1$ and $b$.}

 Let $A_1:=\mathcal A |Z_{,\alpha'}|^2$. Notice that $\mathcal A\circ h=\frak a h_\alpha=-\frac{\partial P}{\partial\vec n}\frac{h_\alpha}{\abs{z_\alpha}}$, so 
  $A_1$ is related to the important quantity $-\frac{\partial P}{\partial\vec n}$ by
 $$-\frac{\partial P}{\partial\vec n}\circ Z=\frac{A_1}{\abs{Z_{,\aa}}}.$$
Using Riemann mapping,   we analyzed the quantities $A_1$ and $b$ in \cite{wu1}.   
Here we re-derive the formulas  for the sake of completeness; we will carefully note the a-priori assumptions made in the derivation. We mention that the same derivation can also be found in  \cite{wu6}. We also mention that in \cite{hit},  using the formulation of Ovsjannikov \cite{ovs},  the authors also re-derived the formulas \eqref{b} and \eqref{a1}. 
 
Assume that 
\begin{equation}\label{assume1}\lim_{z'\in {\mathscr P}_-, z'\to\infty} \Phi_t\circ \Phi^{-1}(z',t)=0,\qquad \lim_{z'\in {\mathscr P}_-,z'\to\infty} \braces{i(\Phi^{-1})_{z'} (z',t)-(\Phi^{-1})_{z'} \overline{ \vec{v}}_t\circ \Phi^{-1}(z',t)}=i. \ \footnote{ It was shown in \cite{wu1} that the water wave equation \eqref{euler} is well-posed in this regime.}\end{equation}
 \begin{proposition}[Lemma 3.1 and (4.7) of \cite{wu1}, or Proposition 2.2 and (2.18) of \cite{wu6}]\label{prop:a1}  
We have  \begin{equation}\label{b}
b:=h_t\circ h^{-1}=\Re (I-\mathbb H)\paren{\frac{Z_t}{Z_{,\alpha'}}};
\end{equation}
 \begin{equation}\label{a1}
A_1=1-\Im [Z_t,\mathbb H]{\bar Z}_{t,\alpha'}=1+\frac1{2\pi }\int \frac{|Z_t(\alpha', t)-Z_t(\beta', t)|^2}{(\alpha'-\beta')^2}\,d\beta'\ge 1;
\end{equation}
 \begin{equation}\label{taylor-formula}
-\frac{\partial P}{\partial\bold n}\Big |_{Z=Z(\cdot,t)}= \frac{A_1}{|Z_{,\alpha'}|}.
\end{equation}
In particular,  if the interface $\Sigma(t)\in C^{1+\epsilon}$ for some $\epsilon>0$, then the strong Taylor sign condition \eqref{taylor-s} holds.
\end{proposition}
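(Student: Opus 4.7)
The plan is to establish the four claims in turn, each reducing to differentiation of the defining relations, exploitation of the holomorphicity constraints via $\mathbb H$, and reality of $b$ and $A_1$.

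For \eqref{b}, differentiating $h(\alpha,t)=\Phi(z(\alpha,t),t)$ in $t$ and composing with $h^{-1}$ yields $b=Z_t/Z_{,\alpha'}+\Phi_t\circ Z$, after using $\Phi_z\circ\Phi^{-1}=1/Z_{,\alpha'}$. Because $\Phi$ is holomorphic in $z$, so is $\Phi_t$; hence $\Phi_t\circ\Phi^{-1}$ is holomorphic on $\mathscr P_-$ and tends to $0$ at infinity by \eqref{assume1}, so Proposition~\ref{prop:hilbe}(a) gives $(I-\mathbb H)(\Phi_t\circ Z)=0$ and therefore $(I-\mathbb H)b=(I-\mathbb H)(Z_t/Z_{,\alpha'})$. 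Reality of $b=h_t\circ h^{-1}$ combined with $\mathbb H$ sending real functions to purely imaginary ones gives $\Re(I-\mathbb H)b=b$, and taking real parts delivers \eqref{b}.

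For \eqref{a1}, the starting point is $iA_1=(Z_{tt}+i)\bar Z_{,\alpha'}$, from multiplying the first equation of \eqref{interface-r} by $\bar Z_{,\alpha'}$. I would exploit the full set of kernel conditions in the Riemann-mapping variable: from the second equation of \eqref{interface-r} and $[\partial_t,\mathbb H]=[\partial_{\alpha'},\mathbb H]=0$, $\bar Z_t,\bar Z_{tt},\bar Z_{t,\alpha'}\in\ker(I-\mathbb H)$; from \eqref{interface-holo} and the Riemann normalization, $Z_{,\alpha'}-1,\ 1/Z_{,\alpha'}-1\in\ker(I-\mathbb H)$, with conjugates $Z_t,Z_{t,\alpha'},\bar Z_{,\alpha'}-1\in\ker(I+\mathbb H)$; and crucially, $\partial_t$-differentiating the second equation of \eqref{interface-r} and subtracting gives the auxiliary identity $(I-\mathbb H)(b\bar Z_{t,\alpha'})=0$. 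Applying $(I-\mathbb H)$ to $iA_1=(Z_{tt}+i)\bar Z_{,\alpha'}$ via the Leibniz rule $(I-\mathbb H)(fg)=f(I-\mathbb H)g+[f,\mathbb H]g$, using $(I-\mathbb H)\bar Z_{,\alpha'}=2\bar Z_{,\alpha'}-1$, rearranges to $i(I+\mathbb H)A_1=Z_{tt}+i-[Z_{tt},\mathbb H]\bar Z_{,\alpha'}$; its imaginary part reads $A_1=1+\Im Z_{tt}-\Im[Z_{tt},\mathbb H]\bar Z_{,\alpha'}$. Substituting $Z_{tt}=\partial_t Z_t+bZ_{t,\alpha'}$ with $b$ from \eqref{b}, then collapsing the $b$-dependent pieces using $(I-\mathbb H)(b\bar Z_{t,\alpha'})=0$, produces $A_1-1=-\Im[Z_t,\mathbb H]\bar Z_{t,\alpha'}$. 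The integral representation then follows by expanding
\[ [Z_t,\mathbb H]\bar Z_{t,\alpha'}(\alpha')=\frac{1}{\pi i}\int\frac{(Z_t(\alpha')-Z_t(\beta'))\bar Z_{t,\beta'}(\beta')}{\alpha'-\beta'}\,d\beta', \]
integrating by parts in $\beta'$, decomposing $\bar Z_t(\beta')=\bar Z_t(\alpha')-(\bar Z_t(\alpha')-\bar Z_t(\beta'))$, and using $\mathbb H Z_{t,\alpha'}=-Z_{t,\alpha'}$ to cancel the non-quadratic remainders; what survives in the imaginary part is $-\frac{1}{2\pi}\int|Z_t(\alpha')-Z_t(\beta')|^2/(\alpha'-\beta')^2\,d\beta'$. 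Non-negativity of the integrand yields $A_1\ge 1$.

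For \eqref{taylor-formula}, the identity is chain-rule bookkeeping: $\mathcal A=(\mathfrak a h_\alpha)\circ h^{-1}$ with $\mathfrak a=-(\partial_{\mathbf n}P)/|z_\alpha|$ from \eqref{frak-a}, and $|Z_{,\alpha'}|=|z_\alpha\circ h^{-1}|/(h_\alpha\circ h^{-1})$ from the chain rule for $Z=z\circ h^{-1}$; substituting into $A_1=\mathcal A|Z_{,\alpha'}|^2$ and cancelling factors of $h_\alpha\circ h^{-1}$ gives $A_1=-\partial_{\mathbf n}P|_Z\cdot|Z_{,\alpha'}|$. For the strong-Taylor assertion under $\Sigma(t)\in C^{1+\epsilon}$, the Kellogg--Warschawski theorem supplies a H\"older-continuous, non-vanishing $Z_{,\alpha'}$ with $Z_{,\alpha'}\to 1$ at infinity, hence $|Z_{,\alpha'}|\le M$ uniformly for some $M<\infty$; combined with $A_1\ge 1$ this forces $-\partial_{\mathbf n}P|_Z\ge 1/M>0$, which is \eqref{taylor-s}. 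The technical core is the commutator derivation in \eqref{a1}: one must organize the projection algebra so that all non-holomorphic remainders consolidate into a single commutator, and pair the subsequent integration by parts with the water-wave identity $(I-\mathbb H)(b\bar Z_{t,\alpha'})=0$ precisely so that the imaginary part reduces to the manifestly non-negative quadratic form in $Z_t(\alpha')-Z_t(\beta')$; the remaining assertions follow from differentiation, the chain rule, or direct appeals to Proposition~\ref{prop:hilbe} and Kellogg--Warschawski.
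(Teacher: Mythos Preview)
Your derivations of \eqref{b} and \eqref{taylor-formula}, and the Kellogg--Warschawski argument for the strong Taylor sign, match the paper's proof and are correct.

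For \eqref{a1}, however, your argument contains a genuine gap. You assert that $\bar Z_{tt}\in\ker(I-\mathbb H)$ ``from $[\partial_t,\mathbb H]=0$'' and then, by subtracting $(I-\mathbb H)\partial_t\bar Z_t=0$, derive the ``auxiliary identity'' $(I-\mathbb H)(b\bar Z_{t,\alpha'})=0$. Both of these are false. The quantity $\bar Z_{tt}$ is the Lagrangian acceleration composed with $h^{-1}$, so in the Riemann variable $\bar Z_{tt}=(\partial_t+b\partial_{\alpha'})\bar Z_t$, \emph{not} $\partial_t\bar Z_t$; consequently $(I-\mathbb H)\bar Z_{tt}=[b,\mathbb H]\bar Z_{t,\alpha'}$, which is generically nonzero, and likewise $(I-\mathbb H)(b\bar Z_{t,\alpha'})=[b,\mathbb H]\bar Z_{t,\alpha'}\ne 0$. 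Your reduction from $A_1=1+\Im Z_{tt}-\Im[Z_{tt},\mathbb H]\bar Z_{,\alpha'}$ to $A_1=1-\Im[Z_t,\mathbb H]\bar Z_{t,\alpha'}$ relies on these false identities and therefore does not go through as written.

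The paper avoids this difficulty by exploiting the Euler structure directly rather than working purely with boundary commutator algebra. Writing $F=\bar{\bold v}$ holomorphic in $\Omega(t)$, one has $\bar z_{tt}=F_t\circ z+(D_\alpha\bar z_t)z_t$, which after precomposition with $h^{-1}$ and multiplication by $Z_{,\alpha'}$ gives
\[
-iA_1=Z_{,\alpha'}(\bar Z_{tt}-i)=Z_{,\alpha'}F_t\circ Z+Z_t\bar Z_{t,\alpha'}-iZ_{,\alpha'}.
\]
The point is that $Z_{,\alpha'}F_t\circ Z-iZ_{,\alpha'}$ is the boundary value of a holomorphic function with limit $-i$ at infinity (this is exactly assumption \eqref{assume1}), so $(I-\mathbb H)$ annihilates it up to the constant $-i$. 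What remains is $-i(I-\mathbb H)A_1=(I-\mathbb H)(Z_t\bar Z_{t,\alpha'})-i$, and since $(I-\mathbb H)\bar Z_{t,\alpha'}=0$ the right side is already a commutator: taking imaginary parts yields $A_1=1-\Im[Z_t,\mathbb H]\bar Z_{t,\alpha'}$ in one step. The holomorphic decomposition of $\bar z_{tt}$ via $F_t$ is the missing ingredient in your approach; it replaces the (incorrect) attempt to force $\bar Z_{tt}$ itself into $\ker(I-\mathbb H)$.
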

\begin{proof}

Taking complex conjugate of the first equation in \eqref{interface-r}, then multiplying by $Z_{,\alpha'}$ yields
\begin{equation}\label{interface-a1}
Z_{,\alpha'}({\bar Z}_{tt}-i)=-i\mathcal A|Z_{,\alpha'}|^2:=-i A_1.
\end{equation}
The left hand side of \eqref{interface-a1} is almost holomorphic since $Z_{,\alpha'}$ is the boundary value of the holomorphic function $(\Phi^{-1})_{z'}$ and $\bar z_{tt}$ is the time derivative of the holomorphic function $\bar z_t$.  We explore the almost holomorphicity of $\bar z_{tt}$ by expanding.  Let $F=\bar {\bold v}$, we know $F$ is holomorphic in $\Omega(t)$ and $\bar z_t=F(z(\alpha, t),t)$, so
\begin{equation}\label{eq:1}
\bar z_{tt}=F_t(z(\alpha, t),t)+F_z(z(\alpha, t),t) z_t(\alpha, t),\qquad \bar z_{t\alpha}=F_z(z(\alpha, t),t) z_\alpha(\alpha, t)
\end{equation}
therefore \begin{equation}\label{eq:2}
\bar z_{tt}= F_t\circ z+ \frac{\bar z_{t\alpha}}{z_\alpha} z_t.
\end{equation}
 Precomposing with $h^{-1}$, subtracting $-i$, then multiplying by $Z_{,\alpha'}$, we have
$$Z_{,\alpha'}({\bar Z}_{tt}-i)= Z_{,\alpha'} F_t\circ Z+ Z_t {\bar Z}_{t,\alpha'}-i Z_{,\alpha'}=-iA_1 $$
Apply $(I-\mathbb H)$ to both sides of the equation. Notice that  $F_t\circ Z$ is the boundary value of the holomorphic function  $F_t\circ \Phi^{-1}$. By assumption \eqref{assume1} and Proposition~\ref{prop:hilbe},  $(I-\mathbb H)(Z_{,\alpha'} F_t\circ Z-iZ_{,\alpha'})=-i$;  therefore
$$-i(I-\mathbb H) A_1= (I-\mathbb H)(Z_t {\bar Z}_{t,\alpha'})-i$$
Taking imaginary parts on both sides and using the fact $(I-\mathbb H){\bar Z}_{t,\alpha'}=0$ \footnote{Because $(I-\mathbb H){\bar Z}_{t}=0$.}  to rewrite $(I-\mathbb H)(Z_t {\bar Z}_{t,\alpha'})$ as $[Z_t,\mathbb H]{\bar Z}_{t,\alpha'}$ yields
\begin{equation}\label{A_1}
A_1=1-\Im [Z_t,\mathbb H]{\bar Z}_{t,\alpha'}.
\end{equation}
The identity \begin{equation}
-\Im[Z_t,\mathbb H]{\bar Z}_{t,\alpha'}
=\frac1{2\pi }\int \frac{|Z_t(\alpha', t)-Z_t(\beta', t)|^2}{(\alpha'-\beta')^2}\,d\beta'
\end{equation}
is obtained by integration by parts. 

The quantity $b:=h_t\circ h^{-1}$ can be calculated similarly.
Recall $h(\alpha,t)=\Phi(z(\alpha,t),t)$, so 
$$h_t=\Phi_t\circ z+(\Phi_z\circ z) z_t,\qquad h_\alpha=(\Phi_z\circ z) z_\alpha$$
hence
$h_t= \Phi_t\circ z+ \frac{h_\alpha}{z_\alpha}  z_t$. Precomposing with $h^{-1}$ yields
\begin{equation}\label{b1}
h_t\circ h^{-1}=\Phi_t\circ Z+ \frac{Z_t}{Z_{,\alpha'}}.
\end{equation}
Now $\Phi_t\circ Z$ is the boundary value of the holomorphic function  $\Phi_t\circ \Phi^{-1}$.  By assumption \eqref{assume1} and Proposition~\ref{prop:hilbe}, $(I-\mathbb H)\Phi_t\circ Z=0$.  Apply $(I-\mathbb H)$ to both sides of \eqref{b1} then take the real parts, we get  
$$b=h_t\circ h^{-1}= \Re (I-\mathbb H)\paren{\frac{Z_t}{Z_{,\alpha'}}}.$$

A classical result in complex analysis states that if the interface is in $C^{1+\epsilon}$, $\epsilon>0$, tending to lines at infinity, then 
$c_0\le \abs{Z_{,\aa}}\le C_0$, for some constants $c_0, C_0>0$. So in this case, the strong Taylor sign condition \eqref{taylor-s} holds.

\end{proof}

\subsection{The quasilinear equation}\label{surface-quasi-r} In \cite{wu1, wu2} we showed that the quasi-linearization of the free surface equation \eqref{interface-e} can be accomplished by just taking one time derivative to equation \eqref{interface-l}. 

Taking derivative to $t$ to \eqref{interface-l} we get
\begin{equation}\label{quasi-l}
{\bar z}_{ttt}+i\frak a {\bar z}_{t\alpha}=-i\frak a_t {\bar z}_{\alpha}=\frac{\frak a_t}{\frak a} ({\bar z}_{tt}-i).
\end{equation}
Precomposing
with $h^{-1}$ on both sides of \eqref{quasi-l},  we have the equation in the Riemann mapping variable 
\begin{equation}\label{quasi-r1}
{\bar Z}_{ttt}+i\mathcal A {\bar Z}_{t,\alpha'}=\frac{\frak a_t}{\frak a}\circ h^{-1} ({\bar Z}_{tt}-i) 
\end{equation}
We compute $\dfrac{\frak a_t}{\frak a}$ by the identities $\frak a h_\alpha=\mathcal A\circ h$, and $\mathcal A\circ h=\dfrac{A_1}{\abs{Z_{,\aa}}^2}\circ h=A_1\circ h \dfrac{ h_\a^2}{\abs{z_{\a}}^2}$, so
\begin{equation}\label{eq:frak a}
\frak a =A_1\circ h \dfrac{ h_\a}{\abs{z_{\a}}^2};
\end{equation}
and we obtain, by taking derivative to $t$ to \eqref{eq:frak a}, 
$$
\dfrac{\frak a_t}{\frak a}= \frac{\partial_t \paren{ A_1\circ h}}{A_1\circ h}+\frac{h_{t\a}}{h_\a}-2\Re \frac{z_{t\a}}{z_\a}.
$$
Notice that $\frac{h_{t\a}}{h_\a}\circ h^{-1}=(h_t\circ h^{-1})_{\aa}:=b_\aa$. So
\begin{equation}\label{at}
\dfrac{\frak a_t}{\frak a}\circ h^{-1}= \frac{(\partial_t +b\partial_\aa) A_1}{A_1}+b_\aa -2\Re D_\aa Z_t;
\end{equation}
where we calculate from \eqref{b} that
\begin{equation}\label{ba}
\begin{aligned}
b_\aa&= \Re \paren{ (I-\mathbb H) \frac{Z_{t,\alpha'}}{Z_{,\aa}}+ (I-\mathbb H) \paren{Z_t\partial_\aa\frac 1{Z_{,\aa}}}}\\& =2\Re D_\aa Z_t+ \Re \paren{ (-I-\mathbb H) \frac{Z_{t,\alpha'}}{Z_{,\aa}}+ (I-\mathbb H) \paren{Z_t\partial_\aa \frac 1{Z_{,\aa}}}}\\&
=2\Re D_\aa Z_t+\Re \paren{\bracket{ \frac1{Z_{,\aa}}, \mathbb H}  Z_{t,\alpha'}+ \bracket{Z_t, \mathbb H}\partial_\aa \frac1{Z_{,\aa}}  }, 
\end{aligned}
\end{equation}
here in the last step we used the fact that  $(I+\mathbb H)Z_{t,\aa}=0$ and $(I-\mathbb H)\partial_\aa  \frac 1{Z_{,\aa}}=0$ \footnote{These follows from $(I-\mathbb H)\bar Z_t=0$ and  \eqref{interface-holo}.}  to rewrite the terms as commutators; and
we compute, by \eqref{A_1} and \eqref{eq:c14}, 
\begin{equation}\label{dta1}
(\partial_t +b\partial_\aa) A_1= -\Im \paren{\bracket{Z_{tt},\mathbb H}\bar Z_{t,\alpha'}+\bracket{Z_t,\mathbb H}\partial_\aa \bar Z_{tt}-[Z_t, b; \bar Z_{t,\aa}]}.
\end{equation}

We now sum up the above calculations and write the quasilinear system in the Riemann mapping variable. 
We have 
\begin{equation}\label{quasi-r}
\begin{cases}
(\partial_t+b\partial_\aa)^2{\bar Z}_{t}+i\dfrac{A_1}{\abs{Z_{,\aa}}^2}\partial_\aa {\bar Z}_{t}=\dfrac{\frak a_t}{\frak a}\circ h^{-1} ({\bar Z}_{tt}-i) 
\\ \bar Z_t=\mathbb H \bar Z_t
\end{cases}
\end{equation}
where
\begin{equation}\label{aux}
\begin{cases}
b:=h_t\circ h^{-1}=\Re (I-\mathbb H)\paren{\dfrac{Z_t}{Z_{,\alpha'}}}\\
A_1=1-\Im [Z_t,\mathbb H]{\bar Z}_{t,\alpha'}=1+\dfrac1{2\pi }\int \dfrac{|Z_t(\alpha', t)-Z_t(\beta', t)|^2}{(\alpha'-\beta')^2}\,d\beta'\\
\dfrac1{Z_{,\aa}}= i\dfrac{\bar Z_{tt}-i}{A_1}\\
\dfrac{\frak a_t}{\frak a}\circ h^{-1}= \dfrac{(\partial_t +b\partial_\aa) A_1}{A_1}+b_\aa -2\Re D_\aa Z_t
\end{cases}
\end{equation}
Here the third equation in \eqref{aux} is obtained by rearranging the terms of the equation \eqref{interface-a1}. 
Using it to replace $\frac1{Z_{,\aa}}$ by $i\frac{\bar Z_{tt}-i}{A_1}$,  we get a system for the complex conjugate velocity  and  acceleration $(\bar Z_t, \bar Z_{tt})$.  The initial data for the system \eqref{quasi-r}-\eqref{aux} is set up as follows.

\subsubsection{The initial data}\label{id-r}
Without loss of generality, we choose the parametrization of the initial interface $\Sigma(0): Z(\cdot,0):=Z(0)$  by the Riemann mapping variable, so $h(\alpha,0)=\alpha$ for $\alpha\in\mathbb R$; 
we take the initial  
velocity $Z_t(0)$,  
such that it satisfies $\bar Z_t(0)=\mathbb H \bar Z_t(0)$. 
And we take the initial acceleration $Z_{tt}(0)$ so that it solves the equation
\eqref{interface-a1} or the third equation in \eqref{aux}.

\subsection{Local wellposedness in Sobolev spaces}
 By \eqref{taylor-formula} and \eqref{a1},  if $Z_{,\aa}\in L^\infty$,  then the strong Taylor stability criterion \eqref{taylor-s} holds.  
In this case, the system \eqref{quasi-r}-\eqref{aux} is quasilinear of the hyperbolic type, with the left hand side of the first equation in \eqref{quasi-r} consisting of the higher order terms.\footnote{ $i\partial_\aa =|\partial_\aa|$ when acting on holomorphic functions. The Dirichlet-to-Neumann operator $\nabla_{\bf n}= \frac1{|Z_{,\aa}|}|\partial_\aa|$.} In \cite{wu1} we showed that the Cauchy problem of \eqref{quasi-r}-\eqref{aux},  and equivalently of \eqref{interface-r}-\eqref{interface-holo}-\eqref{b}-\eqref{a1},
 is uniquely solvable in Sobolev spaces $H^s$, $s\ge 4$.

Let the initial data be given as in \S\ref{id-r}.

\begin{theorem}[Local wellposedness in Sobolev spaces, cf. Theorem 5.11, \S6 of \cite{wu1}]\label{prop:local-s} Let $s\ge 4$. Assume that 
$Z_t(0)\in H^{s+1/2}(\mathbb R)$, $Z_{tt}(0)\in H^s(\mathbb R)$ and $Z_{,\aa}(0)\in L^\infty(\mathbb R)$. 
Then there is $T>0$,  such that on $[0, T]$, the initial value problem of \eqref{quasi-r}-\eqref{aux}, or equivalently of \eqref{interface-r}-\eqref{interface-holo}-\eqref{b}-\eqref{a1},  has a unique solution 
$Z=Z(\cdot, t)$,  satisfying
 $(Z_{t}, Z_{tt})\in C^
l([0, T], H^{s+1/2-l}(\mathbb R)\times H^{s-l}(\mathbb R))$,  and $Z_{,\alpha'}-1\in C^l([0, T], H^{s-l}(\mathbb R))$, for $l=0,1$.

Moreover if $T^*$ is the supremum over all such times $T$, then either $T^*=\infty$, or $T^*<\infty$, but
\begin{equation}\label{eq:1-1}
\sup_{[0, T^*)}(\|Z_{,\aa}(t)\|_{L^\infty}+\|Z_{tt}(t)\|_{H^3}+\| Z_t(t)\|_{H^{3+1/2}})=\infty. \end{equation}
\end{theorem}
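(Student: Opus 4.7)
The plan is to prove both existence, uniqueness, and the continuation criterion by the standard energy-method approach adapted to the quasilinear system \eqref{quasi-r}--\eqref{aux}, exploiting the fact that under the hypothesis $\|Z_{,\aa}\|_{L^\infty}<\infty$ the strong Taylor sign condition holds via \eqref{taylor-formula}--\eqref{a1}, so that the principal operator
$$L:= (\partial_t+b\partial_{\aa})^2 + i\,\tfrac{A_1}{|Z_{,\aa}|^2}\partial_{\aa}$$
is of hyperbolic type when restricted to anti-holomorphic data. The crucial observation, which I will use throughout, is that on the anti-holomorphic subspace $\{f:(I-\mathbb H)f=0\}$ one has $i\partial_{\aa}=|\partial_{\aa}|$, and consequently the symmetric form associated to $L$ is a positive operator modulo lower-order terms controlled by $\|Z_{,\aa}\|_{L^\infty}$ and the Sobolev norms of $Z_t,Z_{tt}$.

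First I would construct a hierarchy of energies of the schematic form
$$E_s(t) = \sum_{k\le s}\int A_1\,\bigl|\partial_{\aa}^k \bar Z_t\bigr|^2 + \tfrac{1}{|Z_{,\aa}|^2}\bigl|\partial_{\aa}^{k-1/2}\bar Z_{tt}\bigr|^2\,d\aa + \text{lower-order terms},$$
where the fractional derivative is defined via $|\partial_{\aa}|^{1/2}=(i\partial_{\aa}\mathbb P_A-i\partial_{\aa}\mathbb P_H)^{1/2}$. Differentiating \eqref{quasi-r} $k$ times in $\partial_{\aa}$ and computing $\tfrac{d}{dt}E_s$, the highest-order contributions cancel by the symmetric structure of $L$ (this uses $(I-\mathbb H)\bar Z_t=0$, $(I-\mathbb H)\partial_{\aa}(1/Z_{,\aa})=0$, and integration by parts exploiting that $\mathbb H$ is skew-adjoint on $L^2$). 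Remaining terms involve commutators $[\partial_{\aa}^k,\mathbb H]$, $[\partial_{\aa}^k,b\partial_{\aa}]$, $[\partial_{\aa}^k, A_1]$, each of which is controlled by the a-priori quantities $\|Z_{,\aa}\|_{L^\infty}$, $\|Z_{tt}\|_{H^{s-1}}$, $\|Z_t\|_{H^{s-1/2}}$ using the commutator $[f,g;h]$ machinery in \eqref{eq:comm}, Proposition~\ref{prop:hilbe}, and the formulas \eqref{b}, \eqref{ba}, \eqref{dta1}. The outcome is a closed Gronwall-type inequality $E_s'(t)\le C(E_s(t))\cdot E_s(t)$, from which the a-priori bound follows.

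For existence I would set up a standard iteration scheme: regularize the initial data by $Z^\eps(0)=J_\eps Z(0)$ where $J_\eps$ is a mollifier compatible with the anti-holomorphic constraint, and either (a) solve approximating smooth systems by iterating the linearization of \eqref{quasi-r}--\eqref{aux} around the previous iterate, or (b) add a parabolic regularization $-\eps\partial_{\aa}^{2m}$ and use ODE theory in $H^N$ for large $N$. The energy estimates above provide uniform bounds on the approximating sequence, and strong compactness in $C([0,T],H^{s-1})$ plus weak-* convergence in the higher norms allows one to pass to the limit in \eqref{quasi-r}--\eqref{aux} and recover the anti-holomorphic constraint $(I-\mathbb H)\bar Z_t=0$ at each time. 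Uniqueness and continuous dependence come from a parallel energy estimate applied to the difference of two solutions, at one order of regularity below $s$, again using the positivity of $A_1$.

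The principal obstacle is twofold. First, because $\mathbb H$ is nonlocal and depends on the unknown curve $Z$ only implicitly through the Riemann mapping (here flattened to the real line), the commutator and Hardy-type estimates must be carried out uniformly with respect only to $\|Z_{,\aa}\|_{L^\infty}$ and not any higher Sobolev norm of the curve -- this is what makes the criterion \eqref{eq:1-1} sharp. Second, closing the energy at the level $s\ge 4$ requires extracting a full derivative of $Z_{,\aa}$ from the source terms in \eqref{quasi-r}; this is done via the third equation of \eqref{aux}, which exchanges $\partial_{\aa}(1/Z_{,\aa})$ for a derivative of $\bar Z_{tt}$ divided by $A_1\ge 1$, so that no regularity is lost. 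Once this bookkeeping is in place, the blow-up dichotomy \eqref{eq:1-1} follows by contradiction: if the three quantities stay bounded on $[0,T^*)$, the energy estimate propagates $E_s$ past $T^*$, and the local existence theorem restarted at a time close to $T^*$ extends the solution, contradicting the maximality.
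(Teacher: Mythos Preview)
The paper does not prove this theorem; it is quoted from \cite{wu1} (see the attribution ``cf.\ Theorem 5.11, \S6 of \cite{wu1}'') and used as a black box on which the later blow-up criterion (Theorem~\ref{blow-up}) and the main existence result are built. So there is no proof in the present paper to compare your proposal against.

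That said, your outline is a fair sketch of the strategy actually carried out in \cite{wu1}: once $\|Z_{,\aa}\|_{L^\infty}<\infty$, Proposition~\ref{prop:a1} gives $-\partial P/\partial\mathbf n\ge 1/\|Z_{,\aa}\|_{L^\infty}>0$, so the principal part of \eqref{quasi-r} is genuinely hyperbolic on the holomorphic subspace, and the rest is a quasilinear energy argument plus iteration. Two points of calibration against what \cite{wu1} actually does. First, the natural energy weight on the material-derivative term is $1/\mathcal A$ (equivalently $|Z_{,\aa}|^2/A_1$), not $A_1$ as in your schematic $E_s$; compare the basic energy $E_\Theta$ in Lemma~\ref{basic-e} of the present paper. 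With your weight the top-order cancellation does not occur. Second, in \cite{wu1} the iteration is set up directly on the quasilinear equation in the Riemann-mapping variable (the linearized problem at each step being a genuine linear hyperbolic equation with variable coefficients), rather than via parabolic regularization; the holomorphicity constraint $(I-\mathbb H)\bar Z_t=0$ is preserved automatically because the iteration is designed so that each iterate is the boundary value of a holomorphic function. Your continuation-criterion argument (bounded norms $\Rightarrow$ energy propagates $\Rightarrow$ restart) is exactly the standard one.
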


\begin{remark} 1. Let $h=h(\a, t)$ be the solution of the ODE 
\begin{equation}\label{b1-1}
\begin{cases}
h_t=b(h,t),\\
h(\a, 0)=\a
\end{cases}
\end{equation}
 where $b$ is as given by \eqref{b}. Then $z=Z\circ h$ satisfies equation
 \eqref{interface-l}, cf. \S6 of \cite{wu1}.

2.    
 \eqref{quasi-r}-\eqref{aux} is a system for the complex conjugate velocity  and  acceleration $(\bar Z_t, \bar Z_{tt})$, the interface doesn't appear explicitly, 
so a solution can exist even if $Z=Z(\cdot, t)$ becomes self-intersecting. Similarly, equation \eqref{interface-r}-\eqref{interface-holo}-\eqref{b}-\eqref{a1} makes sense even if $Z=Z(\cdot, t)$ self-intersects. To obtain the solution of the water wave equation \eqref{euler} from the solution of the  quasilinear equation \eqref{quasi-r}-\eqref{aux} as given in Theorem~\ref{prop:local-s} above, in \S 6 of \cite{wu1}, 
an additional chord-arc condition  is assumed for the initial interface, and it was shown that the solution $Z=Z(\cdot, t)$ remains non-self-intersecting for a time period depending only on the initial chord-arc constant and the initial Sobolev norms. \footnote{\eqref{interface-r}-\eqref{interface-holo}-\eqref{b}-\eqref{a1} is equivalent to the water wave equation \eqref{euler} only when the interface is non-self-intersecting, see \S\ref{general-soln}.}   

3. 
Observe that we arrived at 
\eqref{quasi-r}-\eqref{aux} from \eqref{euler} using only the following properties of the domain: 1. there is a conformal mapping taking the fluid region $\Omega(t)$ to ${\mathscr P}_-$; 2. $P=0$ on $\Sigma(t)$. We note that  $z\to z^{1/2}$ is a conformal map that takes the region $\mathbb C\setminus \{z=x+i0, x>0\}$ to the upper half plane; so a domain with its boundary self-intersecting at the positive real axis can  be mapped conformally onto the lower half plane ${\mathscr P}_-$. Taking such a domain as the initial fluid domain, 
assuming  $P=0$ on $\Sigma(t)$ even when $\Sigma(t)$ self-intersects,\footnote{We note that when $\Sigma(t)$ self-intersects, the condition $P=0$ on $\Sigma(t)$ is unphysical.} one can still solve equation \eqref{quasi-r}-\eqref{aux} for a short time, by Theorem~\ref{prop:local-s}. Indeed this is one of the main ideas in the work of \cite{cf}. Using this idea and the time reversibility of the water wave equation, by choosing an appropriate initial velocity field that pulls the initial domain apart,  Castro, Cordoba et.\ al. \cite{cf} proved the existence of "splash" and "splat" singularities starting from a smooth non-self-intersecting fluid interface. 
 
\end{remark}

\subsection{Recovering the water wave equation \eqref{euler} from the interface equations}\label{general-soln} In this section, we derive the equivalent system in the lower half plane ${\mathscr P}_-$ for the interface equations 
\eqref{interface-r}-\eqref{interface-holo}-\eqref{a1}-\eqref{b}, and show how to recover from here the water wave equation \eqref{euler}. Although the derivation is quite straight-forward, to the best knowledge of the author, it has not been done before.

Let $Z=Z(\cdot, t)$ be a solution of \eqref{interface-r}-\eqref{interface-holo}-\eqref{a1}-\eqref{b}, 
satisfying the regularity properties  of Theorem~\ref{prop:local-s}; let $U(\cdot, t): \mathscr P_-\to \mathbb C$, $\Psi(\cdot, t): \mathscr P_-\to \mathbb C$ be the holomorphic functions, continuous on $\bar {\mathscr P}_-$, such that 
\begin{equation}\label{eq:270}
U(\alpha',t)=\bar Z_t(\alpha', t),\qquad \Psi(\alpha',t)=Z(\alpha',t),\qquad \Psi_{z'}(\alpha',t)=Z_{,\alpha'}(\alpha', t),
\end{equation}
and $\lim_{z'\to\infty} U(z',t)=0$, $\lim_{z'\to\infty}\Psi_{z'}(z',t)=1$.\footnote{We know $U(z', t)=K_{y'}\ast \bar Z_t$, $\Psi_{z'}=K_{y'}\ast Z_{,\aa}$ and by the Maximum principle, $\frac1{\Psi_{z'}}=K_{y'}\ast \frac1{Z_{,\aa}}$, here $K_{y'}$ is the Poisson kernel defined by \eqref{poisson}. By \eqref{interface-a1}, $\frac1{Z_{,\aa}}-1\in C([0, T], H^s(\mathbb R))$ for $s\ge 4$, so $\Psi_{z'}\ne 0$ on $\bar {\mathscr P}_-$.  } 
From $Z_t=(\partial_t+b\partial_\aa)Z=\Psi_t (\aa, t)+b \Psi_{z'}(\aa,t)$, we have
\begin{equation}\label{eq:271}
b=\frac{Z_t}{ \Psi_{z'} }-\frac{\Psi_t}{\Psi_{z'}}=\frac{\bar U}{\Psi_{z'}}-\frac{\Psi_t}{\Psi_{z'}},\qquad \text{on }\partial {\mathscr P}_- ,
\end{equation}
and substituting in we get 
$$\bar Z_{tt}=(\partial_t+b\partial_\aa) \bar Z_t=U_t+\paren{\frac{\bar U}{\Psi_{z'}}-\frac{\Psi_t}{\Psi_{z'}}   } U_{z'}\qquad \text{on }\partial {\mathscr P}_- ;$$
so $\bar Z_{tt}$ is the trace  of the function $U_t-\frac{\Psi_t}{\Psi_{z'}}U_{z'}+\frac{\bar U}{\Psi_{z'}}U_{z'} $ on $\partial {\mathscr P}_-$; and $Z_{,\alpha'}(\bar Z_{tt}-i)$ is then the trace of the function $\Psi_{z'} 
U_t- {\Psi_t}U_{z'}+{\bar U}U_{z'} -i\Psi_{z'}$  on $\partial {\mathscr P}_-$.  This gives, from \eqref{interface-a1} that 
\begin{equation}\label{eq:272}
\Psi_{z'} U_t- {\Psi_t}U_{z'}+{\bar U}U_{z'}-i\Psi_{z'}=-iA_1,\qquad \text{on }\partial {\mathscr P}_- .
\end{equation}
Observe that on the left hand side of \eqref{eq:272}, $\Psi_{z'} U_t- {\Psi_t}U_{z'}-i\Psi_{z'}$ is holomorphic on ${\mathscr P}_-$, 
while
${\bar U}U_{z'}=\partial_{z'}(\bar U U)$.  
So there is a real valued function $\frak P: {\mathscr P}_-\to \mathbb R$, such that
\begin{equation}\label{eq:273}
\Psi_{z'} U_t- {\Psi_t}U_{z'}+{\bar U}U_{z'}-i\Psi_{z'}=-2\partial_{z'}\frak P=-(\partial_{x'}-i\partial_{y'})\frak P,\qquad \text{on }{\mathscr P}_-;
\end{equation}
and by \eqref{eq:272}, because $iA_1$ is purely imaginary, 
\begin{equation}\label{eq:274}
\frak P=c,\qquad \text{on }\partial {\mathscr P}_-.
\end{equation}
where $c\in \mathbb R$ is a constant. 
Applying $\partial_{x'}+i\partial_{y'}:=2\overline{ \partial}_{z'}$ to \eqref{eq:273} yields
\begin{equation}\label{eq:275}
\Delta \frak P= -2|U_{z'}|^2\qquad\text{on }{\mathscr P}_-.
\end{equation}
It is easy to check that for $y'\le 0$ and $t\in [0, T]$, $\paren{U, U_t, U_{z'}, \Psi_{z'}-1, \frac1{\Psi_{z'}}-1, \Psi_t}(\cdot+iy', t)\in L^2(\mathbb R)\cap L^\infty(\mathbb R)$, and $(U, \Psi, \frac1{\Psi_{z'}}, \frak P)\in C^1( \overline{\mathscr P}_-\times [0, T])$.

It is clear that the above process is reversible. From a solution $(U, \Psi, \frak P)\in C^1( \overline{\mathscr P}_-\times [0, T])$ of the system \eqref{eq:273}-\eqref{eq:275}-\eqref{eq:274}-\eqref{eq:271}, with  $\paren{U, U_t, U_{z'}, \Psi_{z'}-1, \frac1{\Psi_{z'}}-1, \Psi_t}(\cdot+iy', t)\in L^2(\mathbb R)\cap L^\infty(\mathbb R)$ for $y'\le 0,\ t\in [0, T]$,  $U(\cdot, t)$, $\Psi(\cdot, t)$ holomorphic in ${\mathscr P}_-$, and  $b$ real valued, the boundary value $(Z(\aa,t), Z_t(\aa, t)):=(\Psi(\aa,t), \bar U(\aa, t))$ satisfies the interface equation \eqref{interface-r}-\eqref{interface-holo}-\eqref{b}-\eqref{a1}. Therefore the systems  \eqref{interface-r}-\eqref{interface-holo}-\eqref{b}-\eqref{a1} and  \eqref{eq:273}-\eqref{eq:275}-\eqref{eq:274}-\eqref{eq:271}, with $U(\cdot, t)$, $\Psi(\cdot, t)$ holomorphic in ${\mathscr P}_-$, and $\Psi_{z'}(\cdot, t)\ne 0$, $b$ real valued,  are equivalent in the smooth regime.

Assume $(U, \Psi)\in C( \overline{\mathscr P}_-\times [0, T])\cap C^1( {\mathscr P}_-\times (0, T))$ is a solution of the system \eqref{eq:273}-\eqref{eq:275}-\eqref{eq:274}, with  $U(\cdot, t)$, $\Psi(\cdot, t)$ holomorphic in ${\mathscr P}_-$, assume in addition that $\Sigma(t)=\{Z=Z(\alpha',t):=\Psi(\alpha',t)\ | \ \alpha'\in\mathbb R\}$  is a Jordan curve with $$\lim_{|\alpha'|\to\infty} Z_{,\alpha'}(\alpha',t)=1.$$ Let $\Omega(t)$ be the domain bounded by $Z=Z(\cdot,t)$ from the above, then $Z=Z(\alpha',t) $, $\alpha'\in\mathbb R$ winds the boundary of $\Omega(t)$ exactly once. By the argument principle, $\Psi: \bar {\mathscr P}_-\to \bar \Omega(t)$ is one-to-one and onto,  $\Psi^{-1}:\Omega(t)\to {\mathscr P}_-$
exists and is a holomorphic function; and by equation \eqref{eq:273} and  the chain rule, \begin{equation}\label{eq:276}
(U\circ \Psi^{-1})_t+\bar U\circ \Psi^{-1}(U\circ \Psi^{-1})_{z}+(\partial_x-i\partial_y)(\frak P\circ \Psi^{-1})=i,\qquad \text{on }\Omega(t).
\end{equation}
Let $\bar {\bold v}=U\circ \Psi^{-1}$, $P=\frak P\circ \Psi^{-1}$. Observe that ${\bf v}\bar{\bf v}_{z}= ({\bold v}\cdot \nabla) \bar {\bold v}$. So  $({\bf v}, P)$ satisfies the water wave equation \eqref{euler} in the  domain $\Omega(t)$.

\subsection{Non-$C^1$ interfaces} 
Assume that the interface $Z=Z(\cdot, t)$ has an angled crest at $\aa_0$  with interior angle  $\nu$, 
we know from the discussion in \S3.3.2 of \cite{kw} that  if the acceleration is finite, then it is necessary that $\nu\le \pi$; and if $\nu<\pi$ then 
$\frac 1{Z_{,\aa}}(\aa_0, t)=0$. We henceforth call those points at which  $\frac 1{Z_{,\aa}}=0$ the singularities.  

If the interface is allowed to be non-$C^1$ with interior angles at the crests $<\pi$, then the coefficient $\frac{A_1}{\abs{Z_{,\aa}}^2}$ of the second term on the left hand side of the first equation in \eqref{quasi-r} can be degenerative, and in this case 
  it is not clear if equation \eqref{quasi-r} is still hyperbolic. 
In order to handle this situation, we need to understand how the singularities  propagate. In what follows we derive the evolution equation for $\frac 1{Z_{,\aa}}$. We will also give the evolution equations for the basic quantities $\bar Z_t$ and  $\bar Z_{tt}$.\footnote{In Lagrangian coordinates, the first equation in \eqref{quasi-r} is of the form $(\partial_t^2+a\nabla_{\bf n} )\bar z_t=f$, where $a=-\frac{\partial P}{\partial {\bf n}}=\frac{A_1}{|Z_{,\aa}|}\circ h$, and the Dirichilet-Neumann operator $\nabla_{\bf n}\circ h^{-1}=\frac{i}{|Z_{,\aa}|}\partial_\aa$. So at the singularities both $a$ and $\nabla_{\bf n}$ are denegerative.}

\subsection{Some basic evolution equations} 
We begin with
$$\frac1{Z_{,\aa}}\circ h=\frac{h_\a}{z_\a},$$
 taking derivative to $t$  yields, 
$$\partial_t \paren{\frac1{Z_{,\aa}}\circ h}=\frac1{Z_{,\aa}}\circ h \paren{\frac{h_{t\a}}{h_\a}-\frac{z_{t\a}}{z_\a}};$$
precomposing with $h^{-1}$ gives
\begin{equation}\label{eq:dza}
(\partial_t+b\partial_{\alpha'})\paren{\frac1{Z_{,\aa}}}=\frac1{Z_{,\aa}} \paren{b_\aa-D_\aa Z_t}.
\end{equation}
The evolution equations for $\bar Z_t$ and $\bar Z_{tt}$ can be obtained from  \eqref{interface-a1} and \eqref{quasi-l}.

We have, by \eqref{interface-a1}, 
\begin{equation}\label{eq:dzt}
(\partial_t+b\partial_{\alpha'})\bar Z_t:= \bar Z_{tt} =-i \frac {A_1}{Z_{,\aa}}+i.
\end{equation}
Using \eqref{interface-l} to replace $i\frak a$ by $-\frac{\bar z_{tt}-i} {\bar z_\a}$ in equation \eqref{quasi-l} yields
$$\bar z_{ttt}=(\bar z_{tt}-i) \paren{ \bar { D_\a z_t}+ \frac{\frak a_t}{\frak a}};$$
precomposing with $h^{-1}$ gives 
\begin{equation}\label{eq:dztt}
(\partial_t+b\partial_{\alpha'})\bar Z_{tt}= (\bar Z_{tt}-i) \paren{ \bar { D_\aa Z_t}+ \frac{\frak a_t}{\frak a}\circ h^{-1}}.
\end{equation}

Equations \eqref{eq:dza}, \eqref{eq:dzt} and \eqref{eq:dztt} describe the time evolution of the basic quantities $\frac1{Z_{,\aa}}$, $\bar Z_t$ and $\bar Z_{tt}$. In fact,  equations \eqref{eq:dza}-\eqref{eq:dzt} together with \eqref{b}, \eqref{a1} and \eqref{ba} give a complete evolutionary system for the holomorphic quantities $\frac1{Z_{,\aa}}$ and $\bar Z_t$, which characterize the fluid domain $\Omega(t)$ and  the complex conjugate velocity $\bar {\vec{v}}$. We will explore this evolution system  in our future work. These equations give a first indication that it is natural to study the water wave problem in a setting where bounds are only imposed on $\frac1{Z_{,\aa}}$,  $\bar Z_t$ and their derivatives. 

\subsection{An important equation} Here we record an important equation, which is obtained by rearranging the terms of \eqref{interface-a1}. 
\begin{equation}\label{aa1}
\dfrac1{Z_{,\aa}}= i\dfrac{\bar Z_{tt}-i}{A_1}.
\end{equation}

\section{Well-posedness  in a broader class that includes non-$C^1$ interfaces.}\label{main-results}

We are now ready to study the Cauchy problem for the 
water wave equation \eqref{euler}
in a regime that allows for non-$C^1$ interfaces. 
We begin with an a-priori estimate.

\subsection{A-priori estimate for water waves with angled crests}\label{a priori}

Motivated by the question of the interaction of the free interface with a fixed vertical boundary, in \cite{kw}, Kinsey and the author studied the water wave equation \eqref{euler} in a regime that includes non-$C^1$ interfaces with angled crests in a periodic setting, 
constructed an energy functional and proved an a-priori estimate  
which does not require a positive lower bound for $\frac1{\abs{Z_{,\alpha'}}}$. 
A similar result holds for the whole line case. While a similar proof as that in \cite{kw} applies to the whole line, for the sake of completeness, 
we will provide  a slightly different argument  in \S\ref{proof0}. In the first proof  in \cite{kw}, we expanded and then re-organized the terms to ensure that there is no further cancelations and the estimates can be closed. Here instead we will rely on the estimates for the quantities $b_\aa$, $A_1$ and their derivatives.\footnote{These estimates become available in the work \cite{wu7}. The same results in the current paper hold in the periodic setting. 
}

Let 
\begin{equation}  \label{eq:ea}
 {\bf E}_a(t)=\int\frac1{A_1}|Z_{,\aa}(\partial_t+b\partial_\aa) D_\aa\bar Z_t |^2\,d\aa+ \nm{ D_\aa\bar Z_t(t)}_{\dot H^{1/2}}^2,
\end{equation}
and
\begin{equation} \label{eq:eb}
 {\bf E}_b(t) =  \int\frac1{A_1}\abs{Z_{,\aa}(\partial_t+b\partial_\aa)\paren{ \frac1{Z_{,\aa}}D_\aa^2\bar Z_t }}^2\,d\aa+  \nm{\frac1{Z_{,\aa}}D_\aa^2\bar Z_t (t)}_{\dot{H}^{1/2}}^2.
 \end{equation}
Let
\begin{equation}\label{energy}
\frak E(t)= {\bf E}_a(t)+{\bf E}_b(t)+ \|\bar{Z}_{t,\alpha'}(t)\|_{L^2}^2+\| D_{\alpha'}^2 \bar{Z}_t(t)\|_{L^2}^2+\nm{\partial_\aa\frac1{Z_{,\aa}}(t)}^2_{L^2} + \abs{\frac1{Z_{,\aa}}(0, t)}^2.
\end{equation}

\begin{theorem}[cf. Theorem 2 of \cite{kw} for the periodic version]\label{prop:a priori}
Let 
$Z=Z(\cdot,t)$, $t\in [0, T]$ be a solution of the system \eqref{interface-r}-\eqref{interface-holo}-\eqref{b}-\eqref{a1}, satisfying 
$(Z_{t}, Z_{tt})\in C^
l([0, T], H^{s+1/2-l}(\mathbb R)\times H^{s-l}(\mathbb R))$,  $l=0,1$ for some $s\ge 4$. There is a polynomial $C$ with universal  nonnegative coefficients, such that 
\begin{equation}\label{a priori-eq}
\frac{d}{dt}\frak E(t)\le C(\frak E(t)),\qquad \text{for } t\in [0, T].
\end{equation}

\end{theorem}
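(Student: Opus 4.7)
The plan is to differentiate $\frak E(t)$ in time and exploit that both $\theta := D_{\aa}\bar Z_t$ and $\Theta := \frac{1}{Z_{,\aa}}D_{\aa}^2\bar Z_t$ are boundary values of functions holomorphic in ${\mathscr P}_-$ that satisfy a wave-type equation of exactly the same shape as the first equation in \eqref{quasi-r}. Concretely, for any such holomorphic $\frak u$ obeying
$$ (\partial_t+b\partial_\aa)^2\frak u + i\frac{A_1}{|Z_{,\aa}|^2}\partial_\aa \frak u = G, $$
the weighted energy $\int \frac{1}{A_1}|Z_{,\aa}(\partial_t+b\partial_\aa)\frak u|^2\,d\aa + \|\frak u\|_{\dot H^{1/2}}^2$ admits a clean $t$-derivative: the weight $|Z_{,\aa}|^2/A_1$ is engineered so that after integrating by parts against the equation, the second-order contribution $i\frac{A_1}{|Z_{,\aa}|^2}\partial_\aa\frak u$ cancels against $\frac{d}{dt}\|\frak u\|_{\dot H^{1/2}}^2$ via the holomorphic identity $i\partial_\aa \frak u = |\partial_\aa|\frak u$. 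This is precisely the structure behind ${\bf E}_a(t)$ and ${\bf E}_b(t)$, and the whole estimate turns on being able to produce and close such equations for $\theta$ and $\Theta$.

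First I would apply $D_\aa$ (respectively $\frac{1}{Z_{,\aa}}D_\aa^2$) to the first equation of \eqref{quasi-r} and commute with $(\partial_t + b\partial_\aa)^2$. The commutator $[(\partial_t+b\partial_\aa), D_\aa]$ is governed by $D_\aa Z_t$ and $b_\aa$, the commutator with $\frac{1}{Z_{,\aa}}$ is handled through the evolution equation \eqref{eq:dza}, and commuting derivatives past the coefficient $\frac{A_1}{|Z_{,\aa}|^2}$ produces $\partial_\aa(1/Z_{,\aa})$ and $(\partial_t+b\partial_\aa)A_1$ which can be reorganized via \eqref{dta1}, \eqref{ba} and \eqref{at} so that every factor of $1/Z_{,\aa}$ that appears outside the weight is locked inside a holomorphic quantity ($D_\aa \bar Z_t$, $\partial_\aa \frac{1}{Z_{,\aa}}$, etc.) whose $\dot H^{1/2}$ norm is controlled by $\frak E(t)$. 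This yields governing equations for $\theta$ and $\Theta$ of the required form with remainders $G_\theta, G_\Theta$. Then, differentiating ${\bf E}_a$ and ${\bf E}_b$ and performing the cancellation, the residue consists of terms involving $(\partial_t+b\partial_\aa)\frac{1}{A_1}$, $(\partial_t+b\partial_\aa)|Z_{,\aa}|^2$, $b_\aa$, and $G_\theta, G_\Theta$; each is bounded through Proposition~\ref{prop:a1}, \eqref{b}, \eqref{a1}, \eqref{ba}, \eqref{dta1}, \eqref{at}, together with the $L^2$ boundedness of commutators of the form \eqref{eq:comm} by the $\dot H^{1/2}$ norms of their entries (the tools recorded in Appendix~\ref{ineq} and Appendix~\ref{iden}).

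For the lower-order terms I would differentiate $\|\bar Z_{t,\aa}\|_{L^2}^2$, $\|D_{\aa}^2\bar Z_t\|_{L^2}^2$ and $\|\partial_\aa \frac{1}{Z_{,\aa}}\|_{L^2}^2$ directly using \eqref{eq:dza}, \eqref{eq:dzt}, \eqref{eq:dztt} and an $\aa$-derivative of \eqref{eq:dza}, bounding the resulting products through Sobolev/commutator inequalities against $\frak E(t)$; the pointwise term $|\frac{1}{Z_{,\aa}}(0,t)|^2$ is then handled by its own ODE obtained by evaluating \eqref{eq:dza} at $\aa=0$, with the right-hand side bounded in $L^\infty$ via the Sobolev embedding $\dot H^{1/2}\cap \dot H^{\text{higher}}\hookrightarrow L^\infty$. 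The main obstacle is the degeneracy: since $\frac{1}{Z_{,\aa}}$ may vanish, the coefficient $\frac{A_1}{|Z_{,\aa}|^2}$ can blow up, so the entire argument rests on never allowing a naked factor of $|Z_{,\aa}|^{-1}$ to appear in any remainder — every such factor must either be absorbed by the weight $|Z_{,\aa}|^2/A_1$ in the kinetic term, or be hidden inside a holomorphic expression whose $\dot H^{1/2}$ norm is part of $\frak E(t)$. Verifying this delicate bookkeeping, particularly in the remainders $G_\theta$ and $G_\Theta$ produced by the commutations above, is the heart of the argument and is where the formulas \eqref{b}, \eqref{a1}, \eqref{ba}, \eqref{dta1} and \eqref{at} do the essential work, ultimately yielding $\frac{d}{dt}\frak E(t) \le C(\frak E(t))$.
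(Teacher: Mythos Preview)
Your plan is essentially the paper's own strategy: derive wave-type equations for the holomorphic quantities $D_\aa\bar Z_t$ and $\frac1{Z_{,\aa}}D_\aa^2\bar Z_t$, use the basic energy identity (the paper's Lemma~\ref{basic-e}) to differentiate ${\bf E}_a$ and ${\bf E}_b$, and close against the quantities already in $\frak E$. The treatment of the lower-order pieces and of $|\frac1{Z_{,\aa}}(0,t)|^2$ is also exactly what the paper does (Lemma~\ref{basic-e2} and \eqref{eq:dza} at $\aa=0$).

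There is, however, one concrete step you have not isolated, and it is precisely where ``the delicate bookkeeping'' is nontrivial. When you commute $D_\aa$ (resp.\ $\frac1{Z_{,\aa}}D_\aa^2$) past the equation, the right-hand side $\frac{\frak a_t}{\frak a}\circ h^{-1}(\bar Z_{tt}-i)$ produces, after differentiation, a term
\[
(\bar Z_{tt}-i)\,\partial_\aa\Bigl(\frac{\frak a_t}{\frak a}\circ h^{-1}\Bigr)
\qquad\text{resp.}\qquad
(\bar Z_{tt}-i)\,D_\aa^2\Bigl(\frac{\frak a_t}{\frak a}\circ h^{-1}\Bigr),
\]
and this is \emph{not} directly controlled by your list of building blocks: the factor $\partial_\aa(\frac{\frak a_t}{\frak a}\circ h^{-1})$ is not of the ``locked-inside-a-holomorphic-quantity'' type. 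The paper's device here is to exploit that $\frac{\frak a_t}{\frak a}\circ h^{-1}$ is \emph{real}, so that $|\partial_\aa(\frac{\frak a_t}{\frak a}\circ h^{-1})|\le |(I-\mathbb H)\partial_\aa(\frac{\frak a_t}{\frak a}\circ h^{-1})|$; one applies $(I-\mathbb H)$ to the full equation for $\mathcal P\bar Z_{t,\aa}$, commutes $(\bar Z_{tt}-i)$ back out, and uses Lemma~\ref{basic-3-lemma} to bound $(I-\mathbb H)\mathcal P\bar Z_{t,\aa}$ in terms of quantities already in $\frak E$. For ${\bf E}_b$ the maneuver is repeated with an additional layer of commuting $\frac1{Z_{,\aa}}$ and $\frac1{|Z_{,\aa}|}$ in and out of $(I-\mathbb H)$; see \S\ref{ddtea}--\S\ref{ddteb}. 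This ``real-valued $+\,(I-\mathbb H)$'' step is the one genuinely new idea beyond straightforward commutation and is what actually closes the estimate; your proposal would stall at this point unless you found it.
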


 For the sake of completeness we will give a proof of Theorem~\ref{prop:a priori} in \S\ref{proof}. 

\begin{remark}\label{remark3.2}

It appears that there is an  $\infty\cdot 0$ ambiguity in the definition of ${\bf E}_a$ and ${\bf E}_b$. This can be resolved by replacing the ambiguous quantities by the right hand sides of \eqref{2008-1} and \eqref{2010-2}. The same remark applies to Lemmas~\ref{basic-e}, ~\ref{basic-4-lemma},~\ref{dlemma1}. We opt for the current version for the clarity of the origins of the definitions and the more intuitive proofs.\footnote{The assumptions in Theorems~\ref{prop:a priori},~\ref{prop:a-priori},~\ref{unique} and Proposition~\ref{prop:energy-eq} is consistent with the completeness of the evolutionary equations \eqref{eq:dza}-\eqref{eq:dzt}.   We mention that to obtain the wellposed-ness result, Theorem~\ref{th:local}, we only apply Theorems~\ref{prop:a priori},~\ref{prop:a-priori},~\ref{unique} and Proposition~\ref{prop:energy-eq}  to solutions that satisfy in addition that $Z_{,\aa}\in L^\infty$.}

 By \eqref{eq:dza} and product rules, 
\begin{equation}\label{2008-1}
Z_{,\aa}(\partial_t+b\partial_\aa) D_\aa\bar Z_t=(b_\aa-D_\aa Z_t) \bar Z_{t,\aa}+(\partial_t+b\partial_\aa) \bar Z_{t,\aa}=
\bar Z_{tt,\aa}- (D_\aa Z_t) \bar Z_{t,\aa},
\end{equation}
and 
\begin{equation}\label{2010-2}
Z_{,\aa}(\partial_t+b\partial_\aa) \paren{\frac1{Z_{,\aa}}D^2_\aa\bar Z_t}=(b_\aa-D_\aa Z_t)D^2_\aa\bar Z_t+ (\partial_t+b\partial_\aa) D^2_\aa\bar Z_t.
\end{equation}
Let 
\begin{equation}\label{2010-3}
\begin{aligned}
\frak e(t)= \nm{\bar Z_{tt,\aa}(t)}_{L^2}^2&+\nm{D_\aa \bar Z_t(t)}_{\dot H^{1/2}}^2+ \nm{  D^2_\aa\bar Z_{tt}(t)}_{L^2}^2+\nm{\frac1{Z_{,\aa}}D^2_\aa\bar Z_t (t)}_{\dot H^{1/2}}^2\\&+
\|\bar{Z}_{t,\alpha'}(t)\|_{L^2}^2+\| D_{\alpha'}^2 \bar{Z}_t(t)\|_{L^2}^2+\nm{\partial_\aa\frac1{Z_{,\aa}}(t)}^2_{L^2} + \abs{\frac1{Z_{,\aa}}(0, t)}^2.
\end{aligned}
\end{equation}
It is easy to check that the argument in \S\ref{basic-quantities}  gives
\begin{equation}\label{equi-1}
\frak E(t)\lec c_1( \frak e(t) ),\qquad \text{and } \qquad \frak e(t)\lec c_2(\frak E(t)).
\end{equation}
for some universal polynomials $c_1=c_1(x)$ and $c_2=c_2(x)$. 

\end{remark}

In fact, as was shown in \S10 in \cite{kw}, we  have the following characterization, which is essentially a consequence of \eqref{equi-1} and equation \eqref{aa1},  of the energy functional $\frak E$  in terms of the holomorphic quantities $\frac1{Z_{,\aa}}$ and $\bar Z_t$. Since the proof in  \cite{kw} applies to the current setting,  
we omit the proof. 

Let \begin{equation}\label{energy1}
\begin{aligned}
\mathcal E(t)=\|\bar Z_{t,\alpha'}(t)\|_{L^2}^2&+ \|D_{\alpha'}^2\bar Z_t(t)\|_{L^2}^2+\nm{\partial_{\alpha'}\frac1{Z_{,\alpha'}}(t)}_{L^2}^2+\nm{D_{\alpha'}^2\frac1{Z_{,\alpha'}}(t)}_{L^2}^2\\&+\nm{\frac1{Z_{,\alpha'} }D_{\alpha'}^2\bar Z_t (t) }_{\dot H^{1/2}}^2+\| D_{\alpha'}\bar Z_t (t) \|_{\dot H^{1/2}}^2+\abs{\frac1{Z_{,\alpha'}}(0,t)}^2.
\end{aligned}
\end{equation}

\begin{proposition}[A characterization of $\frak E$ via $\mathcal E$, cf. \S10 of \cite{kw}] \label{prop:energy-eq} 
There are polynomials $C_1=C_1(x)$ and $C_2=C_2(x)$,
with nonnegative universal coefficients, such that for any solution $Z$ of \eqref{interface-r}-\eqref{interface-holo}-\eqref{b}-\eqref{a1}, satisfying the assumption of Theorem~\ref{prop:a priori},
\begin{equation}\label{energy-equiv}
\mathcal E(t)\le C_1(\frak E(t)),\qquad\text{and}\quad \frak E(t)\le C_2(\mathcal E(t)).
\end{equation}

\end{proposition}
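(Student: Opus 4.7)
The plan is to exploit the algebraic identity \eqref{aa1} together with the equivalence \eqref{equi-1} (which was already spelled out in Remark~\ref{remark3.2}) to convert between the material-derivative quantities appearing in $\frak E$ and the purely spatial-derivative quantities appearing in $\mathcal E$. By \eqref{equi-1} it suffices to compare the auxiliary functional $\frak e(t)$ of \eqref{2010-3} with $\mathcal E(t)$. The two functionals share the common block
\[
\nm{\bar Z_{t,\aa}}_{L^2}^2+\nm{D_{\aa}^2\bar Z_t}_{L^2}^2+\nm{\partial_\aa\tfrac1{Z_{,\aa}}}_{L^2}^2+\nm{D_\aa\bar Z_t}_{\dot H^{1/2}}^2+\nm{\tfrac1{Z_{,\aa}}D_\aa^2\bar Z_t}_{\dot H^{1/2}}^2+\abs{\tfrac1{Z_{,\aa}}(0,t)}^2,
\]
so the comparison reduces to trading $\nm{\bar Z_{tt,\aa}}_{L^2}^2+\nm{D_\aa^2\bar Z_{tt}}_{L^2}^2$ (from $\frak e$) against $\nm{D_\aa^2\tfrac1{Z_{,\aa}}}_{L^2}^2$ (from $\mathcal E$).

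The bridge between these two families is obtained by differentiating \eqref{aa1}, which I rewrite as $\bar Z_{tt}-i=-iA_1/Z_{,\aa}$. Applying $\partial_{\aa}$ and $D_\aa^2$ gives
\begin{equation*}
\bar Z_{tt,\aa}=-i A_1\,\partial_\aa\tfrac1{Z_{,\aa}}-i\tfrac1{Z_{,\aa}}\,\partial_\aa A_1,
\end{equation*}
\begin{equation*}
D_\aa^2\bar Z_{tt}=-i\Bigl[A_1\,D_\aa^2\tfrac1{Z_{,\aa}}+2\,D_\aa A_1\cdot D_\aa\tfrac1{Z_{,\aa}}+\tfrac1{Z_{,\aa}}\,D_\aa^2 A_1\Bigr].
\end{equation*}
Because $A_1\ge 1$ by \eqref{a1}, the second identity can be solved algebraically for $D_\aa^2\tfrac1{Z_{,\aa}}$, giving both directions of the comparison once the coefficients $A_1$, $D_\aa A_1$, $D_\aa^2 A_1$ and the multiplier $\tfrac1{Z_{,\aa}}$ are controlled.

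To estimate those coefficients, I would use the commutator representation \eqref{a1} of $A_1$ and differentiate it: $D_\aa A_1$ and $D_\aa^2 A_1$ can be rewritten, via the identities in Appendix~\ref{iden}, as combinations of commutators $[f,\mathbb H]g$ and triple brackets $[f,g;h]$ whose inputs are $Z_t$, $\bar Z_{t,\aa}$ and their derivatives—all quantities already in the common block of $\mathcal E$ and $\frak e$. Together with the Sobolev control $\nm{\tfrac1{Z_{,\aa}}}_{L^\infty}\lec\abs{\tfrac1{Z_{,\aa}}(0,t)}+\nm{\partial_\aa\tfrac1{Z_{,\aa}}}_{L^2}^{1/2}\nm{\tfrac1{Z_{,\aa}}}_{L^2}^{1/2}$ (or the equivalent via the boundary value), standard $L^2$ bounds for $[f,\mathbb H]g$ then yield polynomial bounds of the form $\nm{A_1-1}_{L^\infty}+\nm{D_\aa A_1}_{L^2}+\nm{D_\aa^2 A_1}_{L^2}\le C(\mathcal E)$ and $\le C(\frak e)$. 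Plugging these bounds into the two displayed identities, together with $1/A_1\le 1$, proves $\mathcal E(t)\le C_1(\frak E(t))$ and $\frak E(t)\le C_2(\mathcal E(t))$ by the quantitative inventory listed in Appendix~\ref{quantities}.

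The main obstacle is the bookkeeping for $D_\aa^2 A_1$: differentiating the singular-integral $A_1=1-\Im[Z_t,\mathbb H]\bar Z_{t,\aa}$ twice produces commutators whose naive estimates cost more derivatives than $\mathcal E$ controls, and one has to reorganize the terms using the holomorphicity relations $(I-\mathbb H)\bar Z_t=0$ and \eqref{interface-holo} to land on $\dot H^{1/2}$-type quantities that sit inside $\mathcal E$. Since this rearrangement is carried out in detail in \S10 of \cite{kw} in the periodic setting and the same identities and singular-integral estimates hold on the whole line (with \eqref{hilbert-t}, \eqref{ht} replacing their periodic counterparts), the proof goes through verbatim, which is what the statement permits us to omit.
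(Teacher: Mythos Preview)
Your proposal is correct and follows essentially the same route the paper indicates: the paper itself omits the proof, noting only that it ``is essentially a consequence of \eqref{equi-1} and equation \eqref{aa1}'' and that the argument in \S10 of \cite{kw} carries over verbatim; you flesh out precisely this outline by reducing to $\frak e\leftrightarrow\mathcal E$, isolating the common block, and using derivatives of \eqref{aa1} to swap $\bar Z_{tt,\aa}$, $D_\aa^2\bar Z_{tt}$ for $D_\aa^2\tfrac1{Z_{,\aa}}$ modulo $A_1$-coefficients.

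One small correction: your Sobolev bound for $\nm{\tfrac1{Z_{,\aa}}}_{L^\infty}$ as written is not quite right since $\tfrac1{Z_{,\aa}}\to1$ at infinity and is not in $L^2$; moreover $\nm{\tfrac1{Z_{,\aa}}}_{L^\infty}$ is \emph{not} among the quantities controlled by $\frak E$ in \eqref{2020-1}--\eqref{2020-2}. What the paper actually controls (see \eqref{2040}) is $\nm{\tfrac1{Z_{,\aa}}\partial_\aa\tfrac1{Z_{,\aa}}}_{L^\infty}$, and the commutator manipulations for $D_\aa A_1$, $D_\aa^2 A_1$ are arranged so that only this weighted quantity and the $L^2$/$\dot H^{1/2}$ pieces already in $\mathcal E$ appear---this is exactly the reorganization you flag as the ``main obstacle'' and defer to \cite{kw}, so your sketch remains sound.
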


A corollary of Theorem~\ref{prop:a priori} and Proposition~\ref{prop:energy-eq} is the following
\begin{theorem}[A-priori estimate \cite{kw}]\label{prop:a-priori} 
Let 
$Z=Z(\cdot,t)$, $t\in [0, T']$ be a solution of the system \eqref{interface-r}-\eqref{interface-holo}-\eqref{b}-\eqref{a1}, satisfying the assumption of Theorem~\ref{prop:a priori}.  
There are  constants $T=T(\mathcal E(0))>0$, $C=C(\mathcal E(0))>0$ that depend only on $\mathcal E(0)$, and with $-T(e)$, $C(e)$ increasing with respect to $e$,  such that 
\begin{equation}\label{a priori-e}
\sup_{[0, \min\{T, T'\}]}\mathcal E(t)\le C(\mathcal E(0))<\infty.
\end{equation}
\end{theorem}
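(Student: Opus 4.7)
The plan is to deduce Theorem~\ref{prop:a-priori} as an essentially immediate corollary of Theorem~\ref{prop:a priori} and Proposition~\ref{prop:energy-eq}, via a standard comparison argument with a scalar ODE. The three ingredients I would combine are: (i) the differential inequality $\frac{d}{dt}\frak E(t)\le C(\frak E(t))$ on $[0,T']$ from Theorem~\ref{prop:a priori}, (ii) the two-sided equivalence $\mathcal E \le C_1(\frak E)$, $\frak E \le C_2(\mathcal E)$ from Proposition~\ref{prop:energy-eq}, and (iii) the continuity of $t\mapsto \frak E(t)$ on $[0,T']$, which follows from the Sobolev regularity hypothesis $(Z_t,Z_{tt})\in C^l([0,T'],H^{s+1/2-l}\times H^{s-l})$ with $s\ge 4$ (every summand of \eqref{energy} depends continuously on $t$ through quantities controlled in these norms).

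First I would convert the hypothesis at $t=0$: $\frak E(0)\le C_2(\mathcal E(0))$. Then I would compare $\frak E(t)$ to the solution $y(t)$ of the scalar Cauchy problem
\[
y'(t)=C(y(t))+1,\qquad y(0)=C_2(\mathcal E(0)),
\]
where I add $+1$ only to avoid triviality when $C(0)=0$ (otherwise one can work with $C$ directly). Since $C$ has nonnegative universal coefficients, $C(y)+1$ is positive and increasing on $[0,\infty)$, the solution $y$ is well defined and increasing on a maximal interval $[0,T^*)$ with blow-up time
\[
T^*=\int_{C_2(\mathcal E(0))}^{\infty}\frac{ds}{C(s)+1}\in(0,\infty],
\]
which is strictly decreasing in $\mathcal E(0)$. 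The classical scalar comparison principle for differential inequalities, applied on any interval $[0,T]\subset [0,\min\{T^*,T'\})$ where $\frak E$ stays finite (guaranteed by continuity and a standard continuation/bootstrap argument), gives $\frak E(t)\le y(t)$ throughout.

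Finally, I would set $T(\mathcal E(0)):=\tfrac12 T^*$ and $M(\mathcal E(0)):=\sup_{[0,T(\mathcal E(0))]}y(t)<\infty$, and apply the other direction of Proposition~\ref{prop:energy-eq} to obtain
\[
\sup_{[0,\min\{T(\mathcal E(0)),T'\}]}\mathcal E(t)\le C_1(M(\mathcal E(0)))=:C(\mathcal E(0)).
\]
Monotonicity of $T$ (decreasing) and $C$ (increasing) in $\mathcal E(0)$ is inherited from the monotonicity of the defining integral for $T^*$ and from the fact that $C_1,C_2$ are polynomials with nonnegative coefficients. There is no real obstacle: the substantive work is packaged into Theorem~\ref{prop:a priori} and Proposition~\ref{prop:energy-eq}, and the only subtlety is ensuring the comparison argument is rigorously justified, which reduces to the continuity of $\frak E(t)$ on $[0,T']$ noted above.
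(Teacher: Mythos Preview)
Your proposal is correct and is exactly the approach the paper intends: the theorem is stated in the paper as an immediate corollary of Theorem~\ref{prop:a priori} and Proposition~\ref{prop:energy-eq} with no further proof given, and your ODE comparison argument is precisely the standard way to extract the conclusion from those two ingredients. The only detail you spell out that the paper leaves implicit is the continuity of $t\mapsto\frak E(t)$ and the monotonicity of $T(e)$ and $C(e)$, both of which are routine.
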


\begin{remark}  
1. Let $t$ be fixed, $s\ge 2$,  and assume $Z_t(t)\in H^s(\R)$. By Proposition~\ref{B2} and Sobolev embeddings, $A_1(t)-1=-\Im[Z_t,\mathbb H]\bar Z_{t,\aa}\in H^s(\R)$; and by \eqref{aa1}, $Z_{tt}(t)\in H^s(\mathbb R)$ is equivalent to $\frac1{Z_{,\aa}}(t)-1\in H^s(\mathbb R)$.

2. Assume that $\paren{Z_t(t), \frac1{Z_{,\aa}}(t)-1}\in (H^{s+1/2}(\mathbb R), H^s(\mathbb R))$, $s\ge 2$, or equivalently $(Z_t(t), Z_{tt}(t))\in (H^{s+1/2}(\mathbb R), H^s(\mathbb R))$.   
It is easy to check  
that $\mathcal E(t)<\infty$. So in the class where $\mathcal E(t)<\infty$, it  allows  for interfaces and velocities in Sobolev classes; it is clear that in the class where $\mathcal E(t)<\infty$ it also allows for $\frac1{Z_{,\aa}}=0$, that is, singularities on the interface.

\end{remark}

\subsubsection{A description of the class $\mathcal E<\infty$ in $\mathscr P_-$}\label{e-1}
We give here an equivalent description of the class $\mathcal E<\infty$ in the lower half plane $\mathscr P_-$.

Let $1< p\le \infty$, and 
\begin{equation}\label{poisson}
K_y(x)=\frac{-y}{\pi(x^2+y^2)},\qquad y<0
\end{equation}
be the Poisson kernel. 
We know for any holomorphic function $G$ on $P_-$, 
$$\sup_{y<0}\|G(x+iy)\|_{L^p(\mathbb R,dx)}<\infty$$
if and only if there exists $g\in L^p(\mathbb R)$ such that $G(x+iy)=K_y\ast g(x)$. In this case,
$\sup_{y<0}\|G(x+iy)\|_{L^p(\mathbb R,dx)}=\|g\|_{L^p}$.  Moreover, if $g\in L^p(\mathbb R)$, $1<p<\infty$,  then $\lim_{y\to 0-} K_y\ast g(x)=g(x)$  in $L^p(\mathbb R)$ and if $g\in L^\infty\cap C(\mathbb R)$,  then $\lim_{y\to 0-} K_y\ast g(x)=g(x)$ for all $x\in\mathbb R$.

Let $Z=Z(\cdot, t)$ be a solution of \eqref{interface-r}-\eqref{interface-holo}-\eqref{a1}-\eqref{b}, satisfying the assumption of Theorem~\ref{prop:a priori}; 
let $\Psi$, $U$ be the holomorphic functions as given in \S\ref{general-soln}, so 
$$U(x'+iy',t)=K_{y'}\ast \bar Z_t(x', t),\qquad \frac1{\Psi_{z'}}(x'+iy',t)=K_{y'}\ast \frac1{Z_{,\aa}}(x',t),\qquad \text{for }y'< 0.$$
Let $z'=x'+iy'$.  We have 
\begin{equation}\label{domain-energy1}
\mathcal E(t)=\mathcal E_1(t)+\abs{\frac1{Z_{,\aa} }(0, t)  }^2,
\end{equation}
where
\begin{equation}\label{domain-energy}
\begin{aligned}
\mathcal E_1(t)&:=\sup_{y'<0}\nm{U_{z'}(\cdot+iy', t)}_{L^2(\mathbb R)}^2+\sup_{y'<0}\nm{\frac1{\Psi_{z'}}\partial_{z'}\paren{\frac1{\Psi_{z'} }U_{z'}}(\cdot+iy',t)}_{L^2(\mathbb R)}^2
\\&+\sup_{y'<0}\nm{\frac1{\{\Psi_{z'}\}^2}\partial_{z'}\paren{\frac1{\Psi_{z'} }U_{z'}}(\cdot+iy',t)}_{\dot H^{1/2}(\mathbb R)}^2+\sup_{y'<0}\nm{\frac1{\Psi_{z'} }U_{z'}(\cdot+iy',t)}_{\dot H^{1/2}(\mathbb R)}^2
\\&+\sup_{y'<0}\nm{ \frac1{\Psi_{z'} }\partial_{z'}\paren{\frac1{\Psi_{z'} }\partial_{z'}\paren{\frac1{\Psi_{z'} }}}(\cdot+iy',t)  }_{L^2(\mathbb R)}^2+\sup_{y'<0}\nm{ \partial_{z'}\paren{\frac1{\Psi_{z'} }}(\cdot+iy',t)  }_{L^2(\mathbb R)}^2.
\end{aligned}
\end{equation}

\subsection{A blow-up criteria and a stability inequality}\label{prelim-result}

The main objective of this paper is to show the unique solvability of the Cauchy problem for the water wave equation \eqref{euler} in the class where $\mathcal E<\infty$. We will build on the existing result, Theorem~\ref{prop:local-s}, by mollifying the initial data, constructing an approximating sequence and passing to the limit. However the existence time of the solution  as given in Theorem~\ref{prop:local-s} depends on the Sobolev norm of the initial data. In order to have an approximating sequence  defined on a time interval that has a uniform positive lower bound, we need a blow-up criteria; a uniqueness and stability theorem will allow us to prove the convergence of the sequence, and the uniqueness and stability of the solutions obtained by this process. 

Let the initial data be  as given in \S\ref{id-r}. 

\begin{theorem}[A blow-up criteria via $\mathcal E$]\label{blow-up}
Let $s\ge 4$. Assume $Z_{,\alpha'}(0)\in L^\infty (\mathbb R)$, $Z_t(0)\in H^{s+1/2}(\mathbb R)$ and $Z_{tt}(0)\in H^s(\mathbb R)$. Then there is $T>0$, such that on $[0, T]$, the initial value problem of \eqref{interface-r}-\eqref{interface-holo}-\eqref{b}-\eqref{a1} has a unique solution 
$Z=Z(\cdot, t)$,  satisfying
 $(Z_{t}, Z_{tt})\in C^
l([0, T], H^{s+1/2-l}(\mathbb R)\times H^{s-l}(\mathbb R))$ for $l=0,1$, and $Z_{,\alpha'}-1\in C([0, T], H^s(\mathbb R))$.

Moreover if $T^*$ is the supremum over all such times $T$, then either $T^*=\infty$, or $T^*<\infty$, but
\begin{equation}\label{eq:30}
\sup_{[0, T^*)}\mathcal E(t)=\infty 
\end{equation}

\end{theorem}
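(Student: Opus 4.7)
The plan is as follows. Local existence and uniqueness on a short time interval are immediate from Theorem~\ref{prop:local-s}, whose hypotheses are precisely the assumed initial data; let $T^*$ denote the maximal such time. The substance of the claim is to replace the Sobolev-level blow-up criterion \eqref{eq:1-1} by the $\mathcal E$-level criterion \eqref{eq:30}. I would argue the contrapositive: assume $T^* < \infty$ and $M := \sup_{[0, T^*)} \mathcal E(t) < \infty$, and show that $\|Z_{,\aa}(t)\|_{L^\infty} + \|Z_{tt}(t)\|_{H^3} + \|Z_t(t)\|_{H^{3+1/2}}$ stays bounded on $[0, T^*)$, violating \eqref{eq:1-1}.

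The first step is to propagate $\|Z_{,\aa}\|_{L^\infty}$ along the Lagrangian flow of $b$. From \eqref{eq:dza} one obtains the transport equation $(\partial_t + b\partial_\aa) Z_{,\aa} = Z_{,\aa}(D_\aa Z_t - b_\aa)$, a linear ODE for $Z_{,\aa}$ along characteristics of $b$. Integrating reduces the pointwise bound to an integrated $L^\infty$ estimate on the scalar $D_\aa Z_t - b_\aa$. Using the commutator representation \eqref{ba} of $b_\aa$ together with $(I+\mathbb H) Z_{t,\aa}=0$ and $(I-\mathbb H)\partial_\aa(1/Z_{,\aa}) = 0$, this factor rewrites as $-\overline{D_\aa Z_t}$ plus bilinear commutators $\bigl[1/Z_{,\aa},\mathbb H\bigr] Z_{t,\aa}$ and $\bigl[Z_t,\mathbb H\bigr]\partial_\aa(1/Z_{,\aa})$. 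Both commutators, and $D_\aa Z_t$ itself, can be estimated in $L^\infty$ in terms of $\mathcal E$ using the quantities compiled in Appendix~\ref{quantities} and the pointwise reference value $|1/Z_{,\aa}(0,t)|$ carried by the definition \eqref{energy1}. Gronwall along the characteristics then yields $\sup_{[0, T^*)} \|Z_{,\aa}(t)\|_{L^\infty} < \infty$.

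With $Z_{,\aa} \in L^\infty$ secured, the principal symbol $A_1/|Z_{,\aa}|^2$ of \eqref{quasi-r} is bounded both above and below, so the equation is uniformly (non-degenerate) hyperbolic on $[0, T^*)$. The second step is to derive higher-order Sobolev energy estimates for $\bar Z_t$ by mimicking the $\frak E$-energy scheme to be carried out in \S\ref{proof}, but at the $H^{s+1/2}$ level: apply $\partial_\aa^k$ to \eqref{quasi-r}, weight by $1/A_1$, and run the $(\partial_t + b\partial_\aa)$-energy method in parallel with \eqref{eq:ea}--\eqref{eq:eb}. All coefficients and commutators ($b$, $b_\aa$, $A_1$, $\frak a_t/\frak a\circ h^{-1}$, $1/Z_{,\aa}$, etc.) are $\mathcal E$-controlled by Proposition~\ref{prop:energy-eq} and Appendix~\ref{quantities}, supplemented by the $L^\infty$-bound on $Z_{,\aa}$ from the first step. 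This produces a differential inequality of the schematic form $\frac{d}{dt} N_s(t) \le C(M) N_s(t) + C(M)$ with $N_s(t) := \|Z_t(t)\|_{H^{s+1/2}}^2 + \|Z_{tt}(t)\|_{H^s}^2$, and a final Gronwall delivers the desired contradiction.

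The main obstacle I anticipate is the first step. In the $\mathcal E$-class the natural regularity of $D_\aa Z_t - b_\aa$ sits at the $\dot H^{1/2}$ embedding threshold, which fails to give $L^\infty$ by classical Sobolev embedding alone. The rescue has to come from the bilinear commutator structure revealed by \eqref{ba}, allowing each commutator to be estimated pointwise through improved $L^\infty$-bounds on $[f,\mathbb H]g$ when one factor is holomorphic and the other has $\mathcal E$-controlled derivative, together with the anchoring reference value $|1/Z_{,\aa}(0,t)|$ encoded in $\mathcal E$. Once this $L^\infty$ step is carried through, the higher-order bookkeeping is standard quasilinear commutator analysis in the non-degenerate regime.
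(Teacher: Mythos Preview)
Your proposal is correct and matches the paper's approach closely: the paper first propagates $\|Z_{,\aa}\|_{L^\infty}$ via the transport equation \eqref{eq:dza} exactly as you describe (with the needed $L^\infty$ bounds on $b_\aa$ and $D_\aa Z_t$ already established in \S\ref{basic-quantities} through precisely the commutator and anchoring-value mechanisms you anticipate), then runs a cascade of higher-order energies $E_2, E_3$ (see \eqref{2114}, Propositions~\ref{step1}--\ref{step2}) to recover the full $H^{3+1/2}\times H^3$ norms. Your flagged ``obstacle'' is not a genuine difficulty---the $L^\infty$ control of $D_\aa Z_t$ and $b_\aa$ from $\mathcal E$ alone is already part of the a priori machinery (\eqref{2005}--\eqref{2006}), so Step~1 goes through exactly as you sketch.
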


The proof for Theorem~\ref{blow-up} will be given in \S\ref{proof}. We now give the uniqueness and stability theorem. 

Let $Z=Z(\alpha',t)$, $\Zf=\Zf(\alpha',t)$ be solutions of the system \eqref{interface-r}-\eqref{interface-holo}-\eqref{b}-\eqref{a1}, with $z=z(\alpha,t)$, $\zf=\zf(\alpha,t)$ being their re-parametrizations in Lagrangian coordinates, and their initial data as given in \S\ref{id-r}; let $$Z_t,\ Z_{tt},\ Z_{,\aa},\ z_\a,\ h,\ A_1,\ \mathcal A, \ b,\ \frak a, \ D_\aa,\ D_\a, \  \frak H,\ \frak E(t), \ \mathcal E(t),\quad etc. $$ be the quantities associated with $Z$, $z$ as defined in \S\ref{prelim}, \S\ref{a priori}, and 
$$\Zf_t,\ \Zf_{tt},\ \Zf_{,\aa},\ \zf_\a,\ \th,\ \tAone,\ \tAc, \ \tb,\ \taf,\ \tD_\aa,\ \tD_\a, \  \tilde{\frak H}, \ \tilde{\frak E}(t), \ \tilde{\mathcal E}(t),\quad etc.$$
be the corresponding quantities for $\Zf$, $\zf$. Define
\begin{equation}\label{def-l}
l= \th\circ h^{-1}.
\end{equation}
so $l(\aa, 0)=\aa$, for $\aa\in \R$.
\begin{theorem}[Uniqueness and Stability in $\mathcal E<\infty$]\label{unique}
Assume that $Z$, $\frak Z$ are solutions of equation \eqref{interface-r}-\eqref{interface-holo}-\eqref{a1}-\eqref{b}, satisfying
$(Z_t, Z_{tt}), (\frak Z_t, \frak Z_{tt})\in C^l([0, T], H^{s+1/2-l}(\mathbb R)\times H^{s-l}(\mathbb R))$ for $l=0,1$, $s\ge 4$.
There is a constant $C$, depending only on $T$, $\sup_{[0, T]} \mathcal E(t)$ and $\sup_{[0, T]}\tilde{\mathcal E}(t)$, such that  
\begin{equation}\label{stability}
\begin{aligned}
&\sup_{[0, T]}\paren{\|\paren{\bar Z_t-\bar \Zf_t\circ l}(t)\|_{\dot{H}^{1/2}}+\|\paren{\bar Z_{tt}-\bar \Zf_{tt}\circ l}(t)\|_{\dot{H}^{1/2}}+\nm{\paren{\frac1{ Z_{,\aa}}-\frac 1{ \Zf_{,\aa}}\circ l}(t)}_{\dot{H}^{1/2}}}+\\&\sup_{[0, T]}\paren{\|\paren{l_\aa-1}(t)\|_{L^2}+\|D_\aa Z_t-(\tD_\aa \Zf_t)\circ l\|_{L^2}
 +\|(A_1-\tAone\circ l)(t)\|_{L^2}+\|(b_\aa-\tb_\aa\circ l)(t)\|_{L^2}}\\&\le C\paren{ \|\paren{\bar Z_t-\bar \Zf_t}(0)\|_{\dot{H}^{1/2}}+\|\paren{\bar Z_{tt}-\bar \Zf_{tt}}(0)\|_{\dot{H}^{1/2}}+\nm{\paren{\frac1{ Z_{,\aa}}-\frac 1{ \Zf_{,\aa}}}(0)}_{\dot{H}^{1/2}}}\\&+C\paren{\|\paren{D_\aa Z_t-(\tD_\aa \Zf_t)}(0)\|_{L^2}
 +\nm{\paren{\frac1{ Z_{,\aa}}-\frac 1{ \Zf_{,\aa}}}(0)}_{L^\infty}}
\end{aligned}
\end{equation}
\end{theorem}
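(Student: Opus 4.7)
The proof is a Gronwall-type difference-energy estimate, comparing the two solutions through the diffeomorphism $l:=\th\circ h^{-1}$, so that $l(\cdot,0)=\mathrm{id}$. The fundamental identity
\begin{equation*}
(\partial_t+b\partial_\aa)(G\circ l)=\bigl((\partial_t+\tb\partial_\aa)G\bigr)\circ l,
\end{equation*}
valid for any $G(\aa,t)$, is a direct consequence of the chain rule together with $h_t=b\circ h$ and $\th_t=\tb\circ\th$. Applied to $l$ itself it gives $(\partial_t+b\partial_\aa)l=\tb\circ l$ and hence $(\partial_t+b\partial_\aa)(l_\aa-1)=(\tb_\aa\circ l)\,l_\aa$. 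Introduce the primary differences $\delta_1:=\bar Z_t-\bar\Zf_t\circ l$, $\delta_2:=\bar Z_{tt}-\bar\Zf_{tt}\circ l$, $\delta_3:=\tfrac1{Z_{,\aa}}-\tfrac1{\Zf_{,\aa}}\circ l$, and the auxiliaries $\delta_4:=l_\aa-1$, $\delta_5:=D_\aa Z_t-(\tD_\aa\Zf_t)\circ l$, $\delta_6:=A_1-\tAone\circ l$, $\delta_7:=b_\aa-\tb_\aa\circ l$.

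\textbf{Evolution equations.} Applying \eqref{eq:dza}, \eqref{eq:dzt}, \eqref{eq:dztt} to both $Z$ and $\Zf$ and using the commutation identity above, one finds $(\partial_t+b\partial_\aa)\delta_1=\delta_2$, an analogous formula for $(\partial_t+b\partial_\aa)\delta_3$ involving $\delta_5,\delta_7$ and the primaries, and a formula for $(\partial_t+b\partial_\aa)\delta_2$ whose leading quasilinear contribution is $-i\frac{A_1}{|Z_{,\aa}|^2}\partial_\aa\delta_1$. This term emerges after using \eqref{interface-a1} to rewrite $\bar Z_{tt}-i$ and matching the second-derivative terms across the two systems via the pointwise identity $(\partial_\aa\bar\Zf_t)\circ l=\frac1{l_\aa}\partial_\aa(\bar\Zf_t\circ l)$; all remaining terms are products of primary or auxiliary differences with factors bounded in $L^\infty$ or $L^2$ by polynomials of $\mathcal E,\tilde{\mathcal E}$. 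The auxiliaries $\delta_6,\delta_7$ are estimated statically: substituting \eqref{b}, \eqref{ba}, \eqref{a1} for both solutions expresses them as sums of commutators of $\mathbb H$ with coefficient differences involving $\delta_1,\delta_3$, plus change-of-variable remainders $\mathbb H(G\circ l)-(\mathbb H G)\circ l$ controlled by $\|\delta_4\|$.

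\textbf{Energy estimate.} Define a difference energy $\frak F(t)$ modeled on ${\bf E}_a,{\bf E}_b$ from \eqref{eq:ea}--\eqref{eq:eb}: the weighted $L^2$-norm $\int\frac{1}{A_1}|Z_{,\aa}\delta_2|^2\,d\aa$ and its $\delta_3$-analogue, together with $\|\delta_1\|_{\dot H^{1/2}}^2,\|\delta_3\|_{\dot H^{1/2}}^2$ and the $L^2$-norms of $\delta_4,\ldots,\delta_7$; a second parallel energy obtained by applying one further $(\partial_t+b\partial_\aa)$ to $\delta_1$ delivers $\|\delta_2\|_{\dot H^{1/2}}^2$. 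By construction $\frak F(t)$ dominates the square of the left side of \eqref{stability} up to $\mathcal E,\tilde{\mathcal E}$-dependent constants, and $\frak F(0)$ is bounded by the square of the right side. Differentiating $\frak F$ and substituting the evolution produces, as its leading contribution, the quadratic form generated by pairing $-i\frac{A_1}{|Z_{,\aa}|^2}\partial_\aa\delta_1$ against $\delta_2$; by the same mechanism used in the proof of Theorem~\ref{prop:a priori}, and the identity $-i\partial_\aa\bar f=|\partial_\aa|\bar f$ for antiholomorphic $f$, this cancels against the contribution from $\frac{d}{dt}\|\delta_1\|_{\dot H^{1/2}}^2$ modulo lower-order commutator errors. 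With the weak Taylor bound $A_1\ge 1$ supplying coercivity, all residuals are bounded by $C(\mathcal E,\tilde{\mathcal E})\frak F(t)$; Gronwall then yields \eqref{stability}.

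\textbf{Main obstacle.} The central technical difficulty is that $\mathbb H$ does not commute with composition by $l$: the relation $\bar\Zf_t=\mathbb H\bar\Zf_t$ becomes $\bar\Zf_t\circ l=(\mathbb H\bar\Zf_t)\circ l\neq\mathbb H(\bar\Zf_t\circ l)$ after pullback. Systematically converting these mismatches, along with the commutators of $\mathbb H$ with multiplication by $Z_t$ and $\frac1{Z_{,\aa}}$ arising from \eqref{a1}, \eqref{ba}, into $\|\delta_4\|$-controlled error terms through the change-of-variable estimates of Appendix~\ref{ineq} and the $[\,\cdot,\,\cdot\,;\,\cdot\,]$-commutator notation of \eqref{eq:comm}, forms the bulk of the technical work; uniqueness follows by setting the initial data equal.
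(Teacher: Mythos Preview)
Your outline has the right shape, but there are three concrete gaps that prevent the argument from closing.

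\textbf{The kinetic term is asymmetric and uncontrolled.} You propose $\int\frac{1}{A_1}|Z_{,\aa}\delta_2|^2\,d\aa$. But using \eqref{interface-a1} for both solutions,
\[
Z_{,\aa}\delta_2 = Z_{,\aa}(\bar Z_{tt}-\bar\Zf_{tt}\circ l) = -i(A_1-\tAone\circ l) - i\,\tAone\circ l\cdot Z_{,\aa}\,\delta_3,
\]
so your energy contains $\int\frac{(\tAone\circ l)^2}{A_1}|Z_{,\aa}|^2|\delta_3|^2\,d\aa$. In the regime $\mathcal E<\infty$ the factor $|Z_{,\aa}|$ is unbounded at singularities and $\delta_3$ is only controlled in $\dot H^{1/2}\cap L^\infty$, so this term need not be finite, let alone bounded by $\mathcal F$. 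The paper avoids this by using the \emph{symmetric} quantity $Z_{,\aa}(\bar Z_{tt}-i)-\Zf_{,\aa}\circ l\,(\bar\Zf_{tt}\circ l-i)=-i(A_1-\tAone\circ l)$ with an additional weight $\kappa=\sqrt{\tfrac{A_1}{\tAone\circ l}l_\aa}$, which is exactly what makes the time-derivative computation in Lemma~\ref{dlemma1} close.

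\textbf{The $\dot H^{1/2}$ term does not give the cancellation you claim.} You want the quasilinear term $-i\mathcal A\partial_\aa\delta_1$ to cancel against $\frac{d}{dt}\|\delta_1\|_{\dot H^{1/2}}^2$. But $\|\delta_1\|_{\dot H^{1/2}}^2=\int i\mathbb H\partial_\aa\delta_1\,\bar\delta_1\,d\aa$ contains $\mathbb H$, which neither commutes with the material derivative nor is invariant under change of variables; there is no clean transport identity for it. The quantity that \emph{does} transport well is $\int i\partial_\aa\delta_1\,\bar\delta_1\,d\aa$ (no $\mathbb H$), and this is what the paper uses. Since $\delta_1=\bar Z_t-\bar\Zf_t\circ l$ is not holomorphic (composition with $l$ destroys $\mathbb H$-invariance), this quantity is not positive; by Lemma~\ref{hhalf1} it equals $\|\mathbb P_H\delta_1\|_{\dot H^{1/2}}^2-\|\mathbb P_A\delta_1\|_{\dot H^{1/2}}^2$. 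The paper recovers positivity by estimating $\mathbb P_A\delta_1=-\tfrac12\mathcal Q_l(\bar\Zf_t\circ l)$ via \eqref{q1} and absorbing it into a large multiple $M\frak F_0=M\|l_\aa-1\|_{L^2}^2$ (Lemma~\ref{dominate1}). Your invocation of ``$-i\partial_\aa\bar f=|\partial_\aa|\bar f$ for antiholomorphic $f$'' is precisely what fails here.

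\textbf{The $L^\infty$ initial datum is not explained by $\frac{d}{dt}\frak F\le C\frak F$.} A pure Gronwall inequality would give $\frak F(t)\le e^{CT}\frak F(0)$ with $\frak F(0)$ built only from $\dot H^{1/2}$ and $L^2$ norms; the term $\|\tfrac{1}{Z_{,\aa}}-\tfrac{1}{\Zf_{,\aa}}\|_{L^\infty}(0)$ on the right of \eqref{stability} would not appear. In the paper this term arises from the cross term in \eqref{dlemma1-inq} (the last line), which cannot be bounded by $\mathcal F(t)$ alone. It is handled via the explicit formula \eqref{2100} for $\frac{1}{Z_{,\aa}}\circ h$, yielding the decomposition \eqref{2399}; this produces both the $L^\infty(0)$ term and the extra $\int_0^t\mathcal F(\tau)\,d\tau$ in the differential inequality \eqref{denergy-inq}, which is genuinely of integro-differential type rather than a simple Gronwall.
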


By precomposing with $h$, we see that inequality \eqref{stability} effectively gives control of the differences, $z_t-\zf_t$, $z_{tt}-\zf_{tt}$ etc,  in Lagrangian coordinates. 

Notice that in the stability inequality \eqref{stability}, we control  the $\dot{H}^{1/2}$ norms of the differences of $Z_t$ and $\Zf_t\circ l$, $Z_{tt}$ and $\Zf_{tt}\circ l$, and  $\frac1{ Z_{,\aa}}$ and $\frac 1{ \Zf_{,\aa}}\circ l$, and the $L^2$ norms of the differences of $D_\aa Z_t$ and $(\tD_\aa \Zf_t)\circ l$, and $A_1$ and $\tAone\circ l$,  while  the energy functional $\frak E(t)$, or equivalently $\mathcal E(t)$, gives us control of the $L^2$ norms of $Z_{t,\aa}$, $Z_{tt,\aa}$ and $\partial_\aa\frac1{ Z_{,\aa}}$, and the $L^\infty$ and $\dot H^{1/2}$ norms \footnote{see \S\ref{basic-quantities} and \S\ref{hhalf-norm}.} of $D_\aa Z_t$ and $A_1$. Indeed, because the coefficient $\frac {A_1}{|Z_{,\aa}|^2}$ in equation \eqref{quasi-r} is solution dependent and possibly degenerative, for given solutions $Z=Z(\alpha', t)$, $\frak Z=\frak Z(\alpha', t)$ of  equation \eqref{quasi-r}-\eqref{aux},  the sets of zeros in $\frac1{Z_{,\alpha'}}(t)$ and $\frac1{\frak Z_{,\alpha'}}(t)$ are different and move with the solutions, hence one cannot simply subtract the two solutions and perform energy estimates, as is usually done in classical cases. 
Our approach is to first get a good understanding of the evolution of the degenerative factor $\frac1{Z_{,\aa}}$ via equation \eqref{eq:dza}, this allows us to construct a series of  model equations that capture the key degenerative features of the equation \eqref{quasi-r} to get some ideas of what would work. We then tailor the ideas to the specific structure of our equations. We give the proof for Theorem~\ref{unique} in \S\ref{proof3}.

\subsection{The wellposedness of the water wave equation \eqref{euler} in $\mathcal E<\infty$}\label{main}

Since it can be tricky to define solutions for the interface equation \eqref{interface-r} when the interface is allowed to have singularities, we will directly solve the water wave equation \eqref{euler} via the system \eqref{eq:273}-\eqref{eq:275}-\eqref{eq:274}-\eqref{eq:271}. 
As we know from the discussions in \S\ref{general-soln} and \S\ref{e-1},  equation \eqref{euler} is equivalent to \eqref{eq:273}-\eqref{eq:275}-\eqref{eq:274}, for $(U, \Psi)\in C(\overline{\mathscr P}_-\times [0, T])\cap C^1({\mathscr P}_-\times (0, T))$ with $U(\cdot, t)$, $\Psi(\cdot, t)$ holomorphic,  provided $\Psi(\cdot, t)$ is a Jordan curve; and the energy functionals $\mathcal E=\mathcal E_1+|\frac1{Z_{,\aa}}(0,t)|^2$. 
Observe that the energy functional $\mathcal E(t)$ does not give direct control of the lower order norms $\|Z_t(t)\|_{L^2}$, $\|Z_{tt}(t)\|_{L^2}$ and $\nm{\frac1{Z_{,\aa}}(t)-1}_{L^2(\mathbb R)}$;  
 in the class where we want to solve the water wave equation we require in addition that $Z_t(t)\in {L^2(\mathbb R)}$ and $\frac1{Z_{,\aa}}(t)-1\in {L^2(\mathbb R)}$.  This is consistent with the decay assumption made in \S\ref{notation1}.   

\subsubsection{The initial data}\label{id}
Let $\Omega(0)$ be the initial fluid domain, with the interface $\Sigma(0):=\partial\Omega(0)$ being a Jordan curve that tends to horizontal lines at the infinity, and let $\Psi(\cdot, 0):{\mathscr P}_-\to \Omega(0)$ 
be the Riemann Mapping such that $\lim_{z'\to\infty} \partial_{z'}\Psi(z', 0)=1$. We know $\Psi(\cdot, 0) :{\mathscr P}_-\to \Omega(0)$ is a homeomorphism. Let $Z(\alpha', 0):=\Psi(\alpha', 0)$ for $\aa\in\mathbb R$, so $Z=Z(\cdot, 0):\mathbb R\to\Sigma(0)$  is the parametrization of $\Sigma(0)$ in the Riemann Mapping variable. Let $\bold v(\cdot, 0):\Omega(0)\to \mathbb C$ be the initial velocity field, and $U(z', 0)=\bar{\bold v}(\Psi(z', 0),0)$. Assume $\bar{\bold v}(\cdot, 0)$ is holomorphic on $\Omega(0)$, so $U(\cdot, 0)$ is holomorphic on ${\mathscr P}_-$.  Assume that the energy functional $\mathcal E_{1}(0)$ for $(U(\cdot, 0),\Psi(\cdot, 0))$ as given in \eqref{domain-energy} satisfy $\mathcal E_1(0)<\infty$.
Assume in addition that \footnote{This is equivalent to  $\|U(\cdot+i0, 0)\|_{L^2(\mathbb R)}+ \nm{\frac1{Z_{,\aa}} (0)-1}_{L^2(\mathbb R)}    <\infty$, see \S\ref{e-1}.} 
\begin{equation}\label{iid}
c_0:=\sup_{y'<0}\|U(\cdot+iy', 0)\|_{L^2(\mathbb R)}+\sup_{y'<0}\nm{\frac1{\Psi_{z'}(\cdot+iy',0)}-1}_{L^2(\mathbb R)}<\infty.
\end{equation}

In light of  the discussion in \S\ref{general-soln} and the uniqueness and stability Theorem~\ref{unique}, we define solutions for the Cauchy problem of the system \eqref{eq:273}-\eqref{eq:275}-\eqref{eq:274} as follows.

\begin{definition}\label{de} 
Let the data be as given in \S\ref{id}, and 
$(U, \Psi, \frak P)\in C(\bar{\mathscr P}_-\times [0, T])$, with $(U, \Psi)\in C^1(\mathscr P_-\times (0, T))$, $\lim_{z'\to\infty} (U, \Psi_{z'}-1)(z',t)=(0,0)$ and
$U(\cdot, t)$, $\Psi(\cdot, t)$ holomorphic in the lower half plane ${\mathscr P}_-$ for $t\in [0, T]$. We say $(U, \Psi, \frak P)$  is a solution of the Cauchy problem of the system \eqref{eq:273}-\eqref{eq:275}-\eqref{eq:274}, if it satisfies the system \eqref{eq:273}-\eqref{eq:275}-\eqref{eq:274} on $\mathscr P_-\times [0, T]$, and  if there is a sequence  $Z_n=Z_n(\aa,t)$, $(\aa,t)\in \mathbb R\times[0, T]$, which are solutions of the system \eqref{interface-r}-\eqref{interface-holo}-\eqref{a1}-\eqref{b}, satisfying $(Z_{n,t}, \frac1{\partial_\aa Z_{n}}-1, \partial_\aa Z_{n} -1 )\in C^j([0, T],  H^{s+1/2-l}(\mathbb R)\times H^{s-l}(\mathbb R)\times H^{s-l}(\mathbb R)  )$ for some $s\ge 4$, $l=0,1$,  $\sup_{n, t\in [0, T]} \mathcal E_n(t)<\infty$ and $\sup_{n, t\in [0, T]}(\|Z_{n,t}(t)\|_{L^2}+ \nm{\frac1{\partial_\aa Z_{n}}(t)-1}_{L^2})<\infty$,  and the holomorphic extension $(U_n, \Psi_n)$ in ${\mathscr P}_-$ of $(\bar Z_{n,t}, Z_n)$, with $\lim_{z'\to\infty} (U_n, \partial_{z'}\Psi_{n}-1)(z',t)=(0,0)$, and the function $\frak P_n$ defined by \eqref{eq:273}-\eqref{eq:274}-\eqref{eq:275}, 
such that $\lim_{n\to \infty} U_n=U$, $\lim_{n\to \infty} \Psi_n=\Psi$, $\lim_{n\to \infty} \frak P_n=\frak P$ and $\lim_{n\to\infty}\frac1{\partial_{z'}\Psi_n}= \frac1{\partial_{z'}\Psi}$, uniformly on compact subsets of $\bar{\mathscr P}_-\times [0, T]$, and the data $(Z_n(\cdot, 0), Z_{n,t}(\cdot,0))$ converges in the topology of the right hand side of the inequality \eqref{stability} to the trace $(\Psi(\cdot+i0, 0), \bar U(\cdot+i0, 0))$. 
\end{definition}

Let $\mathcal E(0)=\mathcal E_1(0)+|\frac1{Z_{,\aa}}(0,0)|^2$.

\begin{theorem}[Local wellposedness in the $\mathcal E<\infty$ regime]\label{th:local}
1. There exists $T>0$, depending only on $\mathcal E(0)$, such that on $[0,T]$, the initial value problem of the  system \eqref{eq:273}-\eqref{eq:275}-\eqref{eq:274}  has a unique  solution $(U, \Psi, \frak P)$, with the properties that $U(\cdot, t),\Psi(\cdot, t)$ are holomorphic on ${\mathscr P}_-$ for each fixed $t\in [0, T]$, $U, \Psi, \frac1{\Psi_{z'}}, \frak P$ are continuous on $\bar {\mathscr P}_-\times [0, T]$,   $U, \Psi, \frak P$ are continuous differentiable on ${\mathscr P}_-\times [0, T]$,  
$\sup_{[0, T]}\mathcal E_1(t)<\infty$ and 
\begin{equation}\label{iidt}
\sup_{[0, T]}\sup_{y'<0}\paren{\|U(\cdot+iy', t)\|_{L^2(\mathbb R)}+\|\frac1{\Psi_{z'}(\cdot+iy',t)}-1\|_{L^2(\mathbb R)}}<\infty.
\end{equation}
The  solution $(U, \Psi, \frak P)$ gives rise to a solution $(\bar{\bold v}, P)=(U\circ \Psi^{-1}, \frak P\circ \Psi^{-1})$ of the water wave equation \eqref{euler} so long as $\Sigma(t)=\{Z=\Psi(\alpha',t)\ | \ \alpha'\in \mathbb R\}$ is a Jordan curve.

2. If in addition that the initial interface is chord-arc, that is, $Z_{,\alpha'}(\cdot,0)\in L^1_{loc}(\mathbb R)$ and there is $0<\delta<1$, such that
$$\delta \int_{\alpha'}^{\beta'} |Z_{,\alpha'}(\gamma,0)|\,d\gamma\le |Z(\alpha', 0)-Z(\beta', 0)|\le \int_{\alpha'}^{\beta'} |Z_{,\alpha'}(\gamma,0)|\,d\gamma,\quad \forall -\infty<\alpha'<  \beta'<\infty.$$
Then there is $T>0, T_1>0$, $T, T_1$ depend only on $\mathcal E(0)$, such that on $[0,  \min\{T, \frac{\delta}{T_1}\}]$, the initial value problem of the water wave equation \eqref{euler} has a unique solution, satisfying  $\mathcal E_1(t)<\infty$ and \eqref{iidt}, and the interface $Z=Z(\cdot, t)$ remains chord-arc.
\end{theorem}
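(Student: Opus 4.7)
The plan is to construct the solution by approximating the initial data with smooth, Sobolev-class data, applying Theorem~\ref{blow-up} to obtain smooth solutions on a uniform time interval supplied by the a-priori estimate of Theorem~\ref{prop:a-priori}, and then passing to the limit using the stability inequality of Theorem~\ref{unique}. For the approximation I would mollify by Poisson translation inside $\mathscr P_-$: choose $\epsilon_n \downarrow 0$ and set $U_n(z',0) := U(z' - i\epsilon_n, 0)$ and $\Psi_n(z',0) := \Psi(z' - i\epsilon_n, 0)$. These are holomorphic on a strip containing $\overline{\mathscr P}_-$, so their boundary traces are real-analytic and lie in every Sobolev space; by continuity of Poisson integration on $L^2$, $L^\infty$, and $\dot H^{1/2}$ together with continuity of the pointwise nonlinear operations entering $\mathcal E$, one has $\limsup_n \mathcal E_n(0) \le \mathcal E(0)$, and $(Z_n(\cdot,0), Z_{n,t}(\cdot,0))$ converges to $(\Psi(\cdot+i0,0), \bar U(\cdot+i0,0))$ in the topology of the right-hand side of \eqref{stability}. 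The initial acceleration $Z_{n,tt}(\cdot, 0)$ is then determined from \eqref{aa1}.

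Next, Theorem~\ref{blow-up} yields a unique smooth solution $Z_n$ on a maximal interval $[0, T_n^\ast)$ on which $\mathcal E_n$ stays finite. Theorem~\ref{prop:a-priori} furnishes $T = T(\mathcal E(0)) > 0$ and $C = C(\mathcal E(0))$ with $\sup_{[0, \min\{T, T_n^\ast\}]} \mathcal E_n(t) \le C$ for all large $n$, so the blow-up criterion \eqref{eq:30} forces $T_n^\ast > T$. Hence the sequence $\{Z_n\}$ is defined uniformly on $[0, T]$ with $\sup_{n,\,t\in [0,T]} \mathcal E_n(t) < \infty$. Using the evolution equations \eqref{eq:dzt}, \eqref{eq:dza} together with the initial bound \eqref{iid}, a Gronwall argument upgrades this to uniform control of $\|Z_{n,t}(t)\|_{L^2}$ and $\|\frac{1}{\partial_\aa Z_n}(t) - 1\|_{L^2}$ on $[0, T]$.

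The stability inequality \eqref{stability} then shows $\{Z_n\}$ is Cauchy, uniformly in $t$, in the topology on its left-hand side. Consequently the holomorphic extensions $(U_n, \Psi_n, \frac{1}{\partial_{z'}\Psi_n})$ converge locally uniformly on compact subsets of $\overline{\mathscr P}_- \times [0, T]$ to holomorphic-in-$z'$ functions $(U, \Psi, \frac{1}{\partial_{z'}\Psi})$, and the pressures $\frak P_n$, determined by \eqref{eq:273}-\eqref{eq:274}-\eqref{eq:275}, converge correspondingly to some $\frak P$. Passing to the limit in the smooth identities gives \eqref{eq:273}, \eqref{eq:274}, \eqref{eq:275} classically on $\mathscr P_- \times (0, T)$; Fatou applied to each supremum in \eqref{domain-energy} yields $\sup_{[0,T]} \mathcal E_1(t) < \infty$ and \eqref{iidt}. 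By construction $(U, \Psi, \frak P)$ is a solution in the sense of Definition~\ref{de}. Uniqueness follows from a further application of \eqref{stability}: given two solutions with the same data, one interleaves their approximating sequences and concludes the two limits coincide. For part~2, uniform control from $\mathcal E_n$ on the relevant spatial derivatives of $Z_{n,t}$ gives, upon integration in time, a quantitative bound $\delta(t) \ge \delta - T_1 t$ on the chord-arc constant with $T_1 = T_1(\mathcal E(0))$; on $[0, \min\{T, \delta/T_1\}]$ the interface $\Psi(\cdot, t)$ remains a Jordan curve, and the construction of \S\ref{general-soln} recovers \eqref{euler}.

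The main obstacle will be that $\mathcal E$ is a comparatively weak functional: it is insensitive to the lower-order $L^2$ norms of $Z_t$ and $\frac{1}{Z_{,\aa}} - 1$, and it does not control $|Z_{,\aa}|$ from above, so one must simultaneously propagate just enough additional control to legitimize the convergence $\frac{1}{\partial_{z'}\Psi_n} \to \frac{1}{\partial_{z'}\Psi}$ near points where $\partial_{z'}\Psi$ blows up, to secure integrability at spatial infinity, and to meet the uniform-approximation clauses of Definition~\ref{de}. Once these are in place, convergence in the stability topology of \eqref{stability} is strong enough to pass to the limit in the quadratic nonlinearities of \eqref{eq:273}-\eqref{eq:275}.
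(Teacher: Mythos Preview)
Your plan is essentially the paper's own proof: Poisson translation to mollify the data, Theorem~\ref{blow-up} plus Theorem~\ref{prop:a-priori} to get smooth solutions on a uniform interval with uniformly bounded $\mathcal E$, then the stability estimate \eqref{stability} together with compactness to pass to the limit, and an integration-in-time argument for chord-arc persistence. One point you should not underestimate: the left-hand side of \eqref{stability} measures differences \emph{after composing one solution with} $l=\tilde h\circ h^{-1}$, e.g.\ $\|\bar Z_t-\bar{\mathfrak Z}_t\circ l\|_{\dot H^{1/2}}$, so ``Cauchy in that topology'' does not immediately yield convergence of $\bar Z_{n,t}$, $\frac{1}{Z_{n,\alpha'}}$, $b_n$ in the Riemann-mapping variable; the paper spends a separate preparatory subsection (Lemma~\ref{lemma4}, and the estimates \eqref{3004}--\eqref{3020}) controlling $\|l-\alpha'\|_{L^2}$ and converting the Lagrangian-type differences back to Eulerian ones, and then combines this with Arzel\`a--Ascoli type compactness (Lemmas~\ref{lemma1}--\ref{lemma2}) to get the required uniform convergence on compact subsets of $\overline{\mathscr P}_-\times[0,T]$.
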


We prove Theorem~\ref{th:local} in \S\ref{proof2}.

\section{The proof of  Theorem~\ref{prop:a priori} and  Theorem~\ref{blow-up}}\label{proof}

 We need the following basic  inequalities in the proof of Theorems~\ref{prop:a priori} and  \ref{blow-up}.
The basic energy inequality in Lemma~\ref{basic-e} has already appeared in \cite{wu3}. We give a proof nevertheless.

\begin{lemma}[Basic energy inequality I, cf. \cite{wu3}, lemma 4.1]\label{basic-e}
Assume $\Theta=\Theta(\alpha',t)$, $\alpha'\in \mathbb R$, $t\in [0, T)$ is smooth, decays fast at the spatial infinity, satisfying $(I-\mathbb H)\Theta=0$ and 
\begin{equation}\label{eq:40}
(\partial_t+b\partial_\aa)^2\Theta+i\mathcal A\partial_\aa \Theta=G_\Theta.
\end{equation}
Let
\begin{equation}\label{eq:41}
E_\Theta(t):=\int\frac1{\mathcal A}|(\partial_t+b\partial_\aa)\Theta|^2\,d\aa+ i\int(\partial_\aa\Theta) \bar\Theta\,d\aa.
\end{equation}
Then
\begin{equation}\label{eq:42}
\frac d{dt} E_\Theta(t)\le \nm{\frac{\frak a_t}{\frak a}\circ h^{-1}}_{L^\infty} E_\Theta(t)+2 E_\Theta(t)^{1/2}\paren{\int\frac{|G_\Theta|^2}{\mathcal A}\,d\aa}^{1/2}.
\end{equation}
\end{lemma}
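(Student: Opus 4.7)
The quantity $E_\Theta$ is the sum of a ``kinetic'' piece
$E^K := \int \tfrac{1}{\mathcal A}|D_t \Theta|^2\,d\alpha'$ and a ``potential'' piece
$E^P := i\int (\partial_{\alpha'}\Theta)\bar\Theta\,d\alpha'$, where $D_t := \partial_t + b\partial_{\alpha'}$. The plan is to differentiate each piece in time, use the equation to replace $D_t^2\Theta$, and show the resulting cross terms cancel exactly so that only the two ``forcing'' contributions (the one involving $\frac{\mathfrak a_t}{\mathfrak a}\circ h^{-1}$ and the one involving $G_\Theta$) survive. A few preparatory identities will be used throughout: the material-derivative formula
\[
\tfrac{d}{dt}\int g\,d\alpha' = \int \bigl(D_t g + b_{\alpha'} g\bigr)\,d\alpha',
\]
the commutator $[\partial_{\alpha'}, D_t] = b_{\alpha'}\partial_{\alpha'}$, and the Riemann-mapping identity obtained by precomposing $\mathcal A\circ h = \mathfrak a\, h_\alpha$ with $h^{-1}$:
\[
\tfrac{D_t\mathcal A}{\mathcal A} - b_{\alpha'} = \tfrac{\mathfrak a_t}{\mathfrak a}\circ h^{-1}.
\]

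For the kinetic piece, differentiating under the integral and using the product rule gives
\[
\tfrac{d}{dt} E^K = \int \tfrac{2\Re(\overline{D_t\Theta}\, D_t^2\Theta)}{\mathcal A}\,d\alpha' - \int \tfrac{1}{\mathcal A}\Bigl(\tfrac{D_t\mathcal A}{\mathcal A}-b_{\alpha'}\Bigr)|D_t\Theta|^2\,d\alpha'.
\]
Replacing $D_t^2\Theta = G_\Theta - i\mathcal A\,\partial_{\alpha'}\Theta$ from the equation splits the first term into a $G_\Theta$-contribution and a cross term
$+2\Im\!\int \overline{D_t\Theta}\,\partial_{\alpha'}\Theta\,d\alpha'$; the second term becomes $-\int \frac{1}{\mathcal A}\,\frac{\mathfrak a_t}{\mathfrak a}\!\circ h^{-1}\,|D_t\Theta|^2\,d\alpha'$ by the identity above.

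For the potential piece, applying the material-derivative formula and expanding
$D_t(i\,\partial_{\alpha'}\Theta\cdot\bar\Theta) = i(\partial_{\alpha'}D_t\Theta)\bar\Theta - ib_{\alpha'}(\partial_{\alpha'}\Theta)\bar\Theta + i(\partial_{\alpha'}\Theta)\overline{D_t\Theta}$
shows that the two $b_{\alpha'}$ contributions cancel, and an integration by parts on the remaining term produces
\[
\tfrac{d}{dt}E^P = -2\Im\!\int (\partial_{\alpha'}\Theta)\,\overline{D_t\Theta}\,d\alpha'.
\]
This is exactly the opposite of the cross term generated in $\tfrac{d}{dt}E^K$, so the two cancel and we are left with
\[
\tfrac{d}{dt}E_\Theta = -\int \tfrac{1}{\mathcal A}\,\tfrac{\mathfrak a_t}{\mathfrak a}\!\circ h^{-1}\,|D_t\Theta|^2\,d\alpha' + \int \tfrac{2\Re(\overline{D_t\Theta}\,G_\Theta)}{\mathcal A}\,d\alpha'.
\]
Bounding the first integral by $\|\tfrac{\mathfrak a_t}{\mathfrak a}\!\circ h^{-1}\|_{L^\infty} E^K \le \|\tfrac{\mathfrak a_t}{\mathfrak a}\!\circ h^{-1}\|_{L^\infty} E_\Theta$ and applying Cauchy--Schwarz to the second yields the claimed inequality. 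Implicit here is that $E_\Theta \ge E^K$, which uses the holomorphicity hypothesis $(I-\mathbb H)\Theta = 0$: since then $i\partial_{\alpha'}$ acts as the non-negative operator $|\partial_{\alpha'}|$ on $\Theta$, making $E^P = \|\Theta\|_{\dot H^{1/2}}^2\ge 0$.

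The only delicate point is bookkeeping the cancellations, not any deep estimate; the holomorphicity of $\Theta$ enters solely to guarantee positivity of $E^P$ (so the bound $E^K \le E_\Theta$ is legitimate), while the cancellation between $\tfrac{d}{dt}E^K$ and $\tfrac{d}{dt}E^P$ is a purely algebraic consequence of the symmetry between $(\partial_{\alpha'}\Theta)\overline{D_t\Theta}$ and its complex conjugate together with an integration by parts. Time-dependence of the measure via $b_{\alpha'}$ is absorbed precisely because $D_t \mathcal A/\mathcal A - b_{\alpha'} = \mathfrak a_t/\mathfrak a\circ h^{-1}$, which is why the $L^\infty$ norm appearing on the right-hand side is of that combined quantity rather than of $D_t\mathcal A/\mathcal A$ alone.
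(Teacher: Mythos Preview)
Your proof is correct and is essentially the same argument as the paper's, just executed in the Riemann mapping variable rather than in Lagrangian coordinates. The paper pulls back via $\theta=\Theta\circ h$ so that $D_t$ becomes $\partial_t$ and the equation reads $(\partial_t^2+i\mathfrak a\,\partial_\alpha)\theta=G_\Theta\circ h$; differentiating $E_\Theta=\int\frac1{\mathfrak a}|\theta_t|^2+i\theta_\alpha\bar\theta\,d\alpha$ then requires no $b_{\alpha'}$ bookkeeping, and one changes back at the end. Your transport formula $\frac{d}{dt}\int g=\int(D_tg+b_{\alpha'}g)$, the commutator $[\partial_{\alpha'},D_t]=b_{\alpha'}\partial_{\alpha'}$, and the identity $D_t\mathcal A/\mathcal A-b_{\alpha'}=\frac{\mathfrak a_t}{\mathfrak a}\circ h^{-1}$ are precisely what the Lagrangian pullback encodes, so the $b_{\alpha'}$ cancellations you track are exactly those that are invisible in the paper's coordinates. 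Both routes produce the same exact identity for $\frac{d}{dt}E_\Theta$ before applying Cauchy--Schwarz and the positivity of $E^P$.
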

\begin{remark}
By $\Theta=\mathbb H\Theta$ and \eqref{def-hhalf},
\begin{equation}\label{hhalf}
i\int(\partial_{\alpha'}\Theta) \bar\Theta\,d\alpha'=\int(i\partial_{\alpha'}\mathbb H \Theta) \bar\Theta\,d\alpha' =\|\Theta\|_{\dot{H}^{1/2}}^2\ge 0.
\end{equation}
\end{remark}
\begin{proof}
 By a change of the variables in  \eqref{eq:40},  we have
  $$(\partial_t^2+i\frak a \partial_\a)(\Theta\circ h)=G_\Th\circ h$$
where $\frak a h_\a=\mathcal A\circ h$; and  in  \eqref{eq:41}, 
$$E_\Theta(t)=\int\frac1{\frak a}|\partial_t(\Theta\circ h)|^2\,d\a+\int i\partial_\a (\Theta\circ h) \bar{\Theta\circ h}\,d\a.$$
So
\begin{equation}\label{eq:43}
\begin{aligned}
\frac d{dt} E_\Theta(t)&=\int 2\Re\braces{\frac1{\frak a}\partial_t^2(\Theta\circ h) \partial_t(\bar{\Theta\circ h})}-\frac{\frak a_t}{\frak a^2}|\partial_t(\Theta\circ h)|^2+2\Re \braces{i\int\partial_\alpha(\Theta\circ h)  \partial_t(\bar{\Theta\circ h}) \,d\alpha }
\\&= 2\Re \int \frac1{\frak a} G_\Th\circ h \partial_t(\bar{\Theta\circ h})\,d\alpha-\int \frac{\frak a_t}{\frak a^2}|\partial_t(\Theta\circ h)|^2\,d\a,
\end{aligned}
\end{equation}
where  we used integration by parts  in the first step. Changing back to the Riemann mapping variable, applying Cauchy-Schwarz inequality and \eqref{hhalf} yields  \eqref{eq:42}.

\end{proof}

We also need the following simple energy inequality.
\begin{lemma}[Basic energy inequality II]\label{basic-e2} Assume  $\Theta=\Theta(\aa,t)$ is smooth and decays fast at the spatial infinity. And assume
\begin{equation}\label{evolution-equation}
(\partial_t+b\partial_\aa)\Theta=g_\Theta.
\end{equation}
Then
\begin{equation}\label{basic-2}
\frac{d}{dt}\nm{\Theta(t)}_{L^2}^2\le 2\nm{g_\Th(t)}_{L^2}\nm{\Theta(t)}_{L^2}+\|b_\aa(t)\|_{L^\infty}\nm{\Theta(t)}_{L^2}^2
\end{equation}
\end{lemma}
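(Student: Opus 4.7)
The plan is to differentiate $\|\Theta(t)\|_{L^2}^2$ directly and substitute the transport-type evolution equation \eqref{evolution-equation}. Since $\Theta$ is smooth and decays at infinity, I can move the time derivative inside the integral:
\begin{equation*}
\frac{d}{dt}\|\Theta(t)\|_{L^2}^2 = 2\Re\int \Theta_t\,\bar\Theta\,d\alpha'
= 2\Re\int (g_\Theta - b\,\partial_{\alpha'}\Theta)\,\bar\Theta\,d\alpha'.
\end{equation*}

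Next I would handle the transport term by integration by parts. Since $b$ is real-valued (as $h$ is a homeomorphism of $\mathbb R$ onto itself and $b = h_t\circ h^{-1}$, cf.\ \eqref{b}), one has $2\Re(b\,\partial_{\alpha'}\Theta\cdot\bar\Theta) = b\,\partial_{\alpha'}(|\Theta|^2)$, and integrating by parts gives
\begin{equation*}
-2\Re\int b\,\partial_{\alpha'}\Theta\,\bar\Theta\,d\alpha' = -\int b\,\partial_{\alpha'}(|\Theta|^2)\,d\alpha' = \int b_{\alpha'}|\Theta|^2\,d\alpha'.
\end{equation*}
No boundary terms appear because $\Theta$ decays fast at spatial infinity. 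Then Cauchy--Schwarz on the $g_\Theta$ term and the obvious $L^\infty$--$L^2$ bound on the $b_{\alpha'}$ term yield \eqref{basic-2}.

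The proof is completely routine; there is no real obstacle. The only thing to be careful about is that $b$ is real-valued so that the integration by parts produces $b_{\alpha'}|\Theta|^2$ rather than a term with $\Theta^2$ or $\bar\Theta^2$; this is exactly where \eqref{b} enters. A slicker alternative, which I would mention as a cross-check, is to precompose with the Lagrangian map $h$ from \eqref{b1-1}: then $\partial_t(\Theta\circ h) = g_\Theta\circ h$, and
\begin{equation*}
\|\Theta(t)\|_{L^2}^2 = \int |\Theta\circ h|^2 h_\alpha\,d\alpha,
\end{equation*}
whose time derivative is $2\Re\int (g_\Theta\circ h)\,\overline{\Theta\circ h}\,h_\alpha\,d\alpha + \int |\Theta\circ h|^2 h_{t\alpha}\,d\alpha$, giving the same bound after using $h_{t\alpha}/h_\alpha = b_{\alpha'}\circ h$. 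Either way the estimate \eqref{basic-2} follows in a few lines.
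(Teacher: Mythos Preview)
Your proof is correct. The paper actually takes what you call the ``slicker alternative'': it changes variables to Lagrangian coordinates via $h$, differentiates $\int |\Theta\circ h|^2 h_\alpha\,d\alpha$ in $t$, and uses $h_{t\alpha}/h_\alpha = b_{\alpha'}\circ h$ before changing back---precisely your cross-check. Your primary approach (direct differentiation in $\alpha'$ plus integration by parts, using that $b$ is real) is an equally valid and essentially equivalent computation; the only practical difference is that the Lagrangian route sidesteps having to verify that the boundary term from integration by parts vanishes.
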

\begin{proof}
We have, upon changing variables,
$$\int |\Theta(\aa,t)|^2\,d\aa=\int |\Th( h(\a, t),t)|^2h_\a \,d\a,$$
so
\begin{equation}
\begin{aligned}
\frac{d}{dt}\int |\Theta(\aa,t)|^2\,d\aa &=\int 2\Re \partial_t(\Th\circ h)\bar{\Th\circ h}\ h_\a+ |\Th\circ h|^2h_{t\a} \,d\a\\&= \int 2\Re \paren{(\partial_t+b\partial_\aa)\Theta}\bar\Th (\aa,t) + b_\aa  |\Theta(\aa,t)|^2\,d\aa;
\end{aligned}
\end{equation}
here in the second step we changed back to the Riemann mapping variable, and used the fact that $\frac{h_{t\a}}{h_\a}=b_\aa\circ h$. Inequality \eqref{basic-2} follows from Cauchy-Schwarz inequality. 
\end{proof}
Let \begin{equation}\label{P}
\mathcal P=(\partial_t+b\partial_\aa)^2+i\mathcal A\partial_\aa.
\end{equation} We need two more basic inequalities.
\begin{lemma}[Basic inequality III]\label{basic-3-lemma} Assume that $\Th=\Th(\aa,t)$ is smooth and decays fast at the spatial infinity, and assume $\Th=\mathbb H\Th$. Then
\begin{equation}\label{basic-3}
\begin{aligned}
&\nm{(I-\mathbb H)\paren{
\mathcal P\Th}(t)
}_{L^2}\le \nm{\partial_\aa (\partial_t+b\partial_\aa)b}_{L^\infty}\nm{\Th(t)}_{L^2}\\&\qquad\qquad\qquad+\nm{b_\aa}_{L^\infty}\nm{(\partial_t+b\partial_\aa)\Th(t)}_{L^2}+\nm{ b_\aa}_{L^\infty}^2\nm{\Th(t)}_{L^2}+\nm{\mathcal A_\aa}_{L^\infty}\nm{\Th(t)}_{L^2}.
\end{aligned}
\end{equation}
\end{lemma}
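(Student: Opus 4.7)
The crucial fact is that $\mathbb H$ is a constant-coefficient convolution on the spatial line, hence $[\mathbb H,\partial_t]=[\mathbb H,\partial_{\alpha'}]=0$; together with the hypothesis $(I-\mathbb H)\Theta=0$ this reduces the whole estimate to a commutator calculation:
\[
(I-\mathbb H)(\mathcal P\Theta)=[(I-\mathbb H),\mathcal P]\Theta=-[\mathbb H,\mathcal P]\Theta.
\]
Since every commutator of the form $[\mathbb H,f]\partial_{\alpha'}$ is bounded on $L^2$ by $\|f_{\alpha'}\|_{L^\infty}$ (first-order Calder\'on commutator, recorded in Appendix~\ref{ineq}), the right hand side of \eqref{basic-3} is exactly what one expects once each piece of $[\mathbb H,\mathcal P]$ is written as such a commutator with the correct coefficient.

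Writing $D_t:=\partial_t+b\partial_{\alpha'}$ for brevity, I would first expand $D_t^2=\partial_t^2+2b\partial_t\partial_{\alpha'}+b^2\partial_{\alpha'}^2+(D_tb)\partial_{\alpha'}$ and use $[\mathbb H,\partial_t]=[\mathbb H,\partial_{\alpha'}]=0$ to obtain
\[
[\mathbb H,\mathcal P]=2[\mathbb H,b]\partial_t\partial_{\alpha'}+[\mathbb H,b^2]\partial_{\alpha'}^2+[\mathbb H,D_tb]\partial_{\alpha'}+i[\mathbb H,\mathcal A]\partial_{\alpha'}.
\]
Then I would substitute $\partial_t\partial_{\alpha'}=\partial_{\alpha'}D_t-\partial_{\alpha'}(b\partial_{\alpha'})$ and use $[\mathbb H,b^2]=[\mathbb H,b]b+b[\mathbb H,b]$ to rearrange. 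Elementary algebra gives the clean decomposition
\[
[\mathbb H,\mathcal P]\Theta=2[\mathbb H,b]\partial_{\alpha'}D_t\Theta-2[\mathbb H,b](b_{\alpha'}\partial_{\alpha'}\Theta)-[[\mathbb H,b],b]\partial_{\alpha'}^2\Theta+[\mathbb H,D_tb]\partial_{\alpha'}\Theta+i[\mathbb H,\mathcal A]\partial_{\alpha'}\Theta.
\]
The first, fourth, and fifth pieces yield, via the first-order commutator estimate, the bounds $\|b_{\alpha'}\|_{L^\infty}\|D_t\Theta\|_{L^2}$, $\|\partial_{\alpha'}(D_tb)\|_{L^\infty}\|\Theta\|_{L^2}$, and $\|\mathcal A_{\alpha'}\|_{L^\infty}\|\Theta\|_{L^2}$, matching three of the four terms on the right hand side of \eqref{basic-3}.

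What remains is to bound the double-commutator pair $-2[\mathbb H,b](b_{\alpha'}\partial_{\alpha'}\Theta)-[[\mathbb H,b],b]\partial_{\alpha'}^2\Theta$ by $\|b_{\alpha'}\|_{L^\infty}^2\|\Theta\|_{L^2}$. For this I would write the integral form
\[
[[\mathbb H,b],b]\partial_{\alpha'}^2\Theta(\alpha')=\frac{1}{\pi i}\int\frac{(b(\beta')-b(\alpha'))^2}{\alpha'-\beta'}\partial_{\beta'}^2\Theta(\beta')\,d\beta'
\]
and integrate by parts once using the decay of $\Theta$. The $\beta'$-derivative of the kernel splits into two pieces: the one falling on $b(\beta')$ produces exactly $2[\mathbb H,b](b_{\alpha'}\partial_{\alpha'}\Theta)$, which cancels the companion term; the one falling on $(\alpha'-\beta')^{-1}$ leaves the residue $[b,b;\partial_{\alpha'}\Theta]=\frac{1}{\pi i}\int\frac{(b(\alpha')-b(\beta'))^2}{(\alpha'-\beta')^2}\partial_{\beta'}\Theta(\beta')\,d\beta'$ in the notation of \eqref{eq:comm}. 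A further integration by parts converts the residue into the second Calder\'on commutator, whose $L^2$-norm is $\lesssim\|b_{\alpha'}\|_{L^\infty}^2\|\Theta\|_{L^2}$ by the second-order commutator estimate of Appendix~\ref{ineq}. Summing the five contributions gives \eqref{basic-3}.

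\textbf{Main difficulty.} The only subtle point is the double-commutator reorganisation: the raw term $[\mathbb H,b^2]\partial_{\alpha'}^2$ naively loses a derivative, and only after identifying the exact cancellation with $[\mathbb H,b]\partial_{\alpha'}(b\partial_{\alpha'}\Theta)$ does one see that $\partial_{\alpha'}^2\Theta$ can be traded for the quadratic coefficient $\|b_{\alpha'}\|_{L^\infty}^2$. This is the structural reason the lemma closes without any second derivative of $b$ appearing on the right hand side.
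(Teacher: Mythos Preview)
Your proof is correct and is essentially the paper's argument. The paper simply quotes the precomputed identity \eqref{eq:c25},
\[
[\mathcal P,\mathbb H]\Theta=[(\partial_t+b\partial_{\alpha'})b,\mathbb H]\partial_{\alpha'}\Theta+2[b,\mathbb H]\partial_{\alpha'}(\partial_t+b\partial_{\alpha'})\Theta-[b,b;\partial_{\alpha'}\Theta]+[i\mathcal A,\mathbb H]\partial_{\alpha'}\Theta,
\]
and applies \eqref{3.20} to each term; your direct expansion of $D_t^2$ followed by the integration-by-parts cancellation reproduces exactly this decomposition (your ``residue'' is the $[b,b;\partial_{\alpha'}\Theta]$ term), and the ``further integration by parts'' you mention is already built into \eqref{3.20}, so no extra step is needed there.
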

\begin{proof}
Because $\Th=\mathbb H\Th$, we have
$$(I-\mathbb H)(\mathcal P\Th)=\bracket{\mathcal P, \mathbb H}\Th;$$
and by \eqref{eq:c25},
$$\bracket{\mathcal P, \mathbb H}\Th=\bracket{(\partial_t+b\partial_\aa)b,\mathbb H}\partial_\aa \Th+2\bracket{b,\mathbb H}\partial_\aa (\partial_t+b\partial_\aa)\Th-[b,b; \partial_\aa \Th]+\bracket{i\mathcal A,\mathbb H}\partial_\aa \Th.
$$
Inequality \eqref{basic-3} follows from \eqref{3.20}.
\end{proof}
\begin{lemma}[Basic inequality IV]\label{basic-4-lemma}
Assume $f$ is  smooth and decays fast at the spatial infinity. Then
\begin{equation}\label{basic-4}
\begin{aligned}
\nm{Z_{,\aa}\bracket{\mathcal P, \frac1{Z_{,\aa}}}f}_{L^2}&\lec \nm{ (\partial_t+b\partial_\aa)(b_\aa-D_\aa Z_t)}_{L^\infty}\nm{f}_{L^2}\\&+  \nm{(b_\aa-D_\aa Z_t)}^2_{L^\infty}\nm{f}_{L^2}+  \nm{ (b_\aa-D_\aa Z_t)}_{L^\infty}\nm{(\partial_t+b\partial_\aa)f}_{L^2}\\&+  \nm{A_1}_{L^\infty}\nm{\frac1{Z_{,\aa}}\partial_\aa\frac1{Z_{,\aa}}}_{L^\infty}\nm{f}_{L^2}.
\end{aligned}
\end{equation}
\end{lemma}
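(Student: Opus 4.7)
The plan is to prove the inequality by a direct commutator computation, with no estimation ``tricks''; every bound on the right-hand side will arise by matching the terms that appear after expanding $[\mathcal P,1/Z_{,\aa}]f$. Abbreviate $D_t:=\partial_t+b\partial_{\alpha'}$ and $\omega:=1/Z_{,\aa}$, so that $\mathcal P=D_t^2+i\mathcal A\partial_{\alpha'}$.

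First I would compute the commutator in two pieces. Since $D_t$ is a first order derivation,
\begin{equation*}
[D_t^2,\omega]f=(D_t^2\omega)f+2(D_t\omega)(D_tf),\qquad [i\mathcal A\partial_{\alpha'},\omega]f=i\mathcal A(\partial_{\alpha'}\omega)f,
\end{equation*}
hence
\begin{equation*}
Z_{,\aa}[\mathcal P,\omega]f=\frac{1}{\omega}(D_t^2\omega)f+\frac{2}{\omega}(D_t\omega)(D_tf)+i\mathcal A Z_{,\aa}(\partial_{\alpha'}\omega)f.
\end{equation*}

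Next I would substitute the evolution identity \eqref{eq:dza}, which gives $D_t\omega=\omega(b_{\alpha'}-D_{\alpha'}Z_t)$. Hence $\omega^{-1}D_t\omega=b_{\alpha'}-D_{\alpha'}Z_t$, and differentiating once more with $D_t$,
\begin{equation*}
\omega^{-1}D_t^2\omega=(b_{\alpha'}-D_{\alpha'}Z_t)^{2}+D_t(b_{\alpha'}-D_{\alpha'}Z_t).
\end{equation*}
For the last term, the identity $\mathcal A=A_1/|Z_{,\aa}|^2$ (from $A_1:=\mathcal A|Z_{,\aa}|^2$) yields
\begin{equation*}
i\mathcal A Z_{,\aa}(\partial_{\alpha'}\omega)=\frac{iA_1}{\bar Z_{,\aa}}\,\partial_{\alpha'}\tfrac{1}{Z_{,\aa}},
\end{equation*}
whose absolute value equals $|A_1|\bigl|\tfrac{1}{Z_{,\aa}}\partial_{\alpha'}\tfrac{1}{Z_{,\aa}}\bigr|$ pointwise. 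Combining these identities,
\begin{equation*}
Z_{,\aa}[\mathcal P,\omega]f=\bigl[D_t(b_{\alpha'}-D_{\alpha'}Z_t)+(b_{\alpha'}-D_{\alpha'}Z_t)^{2}\bigr]f+2(b_{\alpha'}-D_{\alpha'}Z_t)(D_tf)+\frac{iA_1}{\bar Z_{,\aa}}\,\partial_{\alpha'}\tfrac{1}{Z_{,\aa}}\,f.
\end{equation*}

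Finally I would take $L^2$ norms and apply the $L^\infty\cdot L^2$ H\"older inequality term by term. Each term on the right then produces exactly one of the four summands on the right-hand side of \eqref{basic-4}, with universal constants. I do not expect any genuine obstacle here: the estimate is purely algebraic, and the only ingredients are the product/commutator rules and the identities \eqref{eq:dza} and $\mathcal A=A_1/|Z_{,\aa}|^2$; no Hilbert transform commutator analysis is needed, since no Hilbert transform enters $\mathcal P$ from the inside.
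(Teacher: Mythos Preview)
Your proof is correct and follows exactly the same approach as the paper: the paper's proof simply cites the commutator identity \eqref{eq:c16} (which is your expansion of $[D_t^2,\omega]$ and $[i\mathcal A\partial_{\alpha'},\omega]$), the identities \eqref{eq:c26}--\eqref{eq:c27} (which are your computations of $\omega^{-1}D_t\omega$ and $\omega^{-1}D_t^2\omega$), and the definition $A_1=\mathcal A|Z_{,\alpha'}|^2$, then applies H\"older. You have merely written out explicitly what the paper leaves to the reader.
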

\begin{proof}
Lemma~\ref{basic-4-lemma} is straightforward from the commutator relation \eqref{eq:c16}, identities
\eqref{eq:c26}, \eqref{eq:c27} and the definition $A_1:=\mathcal A\abs{Z_{,\aa}}^2$.
\end{proof}

Let $Z=Z(\cdot, t)$ be a solution of the system \eqref{interface-r}-\eqref{interface-holo}-\eqref{b}-\eqref{a1}, satisfying the assumption of Theorem~\ref{prop:a priori}.  By \eqref{quasi-r1} and \eqref{eq:c10},  we have
\begin{equation}\label{base-eq}
\mathcal P \bar Z_{t,\aa}=-(\partial_t+b\partial_\aa)(b_\aa \partial_{\aa}\bar Z_{t})-b_\aa\partial_\aa \bar Z_{tt}-i\mathcal A_\aa \partial_\aa \bar Z_t+\partial_\aa\paren{\frac{\frak a_t}{\frak a}\circ h^{-1} (\bar Z_{tt}-i)}
\end{equation}
Equation \eqref{base-eq} is our base equation in the proof of Theorems~\ref{prop:a priori} and \ref{blow-up}.

\subsection{The proof of  Theorem~\ref{prop:a priori}.}\label{proof0}
We begin with computing a few evolutionary equations. We have 
\begin{equation}\label{eq-dt}
\mathcal P D_\aa\bar Z_t=\bracket{\mathcal P, \frac1{Z_{,\aa}}}\bar Z_{t,\aa}+\frac1{Z_{,\aa}}\mathcal P \bar Z_{t,\aa};
\end{equation}
\begin{equation}\label{eq-ddt}
\mathcal P \paren{\frac1{Z_{,\aa}}D^2_\aa\bar Z_t}=\bracket{\mathcal P, \frac1{Z_{,\aa}}}D_\aa^2\bar Z_{t}+\frac1{Z_{,\aa}}\bracket{\mathcal P, D_\aa^2}\bar Z_{t}+\frac1{Z_{,\aa}}D_\aa^2\mathcal P\bar Z_t.\end{equation}
And, by the commutator identity \eqref{eq:c7} and the fact that $(\partial_t+b\partial_\aa)\bar Z_{t}=\bar Z_{tt}$, 
\begin{equation}\label{eq-zta}
(\partial_t+b\partial_\aa)\bar Z_{t,\aa}=\bar Z_{tt,\aa}-b_\aa \bar Z_{t,\aa};
\end{equation}
and by \eqref{eq:dza} and \eqref{eq:c7}
\begin{equation}\label{eq-ddza}
\begin{aligned}
(\partial_t+b\partial_\aa)\partial_{\alpha'} \frac{1}{ Z_{,\alpha'}}&=\partial_{\alpha'}(\partial_t+b\partial_\aa) \frac{1}{ Z_{,\alpha'}}+[(\partial_t+b\partial_\aa), \partial_{\alpha'}] \frac{1}{ Z_{,\alpha'}}\\&
=\paren{\partial_{\alpha'}\frac{1}{ Z_{,\alpha'}}}\paren{b_\aa-D_\aa Z_t}+D_\aa \paren{b_\aa-D_\aa Z_t} -b_{\alpha'}\partial_{\alpha'}\frac{1}{ Z_{,\alpha'}}\\&
=-D_\aa Z_t\paren{\partial_{\alpha'}\frac{1}{ Z_{,\alpha'}}}+D_\aa \paren{b_\aa-D_\aa Z_t}.
\end{aligned}
\end{equation}

We know from the definition of ${\bf E}_a(t)$,  ${\bf E}_b(t)$, and $A_1:=\mathcal A |Z_{,\aa}|^2$,
 $${\bf E}_a(t):=E_{D_\aa\bar Z_t}(t),\qquad\text{and }\quad {\bf E}_b(t):=E_{ \frac1{Z_{,\aa}}D^2_\aa\bar Z_t}(t),$$
where $E_\Th(t)$ is the basic energy as defined in \eqref{eq:41}. Notice that the quantities $D_\aa\bar Z_t$ and $
\frac1{Z_{,\aa}}D^2_\aa\bar Z_t$ are holomorphic. So the energy functional 
\begin{equation}\label{energy-functional}
\frak E(t)=E_{D_\aa\bar Z_t}(t)+ E_{ \frac1{Z_{,\aa}}D^2_\aa\bar Z_t}(t) +\|\bar Z_{t,\aa}(t)\|_{L^2}^2+\|D_\aa^2\bar Z_t(t)\|_{L^2}^2+\nm{\partial_\aa \frac{1}{ Z_{,\alpha'}}(t)}_{L^2}^2+\abs{\frac{1}{ Z_{,\alpha'}}(0,t)}^2.
\end{equation}
Our goal is to show that there is a universal polynomial $C=C(x)$, such that
\begin{equation}\label{energy-ineq}
\frac d{dt}\frak E(t)\le C(\frak E(t)).
\end{equation}

We begin with a list of quantities controlled by $\frak E(t)$. \subsubsection{Quantities controlled by $\frak E(t)$.}\label{basic-quantities} It is clear that $\frak E(t)$ controls the following quantities:
\begin{equation}\label{list1}
\nm{ D_\aa\bar Z_t}_{\dot H^{1/2}}, \quad\nm{\frac{1}{ Z_{,\alpha'}} D_\aa^2\bar Z_t}_{\dot H^{1/2}}, \quad \|\bar Z_{t,\aa}\|_{L^2},\quad \|D_\aa^2\bar Z_t\|_{L^2},\quad \nm{\partial_\aa \frac{1}{ Z_{,\alpha'}}}_{L^2},\quad \abs{\frac{1}{ Z_{,\alpha'}}(0,t)}.
\end{equation}
By \eqref{eq:b13} and \eqref{a1},
\begin{equation}\label{2000}
1\le A_1,\qquad{and}\quad \nm{ A_1}_{L^\infty}\lec 1+\|\bar Z_{t,\aa}\|_{L^2}^2\le 1+\frak E.
\end{equation}
We also have, by \eqref{ba} and \eqref{eq:b13}, that
\begin{equation}\label{2001}
\|b_\aa-2\Re D_\aa Z_t\|_{L^\infty}\lec \nm{\partial_\aa \frac{1}{ Z_{,\alpha'}}}_{L^2}\|\bar Z_{t,\aa}\|_{L^2}\le \frak E.
\end{equation}
We now estimate $\nm{D_\aa Z_t}_{L^\infty}$. We have, by the fundamental Theorem of calculus, 
\begin{equation}\label{2002}\paren{D_{\gamma'} \bar Z_t}^2-\int_0^1 \paren{D_\bb \bar Z_t}^2\,d\bb=2\int_0^1\int_\bb^{\gamma'}
D_\aa \bar Z_t\partial_\aa D_\aa \bar Z_t\,d\aa d\bb=2\int_0^1\int_\bb^{\gamma'}
\partial_\aa \bar Z_t D_\aa^2 \bar Z_t\,d\aa d\bb,
\end{equation}
where in the last equality, we moved $\frac1{Z_{,\aa}}$ from the first to the second factor. So for any $\gamma'\in \mathbb R$,
\begin{equation}\label{2003}
\abs{\paren{D_{\gamma'} \bar Z_t(\gamma',t)}^2-\int_0^1 \paren{D_\bb \bar Z_t(\bb,t)}^2\,d\bb}\le 2\|\bar Z_{t,\aa}\|_{L^2} \|D_\aa^2\bar Z_t\|_{L^2}\le 2\frak E.
\end{equation}
Now by the fundamental Theorem of calculus and Cauchy-Schwarz inequality we have,  for $\bb\in [0, 1]$, 
\begin{equation}\label{2004}\abs{\frac{1}{ Z_{,\bb}}(\bb, t)-\frac{1}{ Z_{,\alpha'}}(0,t)}\le \int_0^1\abs{\partial_\aa \frac{1}{ Z_{,\alpha'}}}\,d\aa\le \nm{\partial_\aa \frac{1}{ Z_{,\alpha'}}}_{L^2};
\end{equation}
so
$$\abs{\int_0^1 \paren{D_\bb \bar Z_t}^2\,d\bb}\le \nm{\frac{1}{ Z_{,\alpha'}}}_{L^\infty[0,1]}^2\|\bar Z_{t,\aa}\|_{L^2}^2\le \paren{\abs{\frac{1}{ Z_{,\alpha'}}(0,t)}+ \nm{\partial_\aa \frac{1}{ Z_{,\alpha'}}}_{L^2}}^2\|\bar Z_{t,\aa}\|_{L^2}^2\lec \frak E^2.
$$
Combining the above argument, we get
\begin{equation}\label{2005}
\nm{D_\aa Z_t}_{L^\infty}=\nm{D_\aa \bar Z_t}_{L^\infty}\lec C(\frak E).
\end{equation}
This together with \eqref{2001} gives us
\begin{equation}\label{2006}
\nm{b_\aa}_{L^\infty}\lec C(\frak E).
\end{equation}

We now explore the remaining terms in ${\bf E}_a(t)$ and ${\bf E}_b(t)$. 
We know 
\begin{equation}\label{2007}{\bf E}_a(t)=\int\frac1{A_1}|Z_{,\aa}(\partial_t+b\partial_\aa) D_\aa\bar Z_t |^2\,d\aa+ \nm{ D_\aa\bar Z_t}_{\dot H^{1/2}}^2.\end{equation}
Now by \eqref{eq:dza}, product rules and \eqref{eq-zta}, \footnote{One can also compute by changing to the Lagrangian coordinate and using the commutator relation \eqref{eq:c1}.}
\begin{equation}\label{2008}
Z_{,\aa}(\partial_t+b\partial_\aa) D_\aa\bar Z_t=(b_\aa-D_\aa Z_t) \bar Z_{t,\aa}+(\partial_t+b\partial_\aa) \bar Z_{t,\aa}=
\bar Z_{tt,\aa}- (D_\aa Z_t) \bar Z_{t,\aa};
\end{equation}
so
\begin{equation}\label{2009}
\begin{aligned}
\|Z_{tt,\aa}\|_{L^2}\le &\|D_\aa Z_t\|_{L^\infty}\|Z_{t,\aa}\|_{L^2}+\|Z_{,\aa}(\partial_t+b\partial_\aa) D_\aa\bar Z_t\|_{L^2}\\&\le \|D_\aa Z_t\|_{L^\infty}\|Z_{t,\aa}\|_{L^2}  + \paren{\|A_1\|_{L^\infty}{\bf E}_a}^{1/2}\lec C(\frak E).
\end{aligned}
\end{equation}
Similarly,
\begin{equation}\label{2010}
{\bf E}_b(t)=\int\frac1{A_1}\abs{Z_{,\aa}(\partial_t+b\partial_\aa) \paren{\frac1{Z_{,\aa}}D^2_\aa\bar Z_t} }^2\,d\aa+ \nm{ \frac1{Z_{,\aa}} D^2_\aa\bar Z_t}_{\dot H^{1/2}}^2,
\end{equation}
and by product rule and \eqref{eq:dza},
\begin{equation}\label{2010-1}
Z_{,\aa}(\partial_t+b\partial_\aa) \paren{\frac1{Z_{,\aa}}D^2_\aa\bar Z_t}=(b_\aa-D_\aa Z_t)D^2_\aa\bar Z_t+ (\partial_t+b\partial_\aa) D^2_\aa\bar Z_t;
\end{equation}
so
\begin{equation}\label{2011}
\nm{(\partial_t+b\partial_\aa) D^2_\aa\bar Z_t}_{L^2}\le \nm{Z_{,\aa}(\partial_t+b\partial_\aa) \paren{\frac1{Z_{,\aa}}D^2_\aa\bar Z_t}}_{L^2}+\|b_\aa-D_\aa Z_t\|_{L^\infty}\|D^2_\aa\bar Z_t\|_{L^2}\lec C(\frak E).
\end{equation}
Now from
\begin{align}
D_\aa^2 Z_t=\partial_\aa \frac1{Z_{,\aa}}D_\aa Z_{t}+\frac1{Z_{,\aa}^2}\partial_\aa^2 Z_t,\label{2012-1}\\
D_\aa^2 \bar Z_t=\partial_\aa \frac1{Z_{,\aa}}D_\aa \bar Z_{t}+\frac1{Z_{,\aa}^2}\partial_\aa^2 \bar Z_t,\label{2012-2}
\end{align}
we have
\begin{equation}\label{2012}
\|D_\aa^2 Z_t\|_{L^2}\le 2 \nm{\partial_\aa\frac1{Z_{,\aa}}}_{L^2}\|D_\aa Z_t\|_{L^\infty}+\|D_\aa^2 \bar Z_t\|_{L^2}\lec
 C(\frak E).
\end{equation}
Commuting $\partial_t+b\partial_\aa$ with $D^2_\aa$ by \eqref{eq:c2-1}, we get
\begin{equation}\label{2013}
D^2_\aa\bar Z_{tt}= (\partial_t +b\partial_\aa) D^2_\aa \bar Z_t+2(D_\aa Z_t) D_\aa^2\bar Z_t +(D_\aa^2 Z_t) D_\aa\bar Z_t;
\end{equation}
by \eqref{list1}, \eqref{2005}, \eqref{2012} and \eqref{2011}, we have
\begin{equation}\label{2014}
\|D^2_\aa\bar Z_{tt}\|_{L^2}\le C(\frak E).
\end{equation}
From \eqref{2014} and \eqref{2009}, we can work through the same argument as from \eqref{2002} to \eqref{2005} and get
\begin{equation}\label{2015}
\|D_\aa Z_{tt}\|_{L^\infty}=\|D_\aa \bar Z_{tt}\|_{L^\infty}\lec C(\frak E);
\end{equation}
and then by 
a similar calculation as in \eqref{2012-1}-\eqref{2012-2} and \eqref{2014}, \eqref{2015}, 
\begin{equation}\label{2016}
\|D^2_\aa Z_{tt}\|_{L^2}\le C(\frak E).
\end{equation}
Additionally,  by \eqref{eq-zta},
\begin{equation}\label{2017}
\|(\partial_t+b\partial_\aa)\bar Z_{t,\aa}\|_{L^2}\le \|Z_{tt,\aa}\|_{L^2}+\|b_\aa\|_{L^\infty} \|Z_{t,\aa}\|_{L^2}\lec C(\frak E).
\end{equation}
Sum up the estimates from \eqref{list1} through \eqref{2017}, we have that the following quantities are controlled by $\frak E$:
\begin{equation}\label{2020}
\begin{aligned}
&\nm{ D_\aa\bar Z_t}_{\dot H^{1/2}}, \quad\nm{\frac{1}{ Z_{,\alpha'}} D_\aa^2\bar Z_t}_{\dot H^{1/2}}, \quad \|\bar Z_{t,\aa}\|_{L^2},\quad \|D_\aa^2\bar Z_t\|_{L^2},\quad \nm{\partial_\aa \frac{1}{ Z_{,\alpha'}}}_{L^2},\quad \abs{\frac{1}{ Z_{,\alpha'}}(0,t)},\\&
\|A_1\|_{L^\infty}, \quad \|b_\aa\|_{L^\infty}, \quad \|D_\aa Z_t\|_{L^\infty},\quad \|D_\aa Z_{tt}\|_{L^\infty}, 
\quad \|(\partial_t+b\partial_\aa)\bar Z_{t,\aa}\|_{L^2} \\&
\|Z_{tt,\aa}\|_{L^2}, \quad \|D_\aa^2 \bar Z_{tt}\|_{L^2},\quad \|D_\aa^2  Z_{tt}\|_{L^2},\quad \|(\partial_t+b\partial_\aa)D_\aa^2\bar Z_t\|_{L^2},\quad \|D_\aa^2 Z_t\|_{L^2}.
\end{aligned}
\end{equation}
  
  We will use Lemmas~\ref{basic-e}-\ref{basic-4-lemma} to do estimates. Hence we need to control 
  the quantities that appear on the right hand sides of the inequalities in these Lemmas.

\subsubsection{Controlling $\nm{\frac{\frak a_t}{\frak a}\circ h^{-1}}_{L^\infty}$ and 
$\nm{(\partial_t+b\partial_\aa)A_1}_{L^\infty}$}\label{ata-da1}
 By \eqref{at}, 
$$\dfrac{\frak a_t}{\frak a}\circ h^{-1}= \frac{(\partial_t +b\partial_\aa) A_1}{A_1}+b_\aa -2\Re D_\aa Z_t.
$$
We have controlled $\|b_\aa\|_{L^\infty}$ and $\|D_\aa Z_t\|_{L^\infty}$ in \S\ref{basic-quantities}. We are left with the quantity $\nm{(\partial_t+b\partial_\aa)A_1}_{L^\infty}$. By \eqref{dta1}, 
$$
(\partial_t +b\partial_\aa) A_1= -\Im \paren{\bracket{Z_{tt},\mathbb H}\bar Z_{t,\alpha'}+\bracket{Z_t,\mathbb H}\partial_\aa \bar Z_{tt}-[Z_t, b; \bar Z_{t,\aa}]}.
$$
Applying \eqref{eq:b13} to the first two terms and \eqref{eq:b15} to the last we get
\begin{equation}\label{2021}
\nm{(\partial_t+b\partial_\aa)A_1}_{L^\infty}\lec \|Z_{tt,\aa}\|_{L^2}\|Z_{t,\aa}\|_{L^2}+\|b_\aa\|_{L^\infty}\|Z_{t,\aa}\|^2_{L^2}\lec C(\frak E);
\end{equation}
consequently
\begin{equation}\label{2022}
\nm{\frac{\frak a_t}{\frak a}\circ h^{-1}}_{L^\infty}\le \nm{(\partial_t+b\partial_\aa)A_1}_{L^\infty}+ 
\|b_\aa\|_{L^\infty}+2\|D_\aa Z_t\|_{L^\infty}\lec
C(\frak E).
\end{equation}

\subsubsection{Controlling $\nm{\mathcal A_\aa}_{L^\infty}$ and $\nm{\frac1{Z_{,\aa}}\partial_\aa \frac1{Z_{,\aa}}}_{L^\infty}$}\label{aa-zdz}
By \eqref{interface-r},  we have
\begin{equation}\label{2028}
i\mathcal A=\frac{Z_{tt}+i}{Z_{,\aa}}.
\end{equation}
Differentiating with respect to $\aa$ yields
\begin{equation}\label{2029}
i\mathcal A_\aa=(Z_{tt}+i)\partial_\aa\frac{1}{Z_{,\aa}}+D_\aa Z_{tt}.
\end{equation}
Apply $I-\mathbb H$ to both sides of the equation and use the fact that $\partial_\aa\frac{1}{Z_{,\aa}}=\mathbb H\paren{\partial_\aa\frac{1}{Z_{,\aa}}}$ to rewrite the first term on the right hand side as a commutator,  we get
\begin{equation}\label{2030}
i(I-\mathbb H)\mathcal A_\aa=\bracket{Z_{tt}, \mathbb H}\partial_\aa\frac{1}{Z_{,\aa}}+(I-\mathbb H)D_\aa Z_{tt}.
\end{equation}
Notice that $\mathcal A_\aa$ is purely real, so $\Im \paren{i(I-\mathbb H)\mathcal A_\aa}=\mathcal A_\aa$, and $|\mathcal A_\aa|\le |\paren{i(I-\mathbb H)\mathcal A_\aa}|$. Therefore,  
\begin{equation}\label{2031}
|\mathcal A_\aa|\le \abs{\bracket{Z_{tt}, \mathbb H}\partial_\aa\frac{1}{Z_{,\aa}}}+ 2|D_\aa Z_{tt}|+|(I+\mathbb H)D_\aa Z_{tt}|.
\end{equation}
We estimate the first term by \eqref{eq:b13},
$$\nm{\bracket{Z_{tt}, \mathbb H}\partial_\aa\frac{1}{Z_{,\aa}}}_{L^\infty}\lec \|Z_{tt,\aa}\|_{L^2}\nm{\partial_\aa\frac{1}{Z_{,\aa}}}_{L^2},$$
and the second term has been controlled in \S\ref{basic-quantities}. We are left with the third term, $(I+\mathbb H)D_\aa Z_{tt}$. We rewrite it by commuting out $\frac{1}{Z_{,\aa}}$:
\begin{equation}\label{2032}
(I+\mathbb H)D_\aa Z_{tt}=D_\aa (I+\mathbb H) Z_{tt} -\bracket{ \frac{1}{Z_{,\aa}},\mathbb H} Z_{tt,\aa},
\end{equation}
where we can estimate the second term by \eqref{eq:b13}. For the first term, we know $(I+\mathbb H)Z_t=0$  because $(I-\mathbb H)\bar Z_t=0$ and $\mathbb H$ is purely imaginary; 
 and $Z_{tt}=(\partial_t+b\partial_\aa)Z_t$. So
\begin{equation}\label{2033}
(I+\mathbb H) Z_{tt}=-[\partial_t+b\partial_\aa, \mathbb H]Z_t=-[b, \mathbb H]Z_{t,\aa}.
\end{equation}
We further rewrite it by \eqref{b}: 
\begin{equation}\label{bb}
b=\mathbb P_A\paren{\frac{Z_t}{Z_{,\aa}}}+\mathbb P_H\paren{\frac{\bar Z_t}{\bar Z_{,\aa}}}=\frac{\bar Z_t}{\bar Z_{,\aa}}+\mathbb P_A\paren{\frac{Z_t}{Z_{,\aa}}-\frac{\bar Z_t}{\bar Z_{,\aa}}},
\end{equation}
Prop~\ref{prop:comm-hilbe}, the fact that $(I+\mathbb H)Z_{t,\aa}=0$ and $(I+\mathbb H)\bar{D_\aa \bar Z_{t}}=0$. We have
\begin{equation}\label{2034}
(I+\mathbb H) Z_{tt}=-\bracket{\frac{\bar Z_t}{\bar Z_{,\aa}}  , \mathbb H}Z_{t,\aa}=-\bracket{\bar Z_t , \mathbb H}\bar{D_\aa \bar Z_{t}}.
\end{equation}
 
 We have reduced the task of estimating $D_\aa (I+\mathbb H) Z_{tt}$  to estimating $D_\aa \bracket{\bar Z_t , \mathbb H}\bar{D_\aa \bar Z_{t}}$. 
We  compute, for general functions $f$ and $g$, 
 \begin{equation}\label{2026}
\partial_\aa [f,\mathbb H]g= f_\aa \mathbb H g-\frac1{\pi i}\int\frac{(f(\aa)-f(\bb))}{(\aa-\bb)^2}g(\bb)\,d\bb
\end{equation}
therefore
\begin{equation}\label{2027}
\begin{aligned}
D_\aa & [f,\mathbb H]g= \frac1{Z_{,\aa}} f_\aa \mathbb H g\\&-\frac1{\pi i}\int\frac{\paren{f(\aa)-f(\bb)}\paren{\frac1{Z_{,\aa}}-\frac1{Z_{,\bb}}}}{(\aa-\bb)^2}g(\bb)\,d\bb-\frac1{\pi i}\int\frac{(f(\aa)-f(\bb))}{(\aa-\bb)^2} \frac1{Z_{,\bb}}g(\bb)\,d\bb.
\end{aligned}
\end{equation}
Now using \eqref{2034},  \eqref{2027}, and the fact that  $(I+\mathbb H)\bar{D_\aa \bar Z_{t}}=0$,  we have
\begin{equation}\label{2035}
\begin{aligned}
D_\aa & (I+\mathbb H) Z_{tt}=\abs{D_\aa \bar Z_t}^2
\\&+\frac1{\pi i}\int\frac{\paren{\bar Z_t(\aa)-\bar Z_t(\bb)}\paren{\frac1{Z_{,\aa}}-\frac1{Z_{,\bb}}}}{(\aa-\bb)^2}\bar{D_\bb \bar Z_{t}}\,d\bb+\frac1{\pi i}\int\frac{(\bar Z_t(\aa)-\bar Z_t(\bb))}{(\aa-\bb)^2}\frac1{Z_{,\bb}}\bar{D_\bb \bar Z_{t}}\,d\bb,
\end{aligned}
\end{equation}
where we rewrite the third term further 
\begin{equation}\label{2036}
\begin{aligned}
\frac1{\pi i}\int\frac{(\bar Z_t(\aa)-\bar Z_t(\bb))}{(\aa-\bb)^2}\frac1{Z_{,\bb}}\bar{D_\bb \bar Z_{t}}\,d\bb=&
\frac1{\pi i}\int\frac{(\bar Z_t(\aa)-\bar Z_t(\bb))}{(\aa-\bb)^2}\paren{\frac1{Z_{,\bb}}\bar{D_\bb \bar Z_{t}}-\frac1{Z_{,\aa}}\bar{D_\aa \bar Z_{t}}}\,d\bb\\&+\frac1{Z_{,\aa}}\bar{D_\aa \bar Z_{t}}\,\bar Z_{t,\aa};
\end{aligned}
\end{equation}
here we simplified the second term on the right hand side by the fact that $\bar Z_{t}=\mathbb H\bar Z_t$.

We can now estimate $\nm{D_\aa  (I+\mathbb H) Z_{tt}}_{L^\infty}$. We apply \eqref{eq:b16} to the second term on the right side of  \eqref{2035}; for the third term we use \eqref{2036}, and apply \eqref{eq:b16} to the first term on the right hand side of \eqref{2036}, and notice that
\begin{equation}\label{2037}
\partial_\aa \paren{\frac1{Z_{,\aa}}\bar{D_\aa \bar Z_{t}}}=\partial_\aa \paren{\frac1{Z_{,\aa}}}\bar{D_\aa \bar Z_{t}}+D_\aa \bar{D_\aa \bar Z_{t}};
\end{equation}
we have
\begin{equation}\label{2038}
\nm{D_\aa  (I+\mathbb H) Z_{tt}}_{L^\infty}\lec \nm{D_\aa \bar Z_t}^2_{L^\infty}+\nm{\partial_\aa\frac1{Z_{,\aa}}}_{L^2}\nm{Z_{t,\aa}}_{L^2}\nm{D_\aa \bar Z_t}_{L^\infty}+\nm{Z_{t,\aa}}_{L^2}\nm{D_\aa^2 \bar Z_t}_{L^2}.
\end{equation}

Sum up the calculations from \eqref{2031} through \eqref{2038}, and use the estimates in \S\ref{basic-quantities}, we conclude
\begin{equation}\label{2039}
\nm{\mathcal A_\aa}_{L^\infty}\lec C(\frak E).
\end{equation}
Observe that  the same argument also gives, by taking the real parts in \eqref{2030}, 
\begin{equation}\label{2039-1}
\nm{\mathbb H\mathcal A_\aa}_{L^\infty}\lec C(\frak E).
\end{equation}

Now from \eqref{2029} and \eqref{aa1},
\begin{equation}
\frac{iA_1}{\bar Z_{,\aa}}\partial_\aa\frac{1}{Z_{,\aa}} =i\mathcal A_\aa- D_\aa Z_{tt};
\end{equation}
Because $A_1\ge 1$, we have
\begin{equation}\label{2040}
\nm{\frac{1}{ Z_{,\aa}}\partial_\aa\frac{1}{Z_{,\aa}} }_{L^\infty}\le \nm{\mathcal A_\aa}_{L^\infty}+\| D_\aa Z_{tt}\|_{L^\infty}\lec C(\frak E).
\end{equation}

  \subsubsection{Controlling   $\nm{\partial_\aa (\partial_t+b\partial_\aa)\frac1{Z_{,\aa}}}_{L^2}$ and $\nm{ (\partial_t+b\partial_\aa)\partial_\aa\frac1{Z_{,\aa}}}_{L^2}$ }\label{dadtza}
  We begin with \eqref{eq-ddza}, and rewrite the second term on the right hand side to get
  \begin{equation}\label{ddza}
  \begin{aligned}
(\partial_t+b\partial_\aa)\partial_{\alpha'} \frac{1}{ Z_{,\alpha'}}
&=-D_\aa Z_t\paren{\partial_{\alpha'}\frac{1}{ Z_{,\alpha'}}}+D_\aa \paren{b_\aa-D_\aa Z_t}\\&
=-D_\aa Z_t\paren{\partial_{\alpha'}\frac{1}{ Z_{,\alpha'}}}+D_\aa \paren{b_\aa-2\Re D_\aa Z_t}+D_\aa \bar {D_\aa Z_t}.
\end{aligned}
\end{equation}
  We control the first and third terms by
  \begin{equation}\label{2024}
\nm{  D_\aa Z_t\paren{\partial_{\alpha'}\frac{1}{ Z_{,\alpha'}}}}_{L^2}\le \nm{  D_\aa Z_t}_{L^\infty}\nm{\partial_{\alpha'}\frac{1}{ Z_{,\alpha'}}}_{L^2}\lec C(\frak E)
  \end{equation}
  and
  \begin{equation}\label{2025}
  \nm{D_\aa \bar {D_\aa Z_t}}_{L^2}= \nm{ {D^2_\aa Z_t}}_{L^2}\lec  C(\frak E).
  \end{equation}
  We are left with the term $D_\aa \paren{b_\aa-2\Re D_\aa Z_t}$. We begin with \eqref{ba}:\begin{equation}\label{ba-1}
b_\aa-2\Re D_\aa Z_t=\Re \paren{\bracket{ \frac1{Z_{,\aa}}, \mathbb H}  Z_{t,\alpha'}+ \bracket{Z_t, \mathbb H}\partial_\aa \frac1{Z_{,\aa}}  }.
\end{equation}
Notice that the right hand side consists of  $\bracket{ \frac1{Z_{,\aa}}, \mathbb H}  Z_{t,\alpha'}$, 
$\bracket{Z_t, \mathbb H}\partial_\aa \frac1{Z_{,\aa}}$ and their complex conjugates. We use \eqref{2027} to compute
\begin{equation}\label{2041}
\begin{aligned}
D_\aa &\bracket{ \frac1{Z_{,\aa}}, \mathbb H}  Z_{t,\alpha'}= - \partial_\aa \frac1{Z_{,\aa}}  D_\aa  Z_{t}\\&-\frac1{\pi i}\int\frac{\paren{\frac1{Z_{,\aa}}-\frac1{Z_{,\bb}}}^2}{(\aa-\bb)^2} Z_{t,\bb}  \,d\bb-\frac1{\pi i}\int\frac{\paren{\frac1{Z_{,\aa}}-\frac1{Z_{,\bb}}} }{(\aa-\bb)^2} D_\bb Z_t   \,d\bb.
\end{aligned}
\end{equation}
Applying \eqref{eq:b12} to the second term and \eqref{3.17} to the third term yields
\begin{equation}\label{2042}
\nm{D_\aa \bracket{ \frac1{Z_{,\aa}}, \mathbb H}  Z_{t,\alpha'}}_{L^2}\lec \nm{\partial_\aa \frac1{Z_{,\aa}} }_{L^2}\nm{ D_\aa  Z_{t}}_{L^\infty}+ \nm{\partial_\aa \frac1{Z_{,\aa}} }_{L^2}^2\nm{ Z_{t,\aa}}_{L^2}.
\end{equation}
Similarly
\begin{equation}\label{2043}
\begin{aligned}
D_\aa &\bracket{Z_t, \mathbb H}\partial_\aa  \frac1{Z_{,\aa}} =   Z_{t,\aa} \frac1{Z_{,\aa}} \partial_\aa \frac1{Z_{,\aa}}  \\&-\frac1{\pi i}\int\frac{\paren{Z_t(\aa)-Z_t(\bb)}\paren{\frac1{Z_{,\aa}}-\frac1{Z_{,\bb}}}}{(\aa-\bb)^2} \partial_\bb \frac1{Z_{,\bb}}   \,d\bb-\frac1{\pi i}\int\frac{ \paren{Z_t(\aa)-Z_t(\bb)} }{(\aa-\bb)^2} \frac1{Z_{,\bb}} \partial_\bb \frac1{Z_{,\bb}}    \,d\bb,
\end{aligned}
\end{equation}
and applying \eqref{eq:b12} to the second term and \eqref{3.17} to the third term yields
\begin{equation}\label{2044}
\nm{  D_\aa \bracket{Z_t, \mathbb H}\partial_\aa  \frac1{Z_{,\aa}}  }_{L^2}\lec  \nm{\partial_\aa \frac1{Z_{,\aa}} }_{L^2}^2\nm{ Z_{t,\aa}}_{L^2}+ \nm{Z_{t,\aa}}_{L^2}\nm{  \frac1{Z_{,\aa}}\partial_\aa  \frac1{Z_{,\aa}}}_{L^\infty}   .
\end{equation}
The estimate of the complex conjugate terms is similar,  we omit. This concludes,  with an application of the results in \S\ref{basic-quantities} and \S\ref{aa-zdz}, that
\begin{equation}\label{2045}
\nm{D_\aa\paren{b_\aa-2\Re D_\aa Z_t}}_{L^2}\lec C(\frak E),
\end{equation}
therefore
\begin{equation}\label{2046}
\nm{(\partial_t+b\partial_\aa)\partial_{\alpha'} \frac{1}{ Z_{,\alpha'}}}_{L^2}\lec C(\frak E).
\end{equation}

 Now by
 \begin{equation}\label{2047}
\partial_{\alpha'} (\partial_t+b\partial_\aa) \frac{1}{ Z_{,\alpha'}}=(\partial_t+b\partial_\aa)\partial_{\alpha'} \frac{1}{ Z_{,\alpha'}}+b_\aa \partial_\aa \frac{1}{ Z_{,\alpha'}},
 \end{equation}
we also have
 \begin{equation}\label{2048}
\nm{\partial_{\alpha'} (\partial_t+b\partial_\aa)\frac{1}{ Z_{,\alpha'}}}_{L^2}\lec C(\frak E).
\end{equation}

\subsubsection{Controlling $\nm{\partial_\aa(\partial_t+b\partial_\aa)b}_{L^\infty}$, $\nm{(\partial_t+b\partial_\aa)b_\aa}_{L^\infty}$ and $\nm{(\partial_t+b\partial_\aa)D_\aa Z_t}_{L^\infty}$}\label{dtdab}

We apply \eqref{eq:c14} to \eqref{ba-1} and get
\begin{equation}\label{dba-1}
\begin{aligned}
(\partial_t+b\partial_\aa)\paren{b_\aa-2\Re D_\aa Z_t}&=\Re \paren{\bracket{ (\partial_t+b\partial_\aa)\frac1{Z_{,\aa}}, \mathbb H}  Z_{t,\alpha'}+ \bracket{ \frac1{Z_{,\aa}}, \mathbb H}  Z_{tt,\alpha'}-\bracket{  \frac1{Z_{,\aa}}, b;  Z_{t,\alpha'}   } }
\\&+\Re\paren{  \bracket{Z_{tt}, \mathbb H}\partial_\aa \frac1{Z_{,\aa}}    + \bracket{Z_{t}, \mathbb H}\partial_\aa (\partial_t+b\partial_\aa)\frac1{Z_{,\aa}} -\bracket{ Z_{t}, b; \partial_\aa \frac1{Z_{,\aa}}   } };
\end{aligned}
\end{equation}
using \eqref{eq:b13}, \eqref{eq:b15} and results from previous subsections we obtain
\begin{equation}\label{2023}
\begin{aligned}
\nm{(\partial_t+b\partial_\aa)\paren{b_\aa-2\Re D_\aa Z_t}}_{L^\infty}&\lec \nm{\partial_\aa (\partial_t+b\partial_\aa)\frac1{Z_{,\aa}}}_{L^2}\nm
{ Z_{t,\alpha'}}_{L^2}\\&+ \nm{\partial_\aa \frac1{Z_{,\aa}}}_{L^2}\nm{  Z_{tt,\alpha'}}_{L^2}+ \nm{ \partial_\aa \frac1{Z_{,\aa}}}_{L^2}\|b_\aa\|_{L^\infty}\|  Z_{t,\alpha'} \|_{L^2}
\\&\lec C(\frak E).
\end{aligned}
\end{equation}
We now compute $(\partial_t+b\partial_\aa)D_\aa Z_t$.   By \eqref{eq:c1-1},
\begin{equation}\label{2049}
(\partial_t+b\partial_\aa) D_\aa Z_t=D_\aa Z_{tt}-\paren{D_\aa Z_t}^2.
\end{equation}
So by the estimates in \S\ref{basic-quantities}, we have
\begin{equation}\label{2050}
\nm{(\partial_t+b\partial_\aa) D_\aa Z_t}_{L^\infty}\le \nm{D_\aa Z_{tt}}_{L^\infty}+\nm{D_\aa Z_t}^2_{L^\infty}\lec C(\frak E).
\end{equation}
This combine with \eqref{2023} yields
\begin{equation}\label{2051}
\nm{(\partial_t+b\partial_\aa)b_\aa}_{L^\infty}\lec C(\frak E).
\end{equation}

From  $\partial_\aa(\partial_t+b\partial_\aa)b=(\partial_t+b\partial_\aa)b_\aa+(b_\aa)^2$,
\begin{equation}\label{2052}
\nm{\partial_\aa(\partial_t+b\partial_\aa)b}_{L^\infty}\le \nm{(\partial_t+b\partial_\aa)b_\aa}_{L^\infty}+ \nm{b_\aa}_{L^\infty}^2\lec C(\frak E).
\end{equation}

We are now ready to estimate $\frac{d}{dt}\frak E$. 

\subsubsection{Controlling $\frac d{dt} \nm{\bar Z_{t,\aa}}_{L^2}^2$, $\frac d{dt} \nm{D_\aa^2 \bar Z_{t}}_{L^2}^2$, $\frac d{dt} \nm{   \partial_\aa \frac1{Z_{,\aa}}   }_{L^2}^2$ and $\frac d{dt} \abs{ \frac1{Z_{,\aa}} (0,t)  }^2$}\label{ddtlower} We use Lemma~\ref{basic-e2} to control $\frac d{dt} \nm{\bar Z_{t,\aa}}_{L^2}^2$, $\frac d{dt} \nm{D_\aa^2 \bar Z_{t}}_{L^2}^2$ and $\frac d{dt} \nm{   \partial_\aa \frac1{Z_{,\aa}}   }_{L^2}^2$. Notice that when we substitute  
$$\Th=Z_{t,\aa},\qquad \Th=  D_\aa^2 \bar Z_{t},\qquad \text{and}\quad  \Th= \partial_\aa \frac1{Z_{,\aa}} $$
in \eqref{basic-2}, all the terms on the right hand sides are already controlled in subsections \S\ref{basic-quantities} and \S\ref{dadtza}. So we have
\begin{equation}\label{2053}
\frac d{dt} \nm{\bar Z_{t,\aa}}_{L^2}^2+\frac d{dt} \nm{D_\aa^2 \bar Z_{t}}_{L^2}^2+\frac d{dt} \nm{   \partial_\aa \frac1{Z_{,\aa}}   }_{L^2}^2\lec C(\frak E).
\end{equation}

To estimate $\frac d{dt} \abs{ \frac1{Z_{,\aa}} (0,t)  }^2$, we start with \eqref{eq:dza} and compute
\begin{equation}\label{2054}
(\partial_t+b\partial_{\alpha'})\abs{\frac1{Z_{,\aa}}}^2=2\Re \paren{\frac1{\bar Z_{,\aa}}(\partial_t+b\partial_{\alpha'})\frac1{Z_{,\aa}}}=  \abs{\frac1{Z_{,\aa}} }^2\paren{2b_\aa-2\Re D_\aa Z_t}.
\end{equation}
Recall we chose the Riemann mapping so that $h(0,t)=0$ for all $t$. So $h_t\circ h^{-1}(0,t)=b(0,t)=0$ and
\begin{equation}\label{2055}
\frac{d}{dt}\abs{\frac1{Z_{,\aa}}(0,t)}^2=  \abs{\frac1{Z_{,\aa}}(0,t)}^2\paren{2b_\aa(0,t)-2\Re D_\aa Z_t(0,t)}\lec C(\frak E).
\end{equation}

We use Lemma~\ref{basic-e} to estimate the two main terms $\frac{ d}{dt} {\bf E}_a(t)$ and $\frac{ d}{dt}{\bf E}_b(t)$. 

\subsubsection{Controlling $\frac{ d}{dt} {\bf E}_a(t)$}\label{ddtea} We begin with $\frac{ d}{dt} {\bf E}_a(t)$.  Apply   Lemma~\ref{basic-e} to  $\Th=D_\aa \bar Z_t$ we get
\begin{equation}\label{2056}
\frac d{dt} {\bf E}_a(t)\le \nm{\frac{\frak a_t}{\frak a}\circ h^{-1}}_{L^\infty} {\bf E}_a(t)+2 {\bf E}_a(t)^{1/2}\paren{\int\frac{|\mathcal PD_\aa \bar Z_t  |^2}{\mathcal A}\,d\aa}^{1/2}.
\end{equation}
By \eqref{2022}, we know the first term is controlled by $C(\frak E)$.  We need to estimate the factor $\paren{\int\frac{|\mathcal PD_\aa \bar Z_t  |^2}{\mathcal A}\,d\aa}^{1/2}$ in the second term.  By \eqref{eq-dt}:
\begin{equation}\label{2057}
\mathcal P D_\aa\bar Z_t=\bracket{\mathcal P, \frac1{Z_{,\aa}}}\bar Z_{t,\aa}+\frac1{Z_{,\aa}}\mathcal P \bar Z_{t,\aa},
\end{equation}
and we have 
\begin{equation}\label{2058}
\int\frac{| \bracket{\mathcal P, \frac1{Z_{,\aa}}}\bar Z_{t,\aa}   |^2}{\mathcal A}\,d\aa= 
\int\frac{|Z_{,\aa} \bracket{\mathcal P, \frac1{Z_{,\aa}}}\bar Z_{t,\aa}   |^2}{ A_1}\,d\aa
\le 
\nm{Z_{,\aa}\bracket{\mathcal P, \frac1{Z_{,\aa}}}\bar Z_{t,\aa}}_{L^2}\lec C(\frak E),
\end{equation}
here in the last step we used \eqref{basic-4}, notice that all the terms on the right hand side of \eqref{basic-4}   with $f=\bar Z_{t,\aa}$ are controlled in subsections \S\ref{basic-quantities}--\S\ref{dtdab}. We are left with the term
$\int\frac{|  \frac1{Z_{,\aa}}\mathcal P \bar Z_{t,\aa}   |^2}{\mathcal A}\,d\aa$. Because $A_1\ge 1$,
$$\int\frac{|  \frac1{Z_{,\aa}}\mathcal P \bar Z_{t,\aa}   |^2}{\mathcal A}\,d\aa\le \int |\mathcal P \bar Z_{t,\aa}   |^2\,d\aa.$$

By the base equation \eqref{base-eq},
\begin{equation}\label{2059}
\mathcal P \bar Z_{t,\aa}=-(\partial_t+b\partial_\aa)(b_\aa \partial_{\aa}\bar Z_{t})-b_\aa\partial_\aa \bar Z_{tt}-i\mathcal A_\aa \partial_\aa \bar Z_t+\partial_\aa\paren{\frac{\frak a_t}{\frak a}\circ h^{-1} (\bar Z_{tt}-i)};
\end{equation}
we expand the last term by product rules,
\begin{equation}\label{2061}
\partial_\aa\paren{\frac{\frak a_t}{\frak a}\circ h^{-1} (\bar Z_{tt}-i)}=\frac{\frak a_t}{\frak a}\circ h^{-1} \bar Z_{tt,\aa}+\partial_\aa\paren{\frac{\frak a_t}{\frak a}\circ h^{-1}} (\bar Z_{tt}-i).
\end{equation}
It is clear that the first three terms in \eqref{2059} are controlled by $\frak E$, by the results of \S\ref{basic-quantities} - \S\ref{dtdab}:
\begin{equation}\label{2060}
\|-(\partial_t+b\partial_\aa)(b_\aa \partial_{\aa}\bar Z_{t})-b_\aa\partial_\aa \bar Z_{tt}-i\mathcal A_\aa \partial_\aa \bar Z_t\|_{L^2}\lec C(\frak E),
\end{equation}
and the first term in \eqref{2061} satisfies
\begin{equation}\label{2062}
\nm{\frac{\frak a_t}{\frak a}\circ h^{-1} \bar Z_{tt,\aa}}_{L^2}\le \nm{\frac{\frak a_t}{\frak a}\circ h^{-1}}_{L^\infty}\| \bar Z_{tt,\aa}\|_{L^2}\lec C(\frak E).
\end{equation}
We are left with one last term $\partial_\aa\paren{\frac{\frak a_t}{\frak a}\circ h^{-1}} (\bar Z_{tt}-i)$ in \eqref{2059}. 
We write
\begin{equation}\label{2063}
\mathcal P\bar Z_{t,\aa}=\partial_\aa\paren{\frac{\frak a_t}{\frak a}\circ h^{-1}} (\bar Z_{tt}-i)+\mathcal R
\end{equation}
where $\mathcal R= -(\partial_t+b\partial_\aa)(b_\aa \partial_{\aa}\bar Z_{t})-b_\aa\partial_\aa \bar Z_{tt}-i\mathcal A_\aa \partial_\aa \bar Z_t+\frac{\frak a_t}{\frak a}\circ h^{-1} \bar Z_{tt,\aa}$. 
We want to take advantage of the fact that $\partial_\aa\paren{\frac{\frak a_t}{\frak a}\circ h^{-1}} $ is purely real; notice that we have control of $\|(I-\mathbb H)\mathcal P \bar Z_{t,\aa}\|_{L^2}$ and  $\|\mathcal R\|_{L^2}$, 
 by Lemma~\ref{basic-3-lemma} and \S\ref{basic-quantities} - \S\ref{dtdab},  and by \eqref{2060} and \eqref{2062}.

Apply $(I-\mathbb H)$ to both sides of equation \eqref{2063}, we get
\begin{equation}\label{2065}
\begin{aligned}
(I-\mathbb H)\mathcal P\bar Z_{t,\aa}&=(I-\mathbb H)\paren{\partial_\aa\paren{\frac{\frak a_t}{\frak a}\circ h^{-1}} (\bar Z_{tt}-i)}+(I-\mathbb H)\mathcal R\\&
= (\bar Z_{tt}-i)(I-\mathbb H)\partial_\aa\paren{\frac{\frak a_t}{\frak a}\circ h^{-1}} +\bracket{\bar Z_{tt}, \mathbb H} \partial_\aa\paren{\frac{\frak a_t}{\frak a}\circ h^{-1}}  +(I-\mathbb H)\mathcal R
\end{aligned}
\end{equation}
where we commuted  $\bar Z_{tt}-i$ out in the second step. Now because $\partial_\aa\paren{\frac{\frak a_t}{\frak a}\circ h^{-1}} $  is purely real, 
\begin{equation}\label{2066}
\abs{   (\bar Z_{tt}-i) \partial_\aa\paren{\frac{\frak a_t}{\frak a}\circ h^{-1}}  } \le \abs{ (\bar Z_{tt}-i)(I-\mathbb H)\partial_\aa\paren{\frac{\frak a_t}{\frak a}\circ h^{-1}}  },
\end{equation}
so by \eqref{2065},
\begin{equation}\label{2067}
\abs{   (\bar Z_{tt}-i) \partial_\aa\paren{\frac{\frak a_t}{\frak a}\circ h^{-1}}  } \le  \abs{(I-\mathbb H)\mathcal P\bar Z_{t,\aa}}+ \abs{\bracket{\bar Z_{tt}, \mathbb H} \partial_\aa\paren{\frac{\frak a_t}{\frak a}\circ h^{-1}} }+ \abs{(I-\mathbb H)\mathcal R}.
\end{equation}
We estimate the $L^2$ norm of the first term by Lemma~\ref{basic-3-lemma}, the second term by \eqref{3.21}, and the third term by \eqref{2060} and \eqref{2062}. We obtain
\begin{equation}\label{2068}
\nm{   (\bar Z_{tt}-i) \partial_\aa\paren{\frac{\frak a_t}{\frak a}\circ h^{-1}}  }_{L^2}\lec C(\frak E)+ \nm{Z_{tt,\aa}}_{L^2}\nm{\frac{\frak a_t}{\frak a}\circ h^{-1}}_{L^\infty}+C(\frak E)\lec C(\frak E).
\end{equation}
This concludes
\begin{equation}\label{2069}
\frac d{dt}{\bf E}_a(t)\lec C(\frak E(t)).
\end{equation}

We record here the following estimate that will be used later. By \eqref{2068}, $\bar Z_{tt}-i=-\frac{iA_1}{Z_{,\aa}}$ and $A_1\ge 1$, we have
\begin{equation}\label{2068-1}
\nm{  D_\aa\paren{\frac{\frak a_t}{\frak a}\circ h^{-1}}  }_{L^2}\lec C(\frak E).
\end{equation}

\subsubsection{Controlling $\frac d{dt}{\bf E}_b(t)$}\label{ddteb} Taking $\Th= \frac1{Z_{,\aa}}D_\aa^2\bar Z_t$ in Lemma~\ref{basic-e}, we have, 
\begin{equation}\label{2070}
\frac d{dt} {\bf E}_b(t)\le \nm{\frac{\frak a_t}{\frak a}\circ h^{-1}}_{L^\infty} {\bf E}_b(t)+2 {\bf E}_b(t)^{1/2}\paren{\int\frac{|\mathcal P\paren{\frac1{Z_{,\aa}}D^2_\aa \bar Z_t}  |^2}{\mathcal A}\,d\aa}^{1/2}.
\end{equation}
By \eqref{2022}, the first term is controlled by $\mathfrak E$. We consider the second term. We know
\begin{equation}\label{2071}
\mathcal P \paren{\frac1{Z_{,\aa}}D^2_\aa\bar Z_t}=\bracket{\mathcal P, \frac1{Z_{,\aa}}}D_\aa^2\bar Z_{t}+\frac1{Z_{,\aa}}\bracket{\mathcal P, D_\aa^2}\bar Z_{t}+\frac1{Z_{,\aa}}D_\aa^2\mathcal P\bar Z_t,\end{equation}
and because $A_1\ge 1$, 
\begin{equation}\label{2072}
\int\frac{|\mathcal P\frac1{Z_{,\aa}}D^2_\aa \bar Z_t  |^2}{\mathcal A}\,d\aa\lec \int \abs{Z_{,\aa}\bracket{\mathcal P, \frac1{Z_{,\aa}}}D_\aa^2\bar Z_{t}}^2\,d\aa+\int\abs{ \bracket{\mathcal P, D_\aa^2}\bar Z_{t}}^2\,d\aa+\int \abs{D_\aa^2\mathcal P\bar Z_t}^2\,d\aa.
\end{equation}
Now by Lemma~\ref{basic-4-lemma} and the results of \S\ref{basic-quantities} - \S\ref{dtdab},
the first term on the right hand side of \eqref{2072} is controlled by $\frak E$. For the second term, we compute, using \eqref{eq:c4-1}, 
 \begin{equation} \label{2074}
   \begin{aligned}
 \bracket{\mathcal P,D_\aa^2}\bar Z_t & =-4(D_\aa Z_{tt}) D_\aa^2\bar Z_t + 6(D_\aa Z_t)^2 D_\aa^2\bar Z_t  - (2D_\aa^2 Z_{tt}) D_\aa\bar Z_t\\&
+ 6(D_\aa Z_t) (D_\aa^2 Z_t) D_\aa\bar Z_t - 2(D_\aa^2 Z_t) D_\aa \bar Z_{tt} - 4(D_\aa Z_t) D_\aa^2 \bar Z_{tt}.
   \end{aligned}
   \end{equation}
By results in \S\ref{basic-quantities}, we have
\begin{equation}\label{2075}
\| \bracket{\mathcal P,D_\aa^2}\bar Z_t\|_{L^2}\lec C(\frak E).
\end{equation}
We are left with the  term $\int \abs{D_\aa^2\mathcal P\bar Z_t}^2\,d\aa$, where
$$\mathcal P\bar Z_t:=\bar Z_{ttt}+i\mathcal A \bar Z_{t,\aa}=\frac{\frak a_t}{\frak a}\circ h^{-1} (\bar Z_{tt}-i).$$

We expand $D_\aa^2 \mathcal P\bar Z_t$  by product rules,
\begin{equation}\label{2076}
D_\aa^2 \mathcal P\bar Z_t=D_\aa^2\paren{\frac{\frak a_t}{\frak a}\circ h^{-1}}(\bar Z_{tt}-i)+2D_\aa\paren{\frac{\frak a_t}{\frak a}\circ h^{-1}}D_\aa \bar Z_{tt}+\paren{\frac{\frak a_t}{\frak a}\circ h^{-1}} D_\aa^2\bar Z_{tt}.
\end{equation}
We know how to handle the second and third terms, thanks to the work in the previous subsections.
We want to use the same idea as in the previous subsection to control the first term,   however $D_\aa$ is not purely real, so we go through the following slightly evoluted process. 

First,  we have
\begin{equation}\label{2077}
\begin{aligned}
&\nm{2D_\aa\paren{\frac{\frak a_t}{\frak a}\circ h^{-1}}D_\aa \bar Z_{tt}+\paren{\frac{\frak a_t}{\frak a}\circ h^{-1}} D_\aa^2\bar Z_{tt}}_{L^2}\\& \qquad\lec \|D_\aa \bar Z_{tt}\|_{L^\infty}\nm{D_\aa\paren{   \frac{\frak a_t}{\frak a}\circ h^{-1}}}_{L^2}+\nm{   \frac{\frak a_t}{\frak a}\circ h^{-1}}_{L^\infty}\nm{D_\aa^2 \bar Z_{tt}}_{L^2}\lec C(\frak E);
\end{aligned}
\end{equation}
and by Lemma~\ref{basic-3-lemma}  and \eqref{2075}, 
\begin{equation}\label{2084}
\nm{(I-\mathbb H)D_\aa^2 \mathcal P\bar Z_t}_{L^2}\le \nm{(I-\mathbb H)\mathcal PD_\aa^2 \bar Z_t}_{L^2}+\nm{(I-\mathbb H)\bracket{D_\aa^2, \mathcal P}\bar Z_t}_{L^2}\lec C(\frak E).
\end{equation}
So
\begin{equation}\label{2078}
\nm{(I-\mathbb H)\paren{D_\aa^2\paren{\frac{\frak a_t}{\frak a}\circ h^{-1}}(\bar Z_{tt}-i)}}_{L^2}\lec C(\frak E).
\end{equation}
This gives, from 
\begin{equation}\label{2079}
\begin{aligned}
(I-\mathbb H)\paren{D_\aa^2\paren{\frac{\frak a_t}{\frak a}\circ h^{-1}}(\bar Z_{tt}-i)}&= \frac{(\bar Z_{tt}-i)}{Z_{,\aa}}(I-\mathbb H)\paren{\partial_\aa D_\aa \paren{\frac{\frak a_t}{\frak a}\circ h^{-1}}}\\&+\bracket{\frac{(\bar Z_{tt}-i)}{Z_{,\aa}}, \mathbb H}\paren{\partial_\aa D_\aa \paren{\frac{\frak a_t}{\frak a}\circ h^{-1}}},
\end{aligned}
\end{equation}
and \eqref{3.20} that 
\begin{equation}\label{2080}
\nm{\frac{(\bar Z_{tt}-i)}{Z_{,\aa}}(I-\mathbb H)\paren{\partial_\aa D_\aa \paren{\frac{\frak a_t}{\frak a}\circ h^{-1}}}}_{L^2}\lec C(\frak E).
\end{equation}
Now we move the factor $\frac{(\bar Z_{tt}-i)}{|Z_{,\aa}|}$ back into $(I-\mathbb H)$ to get
\begin{equation}\label{2081}
\begin{aligned}
\frac{(\bar Z_{tt}-i)}{|Z_{,\aa}|}(I-\mathbb H)\paren{\partial_\aa D_\aa \paren{\frac{\frak a_t}{\frak a}\circ h^{-1}}}&=- \bracket{\frac{(\bar Z_{tt}-i)}{|Z_{,\aa}|}, \mathbb H}\paren{\partial_\aa D_\aa \paren{\frac{\frak a_t}{\frak a}\circ h^{-1}}}\\&+(I-\mathbb H) \paren{\frac{(\bar Z_{tt}-i)}{|Z_{,\aa}|}\partial_\aa D_\aa \paren{\frac{\frak a_t}{\frak a}\circ h^{-1}}};
\end{aligned}
\end{equation}
and observe that
\begin{equation}\label{2082}
\begin{aligned}
&\frac{(\bar Z_{tt}-i)}{|Z_{,\aa}|}\partial_\aa D_\aa \paren{\frac{\frak a_t}{\frak a}\circ h^{-1}}=\frac{(\bar Z_{tt}-i)}{Z_{,\aa}}\partial_\aa \paren{\frac1{|Z_{,\aa}|}\partial_\aa \paren{\frac{\frak a_t}{\frak a}\circ h^{-1}}}\\&+\frac{(\bar Z_{tt}-i)}{|Z_{,\aa}|}\partial_\aa\paren{\frac1{Z_{,\aa}} }\partial_\aa \paren{\frac{\frak a_t}{\frak a}\circ h^{-1}}-\frac{(\bar Z_{tt}-i)}{Z_{,\aa}}\partial_\aa\paren{\frac1{|Z_{,\aa}|} }\partial_\aa \paren{\frac{\frak a_t}{\frak a}\circ h^{-1}}.
\end{aligned}
\end{equation}
We know the $L^2$ norms of the last two terms on the right hand side of \eqref{2082} are controlled by $C(\frak E)$; and by \eqref{3.20}, the $L^2$ norm of the commutator  in \eqref{2081} is also controlled by $C(\frak E)$, therefore by \eqref{2081}, \eqref{2080}, \eqref{2082},
\begin{equation}\label{2083}
\nm{(I-\mathbb H) \paren{\frac{(\bar Z_{tt}-i)}{Z_{,\aa}}\partial_\aa \paren{\frac1{|Z_{,\aa}|}\partial_\aa \paren{\frac{\frak a_t}{\frak a}\circ h^{-1}}} } }_{L^2}\lec C(\frak E). 
\end{equation}
Now we commute out the factor $\frac{(\bar Z_{tt}-i)}{Z_{,\aa}}$ from $(I-\mathbb H)$ to get
\begin{equation}\label{2085}
\begin{aligned}
(I-\mathbb H) \paren{\frac{(\bar Z_{tt}-i)}{Z_{,\aa}}\partial_\aa \paren{\frac1{|Z_{,\aa}|}\partial_\aa \paren{\frac{\frak a_t}{\frak a}\circ h^{-1}}} }&= \frac{(\bar Z_{tt}-i)}{Z_{,\aa}} (I-\mathbb H) \partial_\aa \paren{\frac1{|Z_{,\aa}|}\partial_\aa \paren{\frac{\frak a_t}{\frak a}\circ h^{-1}}} \\&+\bracket{\frac{(\bar Z_{tt}-i)}{Z_{,\aa}}, \mathbb H}\partial_\aa \paren{\frac1{|Z_{,\aa}|} \partial_\aa \paren{\frac{\frak a_t}{\frak a}\circ h^{-1}}}
\end{aligned}
\end{equation}
Observe that the quantity the operator $(I-\mathbb H)$ acts on in the first term on the right hand side of \eqref{2085} is purely real. Applying \eqref{3.20} again to the commutator in \eqref{2085} and using \eqref{2083} and the fact that $|f|\le |(I-\mathbb H)f|$ for $f$ real, we obtain
\begin{equation}\label{2086}
\begin{aligned}
&\nm{ \frac{(\bar Z_{tt}-i)}{Z_{,\aa}} \partial_\aa \paren{\frac1{|Z_{,\aa}|}\partial_\aa \paren{\frac{\frak a_t}{\frak a}\circ h^{-1}}} }_{L^2}\\&\qquad\le \nm{\frac{(\bar Z_{tt}-i)}{Z_{,\aa}} (I-\mathbb H) \partial_\aa \paren{\frac1{|Z_{,\aa}|}\partial_\aa \paren{\frac{\frak a_t}{\frak a}\circ h^{-1}}}  }_{L^2}\lec C(\frak E).
\end{aligned}
\end{equation}
Applying \eqref{2086} to \eqref{2082} yields, 
$$\nm{D_\aa^2\paren{\frac{\frak a_t}{\frak a}\circ h^{-1}}(\bar Z_{tt}-i)}_{L^2}\lec C(\frak E);$$
and by \eqref{2077}, \eqref{2076}, 
\begin{equation}\label{2088}
\nm{D_\aa^2\mathcal P\bar Z_t}_{L^2}\lec C(\frak E).
\end{equation}
This finishes the proof of 
\begin{equation}\label{2089}
\frac d{dt}{\bf E}_b(t)\lec C(\frak E(t)).
\end{equation}
Sum up the results in subsections \S\ref{ddtlower} - \S\ref{ddteb}, we obtain
\begin{equation}\label{2090}
\frac d{dt}\frak E(t)\lec C(\frak E(t)).
\end{equation}

\subsection{The proof of Theorem ~\ref{blow-up}}\label{proof1} Assume that the initial data satisfies the assumption of Theorem~\ref{blow-up}, we know by \eqref{interface-a1}, Proposition~\ref{B2} and Sobolev embedding,  that $\frac1{Z_{,\aa}}(0)-1, Z_{,\aa}(0)-1\in H^s(\mathbb R)$, with 
\begin{equation}\label{2000-1}
\begin{aligned}
\nm{\frac1{Z_{,\aa}}(0)}_{L^\infty}&\le \|Z_{tt}(0)\|_{L^\infty}+1\lec \|Z_{tt}(0)\|_{H^1}+1<\infty;\\
\nm{\frac1{Z_{,\aa}}(0)-1}_{H^s}&\lec C\paren{ \|Z_t(0)\|_{H^s}, \|Z_{tt}(0)\|_{H^s}};\\
\nm{Z_{,\aa}(0)-1}_{H^s}&\lec C\paren{ \|Z_t(0)\|_{H^s}, \|Z_{tt}(0)\|_{H^s}}.
\end{aligned}
\end{equation}
From  Theorem~\ref{prop:local-s} and Proposition~\ref{prop:energy-eq}, we know to prove the blow-up criteria, Theorem~\ref{blow-up}, it suffices to show that for any solution of \eqref{interface-r}-\eqref{interface-holo}-\eqref{b}-\eqref{a1},  satisfying the regularity properties in Theorem~\ref{blow-up}, and for any $T_0>0$, 
$$\sup_{[0, T_0)} \frak E(t)<\infty \quad \text{implies} \quad \sup_{[0,T_0)}( \|Z_{,\aa}(t)\|_{L^\infty}+\|Z_t(t)\|_{H^{3+1/2}}+\|Z_{tt}(t)\|_{H^3})<\infty.$$

We begin with the lower order norms. We first show that, as a consequence of equation \eqref{eq:dza},  if $\|Z_{,\aa}(0)\|_{L^\infty} <\infty$, then \begin{equation}\label{2000-2}\sup_{[0, T_0)}\|Z_{,\aa}(t)\|_{L^\infty}<\infty\qquad \text{ as  long as }\quad \sup_{[0, T_0)}\frak E(t)<\infty.\end{equation}

Solving equation \eqref{eq:dza} we get, because $\partial_t+b\partial_\aa=U_h^{-1}\partial_t U_h$, 
\begin{equation}\label{2100}
\frac1{Z_{,\aa}}(h(\a,t),t)=\frac1{Z_{,\aa}}(\a,0)e^{\int_0^t (b_\aa\circ h(\a,\tau)-D_\a z_t(\a,\tau))\,d\tau};
\end{equation}
so by  \eqref{2020} of \S\ref{basic-quantities},
\begin{equation}\label{2101}
\sup_{[0, T]}\nm{Z_{,\aa}(t)}_{L^\infty} \le \nm{Z_{,\aa}(0)}_{L^\infty}e^{\int_0^T \|b_\aa(\tau)-D_\aa Z_t(\tau)\|_{L^\infty}\,d\tau}\lec \nm{Z_{,\aa}(0)}_{L^\infty}e^{T\sup_{[0, T]}C(\frak E(t))},
\end{equation}
 hence \eqref{2000-2} holds. Notice that from \eqref{2100}, we also have 
\begin{equation}\label{2101-1}
\sup_{[0, T]}\nm{\frac1 {Z_{,\aa}}(t)}_{L^\infty} \lec \nm{\frac1{Z_{,\aa}}(0)}_{L^\infty}e^{T\sup_{[0, T]}C(\frak E(t))}.
\end{equation}

Now by Lemma~\ref{basic-e2},
\begin{align}\label{2102}
\frac d{dt} \|Z_t(t)\|^2_{L^2}&\lec \|Z_{tt}(t)\|_{L^2}\|Z_t(t)\|_{L^2} +\|b_\aa(t)\|_{L^\infty}\|Z_t(t)\|^2_{L^2},\\
\label{2103}\frac d{dt} \|Z_{tt}(t)\|^2_{L^2}&\lec \|Z_{ttt}(t)\|_{L^2}\|Z_{tt}(t)\|_{L^2} +\|b_\aa(t)\|_{L^\infty}\|Z_{tt}(t)\|^2_{L^2};
\end{align}
and from equations \eqref{eq:dztt} and \eqref{aa1},
\begin{equation}\label{2104}
\bar Z_{ttt}= (\bar Z_{tt}-i)\paren{\frac{\frak a_t}{\frak a}\circ h^{-1}+ \bar {D_\aa Z_t}}=-\frac{iA_1}{Z_{,\aa}}\paren{\frac{\frak a_t}{\frak a}\circ h^{-1}+ \bar {D_\aa Z_t}},
\end{equation}
so 
\begin{equation}\label{2105}
\|\bar Z_{ttt}(t)\|_{L^2}\lec \|A_1(t)\|_{L^\infty}\nm{\frac{1}{Z_{,\aa}}(t)}_{L^\infty} \paren{ \nm{\frac{\frak a_t}{\frak a}\circ h^{-1}(t)}_{L^2}+ \nm{ D_\aa Z_t(t)}_{L^2}}.
\end{equation}
We want to show that $\nm{\frac{\frak a_t}{\frak a}\circ h^{-1}(t)}_{L^2}$ and $\nm{ D_\aa Z_t(t)}_{L^2}$ can be controlled by $\frak E$ and the initial data;  by \eqref{at}, it suffices to control $\|b_\aa(t)\|_{L^2}$, $\nm{ D_\aa Z_t(t)}_{L^2}$ and $\|(\partial_t+b\partial_\aa)A_1(t)\|_{L^2}$.

 Applying H\"older's inequality and \eqref{3.21} to \eqref{ba} yields
\begin{equation}\label{2106}
\|b_\aa(t)\|_{L^2}+\|D_\aa Z_t(t)\|_{L^2}\lec  \nm{\frac1{Z_{,\aa}}(t)}_{L^\infty}\|Z_{t,\aa}(t)\|_{L^2},
\end{equation}
and applying H\"older's inequality, \eqref{3.21}, \eqref{eq:b12} to \eqref{dta1} gives
\begin{equation}\label{2106-1}
\|(\partial_t+b\partial_\aa)A_1(t)\|_{L^2}\lec  \nm{Z_{tt}(t)}_{L^\infty}\|Z_{t,\aa}(t)\|_{L^2}+\|b_\aa(t)\|_{L^2}\|Z_{t,\aa}(t)\|_{L^2}^2;
\end{equation}
so by \eqref{at}, using the fact \eqref{aa1}, we have
\begin{equation}\label{2107}
 \nm{\frac{\frak a_t}{\frak a}\circ h^{-1}}_{L^2}+\nm{ D_\aa Z_t}_{L^2} \lec \paren{\nm{A_1}_{L^\infty}\nm{\frac1{Z_{,\aa}}}_{L^\infty}+1}\|Z_{t,\aa}\|_{L^2}+\|b_\aa\|_{L^2}\|Z_{t,\aa}\|_{L^2}^2.
\end{equation}
This gives, by further applying the estimates \eqref{2020} in \S\ref{basic-quantities} and \eqref{2101-1},  that for $t\in [0, T]$,
\begin{equation}\label{2108}
\|Z_{ttt}(t)\|_{L^2}\lec C(T, \sup_{[0, T]}\frak E(t)) \paren{1+\nm{\frac1{Z_{,\aa}}(t)}_{L^\infty}^2}\lec C(T, \sup_{[0, T]}\frak E(t)) \paren{1+\nm{\frac1{Z_{,\aa}}(0)}_{L^\infty}^2}.
\end{equation}

We now apply Gronwall's inequality to \eqref{2103}. This yields
\begin{equation}\label{2109}
\sup_{[0, T]} \|Z_{tt}(t)\|_{L^2}\lec C\paren{T, \sup_{[0, T]}\frak E(t), \|Z_{tt}(0)\|_{L^2}, \nm{\frac1{Z_{,\aa}}(0)}_{L^\infty}}.
\end{equation}
We then apply Gronwall's inequality to \eqref{2102}, using \eqref{2109}. We obtain
\begin{equation}\label{2110}
\sup_{[0, T]} \|Z_{t}(t)\|_{L^2}\lec C\paren{T, \sup_{[0, T]}\frak E(t), \|Z_{t}(0)\|_{L^2}, \|Z_{tt}(0)\|_{L^2}, \nm{\frac1{Z_{,\aa}}(0)}_{L^\infty}}.
\end{equation}
Therefore the lower order norm $\sup_{[0, T]}(\|Z_t(t)\|_{L^2}+\|Z_{tt}(t)\|_{L^2})$ is controlled by $\sup_{[0, T]}\frak E(t)$,  the $L^2$ norm of  $(Z_t(0), Z_{tt}(0))$ and the $L^\infty$ norm of $\frac1{Z_{,\aa}(0)}$.\footnote{$\nm{\frac1{Z_{,\aa}(0)}}_{L^\infty}$  is controlled by the $H^1$ norm of $Z_{tt}(0)$, see \eqref{2000-1}.}

We are left with proving 
\begin{equation}\label{2111}
\sup_{[0, T_0)}\frak E(t)<\infty \qquad\text{implies }\quad \sup_{[0, T_0)}(\|\partial_\aa^3Z_{t}(t)\|_{\dot H^{1/2}}+\|\partial_\aa^3 Z_{tt}(t)\|_{L^2})<\infty.
\end{equation}
We do so via two stronger results, Propositions~\ref{step1} and \ref{step2}.

Let
\begin{equation}\label{2114}
\begin{aligned}
 E_{k}(t):&=E_{D_\aa \partial_\aa^{k-1}\bar Z_{t}}(t)+\|\partial_\aa^k \bar Z_t(t)\|_{L^2}^2
\\&:=\int \frac1{A_1}\abs{Z_{,\alpha'}(\partial_t+b\partial_\aa) \paren{\frac1{Z_{,\alpha'}}\partial_{\alpha'}^k\bar Z_t}}^2\,d\alpha'+\nm{\frac1{Z_{,\alpha'}}\partial_{\alpha'}^k\bar Z_t(t)}_{\dot H^{1/2}}^2+\|\partial_\aa^k \bar Z_t(t)\|_{L^2}^2,
\end{aligned}
\end{equation}
where $k=2, 3$. We have
\begin{proposition}\label{step1}
There exists a polynomial $p_1=p_1(x)$ with universal coefficients such that 
\begin{equation}\label{2115}
\frac d{dt} E_2(t)\le p_1\paren{\frak E(t)} E_2(t).
\end{equation}
\end{proposition}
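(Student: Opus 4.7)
The proof will parallel the argument in \S\ref{ddtea} for $\frac{d}{dt}{\bf E}_a(t)$, with one extra spatial derivative. Set $\Theta:=D_\aa\partial_\aa\bar Z_t = \frac1{Z_{,\aa}}\partial_\aa^2\bar Z_t$; this is holomorphic, since $\bar Z_t$ (by \eqref{interface-r}) and $\frac1{Z_{,\aa}}$ (by \eqref{interface-holo}) are boundary values of holomorphic functions in $\mathscr P_-$ and $\partial_\aa$ preserves holomorphicity. The first two terms of $E_2(t)$ equal $E_\Theta(t)$, so Lemma~\ref{basic-e} applied to $\Theta$ yields
\begin{equation*}
\frac{d}{dt}E_\Theta(t)\le \nm{\tfrac{\frak a_t}{\frak a}\circ h^{-1}}_{L^\infty}E_\Theta(t)+ 2 E_\Theta(t)^{1/2}\paren{\int\tfrac{\abs{\mathcal P\Theta}^2}{\mathcal A}\,d\aa}^{1/2},
\end{equation*}
whose prefactor is bounded by $C(\frak E)$ via \eqref{2022}. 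For the $\nm{\partial_\aa^2\bar Z_t}_{L^2}^2$ piece of $E_2$, Lemma~\ref{basic-e2} applied to $\partial_\aa^2\bar Z_t$ reduces matters to bounding $\nm{(\partial_t+b\partial_\aa)\partial_\aa^2\bar Z_t}_{L^2}$; using \eqref{eq:dza} and Leibniz,
\begin{equation*}
(\partial_t+b\partial_\aa)\partial_\aa^2\bar Z_t=Z_{,\aa}(\partial_t+b\partial_\aa)\Theta-(b_\aa-D_\aa Z_t)\partial_\aa^2\bar Z_t,
\end{equation*}
which, together with $A_1\ge 1$ and the $L^\infty$ bounds of \S\ref{basic-quantities}, gives $\nm{(\partial_t+b\partial_\aa)\partial_\aa^2\bar Z_t}_{L^2}\le C(\frak E) E_2^{1/2}$.

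The heart of the argument is $\int|\mathcal P\Theta|^2/\mathcal A\,d\aa\le C(\frak E)E_2$. Since $A_1\ge1$, it suffices to bound $\nm{Z_{,\aa}\mathcal P\Theta}_{L^2}$. I split
\begin{equation*}
\mathcal P\Theta=\bracket{\mathcal P,\tfrac1{Z_{,\aa}}}\partial_\aa^2\bar Z_t+\tfrac1{Z_{,\aa}}\bracket{\mathcal P,\partial_\aa^2}\bar Z_t+\tfrac1{Z_{,\aa}}\partial_\aa^2(\mathcal P\bar Z_t),
\end{equation*}
using $\mathcal P\bar Z_t=\paren{\tfrac{\frak a_t}{\frak a}\circ h^{-1}}(\bar Z_{tt}-i)$ from \eqref{quasi-r1}. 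Lemma~\ref{basic-4-lemma} with $f=\partial_\aa^2\bar Z_t$ controls the first commutator: every factor on the right of \eqref{basic-4} is either in $C(\frak E)$ via \S\ref{basic-quantities}--\S\ref{dtdab}, or is $\nm{\partial_\aa^2\bar Z_t}_{L^2}$ or $\nm{(\partial_t+b\partial_\aa)\partial_\aa^2\bar Z_t}_{L^2}$, both bounded by $C(\frak E)E_2^{1/2}$. The second commutator $[\mathcal P,\partial_\aa^2]\bar Z_t$ is expanded by direct computation; it produces terms containing at most $b_\aa$, $(\partial_t+b\partial_\aa)b_\aa$, $\mathcal A_\aa$ and derivatives of $\bar Z_t$ up to second order, all of which combine to $C(\frak E)E_2^{1/2}$ after the few genuinely highest-order pieces (those that naively would ask for $\mathcal A_{\aa\aa}$) are rewritten by appealing to the base equation \eqref{base-eq} for $\mathcal P\bar Z_{t,\aa}$.

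The main obstacle is the worst contribution to the third piece, namely $\tfrac{1}{Z_{,\aa}}\partial_\aa^2(\tfrac{\frak a_t}{\frak a}\circ h^{-1})(\bar Z_{tt}-i)$, because $\partial_\aa^2(\tfrac{\frak a_t}{\frak a}\circ h^{-1})$ is not directly controlled by $\frak E$. The plan is to treat it exactly as in \S\ref{ddteb}, \eqref{2081}--\eqref{2086}: since $\tfrac{\frak a_t}{\frak a}\circ h^{-1}$ is real, $\abs{\partial_\aa^2(\tfrac{\frak a_t}{\frak a}\circ h^{-1})}$ is dominated by $\abs{(I-\mathbb H)\partial_\aa^2(\tfrac{\frak a_t}{\frak a}\circ h^{-1})}$; using \eqref{aa1} to replace $\bar Z_{tt}-i$ by $-iA_1/Z_{,\aa}$, commuting the bounded factor $A_1/Z_{,\aa}^2$ through $(I-\mathbb H)$ by \eqref{3.20}, and then applying Lemma~\ref{basic-3-lemma} to the holomorphic quantity $\partial_\aa^2\bar Z_t$, I reduce the bound to already-controlled quantities and obtain $C(\frak E)E_2^{1/2}$. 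Combining the three contributions yields $\paren{\int|\mathcal P\Theta|^2/\mathcal A\,d\aa}^{1/2}\le C(\frak E)E_2^{1/2}$, and absorbing the $E_\Theta^{1/2}\le E_2^{1/2}$ factor in Lemma~\ref{basic-e} together with the $L^2$ estimate from Lemma~\ref{basic-e2} gives $\frac{d}{dt}E_2(t)\le p_1(\frak E(t))E_2(t)$.
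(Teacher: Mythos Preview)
Your approach is essentially the one the paper uses: split $E_2$ into $E_{D_\aa\partial_\aa\bar Z_t}$ plus $\nm{\partial_\aa^2\bar Z_t}_{L^2}^2$, treat the latter by Lemma~\ref{basic-e2}, the former by Lemma~\ref{basic-e}, and isolate the only genuinely dangerous piece $\partial_\aa^2\bigl(\tfrac{\frak a_t}{\frak a}\circ h^{-1}\bigr)(\bar Z_{tt}-i)$, which is handled by the realness trick together with Lemma~\ref{basic-3-lemma}. The paper's decomposition is $\mathcal P D_\aa\bar Z_{t,\aa}=[\mathcal P,D_\aa]\bar Z_{t,\aa}+D_\aa\mathcal P\bar Z_{t,\aa}$ followed by differentiating the base equation \eqref{base-eq} once (see \eqref{2112}--\eqref{2163}); yours splits through $\tfrac1{Z_{,\aa}}$ and $\partial_\aa^2$ separately. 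These regroup the same collection of terms.

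Two inaccuracies to clean up. First, the correct parallel for the realness argument is \S\ref{ddtea}, \eqref{2065}--\eqref{2068} (and the paper repeats it in \eqref{2157}--\eqref{2162}), not \S\ref{ddteb}, \eqref{2081}--\eqref{2086}; the latter is the more elaborate $D_\aa^2$ version which has to cope with $D_\aa$ not being purely real, and is unnecessary here. Second, the factor you commute through $(I-\mathbb H)$ is $\bar Z_{tt}-i$, not ``$A_1/Z_{,\aa}^2$'': the commutator $[\bar Z_{tt},\mathbb H]\partial_\aa^2\bigl(\tfrac{\frak a_t}{\frak a}\circ h^{-1}\bigr)$ is controlled by \eqref{3.20} via $\nm{\bar Z_{tt,\aa}}_{L^\infty}\lec C(\frak E)E_2^{1/4}$ \eqref{2128} and $\nm{\partial_\aa(\tfrac{\frak a_t}{\frak a}\circ h^{-1})}_{L^2}\lec C(\frak E)E_2^{1/4}$ \eqref{2143}. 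The quantity $A_1/Z_{,\aa}^2$ is not bounded in $L^\infty$ in this regime. Also, your remark about the $\partial_\aa\mathcal A_\aa$ contribution is a little glib: the paper handles the term $(\partial_\aa\mathcal A_\aa)\bar Z_{t,\aa}$ not via the base equation but by the decomposition \eqref{2137}, using $\nm{\bar Z_{t,\aa}}_{L^\infty}$ on one piece and $\nm{D_\aa\bar Z_t}_{L^\infty}\nm{\partial_\aa^2 Z_{tt}}_{L^2}$ on the other.
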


\begin{proposition}\label{step2}
There exists a polynomial $p_2=p_2(x,y, z)$ with universal coefficients such that 
\begin{equation}\label{2116}
\frac d{dt} E_3(t)\le p_2\paren{\frak E(t), E_2(t), \nm{\frac1{Z_{,\aa}}(t)}_{L^\infty}} (E_3(t)+1).
\end{equation}
\end{proposition}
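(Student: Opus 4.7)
\emph{Plan.} The strategy is to mirror the argument of \S\ref{proof0} (Lemma~\ref{basic-e} plus Lemma~\ref{basic-e2}) one derivative higher, applying Lemma~\ref{basic-e} to $\Theta = D_{\alpha'}\partial_{\alpha'}^2\bar Z_t = \tfrac{1}{Z_{,\alpha'}}\partial_{\alpha'}^3\bar Z_t$ and Lemma~\ref{basic-e2} to $\Theta = \partial_{\alpha'}^3\bar Z_t$. That $D_{\alpha'}\partial_{\alpha'}^2\bar Z_t$ is holomorphic follows from $(I-\mathbb H)\bar Z_t = 0$, $(I-\mathbb H)(\tfrac{1}{Z_{,\alpha'}}-1) = 0$ and the stability of holomorphicity under $\partial_{\alpha'}$ and products on $\mathscr P_-$. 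The multiplicative factors appearing in \eqref{eq:42} and \eqref{basic-2}, namely $\nm{\tfrac{\frak a_t}{\frak a}\circ h^{-1}}_{L^\infty}$ and $\|b_{\alpha'}\|_{L^\infty}$, are already controlled by $C(\frak E)$ in \S\ref{ata-da1} and \S\ref{basic-quantities}. So the substantive task is to estimate the integrand $\tfrac{1}{\mathcal A}|\mathcal P(D_{\alpha'}\partial_{\alpha'}^2\bar Z_t)|^2$ and $\|(\partial_t+b\partial_{\alpha'})\partial_{\alpha'}^3\bar Z_t\|_{L^2}$ by a polynomial in $\frak E,E_2,\|\tfrac{1}{Z_{,\alpha'}}\|_{L^\infty}$ times $E_3^{1/2}+1$.

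\emph{Lower-level estimates.} Expanding $Z_{,\alpha'}(\partial_t+b\partial_{\alpha'})(\tfrac{1}{Z_{,\alpha'}}\partial_{\alpha'}^3\bar Z_t) = (b_{\alpha'}-D_{\alpha'}Z_t)\partial_{\alpha'}^3\bar Z_t + (\partial_t+b\partial_{\alpha'})\partial_{\alpha'}^3\bar Z_t$ via \eqref{eq:dza} and using $1\le A_1\le C(\frak E)$ immediately yields $\|(\partial_t+b\partial_{\alpha'})\partial_{\alpha'}^3\bar Z_t\|_{L^2}\lec C(\frak E)E_3^{1/2}$, hence $\tfrac d{dt}\|\partial_{\alpha'}^3\bar Z_t\|_{L^2}^2\lec C(\frak E)E_3$ from Lemma~\ref{basic-e2}. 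Decomposing $\mathcal P(\tfrac{1}{Z_{,\alpha'}}\partial_{\alpha'}^3\bar Z_t) = [\mathcal P,\tfrac{1}{Z_{,\alpha'}}]\partial_{\alpha'}^3\bar Z_t + \tfrac{1}{Z_{,\alpha'}}\mathcal P\partial_{\alpha'}^3\bar Z_t$ and using $\tfrac{1}{\mathcal A} = \tfrac{|Z_{,\alpha'}|^2}{A_1}$ with $A_1\ge 1$, the commutator contribution is handled by Lemma~\ref{basic-4-lemma} applied to $f = \partial_{\alpha'}^3\bar Z_t$ (since $\|f\|_{L^2}$ and $\|(\partial_t+b\partial_{\alpha'})f\|_{L^2}$ are both $\lec C(\frak E)E_3^{1/2}$), and the remaining piece reduces to bounding $\|\mathcal P\partial_{\alpha'}^3\bar Z_t\|_{L^2}$.

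\emph{Main obstacle.} The heart of the argument is the bound on $\|\mathcal P\partial_{\alpha'}^3\bar Z_t\|_{L^2}$. Applying $\partial_{\alpha'}^2$ to the base equation \eqref{base-eq} for $\mathcal P\bar Z_{t,\alpha'}$ and using the commutator $[\partial_{\alpha'}^2,\mathcal P]$ expands this quantity into sums of products involving up to three $\alpha'$-derivatives of the coefficients $b$, $\mathcal A$, $A_1$, $\tfrac{\frak a_t}{\frak a}\circ h^{-1}$ paired with derivatives of $\bar Z_t$ and $\bar Z_{tt}$ of order at most three. The critical top-order combinations, for instance $\partial_{\alpha'}^2(\tfrac{\frak a_t}{\frak a}\circ h^{-1})(\bar Z_{tt}-i)_{\alpha'}$ and $\partial_{\alpha'}^2\mathcal A\cdot\partial_{\alpha'}\bar Z_{t,\alpha'}$, are precisely where a direct $L^2$ estimate would demand control of $\partial_{\alpha'}^3\tfrac{1}{Z_{,\alpha'}}$ or $\partial_{\alpha'}^2 b_{\alpha'}$ in $L^2$, which is not supplied by $\frak E$ or $E_2$. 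The resolution mirrors \S\ref{ddtea}--\S\ref{ddteb}: insert $(I-\mathbb H)$ against each real-valued top-order coefficient; use the holomorphicity $(I-\mathbb H)\bar Z_t = 0 = (I-\mathbb H)(\tfrac{1}{Z_{,\alpha'}}-1)$ to recast the resulting expressions as commutators; then apply the kernel inequalities \eqref{eq:b12}--\eqref{eq:b16} and \eqref{3.20} to trade top-order norms for lower ones multiplied by $E_3^{1/2}$. The identities \eqref{ba}, \eqref{dta1}, \eqref{at}, \eqref{2029} for $b_{\alpha'}$, $(\partial_t+b\partial_{\alpha'})A_1$, $\tfrac{\frak a_t}{\frak a}\circ h^{-1}$, and $\mathcal A_{\alpha'}$ are differentiated once or twice and handled in the same style. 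The remaining $L^2$ ingredients are then $\|\partial_{\alpha'}^2\bar Z_{tt}\|_{L^2}$ and $\|\partial_{\alpha'}^2\bar Z_t\|_{L^2}$ (controlled by $E_2$ via the Paragraph~2 argument applied at the $k=2$ level), $\|D_{\alpha'}^2\tfrac{1}{Z_{,\alpha'}}\|_{L^2}$ and $\|\tfrac{1}{Z_{,\alpha'}}\|_{L^\infty}$ (controlled via Proposition~\ref{prop:energy-eq} and by hypothesis), and $\|\partial_{\alpha'}^3\bar Z_t\|_{L^2} = E_3^{1/2}$; the three-variable dependence of $p_2$ on $(\frak E,E_2,\|\tfrac{1}{Z_{,\alpha'}}\|_{L^\infty})$ arises precisely from these three sources. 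Assembling everything yields $\|\mathcal P\partial_{\alpha'}^3\bar Z_t\|_{L^2}^2\le p(\frak E,E_2,\|\tfrac{1}{Z_{,\alpha'}}\|_{L^\infty})(E_3+1)$, which combined with the Lemma~\ref{basic-e} and Lemma~\ref{basic-e2} bounds above gives \eqref{2116}.
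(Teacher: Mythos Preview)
Your overall strategy is the paper's: apply Lemma~\ref{basic-e} to $\Theta=D_{\alpha'}\partial_{\alpha'}^2\bar Z_t$, Lemma~\ref{basic-e2} to $\partial_{\alpha'}^3\bar Z_t$, reduce via commutators to $\partial_{\alpha'}^3\mathcal P\bar Z_t$, and close the top-order term with the $(I-\mathbb H)$ trick of \S\ref{ddtea}. That is correct.

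Two points of imprecision are worth flagging. First, you misidentify the critical term: after expanding $\partial_{\alpha'}^3\bigl(\tfrac{\frak a_t}{\frak a}\circ h^{-1}(\bar Z_{tt}-i)\bigr)$ by Leibniz, the only piece that genuinely resists a direct $L^2$ bound is $\partial_{\alpha'}^3\bigl(\tfrac{\frak a_t}{\frak a}\circ h^{-1}\bigr)(\bar Z_{tt}-i)$, not the $\partial_{\alpha'}^2$ term you name. The paper in fact controls $\nm{\partial_{\alpha'}^2(\tfrac{\frak a_t}{\frak a}\circ h^{-1})}_{L^2}$ and $\nm{\partial_{\alpha'}^2\mathcal A_{\alpha'}}_{L^2}$ directly (see \eqref{2224-1}, \eqref{2222}); the $(I-\mathbb H)$ argument is reserved for the single third-derivative term, exactly as in \eqref{2247}--\eqref{2252}. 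Second, you cite $\nm{D_{\alpha'}^2\tfrac{1}{Z_{,\alpha'}}}_{L^2}$ from $\mathcal E$, but what is actually needed throughout the intermediate steps is $\nm{\partial_{\alpha'}^2\tfrac{1}{Z_{,\alpha'}}}_{L^2}$, which the paper obtains via \eqref{2214}--\eqref{2215} by differentiating \eqref{aa1} twice; this is one of the principal places where the explicit dependence on $\nm{\tfrac{1}{Z_{,\alpha'}}}_{L^\infty}$ enters. Your sketch also understates the volume of intermediate estimates required before the final step: the paper devotes \S\ref{proof-prop2} to establishing $\nm{\partial_{\alpha'}^2 b_{\alpha'}}_{L^2}$, $\nm{\partial_{\alpha'}^3\bar Z_{tt}}_{L^2}$, $\nm{(\partial_t+b\partial_{\alpha'})\partial_{\alpha'}^2\tfrac{1}{Z_{,\alpha'}}}_{L^2}$, $\nm{\partial_{\alpha'}^2(\partial_t+b\partial_{\alpha'})b_{\alpha'}}_{L^2}$, each with its own bound of the form $C(\frak E,E_2,\nm{\tfrac{1}{Z_{,\alpha'}}}_{L^\infty})(E_3^{1/2}+1)$. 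None of these is deep, but they are not free either, and they feed into the commutator pieces $[\mathcal P,\partial_{\alpha'}]$ at each level of the decomposition \eqref{2239}.
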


By Gronwall's inequality, we have from \eqref{2115} and \eqref{2116}  that
\begin{equation}\label{step1-2}
\begin{aligned}
E_2(t)&\le E_2(0)e^{\int_0^t p_1(\frak E(s))\,ds};\qquad\text{and }\\
E_3(t)&\le \paren{E_3(0)+\int_0^t p_2\paren{\frak E(s), E_2(s), \nm{\frac1{Z_{,\aa}}(s)}_{L^\infty}}\,ds}e^{\int_0^t p_2\paren{\frak E(s), E_2(s), \nm{\frac1{Z_{,\aa}}(s)}_{L^\infty}}\,ds},
\end{aligned}
\end{equation}
so $\sup_{[0, T]} E_2(t)$ is controlled by $E_2(0)$ and $\sup_{[0, T]}\frak E(t)$; and $\sup_{[0, T]} E_3(t)$
is controlled by $E_3(0)$, $\sup_{[0, T]}\frak E(t)$,  $\sup_{[0, T]} E_2(t)$ and $\sup_{[0, T]} \nm{\frac1{Z_{,\aa}}(t)}_{L^\infty}$. And by \eqref{2101-1}, $\sup_{[0, T]} E_3(t)$ is  in turn controlled by $E_3(0)$, $\sup_{[0, T]}\frak E(t)$, $E_2(0)$ and $ \nm{\frac1{Z_{,\aa}}(0)}_{L^\infty}$.
 We will prove Propositions~\ref{step1} and ~\ref{step2} in the next two subsections.
In \S\ref{complete1}  we will exam the relation between the energy functionals $E_2$, $E_3$ and the Sobolev norms $\| Z_t(t)\|_{H^{3+1/2}}$, $\| Z_{tt}(t)\|_{H^3}$ and 
complete the proof of Theorem~\ref{blow-up}. 

\subsection{ The proof of Proposition~\ref{step1}}\label{proof-prop1}
 We begin with a list of quantities controlled by $E_2(t)$.

\subsubsection{Quantities controlled by $E_2(t)$.}\label{quantities-e2}
It is clear by the definition  that the following are controlled by $E_2(t)$. 
\begin{equation}\label{2117}
\|\partial_{\alpha'}^2\bar Z_t\|_{L^2}^2\le E_2,\quad    \nm{\frac1{Z_{,\alpha'}}\partial_{\alpha'}^2\bar Z_t}_{\dot H^{1/2}}^2 \le  E_2,\quad \nm{Z_{,\alpha'}(\partial_t+b\partial_\aa)\paren{ \frac1{Z_{,\alpha'}}\partial_{\alpha'}^2\bar Z_t}}_{L^2}^2\le C(\frak E) E_2,
\end{equation}
because $1\le A_1\le C(\frak E)$ by \eqref{2000}. We compute,  by product rules and \eqref{eq:dza}, that
\begin{equation}\label{2117-1}
 Z_{,\alpha'}(\partial_t+b\partial_\aa) \paren{\frac1{Z_{,\alpha'}}\partial_{\alpha'}^2\bar Z_t}= (\partial_t+b\partial_\aa)\partial_{\alpha'}^2\bar Z_t+ (b_\aa-D_\aa Z_t)\partial_{\alpha'}^2\bar Z_t,
\end{equation}
therefore, by estimates \eqref{2020} in \S\ref{basic-quantities},
\begin{equation}\label{2118}
\abs{\nm{(\partial_t+b\partial_\aa)\partial_{\alpha'}^2\bar Z_t}_{L^2}-\nm{Z_{,\alpha'}(\partial_t+b\partial_\aa) \frac1{Z_{,\alpha'}}\partial_{\alpha'}^2\bar Z_t}_{L^2}}
\le C(\frak E)\|\partial_{\alpha'}^2\bar Z_t\|_{L^2},
\end{equation}
so
\begin{equation}\label{2119}
\nm{(\partial_t+b\partial_\aa)\partial_{\alpha'}^2\bar Z_t}_{L^2}^2\le C(\frak E)E_2.
\end{equation}
Now by \eqref{eq:c7}, 
\begin{equation}
\partial_{\alpha'}(\partial_t+b\partial_\aa)\bar Z_{t,\alpha'}
= (\partial_t+b\partial_\aa)\partial_{\alpha'}^2\bar Z_{t}+b_{\alpha'}\partial_{\alpha'}^2\bar Z_{t},
\end{equation}
so by \eqref{2020}, 
\begin{equation}\label{2120}
\|\partial_{\alpha'}(\partial_t+b\partial_\aa)\bar Z_{t,\alpha'}\|_{L^2}^2\le C(\frak E)E_2.
\end{equation}
Using Sobolev inequality \eqref{eq:sobolev} and \eqref{2020}, we obtain
\begin{align}\label{2121}
\|Z_{t,\alpha'}\|_{L^\infty}^2&\le 2\|Z_{t,\alpha'}\|_{L^2}\|\partial_{\alpha'}^2Z_{t}\|_{L^2}\le C(\frak E)E_2^{1/2};\qquad\qquad\text{and}\\
\label{2122}
\|(\partial_t+b\partial_\aa)Z_{t,\alpha'}\|_{L^\infty}^2&\le 2\|(\partial_t+b\partial_\aa)Z_{t,\alpha'}\|_{L^2}\|\partial_{\alpha'}(\partial_t+b\partial_\aa)Z_{t,\aa}\|_{L^2}\le C(\frak E)E_2^{1/2}.
\end{align}

We need the estimates for some additional quantities,  which we give in the following subsections. 

\subsubsection{Controlling the  quantity $\nm{\partial_{\alpha'}(b_\aa-2\Re D_\aa Z_t)}_{L^2}$.} \label{ddb}
We begin with equation \eqref{ba}, and differentiate with respect to $\aa$. We get
\begin{equation}\label{2122-1}
\begin{aligned}
\partial_\aa(b_\aa-2\Re D_\aa Z_t)&=\Re \paren{\bracket{ \partial_\aa \frac1{Z_{,\aa}}, \mathbb H}  Z_{t,\alpha'}+ \bracket{Z_{t,\aa}, \mathbb H}\partial_\aa \frac1{Z_{,\aa}}  }\\&+\Re \paren{\bracket{  \frac1{Z_{,\aa}}, \mathbb H}  \partial_\aa^2 Z_{t}+ \bracket{Z_t, \mathbb H}\partial_\aa^2 \frac1{Z_{,\aa}}  };
\end{aligned}
\end{equation}
using $\mathbb H Z_{t,\aa}=-Z_{t,\aa}$ to rewrite the first term,
\begin{equation}\label{2129-1}
\bracket{\partial_\aa \frac1{Z_{,\aa}}, \mathbb H}  Z_{t,\alpha'}=-(I+\mathbb H)\paren{\partial_\aa \frac1{Z_{,\aa}}  Z_{t,\alpha'}}
\end{equation}
and then applying  \eqref{3.21} and \eqref{3.20} to the last two terms. We get, by \eqref{2121} and \eqref{2020},
\begin{equation}\label{2123}
\nm{\partial_\aa(b_\aa-2\Re D_\aa Z_t)}_{L^2}\lesssim \|Z_{t,\alpha'}\|_{L^\infty}\nm{\partial_{\alpha'}\frac1{Z_{,\alpha'}}}_{L^2}\le C(\frak E) E_2^{1/4}.
\end{equation}

\subsubsection{Controlling $\|\partial_{\alpha'}^2\bar Z_{tt}\|_{L^2}$}\label{ddzt}
We start with  $(\partial_t+b\partial_\aa)\partial_{\alpha'}^2\bar Z_t$,  and   commute $\partial_t+b\partial_\aa$ with $\partial_{\alpha'}^2$; by \eqref{eq:c11}, we have
\begin{equation}\label{2124}
\begin{aligned}
\partial_{\alpha'}^2\bar Z_{tt}-(\partial_t+b\partial_\aa)\partial_{\alpha'}^2\bar Z_t
&= [\partial_{\alpha'}^2, (\partial_t+b\partial_\aa)]\bar Z_t
\\&=2b_{\alpha'} \partial_{\alpha'}^2\bar Z_t +\paren{\partial_{\alpha'}b_{\alpha'}} \bar Z_{t,\alpha'},
\end{aligned}
\end{equation}
We further expand the second term 
\begin{equation}\label{2125}
\paren{\partial_{\alpha'}b_{\alpha'}} \bar Z_{t,\alpha'}=\paren{\partial_{\alpha'}(b_{\alpha'}-2\Re D_\aa Z_t)} \bar Z_{t,\alpha'}+2\Re\paren{\partial_\aa\frac1{Z_{,\aa}}Z_{t,\aa}}\bar Z_{t,\aa}+2\Re \paren{\frac1{Z_{,\aa}}\partial_\aa^2 Z_{t}}\bar Z_{t,\aa};
\end{equation}
we get, by \eqref{2124} and \eqref{2125} that
\begin{equation}\label{2126}
\begin{aligned}
\|\partial_{\alpha'}^2\bar Z_{tt}\|_{L^2}&\lec \|(\partial_t+b\partial_\aa)\partial_{\alpha'}^2\bar Z_t\|_{L^2}+
\|b_\aa\|_{L^\infty}\|\partial_{\alpha'}^2\bar Z_t\|_{L^2}+\nm{\partial_\aa(b_\aa-2\Re D_\aa Z_t)}_{L^2}\|Z_{t,\alpha'}\|_{L^\infty}\\&+\nm{ \partial_\aa\frac1{Z_{,\aa}}}_{L^2}\|Z_{t,\alpha'}\|_{L^\infty}^2+\|D_\aa Z_t\|_{L^\infty}\|\partial_{\alpha'}^2\bar Z_t\|_{L^2}
\end{aligned}
\end{equation}
Therefore by the estimates in \S\ref{quantities-e2}, \S\ref{basic-quantities} and \eqref{2123}, 
\begin{equation}\label{2127}
\|\partial_{\alpha'}^2\bar Z_{tt}\|^2_{L^2}\lesssim C(\frak E)E_2.
\end{equation}
As a consequence of  the Sobolev inequality \eqref{eq:sobolev}, and estimates \eqref{2020} in \S\ref{basic-quantities},
\begin{equation}\label{2128}
\|\partial_{\alpha'}\bar Z_{tt}\|^2_{L^\infty}\le 2\|\partial_{\alpha'}\bar Z_{tt}\|_{L^2}\|\partial_{\alpha'}^2\bar Z_{tt}\|_{L^2}    \lesssim C(\frak E)E_2^{1/2}.
\end{equation}
We also have, by the $L^2$ boundedness of $\mathbb H$, 
\begin{equation}\label{2128-1}
\|\mathbb H\bar Z_{tt,\aa}\|^2_{L^\infty}\le 2\|\partial_{\alpha'}\mathbb H\bar Z_{tt}\|_{L^2}\|\partial_{\alpha'}^2\mathbb H\bar Z_{tt}\|_{L^2}\lec \|\partial_{\alpha'}\bar Z_{tt}\|_{L^2}\|\partial_{\alpha'}^2\bar Z_{tt}\|_{L^2}    \lesssim C(\frak E)E_2^{1/2}.
\end{equation}

\subsubsection{Controlling the  quantity $\nm{(\partial_t+b\partial_\aa)\partial_{\alpha'}(b_\aa-2\Re D_\aa Z_t)}_{L^2}$.}\label{dtdabl2}
We begin with \eqref{2122-1}, replacing the first term  by \eqref{2129-1}, 
then use \eqref{eq:c21}  on the first term and use \eqref{eq:c14} to compute the remaining three terms,
\begin{equation}\label{2129}
\begin{aligned}
(\partial_t&+b\partial_\aa)\partial_\aa(b_\aa-2\Re D_\aa Z_t)\\&=-\Re \paren{[b,\mathbb H]\partial_\aa \paren{ \partial_\aa \frac1{Z_{,\aa}}  Z_{t,\alpha'}} +(I+\mathbb H) (\partial_t+b\partial_\aa)\paren{ \partial_\aa \frac1{Z_{,\aa}}  Z_{t,\alpha'}}}\\&
+\Re\paren{ \bracket{(\partial_t+b\partial_\aa) Z_{t,\aa}, \mathbb H}\partial_\aa \frac1{Z_{,\aa}} +\bracket{ Z_{t,\aa}, \mathbb H}\partial_\aa (\partial_t+b\partial_\aa)\frac1{Z_{,\aa}}-\bracket{ Z_{t,\aa},  b; \partial_\aa \frac1{Z_{,\aa}} }}\\&+\Re \paren{\bracket{  \frac1{Z_{,\aa}}, \mathbb H}  \partial_\aa (\partial_t+b\partial_\aa)  Z_{t,\aa} +\bracket{  (\partial_t+b\partial_\aa) \frac1{Z_{,\aa}}, \mathbb H}  \partial_\aa^2 Z_{t}-\bracket{  \frac1{Z_{,\aa}}, b; \partial_\aa^2 Z_{t} } } \\&+\Re \paren{\bracket{Z_{tt}, \mathbb H}\partial_\aa^2 \frac1{Z_{,\aa}} + \bracket{Z_t, \mathbb H}\partial_\aa(\partial_t+b\partial_\aa)\partial_\aa \frac1{Z_{,\aa}} - \bracket{Z_t, b; \partial_\aa^2 \frac1{Z_{,\aa}} } }.
\end{aligned}
\end{equation}
We have, by \eqref{3.20}, \eqref{3.16}, \eqref{3.21}, \eqref{3.17}, and estimates in \S\ref{basic-quantities}, \S\ref{dadtza}, \S\ref{quantities-e2}-\S\ref{ddzt},
\begin{equation}\label{2130}
\begin{aligned}
&\nm{(\partial_t+b\partial_\aa)\partial_{\alpha'}(b_\aa-2\Re D_\aa Z_t)}_{L^2}\lec \|b_\aa\|_{L^\infty}\nm{ \partial_\aa \frac1{Z_{,\aa}}}_{L^2}\| Z_{t,\alpha'}\|_{L^\infty}\\&+\nm{ (\partial_t+b\partial_\aa)\partial_\aa \frac1{Z_{,\aa}}}_{L^2}\| Z_{t,\alpha'}\|_{L^\infty}+\nm{ \partial_\aa \frac1{Z_{,\aa}}}_{L^2}\| (\partial_t+b\partial_\aa)Z_{t,\alpha'}\|_{L^\infty}\\&
+\nm{ \partial_\aa(\partial_t+b\partial_\aa) \frac1{Z_{,\aa}}}_{L^2}\| Z_{t,\alpha'}\|_{L^\infty}+\nm{ \partial_\aa \frac1{Z_{,\aa}}}_{L^2}\| Z_{tt,\alpha'}\|_{L^\infty}\\&
\lec C(\frak E) E_2^{1/4}.
\end{aligned}
\end{equation}

\subsubsection{Controlling the quantity $\partial_\aa \mathcal A_\aa$}
We begin with equation \eqref{2029}, 
 \begin{equation}\label{2131}
i \mathcal A_\aa= \frac1{Z_{,\aa}}\partial_{\alpha'} Z_{tt}+(Z_{tt}+i)\partial_{\alpha'}\frac{1}{Z_{,\alpha'}}
 \end{equation}
and differentiate with respect to $\aa$. We get
  \begin{equation}\label{2132}
  i\partial_{\alpha'}\mathcal A_\aa=\frac{\partial_{\alpha'}^2Z_{tt}}{Z_{,\alpha'}}+2\partial_{\alpha'} Z_{tt}\partial_{\alpha'}\frac{1}{Z_{,\alpha'}}+(Z_{tt}+i)\partial_{\alpha'}^2\frac{1}{Z_{,\alpha'}}.
  \end{equation}
  Applying $(I-\mathbb H)$ yields
    \begin{equation}\label{2133}
i(I-\mathbb H)  \partial_{\alpha'}\mathcal A_\aa=(I-\mathbb H)(\frac{\partial_{\alpha'}^2Z_{tt}}{Z_{,\alpha'}})+2(I-\mathbb H)(\partial_{\alpha'} Z_{tt}\partial_{\alpha'}\frac{1}{Z_{,\alpha'}})+(I-\mathbb H)((Z_{tt}+i)\partial_{\alpha'}^2\frac{1}{Z_{,\alpha'}}).
  \end{equation}
  We rewrite the first term on the right by commuting out $\frac1{Z_{,\alpha'}}$,
 and use $(I-\mathbb H) \partial_{\alpha'}^2\frac{1}{Z_{,\alpha'}}=0$  to rewrite the third term on the right of \eqref{2133} as a commutator.
  We have
  \begin{equation}\label{2136}
  i(I-\mathbb H)  \partial_{\alpha'}\mathcal A_\aa-\frac1{Z_{,\aa}}(I-\mathbb H)\partial_{\alpha'}^2Z_{tt}=
  [\frac{1}{Z_{,\alpha'}}, \mathbb H]{\partial_{\alpha'}^2Z_{tt}}
  +2(I-\mathbb H)(\partial_{\alpha'} Z_{tt}\partial_{\alpha'}\frac{1}{Z_{,\alpha'}})+ [Z_{tt}, \mathbb H]\partial_{\alpha'}^2\frac{1}{Z_{,\alpha'}}.
   \end{equation}
Taking imaginary parts, then applying \eqref{3.20}, \eqref{3.21} and H\"older's inequality gives
  \begin{equation}\label{2137}
  \nm{\partial_{\alpha'}\mathcal A_\aa-\Im\braces{\frac{1}{Z_{,\alpha'}}(I-\mathbb H)({\partial_{\alpha'}^2Z_{tt}})}}_{L^2}\lesssim \nm{\partial_{\alpha'}\frac{1}{Z_{,\alpha'}}}_{L^2}\|Z_{tt,\alpha'}\|_{L^\infty}\lec C(\frak E)E_2^{1/4}.
  \end{equation}

\subsubsection{Controlling $\nm{\partial_\aa\paren{\frac{\frak a_t}{\frak a}\circ h^{-1}}}_{L^2}$}\label{data}
We begin with \eqref{at}. We have controlled $\nm{\partial_\aa(b_\aa-2\Re D_\aa Z_t)}_{L^2}$ in \eqref{2123}, we are left with  $\nm{\partial_\aa \paren{\frac{(\partial_t+b\partial_\aa)A_1}{A_1}}}_{L^2}$.

We proceed with computing $\partial_\aa A_1$, using \eqref{a1}. We have
\begin{equation}\label{2138}
\begin{aligned}
\partial_\aa A_1&=-\Im\paren{ [Z_{t,\aa},\mathbb H]\bar Z_{t,\aa}+[Z_t, \mathbb H]\partial_\aa^2\bar Z_t}\\&=-\Im (-\mathbb H |\bar Z_{t,\aa}|^2+[Z_t, \mathbb H]\partial_\aa^2\bar Z_t);
\end{aligned}
\end{equation}
here we used the fact $\mathbb H \bar Z_{t,\aa}=\bar Z_{t,\aa}$ to expand the first term, then removed the term $\Im |Z_{t,\aa}|^2=0$. Applying \eqref{3.20}, \eqref{2121} and \eqref{2020} gives
\begin{equation}\label{2139}
\|\partial_\aa A_1\|_{L^2}\lec \|Z_{t,\aa}\|_{L^\infty}\|Z_{t,\aa}\|_{L^2}\lec C(\frak E) E_2^{1/4}.
\end{equation}  
Now taking derivative $\partial_t+b\partial_\aa$ to \eqref{2138}, using \eqref{eq:c21} and \eqref{eq:c14}, yields
\begin{equation}\label{2140}
\begin{aligned}
(\partial_t+b\partial_\aa)\partial_\aa A_1&=\Im \paren{[b, \mathbb H]\partial_\aa |\bar Z_{t,\aa}|^2+2 \mathbb H \Re\braces{Z_{t,\aa} (\partial_t+b\partial_\aa)\bar Z_{t,\aa} }}\\&
-\Im ([Z_{tt}, \mathbb H]\partial_\aa^2\bar Z_t)+[Z_t,\mathbb H]\partial_\aa (\partial_t+b\partial_\aa  )\bar Z_{t,\aa}-[Z_t, b; \partial_\aa^2\bar Z_t ]);
\end{aligned}
\end{equation}
By \eqref{3.20}, then use \eqref{2020}, \eqref{2121}, \eqref{2128} we get
\begin{equation}\label{2141}
\begin{aligned}
\nm{(\partial_t+b\partial_\aa)\partial_\aa A_1}_{L^2}&\lec \|b_\aa\|_{L^\infty}\|Z_{t,\aa}\|_{L^2}\|Z_{t,\aa}\|_{L^\infty}+ \|Z_{tt,\aa}\|_{L^\infty}\|Z_{t,\aa}\|_{L^2}\\&+\|Z_{t,\aa}\|_{L^\infty}\|(\partial_t+b\partial_\aa)\bar Z_{t,\aa}\|_{L^2}\lec C(\frak E) E_2^{1/4}.
\end{aligned}
\end{equation}
Commuting $\partial_\aa$ with $\partial_t+b\partial_\aa$ gives
\begin{equation}\label{2142}
\nm{\partial_\aa (\partial_t+b\partial_\aa)A_1}_{L^2}\lec \nm{(\partial_t+b\partial_\aa)\partial_\aa A_1}_{L^2}+\nm{b_\aa \partial_\aa A_1}_{L^2}\lec C(\frak E) E_2^{1/4}.
\end{equation}
Combine \eqref{2142} with \eqref{2139} and \eqref{2123}, using \eqref{2000}, \eqref{2021}, we obtain
\begin{equation}\label{2143}
\nm{\partial_\aa\paren{\frac{\frak a_t}{\frak a}\circ h^{-1}}}_{L^2} \lec C(\frak E) E_2^{1/4}.
\end{equation}

Sum up the estimates obtained in \S\ref{quantities-e2} - \S\ref{data}, we have that the following quantities are controlled by $C(\frak E)E_2^{1/2}$:
\begin{equation}\label{2144}
\begin{aligned}
&\|\partial_{\alpha'}^2\bar Z_t\|_{L^2}, \quad    \nm{\frac1{Z_{,\alpha'}}\partial_{\alpha'}^2\bar Z_t}_{\dot H^{1/2}},\quad \nm{Z_{,\alpha'}(\partial_t+b\partial_\aa)\paren{ \frac1{Z_{,\alpha'}}\partial_{\alpha'}^2\bar Z_t}}_{L^2}, \\& \quad \nm{(\partial_t+b\partial_\aa)\partial_{\alpha'}^2\bar Z_t}_{L^2}, \quad 
\nm{\partial_{\alpha'}(\partial_t+b\partial_\aa)\bar Z_{t,\aa}}_{L^2},\quad \|\partial_{\alpha'}^2\bar Z_{tt}\|_{L^2}\\&
\|Z_{t,\alpha'}\|_{L^\infty}^2,\quad
\|(\partial_t+b\partial_\aa)Z_{t,\alpha'}\|_{L^\infty}^2,\quad \|\partial_{\alpha'}\bar Z_{tt}\|^2_{L^\infty},
\quad \|\mathbb H\bar Z_{tt,\aa}\|^2_{L^\infty},\\&
\nm{\partial_\aa(b_\aa-2\Re D_\aa Z_t)}_{L^2}^2, \quad \nm{(\partial_t+b\partial_\aa)\partial_\aa(b_\aa-2\Re D_\aa Z_t)}_{L^2}^2, \\&\quad  \nm{\partial_{\alpha'}\mathcal A_\aa-\Im\braces{\frac{1}{Z_{,\alpha'}}(I-\mathbb H)({\partial_{\alpha'}^2Z_{tt}})}}_{L^2}^2,\quad \nm{\partial_\aa\paren{\frac{\frak a_t}{\frak a}\circ h^{-1}}}_{L^2}^2,\quad \|\partial_\aa A_1\|_{L^2}^2
\end{aligned}
\end{equation}

\subsubsection{Controlling $\frac d{dt}E_2(t)$} We are now ready to estimate $\frac d{dt}E_2(t)$. 
We know 
$$E_2(t)=E_{D_\aa \partial_\aa \bar Z_{t}}(t)+\|\partial_\aa^2 \bar Z_t(t)\|_{L^2}^2,$$
 where $E_{D_\aa \partial_\aa \bar Z_{t}}(t)$ is as defined in \eqref{eq:41}. We  use Lemma~\ref{basic-e} on $E_{D_\aa \partial_\aa \bar Z_{t}}(t)$  and Lemma~\ref{basic-e2} on $\|\partial_\aa^2 \bar Z_t(t)\|_{L^2}^2$. 

We start with $\|\partial_\aa^2 \bar Z_t(t)\|_{L^2}^2$. We know by Lemma~\ref{basic-e2} that
\begin{equation}\label{2144-1}
\frac d{dt} \|\partial_\aa^2 \bar Z_t(t)\|_{L^2}^2\lec \|(\partial_t+b\partial_\aa)\partial_\aa^2 \bar Z_t(t)\|_{L^2}\|\partial_\aa^2 \bar Z_t(t)\|_{L^2}+\|b_\aa\|_{L^\infty}\|\partial_\aa^2 \bar Z_t(t)\|_{L^2}^2
\end{equation}
We have controlled $\|(\partial_t+b\partial_\aa)\partial_\aa^2 \bar Z_t(t)\|_{L^2}$ in \eqref{2119}, and $\|b_\aa\|_{L^\infty}$ in \S\ref{basic-quantities},  therefore
\begin{equation}\label{2145}
\frac d{dt} \|\partial_\aa^2 \bar Z_t(t)\|_{L^2}^2\lec C(\frak E(t))E_2(t).
\end{equation}

We now estimate $\frac d{dt} E_{D_\aa \partial_\aa \bar Z_{t}}(t)$. Take  $\Th=D_\aa  \bar Z_{t,\aa}$ in  Lemma~\ref{basic-e}, we have
 \begin{equation}\label{2149}
\frac d{dt} E_{D_\aa \partial_\aa \bar Z_{t}}(t)  \le \nm{\frac{\frak a_t}{\frak a}\circ h^{-1}}_{L^\infty} E_{D_\aa \partial_\aa \bar Z_{t}}(t)+2 E_{D_\aa \partial_\aa \bar Z_{t}}(t)^{1/2}\paren{\int\frac{|\mathcal P D_\aa \bar Z_{t,\aa}|^2}{\mathcal A}\,d\aa}^{1/2}.
\end{equation}
We have controlled $\nm{\frac{\frak a_t}{\frak a}\circ h^{-1}}_{L^\infty}$ in \eqref{2022}. We need to control
$\int\frac{|\mathcal P D_\aa \bar Z_{t,\aa}|^2}{\mathcal A}\,d\aa$. 

We know  $\mathcal A=\frac{A_1}{|Z_{,\aa}|^2}$ and $A_1\ge 1$, so
\begin{equation}\label{2150}
\int\frac{|\mathcal P D_\aa \bar Z_{t,\aa}|^2}{\mathcal A}\,d\aa\le \int  |Z_{,\aa}\mathcal P D_\aa \bar Z_{t,\aa}|^2 \,d\aa.
\end{equation}
We compute
\begin{equation}\label{2112}
\mathcal P D_\aa \bar Z_{t,\aa}= \bracket{\mathcal P, D_\aa} \bar Z_{t,\aa}+ \frac1{Z_{,\aa}}\partial_\aa\mathcal P  \bar Z_{t,\aa};
\end{equation}
further expanding $\bracket{\mathcal P, D_\aa} \bar Z_{t,\aa}$ by \eqref{eq:c5-1} yields
\begin{equation}\label{2146}
 \bracket{\mathcal P, D_\aa}  \bar Z_{t,\aa}=  (-2D_\aa Z_{tt}) D_\aa  \bar Z_{t,\aa} -2(D_\aa Z_t)(\partial_t +b\partial_\aa)D_\aa  \bar Z_{t,\aa};
\end{equation}
and by \eqref{2020} and \eqref{2117},
\begin{equation}\label{2146-1}
\begin{aligned}
 \|Z_{,\aa}\bracket{\mathcal P, D_\aa}  \bar Z_{t,\aa}\|_{L^2}&\lec  \|D_\aa Z_{tt}\|_{L^\infty}\|\partial_\aa^2  \bar Z_{t}\|_{L^2}+\|D_\aa Z_t\|_{L^\infty}\nm{Z_{,\aa}(\partial_t +b\partial_\aa)D_\aa  \bar Z_{t,\aa}}_{L^2}\\&\lec C(\frak E) E_2^{1/2}.
 \end{aligned}
\end{equation}
We are left with controlling $\|\partial_\aa \mathcal P \bar Z_{t,\aa}\|_{L^2}$. 

Taking derivative to $\aa$ to \eqref{base-eq} yields
\begin{equation}\label{2147}
\begin{aligned}
\partial_\aa \mathcal P \bar Z_{t,\aa}&=-(\partial_t+b\partial_\aa)(\paren{\partial_\aa b_\aa} \partial_{\aa}\bar Z_{t})-\paren{\partial_\aa b_\aa}\partial_\aa \bar Z_{tt}-i\paren{\partial_\aa \mathcal A_\aa} \partial_\aa \bar Z_t\\&
-(\partial_t+b\partial_\aa)( b_\aa \partial_{\aa}^2\bar Z_{t})- b_\aa\partial_\aa^2 \bar Z_{tt}-i \mathcal A_\aa \partial_\aa^2 \bar Z_t-b_\aa^2 \partial_\aa^2 \bar Z_{t}- b_\aa (\partial_\aa  b_\aa) \partial_{\aa}\bar Z_{t}\\&
+\frac{\frak a_t}{\frak a}\circ h^{-1} \partial_\aa^2\bar Z_{tt}+2\paren{ \partial_\aa\frac{\frak a_t}{\frak a}\circ h^{-1} }\partial_\aa \bar Z_{tt}+\paren{\partial_\aa^2 \frac{\frak a_t}{\frak a}\circ h^{-1}} (\bar Z_{tt}-i);
\end{aligned}
\end{equation}
we further expand the terms in the first line and the last term in the second line according to the available estimates in \S\ref{ddb} - \S\ref{data}, 
\begin{equation}\label{2148} 
\begin{aligned}
(\partial_t+b\partial_\aa)&(\paren{\partial_\aa b_\aa} \partial_{\aa}\bar Z_{t})=(\partial_t+b\partial_\aa)\braces{\partial_\aa \paren{ b_\aa-2\Re D_\aa Z_t} \partial_{\aa}\bar Z_{t}}\\&+2 \braces{\Re \partial_\aa \paren{  D_\aa Z_t}} (\partial_t+b\partial_\aa)\partial_{\aa}\bar Z_{t}+2 \braces{\Re (\partial_t+b\partial_\aa)\partial_\aa \paren{  D_\aa Z_t}} \partial_{\aa}\bar Z_{t}
\end{aligned}
\end{equation}
we expand the factors in the second line further by product rules,
\begin{equation}\label{2151}
\Re \partial_\aa \paren{  D_\aa Z_t}=\Re \partial_\aa \frac1{Z_{,\aa}}  \partial_\aa Z_t +\Re\frac {\partial_\aa^2 Z_t}{Z_{,\aa}},
\end{equation}
\begin{equation}\label{2152}
\begin{aligned}
\Re (\partial_t+b\partial_\aa)\partial_\aa \paren{  D_\aa Z_t}&= \Re (\partial_t+b\partial_\aa)\partial_\aa \frac1{Z_{,\aa}}  \partial_\aa Z_t +\Re \partial_\aa \frac1{Z_{,\aa}}   (\partial_t+b\partial_\aa)\partial_\aa Z_t \\&+\Re\frac {(\partial_t+b\partial_\aa)\partial_\aa^2 Z_t}{Z_{,\aa}}+\Re\frac {\partial_\aa^2 Z_t}{Z_{,\aa}}(b_\aa-D_\aa Z_t),
\end{aligned}
\end{equation}
here we used \eqref{eq:dza} in the last term; and by \eqref{eq:c7},
\begin{equation}\label{2153}
(\partial_t+b\partial_\aa)\partial_{\aa}\bar Z_{t}=\bar Z_{tt,\aa}-b_\aa \bar Z_{t,\aa}.
\end{equation}
We are now ready to conclude, by \eqref{2123}, \eqref{2130}, \eqref{2121}, \eqref{2122}, \eqref{2020}, \eqref{2046}, \S\ref{quantities-e2} and the expansions \eqref{2148} - \eqref{2153} that
\begin{equation}\label{2154}
\|(\partial_t+b\partial_\aa)(\paren{\partial_\aa b_\aa} \partial_{\aa}\bar Z_{t})\|_{L^2}\lec C(\frak E) E_2^{1/2}.
\end{equation}
Similarly we can conclude, after expanding if necessary, with a similar estimate for all the terms on the right hand side of \eqref{2147} except for $\paren{\partial_\aa^2 \frac{\frak a_t}{\frak a}\circ h^{-1}} (\bar Z_{tt}-i)$. Let
 \begin{equation}\label{2155}
\partial_\aa \mathcal P \bar Z_{t,\aa}=\mathcal R_1+\paren{\partial_\aa^2 \frac{\frak a_t}{\frak a}\circ h^{-1}} (\bar Z_{tt}-i);
\end{equation}
where $\mathcal R_1$ is the sum of the remaining terms on the right hand side of \eqref{2147}. We have, by the argument above, that 
\begin{equation}\label{2156}
\|\mathcal R_1\|_{L^2}\lec C(\frak E) E_2^{1/2}.
\end{equation}
We control the term $\paren{\partial_\aa^2 \frac{\frak a_t}{\frak a}\circ h^{-1}} (\bar Z_{tt}-i)$ with  a similar  idea as  that in \S\ref{ddtea}, by taking advantage of the fact that $\partial_\aa^2 \frac{\frak a_t}{\frak a}\circ h^{-1}$ is purely real. 

Applying $(I-\mathbb H)$ to both sides of \eqref{2155}, and commuting out $\bar Z_{tt}-i$ yields
 \begin{equation}\label{2157}
(I-\mathbb H) \partial_\aa \mathcal P \bar Z_{t,\aa}=(I-\mathbb H)\mathcal R_1+\bracket{\bar Z_{tt}, \mathbb H}\partial_\aa^2 \frac{\frak a_t}{\frak a}\circ h^{-1} + (\bar Z_{tt}-i)(I-\mathbb H)\partial_\aa^2 \frac{\frak a_t}{\frak a}\circ h^{-1} ;
\end{equation}
Because $\mathbb H$ is purely imaginary, we have $\abs{\partial_\aa^2 \frac{\frak a_t}{\frak a}\circ h^{-1}}\le \abs{(I-\mathbb H)\partial_\aa^2 \frac{\frak a_t}{\frak a}\circ h^{-1}}$, and 
\begin{equation}\label{2158}
\abs{  (\bar Z_{tt}-i)\partial_\aa^2 \frac{\frak a_t}{\frak a}\circ h^{-1}}\le \abs{(I-\mathbb H) \partial_\aa \mathcal P \bar Z_{t,\aa}}+\abs{(I-\mathbb H)\mathcal R_1}+\abs{\bracket{\bar Z_{tt}, \mathbb H}\partial_\aa^2 \frac{\frak a_t}{\frak a}\circ h^{-1}}.
\end{equation}
Now by \eqref{eq:c10}, 
\begin{equation}\label{2159}
\bracket{\mathcal P, \partial_\aa} \bar Z_{t,\aa}
=-(\partial_t+b\partial_\aa)(b_\aa\partial_\aa \bar Z_{t,\aa})-b_\aa\partial_\aa (\partial_t+b\partial_\aa)\bar Z_{t,\aa}-i\mathcal A_\aa \partial_\aa \bar Z_{t,\aa};
\end{equation}
so
\begin{equation}\label{2160}
\|\bracket{\mathcal P, \partial_\aa} \bar Z_{t,\aa}\|_{L^2}\lec C(\frak E) E_2^{1/2}
\end{equation}
by \eqref{2020}, \S\ref{aa-zdz} and \S\ref{dtdab}. By Lemma~\ref{basic-3-lemma}, and \eqref{2160},
\begin{equation}\label{2161}
\|(I-\mathbb H) \partial_\aa \mathcal P \bar Z_{t,\aa}\|_{L^2}\le \|(I-\mathbb H)  \mathcal P \partial_\aa \bar Z_{t,\aa}\|_{L^2}+ \|(I-\mathbb H)\bracket{\mathcal P, \partial_\aa} \bar Z_{t,\aa}\|_{L^2}\lec C(\frak E) E_2^{1/2}.
\end{equation}
Now we apply \eqref{3.20} to the commutator on the right hand side of \eqref{2158}. By \eqref{2156} and \eqref{2161}, we have
\begin{equation}\label{2162}
\nm{  (\bar Z_{tt}-i)\partial_\aa^2 \frac{\frak a_t}{\frak a}\circ h^{-1}}_{L^2}\lec C(\frak E) E_2^{1/2}+\nm{\bar Z_{tt,\aa}}_{L^\infty}\nm{\partial_\aa \frac{\frak a_t}{\frak a}\circ h^{-1}}_{L^2}\lec C(\frak E) E_2^{1/2} .
\end{equation}
This together with \eqref{2156} and \eqref{2155} gives
\begin{equation}\label{2163}
\|\partial_\aa \mathcal P \bar Z_{t,\aa}\|_{L^2}\lec C(\frak E) E_2^{1/2}.
\end{equation}
We can now conclude, by \eqref{2150}, \eqref{2112}, \eqref{2146-1} and \eqref{2163}  that
\begin{equation}\label{2164}
\int\frac{|\mathcal P D_\aa \bar Z_{t,\aa}|^2}{\mathcal A}\,d\aa\lec  C(\frak E) E_2^{1/2};
\end{equation}
and consequently,
\begin{equation}\label{2165}
\frac d{dt} E_{D_\aa \partial_\aa \bar Z_t}(t) \lec  C(\frak E) E_2.
\end{equation}
Combining \eqref{2145} and \eqref{2165} yields
\begin{equation}\label{2166}
\frac d{dt} E_2(t) \lec  C(\frak E(t)) E_2(t).
\end{equation}
This concludes the proof for Proposition~\ref{step1}.

\subsection{The proof of Proposition~\ref{step2}}\label{proof-prop2} 
We begin with discussing quantities controlled by $E_3$. Since the idea is similar to that in previous sections, when the estimates are straightforward, we don't always give the full details. 

\subsubsection{Quantities controlled by $E_3$ and a polynomial of $\frak E$ and $E_2$}\label{quantities-e3}
By the definition of $E_3$, and the fact that $1\le A_1 \le C(\frak E)$, cf. \eqref{2000},
\begin{equation}\label{2200}
\|\partial_{\alpha'}^3\bar Z_t\|_{L^2}^2\le E_3,\quad   \nm{Z_{,\alpha'}(\partial_t+b\partial_\aa)\paren{ \frac1{Z_{,\alpha'}}\partial_{\alpha'}^3\bar Z_t}}_{L^2}^2\le C(\frak E) E_3,\quad \nm{\frac1{Z_{,\alpha'}}\partial_{\alpha'}^3\bar Z_t}_{\dot H^{1/2}}^2\le  E_3 .
\end{equation}
By \eqref{eq:dza} and product rules, 
\begin{equation}\label{2201}
 Z_{,\alpha'}(\partial_t+b\partial_\aa)\paren{ \frac1{Z_{,\alpha'}}\partial_{\alpha'}^3\bar Z_t}=(\partial_t+b\partial_\aa)\partial_{\alpha'}^3\bar Z_t+(b_\aa-D_\aa Z_t)
 \partial_{\alpha'}^3\bar Z_t
\end{equation}
so by \eqref{2020},
\begin{equation}\label{2202}
\abs{\nm{(\partial_t+b\partial_\aa)\partial_{\alpha'}^3\bar Z_t}_{L^2}-\nm{Z_{,\alpha'}(\partial_t+b\partial_\aa) \paren{\frac1{Z_{,\alpha'}}\partial_{\alpha'}^3\bar Z_t}}_{L^2}}
\le C(\frak E)\|\partial_{\alpha'}^3\bar Z_t\|_{L^2},
\end{equation}
therefore
\begin{equation}\label{2203}
\nm{(\partial_t+b\partial_\aa)\partial_{\alpha'}^3\bar Z_t}_{L^2}^2\le C(\frak E)E_3.
\end{equation}
We commute out $\partial_\aa$, by \eqref{eq:c7}, 
\begin{equation}\label{2204}
\partial_{\alpha'}(\partial_t+b\partial_\aa)\partial_{\alpha'}^2\bar Z_t=(\partial_t+b\partial_\aa)\partial_{\alpha'}^3\bar Z_t+b_{\alpha'}\partial_{\alpha'}^3\bar Z_t,
\end{equation}
so
\begin{equation}\label{2205}
\|\partial_{\alpha'}(\partial_t+b\partial_\aa)\partial_{\alpha'}^2\bar Z_t\|_{L^2}^2\le C(\frak E)E_3.
\end{equation}
As a consequence of the Sobolev inequality \eqref{eq:sobolev}, and \eqref{2144},
\begin{align}\label{2206}
\|\partial_{\alpha'}^2\bar Z_t\|_{L^\infty}^2&\le 2\|\partial_{\alpha'}^2\bar Z_t\|_{L^2}\|\partial_{\alpha'}^3\bar Z_t\|_{L^2}  \lec   C(\frak E, E_2)E_3^{1/2},\\
\label{2207}
 \|(\partial_t+b\partial_\aa)\partial_{\alpha'}^2\bar Z_t\|_{L^\infty}^2&\le 2\|(\partial_t+b\partial_\aa)\partial_{\alpha'}^2\bar Z_t\|_{L^2}\|\partial_\aa (\partial_t+b\partial_\aa)\partial_{\alpha'}^2\bar Z_t\|_{L^2} \lec C(\frak E, E_2)E_3^{1/2}.
\end{align}
Now we commute out $\partial_\aa^2$ by \eqref{eq:c11}, and get
\begin{equation}\label{2208}
\partial_{\alpha'}^2(\partial_t+b\partial_\aa)\partial_{\alpha'}\bar Z_t=(\partial_t+b\partial_\aa)\partial_{\alpha'}^3\bar Z_t+\paren{\partial_{\alpha'}b_{\alpha'}}\partial_{\alpha'}^2\bar Z_t+2b_{\alpha'}\partial_{\alpha'}^3\bar Z_t,
\end{equation}
We expand the second term further according to the available estimate \eqref{2123}, as we did in \eqref{2125}; we get
\begin{equation}\label{2209}
\|\partial_{\alpha'}^2(\partial_t+b\partial_\aa)\partial_{\alpha'}\bar Z_t\|_{L^2}^2\le C\paren{\frak E, E_2, \nm{\frac1{Z_{,\aa}}}_{L^\infty}}(E_3+1);
\end{equation}
and consequently by Sobolev inequality \eqref{eq:sobolev} and \eqref{2144}, 
\begin{equation}\label{2210}
\|\partial_{\alpha'}(\partial_t+b\partial_\aa)\partial_{\alpha'}\bar Z_t\|_{L^\infty}^2\le C\paren{\frak E, E_2, \nm{\frac1{Z_{,\aa}}}_{L^\infty}}(E_3^{1/2}+1).
\end{equation}

We need to control some additional quantities.

\subsubsection{Controlling  $\|\partial_{\alpha'}A_1\|_{L^\infty}$, $\|\partial_{\alpha'}^2A_1\|_{L^2}$ and $\nm{\partial_{\alpha'}^2\frac{1}{Z_{,\alpha'}}}_{L^2}$} 
We begin with \eqref{2138}: 
\begin{equation}\label{2211}
\partial_\aa A_1=-\Im\paren{ [Z_{t,\aa},\mathbb H]\bar Z_{t,\aa}+[Z_t, \mathbb H]\partial_\aa^2\bar Z_t}.
\end{equation}
By \eqref{eq:b13}, \eqref{2020}, \eqref{2144},
  \begin{equation}\label{2212}
  \nm{\partial_{\alpha'}A_1}_{L^\infty}\lesssim \|Z_{t,\alpha'}\|_{L^2}\|\partial_{\alpha'}^2Z_{t}\|_{L^2}\lesssim C(\frak E)E_2^{1/2}. 
    \end{equation}
Differentiating \eqref{2211} with respect to $\alpha'$ then apply \eqref{3.20}, \eqref{3.22} and use $\mathbb H \bar Z_{t,\aa}=\bar Z_{t,\aa}$  gives
\begin{equation}\label{2213}
\|\partial_{\alpha'}^2A_1\|_{L^2}\lesssim \|Z_{t,\alpha'}\|_{L^\infty}\|\partial_{\alpha'}^2Z_{t}\|_{L^2}\le C(\frak E, E_2),
\end{equation}
where in the last step  we used \eqref{2144}. To estimate $\partial_{\alpha'}^2\frac{1}{Z_{,\alpha'}}$ we begin with 
 \eqref{interface-a1}:
$$-i\frac 1{Z_{,\alpha'}}=\frac{\bar Z_{tt}-i}{A_1}.$$
Taking two derivatives with respect to $\alpha'$ gives
\begin{equation}\label{2214}
-i\partial_{\alpha'}^2\frac{1}{Z_{,\alpha'}}=\frac{\partial_{\alpha'}^2\bar Z_{tt}}{A_1}-2\bar Z_{tt,\alpha'}\frac{\partial_{\alpha'}A_1}{A_1^2}+(\bar Z_{tt}-i)\paren{-\frac{\partial_{\alpha'}^2A_1}{A_1^2}+2\frac{(\partial_{\alpha'}A_1)^2}{A_1^3}};
\end{equation}
therefore, because $A_1\ge 1$, and \eqref{aa1}, \eqref{2020}, \eqref{2144}, \eqref{2212}, \eqref{2213},
\begin{equation}\label{2215}
\begin{aligned}
\nm{\partial_{\alpha'}^2\frac{1}{Z_{,\alpha'}}}_{L^2}\lesssim &\|\partial_{\alpha'}^2\bar Z_{tt}\|_{L^2}+\|\partial_{\alpha'}\bar Z_{tt}\|_{L^2}\|\partial_{\alpha'}A_1\|_{L^\infty}\\&+\nm{\frac{1}{Z_{,\alpha'}}}_{L^\infty}(\|\partial_{\alpha'}^2A_1\|_{L^2}+\|\partial_{\alpha'}A_1\|_{L^2}\|\partial_{\alpha'}A_1\|_{L^\infty})\le C\paren{\frak E, E_2, \nm{\frac{1}{Z_{,\alpha'}}}_{L^\infty}},
\end{aligned}
\end{equation}
and consequently by Sobolev inequality \eqref{eq:sobolev}, and \eqref{2020},
\begin{equation}\label{2216}
\nm{\partial_{\alpha'}\frac{1}{Z_{,\alpha'}}}_{L^\infty} \le C\paren{\frak E, E_2, \nm{\frac{1}{Z_{,\alpha'}}}_{L^\infty}}.
\end{equation}

\subsubsection{Controlling $\|\partial_{\alpha'}^2 b_{\alpha'}\|_{L^2}$ and $\|\partial_{\alpha'}^3Z_{tt}\|_{L^2}$} 
We are now ready to give the estimates for $ \|\partial_{\alpha'}^2b_{\alpha'}\|_{L^2}  $ and $\| \partial_{\alpha'}^3\bar Z_{tt}  \|_{L^2}$.   We begin with \eqref{2122-1}, differentiating with respect to $\aa$, then use \eqref{3.20}, \eqref{3.21}, the fact that $\mathbb H Z_{t,\aa}=-Z_{t,\aa}$, $\mathbb H\frac1{Z_{,\aa}}=\frac1{Z_{,\aa}}$,  and H\"older's inequality; we get
\begin{equation}\label{2217}
\begin{aligned}
\|\partial_{\alpha'}^2(b_{\alpha'}-2\Re D_\aa Z_t)\|_{L^2}&\lesssim \|\partial_{\alpha'}^2Z_{t}\|_{L^2}\nm{\partial_{\alpha'}\frac{1}{Z_{,\alpha'}}}_{L^\infty}+\|Z_{t,\alpha'}\|_{L^\infty}\nm{\partial_{\alpha'}^2\frac{1}{Z_{,\alpha'}}}_{L^2}\\& \le  C\paren{\frak E, E_2, \nm{\frac{1}{Z_{,\alpha'}}}_{L^\infty}}  .
\end{aligned}
\end{equation}
It is easy to show, by product rules and H\"older's inequality that
\begin{equation}\label{2217-1}
\begin{aligned}
\|\partial_\aa^2 D_\aa Z_t\|_{L^2}&\lec  \|\partial_{\alpha'}^2Z_{t}\|_{L^2}\nm{\partial_{\alpha'}\frac{1}{Z_{,\alpha'}}}_{L^\infty}+\|Z_{t,\alpha'}\|_{L^\infty}\nm{\partial_{\alpha'}^2\frac{1}{Z_{,\alpha'}}}_{L^2}+\nm{\frac1{Z_{,\aa}}\partial_\aa^3 Z_t}_{L^2}\\&\lec C\paren{\frak E, E_2, \nm{\frac{1}{Z_{,\alpha'}}}_{L^\infty}}+\nm{\frac1{Z_{,\aa}}}_{L^\infty}E_3^{1/2},
\end{aligned}
\end{equation}
so
\begin{equation}\label{2218}
\|\partial_{\alpha'}^2 b_{\alpha'}\|_{L^2}\lec C\paren{\frak E, E_2, \nm{\frac{1}{Z_{,\alpha'}}}_{L^\infty}}+\nm{\frac1{Z_{,\aa}}}_{L^\infty}E_3^{1/2}. 
\end{equation}
Now starting from \eqref{eq-zta} and taking two derivatives  to $\aa$ gives
\begin{equation}\label{2219}
\begin{aligned}
\partial_{\alpha'}^3\bar Z_{tt}&=\partial_{\alpha'}^2(\partial_t+b\partial_\aa)\partial_{\alpha'}\bar Z_t+\partial_\aa^2 (b_\aa\bar Z_{t,\aa})
\\&=\partial_{\alpha'}^2(\partial_t+b\partial_\aa)\partial_{\alpha'}\bar Z_t+(\partial_\aa^2 b_\aa)\bar Z_{t,\aa}+2(\partial_\aa b_\aa)\partial_\aa \bar Z_{t,\aa}+ b_\aa \partial_\aa^3\bar Z_{t},
\end{aligned}
\end{equation}
so
\begin{equation}\label{2220}
\| \partial_{\alpha'}^3\bar Z_{tt}  \|_{L^2}^2\le C\paren{\frak E, E_2, \nm{\frac{1}{Z_{,\alpha'}}}_{L^\infty}}(E_3+1),
\end{equation}
and as a consequence of \eqref{eq:sobolev},
\begin{equation}\label{2221}
\| \partial_{\alpha'}^2\bar Z_{tt}  \|_{L^\infty}^2\le C\paren{\frak E, E_2, \nm{\frac{1}{Z_{,\alpha'}}}_{L^\infty}}(E_3^{1/2}+1) .
\end{equation}

\subsubsection {Controlling $\partial_{\alpha'}^2\mathcal A_\aa$.} We differentiate \eqref{2136} with respect to $\alpha'$ then take the imaginary parts and use H\"older's inequality, \eqref{3.20}, \eqref{3.21}. We have,
\begin{equation}\label{2222}
\begin{aligned}
\|\partial_{\alpha'}^2\mathcal A_\aa\|_{L^2}&\le \nm{\frac1{Z_{,\alpha'}}}_{L^\infty}\|\partial_{\alpha'}^3\bar Z_{tt}\|_{L^2}+ \nm{\partial_{\alpha'}\frac1{Z_{,\alpha'}}}_{L^\infty}\|\partial_{\alpha'}^2\bar Z_{tt}\|_{L^2}\\&+\nm{\partial_{\alpha'}^2\frac1{Z_{,\alpha'}}}_{L^2}\|\partial_{\alpha'}\bar Z_{tt}\|_{L^\infty}\le C\paren{\frak E, E_2, \nm{\frac1{Z_{,\alpha'}}}_{L^\infty}} ( E_3^{1/2}+1).
\end{aligned}
\end{equation}

\subsubsection{Controlling $\nm{\partial_\aa^2\frac{\frak a_t}{\frak a}\circ h^{-1}}_{L^2}$}
We begin with \eqref{at}, and take two derivatives to $\aa$. 
\begin{equation}
\begin{aligned}
\partial_\aa^2\frac{\frak a_t}{\frak a}\circ h^{-1}&=\frac{\partial_\aa^2 (\partial_t+b\partial_\aa)A_1}{A_1}-2\frac{\partial_\aa (\partial_t+b\partial_\aa)A_1\partial_\aa A_1}{A_1^2}\\&+(\partial_t+b\partial_\aa)A_1 \paren{\frac{-\partial_\aa^2 A_1}{A_1^2} +2\frac{(\partial_\aa A_1)^2}{A_1^3}}+ \partial_\aa^2 (b_\aa-2\Re D_\aa Z_t)
\end{aligned}
\end{equation}
We have controlled $\nm{\partial_\aa^2 (b_\aa-2\Re D_\aa Z_t)}_{L^2}$, $\nm{\partial_\aa^2 A_1}_{L^2}$, $\nm{\partial_\aa A_1}_{L^\infty}$, $\nm{\partial_\aa A_1}_{L^2}$ and  $\nm{\partial_\aa (\partial_t+b\partial_\aa) A_1}_{L^2}$ etc. in \eqref{2217}, \eqref{2212}, \eqref{2213}, \eqref{2139}, \eqref{2142} and \eqref{2000}, \eqref{2021}. We are left with $\partial_\aa^2 (\partial_t+b\partial_\aa)A_1$. 

We begin with \eqref{a1}, taking two derivatives to $\aa$, then one derivative to $\partial_t+b\partial_\aa$. We have
\begin{equation}\label{2222-1}
(\partial_t+b\partial_\aa)\partial_\aa^2 A_1=-\sum_{k=0}^2 C_2^k\Im(\partial_t+b\partial_\aa) \paren{\bracket{ \partial_\aa^k Z_t, \mathbb H}\partial_\aa^{2-k} \bar Z_{t,\alpha'}}
\end{equation}
where $C_2^0=1, C_2^1=2, C_2^2=1$. Using \eqref{eq:c14} to expand the right hand side, then use 
\eqref{3.20}, \eqref{3.21}  and \eqref{3.22} to do the estimates. We have
\begin{equation}
\|(\partial_t+b\partial_\aa)\partial_\aa^2 A_1\|_{L^2}\lec C\paren{\frak E, E_2, \nm{\frac 1{Z_{,\aa}}}_{L^\infty}}(E_3^{1/4}+1).
\end{equation}
Now we use \eqref{eq:c11} to compute
\begin{equation}\label{2223-1}
\partial_\aa^2 (\partial_t+b\partial_\aa)A_1= (\partial_t+b\partial_\aa)\partial_\aa^2 A_1+\partial_\aa b_\aa \partial_\aa A_1+2 b_\aa \partial_\aa^2 A_1.
\end{equation}
Therefore
\begin{equation}
\|\partial_\aa^2(\partial_t+b\partial_\aa) A_1\|_{L^2}\lec C\paren{\frak E, E_2, \nm{\frac 1{Z_{,\aa}}}_{L^\infty}}(E_3^{1/4}+1),
\end{equation}
consequently 
\begin{equation}\label{2224-1}
\nm{\partial_{\alpha'}^2\frac{\frak a_t}{\frak a}\circ h^{-1} }_{L^2}\lec C\paren{\frak E, E_2, \nm{\frac 1{Z_{,\aa}}}_{L^\infty}}(E_3^{1/2}+1).
\end{equation}

\subsubsection{ Controlling $\nm{(\partial_t+b\partial_\aa)\frac{1}{Z_{,\alpha'}}}_{L^\infty}$  and $\nm{(\partial_t+b\partial_\aa)\partial_{\alpha'}^2\frac{1}{Z_{,\alpha'}}}_{L^2}$ }
We begin with \eqref{eq:dza}, 
\begin{equation}\label{2225}
(\partial_t+b\partial_\aa)\frac{1}{Z_{,\alpha'}}=\frac{1}{Z_{,\alpha'}}(b_\aa-D_\aa Z_t),
\end{equation}
differentiating twice with respect to $\aa$; we get
\begin{equation}\label{2226}
\begin{aligned}
&\partial_{\alpha'}^2 (\partial_t+b\partial_\aa)\frac{1}{Z_{,\alpha'}}=\paren{\partial_{\alpha'}^2\frac{1}{Z_{,\alpha'}}}( b_{\alpha'}-D_{\alpha'}Z_t)\\&+2\paren{\partial_{\alpha'}\frac{1}{Z_{,\alpha'}}}( \partial_{\alpha'}b_{\alpha'}-\partial_{\alpha'}D_{\alpha'}Z_t)+\frac{1}{Z_{,\alpha'}}( \partial_{\alpha'}^2b_{\alpha'}-\partial_{\alpha'}^2D_{\alpha'}Z_t).
\end{aligned}
\end{equation}
We further expand
$\partial_{\alpha'}D_{\alpha'}Z_t$ 
and
$\partial_{\alpha'}^2D_{\alpha'}Z_t$ 
by product rules then use H\"older's inequality, \eqref{2020}, \eqref{2144} and \eqref{2215}, \eqref{2216}, \eqref{2217-1}, \eqref{2218}. We have
\begin{equation}\label{2227}
\begin{aligned}
\nm{\partial_{\alpha'}^2(\partial_t+b\partial_\aa)\frac{1}{Z_{,\alpha'}}}_{L^2}&\lec C(\frak E)\nm{\partial_{\alpha'}^2\frac{1}{Z_{,\alpha'}}}_{L^2}+\nm{\partial_{\alpha'}\frac{1}{Z_{,\alpha'}}}_{L^\infty}\| \partial_{\alpha'}b_{\alpha'}-\partial_{\alpha'}D_{\alpha'}Z_t\|_{L^2}\\&+\nm{\frac{1}{Z_{,\alpha'}}}_{L^\infty}\| \partial_{\alpha'}^2b_{\alpha'}-\partial_{\alpha'}^2D_{\alpha'}Z_t\|_{L^2}\lec C\paren{\frak E, E_2, \nm{\frac{1}{Z_{,\alpha'}}}_{L^\infty}}(E_3^{1/2}+1).
\end{aligned}
\end{equation}

Now by \eqref{eq:c11},
\begin{equation}\label{2228}
(\partial_t+b\partial_\aa)\partial_{\alpha'}^2\frac{1}{Z_{,\alpha'}}=\partial_{\alpha'}^2 (\partial_t+b\partial_\aa)\frac{1}{Z_{,\alpha'}}-(\partial_{\alpha'}b_{\alpha'})\partial_{\alpha'}\frac{1}{Z_{,\alpha'}}-2b_{\alpha'}\partial_{\alpha'}^2\frac{1}{Z_{,\alpha'}}
\end{equation}
so
\begin{equation}\label{2229}
 \nm{(\partial_t+b\partial_\aa)\partial_{\alpha'}^2\frac{1}{Z_{,\alpha'}}}_{L^2}\lec
 C\paren{\frak E, E_2, \nm{\frac{1}{Z_{,\alpha'}}}_{L^\infty}}(E_3^{1/2}+1).
 \end{equation}

 We apply \eqref{2020} to \eqref{2225}, and obtain
\begin{equation}\label{2230}
\nm{(\partial_t+b\partial_\aa)\frac{1}{Z_{,\alpha'}}}_{L^\infty} \le C(\frak E)\nm{\frac{1}{Z_{,\alpha'}}}_{L^\infty}.
\end{equation}

\subsubsection{ Controlling $\|\partial_{\alpha'}^2(\partial_t+b\partial_\aa) b_{\alpha'}\|_{L^2}$
}\label{da2dtba} By   \eqref{eq:c11}, 
\begin{equation}\label{2223}
\partial_{\alpha'}^2(\partial_t+b\partial_\aa) b_{\alpha'}=(\partial_t+b\partial_\aa)\partial_{\alpha'}^2 b_{\alpha'}+(\partial_{\alpha'} b_{\alpha'})^2+2 b_{\alpha'}\partial_{\alpha'}^2 b_{\alpha'}
\end{equation}
where by H\"older's and Sobolev inequalities \eqref{eq:sobolev},
\begin{equation}\label{2224}
\begin{aligned}
&\|(\partial_{\alpha'} b_{\alpha'})^2\|_{L^2}+\| b_{\alpha'}\partial_{\alpha'}^2 b_{\alpha'}\|_{L^2} \lesssim \|\partial_{\alpha'} b_{\alpha'}\|_{L^2}\|\partial_{\alpha'} b_{\alpha'}\|_{L^\infty}+\|\partial_{\alpha'}^2 b_{\alpha'}\|_{L^2}\| b_{\alpha'}\|_{L^\infty}\\&\lesssim  \|\partial_{\alpha'} b_{\alpha'}\|_{L^2}^{3/2}\|\partial_{\alpha'}^2b_{\alpha'}\|_{L^2}^{1/2}+\|\partial_{\alpha'}^2 b_{\alpha'}\|_{L^2}\| b_{\alpha'}\|_{L^\infty}\\&\le C\paren{\frak E, E_2, \nm{\frac 1{Z_{,\aa}}}_{L^\infty}} (E_3^{1/2}+1).
\end{aligned}
\end{equation}
Now we consider $(\partial_t+b\partial_\aa)\partial_{\alpha'}^2 b_{\alpha'}$. We begin with \eqref{ba-1}, 
differentiating twice with respect to $\aa$, then to $\partial_t+b\partial_\aa$,
\begin{equation}\label{2232}
\begin{aligned}
(\partial_t+b\partial_\aa)\partial_\aa^2(b_\aa-2\Re D_\aa Z_t)&=\sum_{k=0}^2 C_2^k\Re(\partial_t+b\partial_\aa) \paren{\bracket{ \partial_\aa^k \frac1{Z_{,\aa}}, \mathbb H}\partial_\aa^{2-k}  Z_{t,\alpha'}}\\&+ \sum_{k=0}^2 C_2^k\Re(\partial_t+b\partial_\aa) \paren{\bracket{\partial_\aa^k Z_t, \mathbb H}\partial_\aa^{3-k} \frac1{Z_{,\aa}}  }.
\end{aligned}
\end{equation}
where $C_2^0=1, C_2^1=2, C_2^2=1$. We expand $(\partial_t+b\partial_\aa)\partial_{\alpha'}^2 D_\aa Z_t$ by product rules, and use \eqref{eq:c14} to further expand the right hand side of \eqref{2232}; 
we then use \eqref{3.20}, \eqref{3.21}, \eqref{3.22}, \eqref{eq:b12} and H\"older's inequality to do the estimates. We have
\begin{equation}\label{2233}
\begin{aligned}
&\|(\partial_t+b\partial_\aa)\partial_{\alpha'}^2b_{\alpha'}\|_{L^2}\lesssim \|(\partial_t+b\partial_\aa)\partial_{\alpha'}^3Z_{t}\|_{L^2}\nm{\frac1{Z_{,\alpha'}}}_{L^\infty}\\&+\|\partial_{\alpha'}^3Z_{t}\|_{L^2}\nm{(\partial_t+b\partial_\aa)\frac1{Z_{,\alpha'}}}_{L^\infty}+ \|(\partial_t+b\partial_\aa)\partial_{\alpha'}^2Z_{t}\|_{L^\infty}\nm{\partial_{\alpha'}\frac{1}{Z_{,\alpha'}}}_{L^2}\\&+\|\partial_{\alpha'}^2Z_{t}\|_{L^\infty}\paren{\nm{(\partial_t+b\partial_\aa)\partial_{\alpha'}\frac{1}{Z_{,\alpha'}}}_{L^2}+\nm{\partial_{\alpha'}(\partial_t+b\partial_\aa)\frac{1}{Z_{,\alpha'}}}_{L^2}}\\&
+\|\partial_{\alpha'}^2Z_{t}\|_{L^\infty}\|b_{\alpha'}\|_{L^\infty}\nm{\partial_{\alpha'}\frac{1}{Z_{,\alpha'}}}_{L^2}+\|(\partial_t+b\partial_\aa)\partial_{\alpha'}Z_{t}\|_{L^\infty}\nm{\partial_{\alpha'}^2\frac{1}{Z_{,\alpha'}}}_{L^2}\\&+\|\partial_{\alpha'}Z_{t}\|_{L^\infty}\|b_{\alpha'}\|_{L^\infty}\nm{\partial_{\alpha'}^2\frac{1}{Z_{,\alpha'}}}_{L^2}+\|Z_{tt,\alpha'}\|_{L^\infty}\nm{\partial_{\alpha'}^2\frac{1}{Z_{,\alpha'}}}_{L^2}\\&+\|Z_{t,\alpha'}\|_{L^\infty}\nm{(\partial_t+b\partial_\aa)\partial_{\alpha'}^2\frac{1}{Z_{,\alpha'}}}_{L^2}.
\end{aligned}
\end{equation}
This, together with \eqref{2224} gives, 
\begin{equation}\label{2234}
\|(\partial_t+b\partial_\aa)\partial_{\alpha'}^2b_{\alpha'}\|_{L^2}+\|\partial_{\alpha'}^2(\partial_t+b\partial_\aa)b_{\alpha'}\|_{L^2}\lec C\paren{\frak E, E_2, \nm{\frac1{Z_{,\aa}}}_{L^\infty}}(E_3^{1/2}+1).
\end{equation}

\subsubsection{Controlling $\frac d{dt} E_3(t)$}\label{ddte3} We know $E_3(t)$ consists of $E_{D_\aa \partial_\aa^2 \bar Z_t}$ and $\|\partial_\aa^3 \bar Z_t\|_{L^2}^2$. We apply Lemma~\ref{basic-e} to $E_{D_\aa \partial_\aa^2 \bar Z_t}$ and Lemma~\ref{basic-e2} to $\|\partial_\aa^3 \bar Z_t\|_{L^2}^2$. We begin with $\|\partial_\aa^3 \bar Z_t\|_{L^2}^2$. We have, by Lemma~\ref{basic-e2},
\begin{equation}\label{2235}
\frac d{dt}\|\partial_\aa^3 \bar Z_t\|_{L^2}^2\lec \|(\partial_t+b\partial_\aa)\partial_\aa^3 \bar Z_t\|_{L^2}\|\partial_\aa^3 \bar Z_t\|_{L^2}+\|b_\aa\|_{L^\infty}\|\partial_\aa^3 \bar Z_t\|_{L^2}^2
\end{equation}
We have controlled all the factors, in \eqref{2020} and \eqref{2203}. We have
\begin{equation}\label{2236}
\frac d{dt}\|\partial_\aa^3 \bar Z_t\|_{L^2}^2\lec C(\frak E) E_3(t).
\end{equation}

We now consider $E_{D_\aa \partial_\aa^2 \bar Z_t}$. Applying Lemma~\ref{basic-e} to $\Th=D_\aa \partial_\aa^2 \bar Z_t$ yields
\begin{equation}\label{2237}
\frac d{dt} E_{D_\aa \partial_\aa^2 \bar Z_t}(t)\le \nm{\frac{\frak a_t}{\frak a}\circ h^{-1}}_{L^\infty} E_{D_\aa \partial_\aa^2 \bar Z_t}(t)+ 2E_{D_\aa \partial_\aa^2 \bar Z_t}(t)^{1/2}\paren{\int \frac{|\mathcal PD_\aa \partial_\aa^2 \bar Z_t|^2}{\mathcal A}\,d\aa}^{1/2}
\end{equation}
We have controlled the factor $\nm{\frac{\frak a_t}{\frak a}\circ h^{-1}}_{L^\infty} $ in \S\ref{ata-da1}, we are left with the second term. We know, by $\mathcal A |Z_{,\aa}|^2=A_1\ge 1$, that 
\begin{equation}\label{2238}
\int \frac{|\mathcal PD_\aa \partial_\aa^2 \bar Z_t|^2}{\mathcal A}\,d\aa\le \int |Z_{,\aa}\mathcal PD_\aa \partial_\aa^2 \bar Z_t|^2\,d\aa.
\end{equation}
We  compute 
\begin{equation}
\label{2239}
\mathcal P D_\aa \partial_\aa^2 \bar Z_{t}= \bracket{\mathcal P, D_\aa}  \partial_\aa^2 \bar Z_{t}+ D_\aa\bracket{\mathcal P,\partial_\aa}  \bar Z_{t,\aa}+ D_\aa\partial_\aa [\mathcal P , \partial_\aa] \bar Z_{t}+D_\aa \partial_\aa^2\mathcal P \bar Z_t;
\end{equation}
and expand further by \eqref{eq:c5-1},
\begin{equation}\label{2240}
 \bracket{\mathcal P, D_\aa} \partial_\aa^2 \bar Z_{t}=  (-2D_\aa Z_{tt}) D_\aa \partial_\aa^2 \bar Z_{t}-2(D_\aa Z_t)(\partial_t+b\partial_\aa) D_\aa \partial_\aa^2 \bar Z_{t};
\end{equation}
 by
\eqref{eq:c10} and product rules,
\begin{equation}\label{2241}
\begin{aligned}
\partial_\aa[\mathcal P, \partial_\aa]\bar Z_{t,\aa}& =-(\partial_t+b\partial_\aa)(\partial_\aa b_\aa\partial_\aa^2 \bar Z_{t})-\partial_\aa b_\aa\partial_\aa (\partial_t+b\partial_\aa)\bar Z_{t,\aa} -i\partial_\aa\mathcal A_\aa \partial_\aa^2 \bar Z_{t}\\&
-(\partial_t+b\partial_\aa)( b_\aa\partial_\aa^3 \bar Z_{t})- b_\aa\partial_\aa^2 (\partial_t+b\partial_\aa)\bar Z_{t,\aa} -i\mathcal A_\aa \partial_\aa^3 \bar Z_{t}\\&- b_\aa^2\partial_\aa^3 \bar Z_{t}-b_\aa\partial_\aa b_\aa \partial_\aa^2 \bar Z_{t};
\end{aligned}
\end{equation}
and by \eqref{eq:c10},
\begin{equation}\label{2242}
\partial_\aa^2[\mathcal P,\partial_\aa]\bar Z_t=-\partial_\aa^2(\partial_t+b\partial_\aa)(b_\aa \partial_{\aa}\bar Z_{t})-\partial_\aa^2(b_\aa\partial_\aa \bar Z_{tt})-i\partial_\aa^2(\mathcal A_\aa \partial_\aa \bar Z_t),
\end{equation}
and then expand \eqref{2242} by product rules. 
We have controlled all the factors on the right hand sides of \eqref{2240}, \eqref{2241} and \eqref{2242} in \eqref{2020}, \eqref{2144} and \S\ref{quantities-e3} - \S\ref{da2dtba}. We have, by H\"older's inequality,
\begin{equation}\label{2243}
\int |Z_{,\aa} \bracket{\mathcal P, D_\aa} \partial_\aa^2 \bar Z_{t}|^2+|\partial_\aa[\mathcal P, \partial_\aa]\bar Z_{t,\aa}|^2+|\partial_\aa^2[\mathcal P,\partial_\aa]\bar Z_t|^2\,d\aa\lec C\paren{\frak E, E_2, \nm{\frac1{Z_{,\aa}}}_{L^\infty}}(E_3+1).
\end{equation}
We are left with the last term $\partial_\aa^3 \mathcal P\bar Z_{t}$. We expand by product rules, starting from \eqref{quasi-r1}; we have
\begin{equation}\label{2244}
\begin{aligned}
\partial_\aa^3\mathcal P \bar Z_{t}&=
\frac{\frak a_t}{\frak a}\circ h^{-1} \partial_\aa^3\bar Z_{tt}+3\partial_\aa\paren{\frac{\frak a_t}{\frak a}\circ h^{-1} }\partial_\aa^2\bar Z_{tt}+3\partial_\aa^2\paren{\frac{\frak a_t}{\frak a}\circ h^{-1} }\bar Z_{tt,\aa}\\&+\partial_\aa^3\paren{\frac{\frak a_t}{\frak a}\circ h^{-1} }(\bar Z_{tt}-i).
\end{aligned}
\end{equation}
Let 
\begin{equation}\label{2245}
\mathcal R_2= \frac{\frak a_t}{\frak a}\circ h^{-1} \partial_\aa^3\bar Z_{tt}+3\partial_\aa\paren{\frac{\frak a_t}{\frak a}\circ h^{-1} }\partial_\aa^2\bar Z_{tt}+3\partial_\aa^2\paren{\frac{\frak a_t}{\frak a}\circ h^{-1} }\bar Z_{tt,\aa}.
\end{equation}
We have controlled all the factors of the terms in $\mathcal R_2$, with
\begin{equation}\label{2246}
\begin{aligned}
&\nm{\mathcal R_2}_{L^2} \le \nm{\frac{\frak a_t}{\frak a}\circ h^{-1}}_{L^\infty}\nm{ \partial_\aa^3\bar Z_{tt}}_{L^2}+3\nm{\partial_\aa\paren{\frac{\frak a_t}{\frak a}\circ h^{-1} }}_{L^2}\nm{\partial_\aa^2\bar Z_{tt}}_{L^\infty}\\&+3\nm{\partial_\aa^2\paren{\frac{\frak a_t}{\frak a}\circ h^{-1} }}_{L^2}\nm{\bar Z_{tt,\aa}}_{L^2}
\lec C\paren{\frak E, E_2, \nm{\frac1{Z_{,\aa}}}_{L^\infty}}(E_3^{1/2}+1).
\end{aligned}
\end{equation}
We are left with controlling $\nm{ \partial_\aa^3\paren{\frac{\frak a_t}{\frak a}\circ h^{-1} }(\bar Z_{tt}-i)}_{L^2}$. We  use a similar idea as that in \S\ref{ddtea}, that is, to take advantage of the fact that $
 \partial_\aa^3\paren{\frac{\frak a_t}{\frak a}\circ h^{-1} }$ is purely real. 
 
 Applying $(I-\mathbb H)$ to both sides of \eqref{2244}, with the first three terms replaced by $\mathcal R_2$, and commuting out $\bar Z_{tt}-i$ yields,
 \begin{equation}\label{2247}
 (I-\mathbb H)\partial_\aa^3\mathcal P \bar Z_{t}=(I-\mathbb H)\mathcal R_2+[\bar Z_{tt},\mathbb H]\partial_\aa^3\paren{\frac{\frak a_t}{\frak a}\circ h^{-1} }+ (\bar Z_{tt}-i)(I-\mathbb H)\partial_\aa^3\paren{\frac{\frak a_t}{\frak a}\circ h^{-1} }.
 \end{equation}
Now
\begin{equation}\label{2248}
\partial_\aa^3\mathcal P \bar Z_{t}=\partial_\aa^2 [\partial_\aa, \mathcal P]\bar Z_t+ \partial_\aa [\partial_\aa, \mathcal P] \bar Z_{t,\aa}+[\partial_\aa, \mathcal P]\partial_\aa^2\bar Z_t+\mathcal P\partial_\aa^3 \bar Z_t.
\end{equation}
and by \eqref{eq:c10}, 
\begin{equation}\label{2249}
[\mathcal P, \partial_\aa]\partial_\aa^2\bar Z_t=-(\partial_t+b\partial_\aa)(b_\aa\partial_\aa \partial_\aa^2\bar Z_t)-b_\aa\partial_\aa (\partial_t+b\partial_\aa)\partial_\aa^2\bar Z_t-i\mathcal A_\aa \partial_\aa \partial_\aa^2\bar Z_t
\end{equation}
so by \S\ref{basic-quantities}-\S\ref{dtdab}, \S\ref{quantities-e3} and H\"older's inequality, 
\begin{equation}\label{2250}
\|[\mathcal P, \partial_\aa]\partial_\aa^2\bar Z_t\|_{L^2}\lec C(\frak E) E_3^{1/2}.
\end{equation}
Applying Lemma~\ref{basic-3-lemma} to the last term in \eqref{2248}. We have, by \eqref{2243}, \eqref{2250} and Lemma~\ref{basic-3-lemma}, that
\begin{equation}\label{2251}
\| (I-\mathbb H)\partial_\aa^3\mathcal P \bar Z_{t}\|_{L^2}\lec C\paren{\frak E, E_2, \nm{\frac1{Z_{,\aa}}}_{L^\infty}}(E_3^{1/2}+1).
\end{equation}
This gives, by \eqref{2247}, that
\begin{equation}\label{2252}
\begin{aligned}
\nm{ \partial_\aa^3\paren{\frac{\frak a_t}{\frak a}\circ h^{-1} }(\bar Z_{tt}-i)}_{L^2}&\le \|(I-\mathbb H)\partial_\aa^3\mathcal P \bar Z_{t}\|_{L^2}+\|\mathcal R_2\|_{L^2}  +\nm{[\bar Z_{tt},\mathbb H]\partial_\aa^3\paren{\frac{\frak a_t}{\frak a}\circ h^{-1} }}_{L^2}\\&
\lec C\paren{\frak E, E_2, \nm{\frac1{Z_{,\aa}}}_{L^\infty}}(E_3^{1/2}+1);
\end{aligned}
\end{equation}
here for the commutator, we used \eqref{3.20}, \eqref{2144}, and \eqref{2224-1}. Combining with \eqref{2244},
 \eqref{2245},  \eqref{2246} yields
 \begin{equation}
 \nm{ \partial_\aa^3\mathcal P \bar Z_{t} }_{L^2} \lec C\paren{\frak E, E_2, \nm{\frac1{Z_{,\aa}}}_{L^\infty}}(E_3^{1/2}+1). 
 \end{equation}
Further combing with \eqref{2238}, \eqref{2239}, \eqref{2243} gives
\begin{equation}
\int \frac{|\mathcal PD_\aa \partial_\aa^2 \bar Z_t|^2}{\mathcal A}\,d\aa\lec C\paren{\frak E, E_2, \nm{\frac1{Z_{,\aa}}}_{L^\infty}}(E_3+1). 
\end{equation}
By \eqref{2237}, this shows that Proposition~\ref{step2} holds.

\subsection{Completing the proof for Theorem~\ref{blow-up}}\label{complete1}
 Now we continue the discussion in \S\ref{proof1}, 
assuming that the initial data satisfies the assumption of Theorem~\ref{blow-up}, and the solution $Z$ satisfies the regularity property in Theorem~\ref{blow-up}. By \eqref{step1-2} and the ensuing discussion, to complete the proof of Theorem~\ref{blow-up}, it suffices to show that for the given data,  $E_2(0)<\infty$ and $E_3(0)<\infty$;  and $\sup_{[0, T]}(E_2(t)+E_3(t)+\frak E(t))$ control the higher order Sobolev norm $\sup_{[0, T]}(\|\partial_\aa^3Z_t(t)\|_{\dot{H}^{1/2}}+\|\partial_\aa^3 Z_{tt}(t)\|_{L^2})$. 

By \eqref{2117-1} and \eqref{2124},
\begin{equation}
 Z_{,\alpha'}(\partial_t+b\partial_\aa) \paren{\frac1{Z_{,\alpha'}}\partial_{\alpha'}^2\bar Z_t}= \partial_{\alpha'}^2\bar Z_{tt}- (b_\aa+D_\aa Z_t)\partial_{\alpha'}^2\bar Z_t- (\partial_\aa b_\aa)\bar Z_{t,\aa};
\end{equation}
expanding further according to the available  estimates in \eqref{2001}, \eqref{2123}, and using H\"older's inequality, we have
\begin{equation}
\begin{aligned}
\nm{ Z_{,\alpha'}(\partial_t+b\partial_\aa) D_\aa\partial_{\alpha'}\bar Z_t}_{L^2}&\lec 
\|\partial_{\alpha'}^2\bar Z_{tt}\|_{L^2}+ \|Z_{t,\aa}\|_{L^2}\nm{\partial_\aa\frac1{Z_{,\aa}}}_{L^2}\|\partial_\aa^2 Z_{t}\|_{L^2}+\|D_\aa Z_t\partial_\aa^2 \bar Z_t\|_{L^2}\\&+ \|Z_{t,\aa}\|_{L^\infty}^2\nm{\partial_\aa\frac1{Z_{,\aa}}}_{L^2}+\|(\partial_\aa D_\aa Z_t)\partial_\aa \bar Z_t\|_{L^2};
\end{aligned}
\end{equation}
it is clear that we also have
\begin{equation}
\nm{\frac1{Z_{,\aa}}\partial_\aa^2\bar Z_t}_{\dot H^{1/2}}\le C\paren{\nm{\frac 1{Z_{,\aa}}-1}_{H^1}, \|Z_t\|_{H^{2+1/2}}};
\end{equation}
so for the given initial data, we have $E_2(0)<\infty$. A similar argument shows that we also have $E_3(0)<\infty$. This implies, by \eqref{step1-2} and the ensuing discussion, that $\sup_{[0, T_0)}(E_2(t)+E_3(t))<\infty$ provided $\sup_{[0, T_0)}\frak E(t)<\infty$. 
On the other hand, we have shown in \eqref{2220} that $\|\partial_\aa^3 \bar Z_{tt}(t)\|_{L^2}$ is controlled by $E_2(t)$, $E_3(t)$ and $\nm{\frac1{Z_{,\aa}}(t)}_{L^\infty}$; and  by \eqref{Hhalf},
$$\|\partial_{\alpha'}^3 \bar Z_{t}\|_{\dot H^{1/2}}\lesssim \|Z_{,\alpha'}\|_{L^\infty}\paren{\nm{\frac1{Z_{,\alpha'}}\partial_{\alpha'}^3 \bar Z_{t}}_{\dot H^{1/2}}+\nm{\partial_{\alpha'}\frac1{Z_{,\alpha'}}}_{L^2}\|\partial_{\alpha'}^3 \bar Z_{t}\|_{L^2}},
$$
so $\|\partial_{\alpha'}^3 \bar Z_{t}\|_{\dot H^{1/2}}$ is controlled by $E_3(t)$, $\frak E(t)$ and $\|Z_{,\alpha'}(t)\|_{L^\infty}$.  With a further application of \eqref{2101} and \eqref{2101-1}, we have
\begin{equation}
\sup_{[0, T_0)} \|\partial_\aa^3 \bar Z_{tt}(t)\|_{L^2}+ \|\partial_{\alpha'}^3 \bar Z_{t}\|_{\dot H^{1/2}}<\infty,\quad\text{provided}\quad \sup_{[0, T_0)}\frak E(t)<\infty.
\end{equation}
This, together with \eqref{2101}, \eqref{2109}, \eqref{2110} and Theorem~\ref{prop:local-s} shows 
that Theorem~\ref{blow-up} holds.

\section{The proof of Theorem~\ref{unique}}\label{proof3}
\subsection{Some basic preparations}\label{prepare}
 We begin with some basic preparatory analysis that will be used in the proof of Theorem~\ref{unique}. 
 
 In the first lemma we construct an energy functional for the difference of the solutions of an equation of the type \eqref{quasi-r}. 
We will apply Lemma~\ref{dlemma1} to $\Theta=\bar Z_t,\ \frac1{Z_{,\aa}}-1$ and $\bar Z_{tt}$.

\begin{lemma}\label{dlemma1}
Assume  $\Theta$, $\tTheta$ are smooth and decay at the spatial infinity, and satisfy
\begin{equation}\label{eqdf}
\begin{aligned}
&(\partial_t+b\partial_\aa)^2\Theta+i\Ac\partial_\aa\Theta=G,\\
&(\partial_t+\tb\partial_\aa)^2\tTheta+i\tAc\partial_\aa\tTheta=\tilde G.\\
\end{aligned}
\end{equation}
Let 
\begin{equation}
\mathfrak F(t)=\int \frac{\kappa}{A_1}\abs{ Z_{,\aa}\paren{(\partial_t+b\partial_\aa)\Theta+\mathfrak c}- \Zf_{,\aa}\circ l\paren{(\partial_t+\tb\partial_\aa)\tTheta\circ l+\mathfrak c}}^2+i\partial_\aa(\Theta-\tTheta\circ l)\bar{(\Theta-\tTheta\circ l)}\,d\aa,
\end{equation}
where $\kappa=\sqrt{\frac{A_1}{\tAone\circ l} l_\aa}$, $\frak c$ is a constant, and 
\begin{equation}
{\bf F}(t)=\int \abs{ Z_{,\aa}\paren{(\partial_t+b\partial_\aa)\Theta+\frak c}- \Zf_{,\aa}\circ l\paren{(\partial_t+\tb\partial_\aa)\tTheta\circ l+\frak c}}^2\ \ d\aa.
\end{equation}
Then
\begin{equation}\label{dlemma1-inq}
\begin{aligned}
&\frak F'(t)\lec {\bf F}(t)^{\frac12} \nm{\frac{\kappa}{A_1}}_{L^\infty}\nm{ Z_{,\aa}G- \Zf_{,\aa}\circ l\tilde G\circ l}_{L^2}\\&\quad+{\bf F}(t)^{\frac12} \nm{\frac{\kappa}{A_1}}_{L^\infty}\nm{\frac{(\partial_t+b\partial_\aa)\kappa }{\kappa}}_{L^2}\paren{\nm{Z_{,\aa}((\partial_t+b\partial_\aa)\Theta+\mathfrak c) }_{L^\infty}+\nm{\Zf_{,\aa}((\partial_t+\tb\partial_\aa)\tTheta+\mathfrak c)}_{L^\infty}}\\&
+{\bf F}(t)^{\frac12} \nm{\frac{\kappa}{A_1}}_{L^\infty}\paren{\nm{\frac{\frak a_t}{\frak a}\circ h^{-1}-\frac{\taft}{\taf}\circ h^{-1}}_{L^2}+\nm{D_\aa Z_t-\tD_\aa\Zf_t\circ l }_{L^2}}\nm{\Zf_{,\aa}((\partial_t+\tb\partial_\aa)\tTheta+\mathfrak c)}_{L^\infty}\\&+{\bf F}(t) \nm{\frac{\kappa}{A_1}}_{L^\infty}\paren{
\nm{\frac{\frak a_t}{\frak a}}_{L^\infty}+\nm{D_\aa Z_t}_{L^\infty}}
+\nm{1-\kappa}_{L^2}{\bf F}(t)^{\frac12}\paren{\nm{D_\aa\Theta}_{L^\infty}+\nm{\frac{l_\aa}{\kappa}}_{L^\infty}\nm{\tD_\aa\tTheta}_{L^\infty}}\\&+2\Re i\int \bar{\paren{\frac1{Z_{,\aa}}-U_l\frac1{\Zf_{,\aa}} }} \paren{\bar{U_l(\Zf_{,\aa}((\partial_t+\tb\partial_\aa)\tTheta+\frak c))}\Theta_\aa-\bar{Z_{,\aa}((\partial_t+b\partial_\aa)\Theta+\frak c)}(\tTheta\circ l)_\aa}\,d\aa.
\end{aligned}
\end{equation}
\end{lemma}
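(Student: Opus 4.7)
The strategy is to mimic the proof of the basic energy inequality in Lemma~\ref{basic-e}, adapted to compare two solutions whose Lagrangian parameterizations differ. I begin by passing to the Lagrangian coordinate $\a$ for the first solution via $h$. Setting $\theta := \Theta \circ h$ and $\ttheta := \tTheta \circ \th = \tTheta \circ l \circ h$, the chain-rule computation that took \eqref{quasi-l} to \eqref{quasi-r1} shows that both functions solve scalar hyperbolic equations in $\a$, namely $\partial_t^2 \theta + i\frak a\, \partial_\a \theta = G \circ h$ and $\partial_t^2 \ttheta + i\taf\, \partial_\a \ttheta = \tilde G \circ \th$. In this variable the quantity $Q := Z_{,\aa}((\partial_t+b\partial_\aa)\Theta+\frak c) - \Zf_{,\aa}\circ l\,((\partial_t+\tb\partial_\aa)\tTheta\circ l+\frak c)$ pulls back to $\frac{z_\a}{h_\a}(\partial_t\theta + \frak c) - \frac{\tilde z_\a}{\th_\a}(\partial_t\ttheta + \frak c)$, and the combination $\frac{\kappa}{A_1}\cdot h_\a$ (weight times Jacobian) becomes, up to absolutely controlled factors, $\frac{1}{\sqrt{\frak a\,\taf}}\sqrt{\frac{h_\a}{\th_\a}}$. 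The choice $\kappa = \sqrt{(A_1/\tAone\circ l)\, l_\aa}$ is engineered precisely so that the logarithmic time derivative of the weight, $(\partial_t+b\partial_\aa)\kappa/\kappa$, isolates exactly those pieces that cannot be absorbed by the equation.

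Next I differentiate $\frak F$ in time. A $\partial_t$ on the weight yields the $\nm{(\partial_t+b\partial_\aa)\kappa/\kappa}_{L^2}$ line of \eqref{dlemma1-inq}. A $\partial_t$ on the two pieces of $Q$ reduces, via the scalar equations above, to $-i\frak a\,\theta_\a + G\circ h$ and $-i\taf\,\ttheta_\a + \tilde G\circ\th$: grouping the $G$-terms produces the $\nm{Z_{,\aa}G - \Zf_{,\aa}\circ l\,\tilde G\circ l}_{L^2}$ contribution, while the difference of the two $i\frak a\,\theta_\a$-type terms is re-expressed via identity \eqref{at} as the $\nm{\frac{\frak a_t}{\frak a}\circ h^{-1} - \frac{\taft}{\taf}\circ h^{-1}}_{L^2}+\nm{D_\aa Z_t - \tD_\aa\Zf_t\circ l}_{L^2}$ piece. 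The self-interaction factor $\nm{\frac{\frak a_t}{\frak a}}_{L^\infty}+\nm{D_\aa Z_t}_{L^\infty}$ multiplying the whole $\frak F$ comes from differentiating the common $1/\sqrt{\frak a\,\taf}$ weight itself, exactly as in Lemma~\ref{basic-e}. For the $\dot H^{1/2}$-type piece $i\int\partial_\aa\eta\,\bar\eta\,d\aa$ with $\eta := \Theta - \tTheta\circ l$, the standard integration by parts pairs $(\partial_t+b\partial_\aa)\eta$ against $i\Ac\partial_\aa\eta$, and, combined with the cross-term from the kinetic part, reproduces the top-order cancellation of Lemma~\ref{basic-e}.

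The main obstacle --- and the source of the final two lines of \eqref{dlemma1-inq} --- is that this cancellation is not clean in the comparison setting. First, because $\partial_\aa(\tTheta\circ l) = (\partial_\aa\tTheta\circ l)\, l_\aa$ rather than $\partial_\aa\tTheta\circ l$, an arc-length mismatch survives; combined with the $1-\kappa$ residue carried by the square-root weight, it gives the $\nm{1-\kappa}_{L^2}$ line, in which the factor $\nm{l_\aa/\kappa}_{L^\infty}$ naturally appears as the price of pulling $\tD_\aa\tTheta$ through $l$. Second, when one inverts the kinetic factors by dividing $Q$ by $Z_{,\aa}$ on one side and by $\Zf_{,\aa}\circ l$ on the other, the mismatch $\frac{1}{Z_{,\aa}} - U_l\frac{1}{\Zf_{,\aa}}$ fails to close in the cross integration by parts; what survives is exactly the boundary-type integral in the last line of \eqref{dlemma1-inq}. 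The real difficulty is the bookkeeping: four independent mismatches --- Taylor coefficients ($\frak a$ vs.\ $\taf$), material derivatives ($b$ vs.\ $\tb$), parameterizations ($h$ vs.\ $\th$), and arc-length factors ($Z_{,\aa}$ vs.\ $\Zf_{,\aa}\circ l$) --- produce cross-terms that must each be identified with exactly one of the five families on the right-hand side of \eqref{dlemma1-inq}. Writing each difference-of-products as a telescoping sum in these four mismatches, and applying Cauchy--Schwarz term by term, then yields the claimed bound.
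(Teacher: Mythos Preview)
Your overall strategy---pass to the Lagrangian variable $\a$ via $h$, write $\theta=\Theta\circ h$, $\ttheta=\tTheta\circ\th$, differentiate, use the two scalar equations, then regroup and apply Cauchy--Schwarz---is exactly the paper's. But your accounting of which piece produces which line of \eqref{dlemma1-inq} has a genuine error that would block the computation.

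You claim that ``the difference of the two $i\frak a\,\theta_\a$-type terms is re-expressed via identity \eqref{at}'' and yields the third line. This cannot be right: the terms $-i\frak a\,\theta_\a$ and $-i\taf\,\ttheta_\a$ coming from the equations are top-order in $\a$; they are not differences of $\frak a_t/\frak a$-type quantities, and identity \eqref{at} says nothing about $\frak a$ itself. These are precisely the terms that must be paired, via integration by parts, with the time derivative of the $\dot H^{1/2}$ piece to produce the quantity you later call $II$---you in fact say this correctly two sentences later, so your sketch is internally inconsistent about the fate of $i\frak a\,\theta_\a$.

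The actual source of lines 3 and 4 is the time derivative of the \emph{complex} prefactors in the kinetic energy. The paper's key algebraic device is to write the Lagrangian kinetic integrand as
\[
\Bigl|\sqrt{\tfrac{k}{a}}\,(\theta_t+\frak c)-\tfrac{1}{\sqrt{\tilde a\,k}}\,(\ttheta_t+\frak c)\Bigr|^2,
\]
where $k=\kappa\circ h$ and the ``square roots'' are complex, defined by $\sqrt{a}:=\dfrac{\sqrt{A_1\circ h\,\,h_\a}}{z_\a}$, $\sqrt{\tilde a}:=\dfrac{\sqrt{\tAone\circ\th\,\,\th_\a}}{\zf_\a}$, so that $|\sqrt{a}|^2=\frak a$. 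A direct computation gives
\[
\frac{(\sqrt{a})_t}{\sqrt{a}}=\tfrac12\,\frac{\frak a_t}{\frak a}-i\,\Im D_\a z_t,
\]
and likewise for $\tilde a$. Splitting this (and its tilde analogue) into a ``difference'' part and a ``diagonal'' part is what produces lines 3 and 4 of \eqref{dlemma1-inq}. Your ``common $1/\sqrt{\frak a\,\taf}$ weight'' is real and would miss the $\Im D_\a z_t$ contribution. Once this is set up, the $-i\frak a\,\theta_\a$ terms combine with $\partial_t$ of the $\dot H^{1/2}$ piece into $II$, and the further regrouping of $II$---using that $(\sqrt{a}-\sqrt{\tilde a}\,k)\sqrt{h_\a}=\sqrt{A_1}\circ h\cdot\bigl(\tfrac{1}{Z_{,\aa}}-U_l\tfrac{1}{\Zf_{,\aa}}\bigr)\circ h$---yields line 5 and the final explicit integral, as you anticipated.
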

\begin{remark}
By definition, $\frac{\kappa}{A_1}= \sqrt{\frac{l_\aa}{A_1\tAone\circ l}}$. And in what follows, $\sqrt{\tilde a h_\aa}\, \kappa\circ h=\frac {\sqrt {A_1}}{{\Zf_{,\aa}}\circ l}\circ h$, $\sqrt{a h_\aa}=\frac {\sqrt {A_1}}{Z_{,\aa}}\circ h$.
\end{remark}

\begin{proof}    
Let $\theta=\Theta\circ h$, and $\tilde\theta=\tTheta\circ \th$. We know $\theta$, $\ttheta$ satisfy
\begin{equation}\label{eqdfl}
\begin{aligned}
&\partial_t^2\theta+i\af \partial_\a\theta=G\circ h,\\
&\partial_t^2\ttheta+i\taf\partial_\a\ttheta=\tilde G\circ \th.\\
\end{aligned}
\end{equation}
Changing coordinate by $h$, we get
\begin{equation}
\mathfrak F(t)=\int\abs{ \sqrt{\frac{k}{a}} \paren{\theta_t+\mathfrak c}- \frac1{\sqrt{{\tilde a} k}}\paren{\ttheta_t+\mathfrak c}}^2+i\partial_\a(\theta-\ttheta)\bar{(\theta-\ttheta)}\,d\a,
\end{equation}
where $k=\kappa\circ h$, $\sqrt{a}:= \frac{\sqrt{A_1\circ h h_\a}}{z_{\a}}$ and 
 $\sqrt{{\tilde a} }:= \frac{\sqrt{\tAone\circ \th \th_\a}}{\zf_{\a}}$. Notice that here, $\sqrt{a}$ and 
 $\sqrt{{\tilde a} }$ are complex valued, and $|\sqrt{a}|^2=\frak a$, $|\sqrt{\tilde a}|^2=\taf$.  Differentiating to $t$, integrating by parts, then applying equations \eqref{eqdfl}, we get
 \begin{equation}\label{2300}
 \begin{aligned}
 &\mathfrak F'(t)=2\Re\int  \bar{\braces{
 \sqrt{\frac{k}{a}} \paren{\theta_t+\mathfrak c}- \frac1{\sqrt{{\tilde a} k}}\paren{\ttheta_t+\mathfrak c}}}\left\{\sqrt{\frac{k}{a}} G\circ h- \frac1{\sqrt{{\tilde a} k}}\tilde G\circ \th\right\}\\&\qquad+\bar{\braces{
 \sqrt{\frac{k}{a}} \paren{\theta_t+\mathfrak c}- \frac1{\sqrt{{\tilde a} k}}\paren{\ttheta_t+\mathfrak c}}}\left\{\frac12{\frac{k_t}k \paren{\sqrt{\frac{k}{a}} \paren{\theta_t+\mathfrak c}+ \frac1{\sqrt{{\tilde a} k}}\paren{\ttheta_t+\mathfrak c} }  }\right\}\\&-\bar{\braces{
 \sqrt{\frac{k}{a}} \paren{\theta_t+\mathfrak c}- \frac{\ttheta_t+\mathfrak c}{\sqrt{{\tilde a} k}}}}\braces{\paren{\frac12\frac{\aft}{\af}-i\Im D_\a z_t}\sqrt{\frac{k}{a}}\paren{\theta_t+\mathfrak c}-\paren{\frac12\frac{\taft}{\taf}-i\Im D_\a \zf_t}\frac{\ttheta_t+\mathfrak c}{\sqrt{{\tilde a} k}}   }\,d\a \\&+
 2\Re i\int \bar{(\theta_t-\ttheta_t)}(\theta_\a-\ttheta_a)-\bar{\braces{
 \sqrt{\frac{k}{a}} \paren{\theta_t+\mathfrak c}- \frac1{\sqrt{{\tilde a} k}}\paren{\ttheta_t+\mathfrak c}}}\braces{\sqrt k\bar{\sqrt{  a}}\theta_\a-\frac{\bar{\sqrt{{\tilde a}}}}{\sqrt{k}}\ttheta_\a }   \,d\a\\&=I+II
  \end{aligned}
  \end{equation}
  where $I$ consists of the terms in the first three lines and $II$ is the last line
  $$II:=2\Re i\int \bar{(\theta_t-\ttheta_t)}(\theta_\a-\ttheta_a)-\bar{\braces{
 \sqrt{\frac{k}{a}} \paren{\theta_t+\mathfrak c}- \frac1{\sqrt{{\tilde a} k}}\paren{\ttheta_t+\mathfrak c}}}\braces{\sqrt k\bar{\sqrt{  a}}\theta_\a-\frac{\bar{\sqrt{{\tilde a}}}}{\sqrt{k}}\ttheta_\a }   \,d\a.$$
 Further regrouping terms in $II$ we get
 \begin{equation}\label{2301}
 \begin{aligned}
 II=&2\Re i\int \frac{(1-k)}{\sqrt{k}}\bar{ \braces{
 \sqrt{\frac{k}{a}} \paren{\theta_t+\mathfrak c}- \frac1{\sqrt{{\tilde a} k}}\paren{\ttheta_t+\mathfrak c}}  }\braces{\bar{\sqrt{a}} \theta_\a+\bar{\sqrt{{\tilde a}}}\ttheta_\a} \,d\a\\&+
 2\Re i\int \bar{(\sqrt{a}-\sqrt{{\tilde a}} k )}\paren{\bar{\frac1{\sqrt{{\tilde a}} k}(\ttheta_t+\frak c)}\theta_\a-\bar{\frac1{\sqrt{a}}(\theta_t+\frak c)}\ttheta_\a}\,d\a.
 \end{aligned}
 \end{equation}
Changing variables by $h^{-1}$ in the integrals in \eqref{2300} and \eqref{2301}, and then applying  Cauchy-Schwarz and H\"older's inequalities, we obtain \eqref{dlemma1-inq}.
\end{proof}

We have the following basic identities and inequalities.
\begin{proposition} Let $\mathcal Q_l= U_{l}\mathbb H U_{l^{-1}}-\mathbb H$, where $l:\mathbb R\to \mathbb R$ is a diffeomorphism,\footnote{We say $l:\mathbb R\to\mathbb R$ is a diffeomorphism, if 
$l:\mathbb R\to\mathbb R$ is one-to-one and onto, and $l, \ l^{-1}\in C^1(\mathbb R)$, with $\|l_\aa\|_{L^\infty}+\|(l^{-1})_\aa\|_{L^\infty}<\infty$.}
with $l_\aa-1\in L^2$.  For any $f\in H^1(\mathbb R)$, we have
\begin{align}
\nm{\mathcal Q_l f}_{\dot H^{\frac12}}&\le C(\nm{(l^{-1})_\aa}_{L^\infty}, \nm{l_\aa}_{L^\infty})\|l_\aa-1\|_{L^2}\|\partial_\aa f\|_{L^2};\label{q1}\\
 \nm{\mathcal Q_l f}_{L^\infty}&\le C(\nm{(l^{-1})_\aa}_{L^\infty}, \nm{l_\aa}_{L^\infty})\|l_\aa-1\|_{L^2}\|\partial_\aa f\|_{L^2};\label{q2}\\
 \nm{\mathcal Q_l f}_{L^2}&\le C(\nm{(l^{-1})_\aa}_{L^\infty}, \nm{l_\aa}_{L^\infty})\|l_\aa-1\|_{L^2}\| f\|_{L^\infty};\label{q3}\\
 \nm{\mathcal Q_l f}_{L^2}&\le C(\nm{(l^{-1})_\aa}_{L^\infty}, \nm{l_\aa}_{L^\infty})\|l_\aa-1\|_{L^2}\| f\|_{\dot H^{1/2}  }.\label{q4}
\end{align}

\end{proposition}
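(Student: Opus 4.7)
The plan is to derive an explicit principal-value kernel for $\mathcal{Q}_l$ and reduce each of the four estimates to Calder\'on-commutator inequalities already established in the Appendix. Setting $g=f\circ l^{-1}$ and performing the change of variables $\beta=l(\gamma)$ in $U_l\mathbb{H}U_{l^{-1}}f(\alpha)=\mathbb{H}g(l(\alpha))$, one obtains
\begin{equation*}
\mathcal{Q}_lf(\alpha) = \frac{1}{\pi i}\,\text{pv.}\int\paren{\frac{l_\gamma(\gamma)}{l(\alpha)-l(\gamma)}-\frac{1}{\alpha-\gamma}}f(\gamma)\,d\gamma.
\end{equation*}
Writing $p(\alpha):=l(\alpha)-\alpha$ and $m(\alpha,\gamma):=\frac{l(\alpha)-l(\gamma)}{\alpha-\gamma}$, simple algebra puts the bracketed kernel in the form $\frac{p_\gamma(\gamma)(\alpha-\gamma)-(p(\alpha)-p(\gamma))}{m(\alpha,\gamma)(\alpha-\gamma)^2}$; by hypothesis $m$ is bounded above and away from zero, uniformly in $(\alpha,\gamma)$.

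The crux is the identity $\mathcal{Q}_lf=\bracket{p,\mathbb{H}}f_\alpha+\mathcal{R}_lf$, where $\mathcal{R}_lf$ gathers the terms carrying an extra factor of $p$. To obtain it, use $\tfrac{1}{m}=1-\tfrac{m-1}{m}$ with $m-1=\tfrac{p(\alpha)-p(\gamma)}{\alpha-\gamma}$ to split the principal integral into
\begin{equation*}
\mathbb{H}(p_\gamma f)(\alpha)\;-\;\frac{1}{\pi i}\int\frac{p(\alpha)-p(\gamma)}{(\alpha-\gamma)^2}f(\gamma)\,d\gamma
\end{equation*}
plus two remainders of type $\int\frac{p_\gamma(\gamma)(p(\alpha)-p(\gamma))}{m(\alpha,\gamma)(\alpha-\gamma)^2}f\,d\gamma$ and $\int\frac{(p(\alpha)-p(\gamma))^2}{m(\alpha,\gamma)(\alpha-\gamma)^3}f\,d\gamma$. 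Direct differentiation gives $\frac{1}{\pi i}\int\frac{p(\alpha)-p(\gamma)}{(\alpha-\gamma)^2}f\,d\gamma = p_\alpha\mathbb{H}f-\partial_\alpha\bracket{p,\mathbb{H}}f$, and the product-rule identity $\partial_\alpha\bracket{p,\mathbb{H}}f = p_\alpha\mathbb{H}f+\bracket{p,\mathbb{H}}f_\alpha-\mathbb{H}(p_\alpha f)$ then collapses the explicit part to $\bracket{p,\mathbb{H}}f_\alpha$.

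With the leading term identified, \eqref{q2} follows by writing $\frac{p(\alpha)-p(\gamma)}{\alpha-\gamma}=\int_0^1 p_\alpha(\gamma+s(\alpha-\gamma))\,ds$, applying Cauchy--Schwarz in $\gamma$, and changing variables $\eta=(1-s)\gamma+s\alpha$; the resulting $(1-s)^{-1/2}$ is integrable over $[0,1]$ and yields the pointwise bound by $\nm{p_\alpha}_{L^2}\nm{f_\alpha}_{L^2}$. Inequality \eqref{q3} follows by an integration by parts that moves $\partial_\alpha$ off $f$, reducing matters to $\nm{\bracket{p,\mathbb{H}}g}_{L^2}\lec\nm{p_\alpha}_{L^2}\nm{g}_{L^2}$, which is a direct application of \eqref{3.20}/\eqref{eq:b12}. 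For \eqref{q1} and \eqref{q4} one invokes the fractional Sobolev characterization $\nm{g}_{\dot H^{1/2}}^2\sim\int\int \frac{|g(\alpha)-g(\beta)|^2}{(\alpha-\beta)^2}\,d\alpha d\beta$ together with the kernel form of $\bracket{p,\mathbb{H}}f_\alpha$, again reducing to Calder\'on estimates of the Appendix. The remainder $\mathcal{R}_lf$ has an extra $p$-factor in the integrand and can be recast either as a nested commutator $\bracket{p,\mathbb{H}}(p_\alpha\cdot)$ or as a double-difference $[p,p;\cdot]$ in the notation of \eqref{eq:comm}; the Appendix estimates then apply verbatim, provided one knows $\nm{1/m}_{L^\infty}<\infty$.

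The main obstacle is ensuring that the bound for the remainder $\mathcal{R}_lf$ stays \emph{linear} in $\nm{l_\alpha-1}_{L^2}$ rather than picking up an unwanted quadratic factor; this is precisely why one must use the diffeomorphism hypothesis $\nm{(l^{-1})_\alpha}_{L^\infty}<\infty$ to absorb $1/m$, instead of attempting to estimate both copies of $p$ in $L^2$. A secondary technical point is the rigorous justification of the integration by parts and of the identity $\mathcal{Q}_lf=\bracket{p,\mathbb{H}}f_\alpha+\mathcal{R}_lf$ when $f$ is only in $L^\infty$ or $\dot H^{1/2}$, which is handled by first verifying the identity for Schwartz $f$ and then passing to the limit using density and continuity of each term in the relevant norm.
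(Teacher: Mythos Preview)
Your decomposition $\mathcal{Q}_l f = [p,\mathbb H]f_{\alpha'} + \mathcal{R}_l f$ is correct, and the leading term is indeed handled by the appendix estimates. The gap is in the remainder. The two pieces of $\mathcal{R}_l f$ are $C_1$-type operators acting on $f$ (not on $f'$), and the Calder\'on bounds you cite, namely Propositions~\ref{B1}--\ref{B2} and \eqref{eq:b12}--\eqref{eq:b16}, only control such operators in $L^2$ or $L^\infty$ via $\|f\|_{L^2}$ or $\|f\|_{L^\infty}$. None of them produces $\|\partial_{\alpha'}f\|_{L^2}$ or $\|f\|_{\dot H^{1/2}}$ on the right. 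So while \eqref{q3} goes through exactly as you describe, \eqref{q2}, \eqref{q4} and the remainder contribution to \eqref{q1} are not covered by ``the Appendix estimates apply verbatim.'' Concretely, for the second remainder piece $\int \frac{(p(\alpha')-p(\beta'))^2}{m\,(\alpha'-\beta')^3}f(\beta')\,d\beta'$, putting one difference quotient in $L^\infty$ and the other in $L^2$ still leaves a factor $\int \frac{|f(\beta')|}{|\alpha'-\beta'|}\cdot(\dots)\,d\beta'$ that cannot be closed on $\|f'\|_{L^2}$ or $\|f\|_{\dot H^{1/2}}$.

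The missing idea is the cancellation $\mathcal{Q}_l 1 = 0$ (equivalently $\mathcal{R}_l 1 = 0$), which follows from $\mathbb H 1 = 0$ applied in both coordinate systems. The paper exploits this from the start: it writes
\[
\mathcal{Q}_l f(\alpha') = \frac{1}{\pi i}\int\Big(\frac{l_{\beta'}-1}{l(\alpha')-l(\beta')}+\frac{(\alpha'-\beta')-(l(\alpha')-l(\beta'))}{(l(\alpha')-l(\beta'))(\alpha'-\beta')}\Big)\big(f(\beta')-f(\alpha')\big)\,d\beta',
\]
after which \eqref{q2} and \eqref{q4} follow in one line from Cauchy--Schwarz and Hardy's inequality \eqref{eq:77} (respectively the definition \eqref{def-hhalf}); for \eqref{q1} it computes $\partial_{\alpha'}\mathcal{Q}_l f$ as a similar kernel acting on $f_{\beta'}$ and dualizes against a test function $p$, again inserting $p(\alpha')-p(\beta')$. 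You can certainly repair your argument by inserting $f(\beta')-f(\alpha')$ in the remainder, but once you do that the preliminary algebra extracting $[p,\mathbb H]f_{\alpha'}$ becomes unnecessary: the paper's route treats the whole kernel at once and is both shorter and more transparent.
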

\begin{proof}
We know 
\begin{equation}\label{2305}
U_{l}\mathbb H U_{l^{-1}}f(\aa)=\frac1{\pi i}\int\frac{f(\bb) l_\bb(\bb)}{l(\aa)-l(\bb)}\,d\bb
\end{equation}
so
\begin{equation}\label{2306}
\begin{aligned}
\mathcal Q_l f&=\frac1{\pi i}\int\paren{\frac{ l_\bb(\bb)-1}{l(\aa)-l(\bb)}+\frac{ \aa-l(\aa)-\bb+l(\bb)}{(l(\aa)-l(\bb))(\aa-\bb)}  } f(\bb)\,d\bb\\&
=\frac1{\pi i}\int\paren{\frac{ l_\bb(\bb)-1}{l(\aa)-l(\bb)}+  \frac{ \aa-l(\aa)-\bb+l(\bb)}{(l(\aa)-l(\bb))(\aa-\bb)} } (f(\bb)-f(\aa))\,d\bb,
\end{aligned}
\end{equation}
here in the second step we inserted $-f(\aa)$ because $\mathbb H1=0$. Apply Cauchy-Schwarz inequality and Hardy's inequality  \eqref{eq:77} on the second equality in \eqref{2306} we obtain \eqref{q2} and \eqref{q4}. Using \eqref{3.16} and \eqref{3.17} on the first equality in \eqref{2306} we get \eqref{q3}. We are left with \eqref{q1}.

Differentiate with respect to $\aa$ and integrate by parts gives
\begin{equation}\label{2307}
\partial_\aa \mathcal Q_lf(\aa)=\frac1{\pi i}\int\paren{\frac{l_\aa(\aa)}{l(\aa)-l(\bb)}- \frac1{\aa-\bb}} f_\bb(\bb) \,d\bb
\end{equation}
Let $p\in C^\infty_0(\mathbb R)$. We have, by using the fact $\mathbb H1=0$ to insert $-p(\bb)$, that
\begin{equation}\label{2308}
\begin{aligned}
\int p(\aa)\partial_\aa \mathcal Q_lf(\aa)\,d\aa&= \frac1{\pi i}\iint\paren{\frac{l_\aa(\aa)}{l(\aa)-l(\bb)}- \frac1{\aa-\bb}} f_\bb(\bb) (p(\aa)-p(\bb)) \,d\aa d\bb\\&=
 \frac1{\pi i}\iint\frac{p(\aa)-p(\bb) }{l(\aa)-l(\bb)} (l_\aa(\aa)-1)f_\bb(\bb) \,d\aa d\bb\\&+
  \frac1{\pi i}\iint\frac{ \aa-l(\aa)-\bb+l(\bb)}{(l(\aa)-l(\bb))(\aa-\bb)} f_\bb(\bb) (p(\aa)-p(\bb)) \,d\aa d\bb.
\end{aligned}
\end{equation}
Applying Cauchy-Schwarz inequality and Hardy's inequality \eqref{eq:77} to \eqref{2308}. We get, for some constant $c$ depending only on $\|l_\aa\|_{L^\infty}$ and $\|(l^{-1})_\aa\|_{L^\infty}$, 
\begin{equation}\label{2309}
\abs{\int p(\aa)\partial_\aa \mathcal Q_lf(\aa)\,d\aa}\le c\|p\|_{\dot H^{1/2}}\|l_\aa-1\|_{L^2}\|\partial_\aa f\|_{L^2}.
\end{equation}
This proves inequality \eqref{q1}.

\end{proof}

\begin{lemma}\label{hhalf2}
 Assume that $f, \ g,\ f_1,\ g_1 \in H^1(\mathbb R)$ are the boundary values of some holomorphic functions on $\mathscr P_-$. Then
\begin{equation}\label{halfholo}
\int \partial_\aa \mathbb P_A (\bar f g)(\aa)f_1(\aa) \bar g_1(\aa)\,d\aa=-\frac1{2\pi i}\iint \frac{(\bar f(\aa)-\bar f(\bb))( f_1(\aa)- f_1(\bb))}{(\aa-\bb)^2}g(\bb)\bar g_1(\aa)\,d\aa d\bb.
\end{equation}

\end{lemma}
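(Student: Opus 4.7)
The plan is to give $\mathbb P_A(\bar f g)$ an explicit integral form using the holomorphicity of $g$, differentiate, and match term by term with the claimed right-hand side, leaving a single residual identity to verify by an integration-by-parts argument that exploits holomorphicity in the upper half plane.

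First, since $g=\mathbb H g$, I would rewrite
\begin{equation*}
\mathbb P_A(\bar f g)=\tfrac12\paren{\bar f\,\mathbb H g-\mathbb H(\bar f g)}=\tfrac12[\bar f,\mathbb H]g=\frac{1}{2\pi i}\int\frac{\bar f(\aa)-\bar f(\bb)}{\aa-\bb}\,g(\bb)\,d\bb.
\end{equation*}
Under the $H^1$ hypothesis the integrand is continuous across $\bb=\aa$, so differentiation under the integral is permitted and, using $\mathbb H g=g$ once more to evaluate one of the resulting pieces,
\begin{equation*}
\partial_\aa\mathbb P_A(\bar f g)(\aa)=\tfrac12\,\bar f'(\aa)g(\aa)-\frac{1}{2\pi i}\int\frac{\bar f(\aa)-\bar f(\bb)}{(\aa-\bb)^2}g(\bb)\,d\bb.
\end{equation*}
Multiplying by $f_1(\aa)\bar g_1(\aa)$ and integrating over $\aa$, the second term above matches exactly the contribution that arises from the $f_1(\aa)$-piece in the expansion $f_1(\aa)-f_1(\bb)$ on the target right-hand side. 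The lemma therefore reduces to the residual identity
\begin{equation*}
\tfrac12\int \bar f'(\aa)g(\aa)f_1(\aa)\bar g_1(\aa)\,d\aa=-\frac{1}{2\pi i}\iint\frac{\bar f(\aa)-\bar f(\bb)}{(\aa-\bb)^2}g(\bb)\bar g_1(\aa)f_1(\bb)\,d\aa\,d\bb.\qquad(\star)
\end{equation*}

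To prove $(\star)$ I would swap the order of integration so that the $\aa$-integral is inner, and integrate by parts in $\aa$ using $\partial_\aa\!\left(-\tfrac{1}{\aa-\bb}\right)=\tfrac{1}{(\aa-\bb)^2}$, where the $H^1$ assumption ensures the boundary terms at $\pm\infty$ vanish. The inner integral becomes
\begin{equation*}
\int\frac{\partial_\aa[\bar f(\aa)\bar g_1(\aa)]}{\aa-\bb}\,d\aa\,-\,\bar f(\bb)\int\frac{\bar g_1'(\aa)}{\aa-\bb}\,d\aa,
\end{equation*}
and each single integral equals $-\pi i$ times the Hilbert transform at $\bb$ of its integrand. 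The crucial structural input is that $\bar f$, $\bar g_1$, and hence $\bar f\bar g_1$ are boundary values of functions holomorphic in the \emph{upper} half plane, so $\mathbb H(\bar f\bar g_1)=-\bar f\bar g_1$ and $\mathbb H\bar g_1=-\bar g_1$. The $\bar f(\bb)\bar g_1'(\bb)$ contributions then cancel, leaving precisely $\pi i\,\bar f'(\bb)\bar g_1(\bb)$. Plugging this back into the outer $\bb$-integral produces exactly the left-hand side of $(\star)$.

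The main obstacle is sign bookkeeping: one must carefully distinguish $\int G(\aa)/(\aa-\bb)\,d\aa=-\pi i\,\mathbb H G(\bb)$ (variable being integrated sits in the numerator of the kernel) from the defining convention $\mathbb H f(\aa)=\frac{1}{\pi i}\int f(\bb)/(\aa-\bb)\,d\bb$, and must apply the two opposite holomorphicity identities ($\mathbb H g=g$, $\mathbb H f_1=f_1$ versus $\mathbb H\bar f=-\bar f$, $\mathbb H\bar g_1=-\bar g_1$) at the correct places. A secondary technical point is the justification of Fubini and of the integration by parts under only the $H^1$ assumption, which I would handle by a standard density argument approximating $f,g,f_1,g_1$ by Schwartz functions holomorphic on $\mathscr P_-$.
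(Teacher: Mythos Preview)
Your approach is essentially the paper's: both start from $2\mathbb P_A(\bar f g)=[\bar f,\mathbb H]g$, differentiate to obtain the same two-term formula, and then reduce the lemma to the residual identity expressing $\int \bar f'\,g\,f_1\,\bar g_1\,d\aa$ as the ``$f_1(\bb)$-piece'' of the double integral. One sign slip: as you yourself anticipated, the right-hand side of your $(\star)$ should carry a $+$ rather than a $-$ (track the expansion $f_1(\aa)-f_1(\bb)$ against the overall $-\tfrac{1}{2\pi i}$); your integration-by-parts computation then lands on the correct identity. The only difference from the paper is in how the residual is dispatched: instead of integrating by parts in $\aa$ and invoking $\mathbb H(\bar f\bar g_1)=-\bar f\bar g_1$, the paper applies the very same derivative formula to the holomorphic pair $(f,\,f_1 g)$ and notes that $\bar g_1\,\partial_\aa\mathbb P_A(\bar f\,f_1 g)$ is the boundary value of an antiholomorphic function, so its integral vanishes by Cauchy's theorem---this yields the residual identity in one line and sidesteps the sign bookkeeping entirely.
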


\begin{proof}
Let $f, \ g,\ f_1,\ g_1\in H^1(\mathbb R)$, and are the boundary values of some holomorphic functions in $\mathscr P_-$. We have
\begin{equation}\label{2310}
2\mathbb P_A (\bar f g)=(I-\mathbb H)(\bar f g)= [\bar f,\mathbb H]g
\end{equation}
and
\begin{equation}\label{2311}
2\partial_\aa\mathbb P_A (\bar f g)= \partial_\aa \bar f\, \mathbb H g-\frac1{\pi i}\int\frac{\bar f(\aa)-\bar f(\bb)}{(\aa-\bb)^2} g(\bb)\,d\bb.
\end{equation}
Because $\bar g_1 \partial_\aa\mathbb P_A (\bar f f_1 g)\in L^1(\mathbb R)$ is the boundary value of an anti-holomorphic function in $\mathscr P_-$, by Cauchy integral theorem, 
\begin{equation}\label{2312}
0=2 \int \bar g_1 \partial_\aa\mathbb P_A (\bar f f_1 g)\,d\aa
=\int \partial_\aa \bar f\, f_1 g \bar g_1\,d\aa-\frac1{\pi i}\iint\frac{\bar f(\aa)-\bar f(\bb)}{(\aa-\bb)^2} f_1(\bb)g(\bb)\bar g_1(\aa)\,d\aa d\bb,
\end{equation}
here we applied formula \eqref{2311} to the pair of holomorphic functions $f$ and $f_1 g$, and used the fact that $\mathbb H(f_1 g)=f_1 g$.
Now we use \eqref{2311} to compute, because $\mathbb H g=g$,
\begin{equation}\label{2313}
2\int \partial_\aa \mathbb P_A (\bar f g) \, f_1 \bar g_1\,d\aa
=\int \partial_\aa \bar f\, g f_1  \bar g_1\,d\aa- \frac1{\pi i}\iint\frac{\bar f(\aa)-\bar f(\bb)}{(\aa-\bb)^2} g(\bb)f_1(\aa)\bar g_1(\aa)\,d\aa d\bb.
\end{equation}
Substituting \eqref{2312} in \eqref{2313},  we get \eqref{halfholo}.
\end{proof}

\begin{remark}\label{hhalf3}
By  Cauchy integral theorem, we know for $f, \ g,\ f_1,\ g_1 \in H^1(\mathbb R)$,
$$\int \partial_\aa \mathbb P_A (\bar f g)(\aa)f_1(\aa) \bar g_1(\aa)\,d\aa=\int \partial_\aa \mathbb P_A (\bar f g)\mathbb P_H(f_1 \bar g_1)\,d\aa=\int \partial_\aa \mathbb P_A (\bar f g)\bar {\mathbb P_A(\bar f_1 g_1)}\,d\aa .$$

\end{remark}
As a  corollary of Lemma~\ref{hhalf2} and Remark~\ref{hhalf3} we have
\begin{proposition}\label{hhalf4}
Assume  that $f, \ g \in H^1(\mathbb R)$. We have
\begin{align}
\nm{\bracket{f, \mathbb H} g}_{\dot H^{1/2}}&\lec \|f\|_{\dot H^{1/2}}(\|g\|_{L^\infty} +\|\mathbb H g\|_{L^\infty});\label{hhalf41}\\
\nm{ \bracket{f, \mathbb H} g }_{\dot H^{1/2}}&\lec \|\partial_\aa f\|_{L^2}\|g\|_{L^2};\label{hhalf42}\\
\nm{\bracket{f, \mathbb H} \partial_\aa g}_{\dot H^{1/2}}&\lec \|g\|_{\dot H^{1/2}}(\|\partial_\aa f\|_{L^\infty}+\|\partial_\aa \mathbb H f\|_{L^\infty}).\label{hhalf43}
\end{align}
\end{proposition}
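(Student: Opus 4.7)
The plan is to reduce all three estimates to a single bilinear form via the projection decomposition, then apply Lemma~\ref{hhalf2} together with appropriate Cauchy--Schwarz / Hardy--Hilbert type splittings to extract each inequality.

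First I would decompose $f = f_+ + f_-$ and $g = g_+ + g_-$ with $f_\pm := \mathbb{P}_{H/A} f$ and similarly $g_\pm$. Using $\mathbb H f_\pm = \pm f_\pm$, the products of like type drop out of $[f, \mathbb H]g = f\mathbb H g - \mathbb H(fg)$, leaving
$$[f,\mathbb H] g = 2\mathbb P_A(f_- g_+) - 2\mathbb P_H(f_+ g_-).$$
The two summands are complex conjugates of each other (up to renaming of factors), and $\|\cdot\|_{\dot H^{1/2}}$ is conjugation-invariant, so it suffices to estimate $\|\mathbb P_A(\bar u v)\|_{\dot H^{1/2}}$ where $u, v \in H^1(\mathbb R)$ are boundary values of holomorphic functions on $\mathscr P_-$ (specifically $u = \overline{f_-}$, $v = g_+$).

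Setting $F := \mathbb P_A(\bar u v)$, which is antiholomorphic, the analogue of \eqref{hhalf} gives $\|F\|_{\dot H^{1/2}}^2 = c\,|\int i\partial_\aa F \cdot \bar F\,d\aa|$. Since $\bar F = \mathbb P_H(u\bar v)$, by Remark~\ref{hhalf3} I can drop the $\mathbb P_A$-part of $u\bar v$ in the pairing (it integrates to zero against the antiholomorphic $\partial_\aa F$), yielding $\|F\|_{\dot H^{1/2}}^2 = c\,|\int \partial_\aa F\cdot u\bar v\,d\aa|$. Applying Lemma~\ref{hhalf2} with $f = f_1 = u$ and $g = g_1 = v$ then yields the key bilinear bound
$$\|F\|_{\dot H^{1/2}}^2 \lec \left|\iint \frac{|u(\aa)-u(\bb)|^2}{(\aa-\bb)^2}\,v(\bb)\,\bar v(\aa)\,d\aa\,d\bb\right|.$$

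Each inequality now follows by handling the right-hand side differently. For \eqref{hhalf41}, I would pull out $\|v\|_{L^\infty}^2$ and use $\iint\frac{|u(\aa)-u(\bb)|^2}{(\aa-\bb)^2}d\aa\,d\bb \simeq \|u\|_{\dot H^{1/2}}^2$; translating back through the reduction via $\|\mathbb P_{H/A} g\|_{L^\infty} \lec \|g\|_{L^\infty}+\|\mathbb H g\|_{L^\infty}$ and the $\dot H^{1/2}$-boundedness of the projections closes the argument. For \eqref{hhalf42}, use $|u(\aa)-u(\bb)|^2 \leq |\aa-\bb|\int_\bb^\aa |u'(\gamma)|^2 d\gamma$, interchange the order of integration by Fubini, and invoke the Hardy--Hilbert inequality $\int_\gamma^\infty \int_{-\infty}^\gamma \frac{|v(\bb)||v(\aa)|}{\aa-\bb}d\bb\,d\aa \lec \|v\|_{L^2}^2$ to bound the expression by $\|u'\|_{L^2}^2\|v\|_{L^2}^2$. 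For \eqref{hhalf43}, use the identity $[f,\mathbb H]\partial_\aa g = \partial_\aa[f,\mathbb H]g - [\partial_\aa f,\mathbb H]g$: the second summand is handled by a ``swapped'' version of \eqref{hhalf41}, obtained by choosing the Cauchy--Schwarz split in the bilinear form with the roles of the two holomorphic factors interchanged (pulling out $\|u\|_{L^\infty}^2$ instead of $\|v\|_{L^\infty}^2$, leaving $\|v\|_{\dot H^{1/2}}^2$), while the first summand is controlled by integrating by parts in the Lemma's integral representation to shift the derivative from $v$ onto the kernel, where it gets absorbed by the pointwise bound $|u(\aa)-u(\bb)|/|\aa-\bb| \leq \|u'\|_{L^\infty}$.

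The main obstacle will be the arrangement of the integration-by-parts / Cauchy--Schwarz step for \eqref{hhalf43}: the extra derivative on $g$ produces a more singular kernel in the bilinear form, and the manipulation must end up with a remainder that is controlled by $\|v\|_{\dot H^{1/2}}$ multiplied by $L^\infty$-norms of derivatives of $u$ and $\mathbb H u$, rather than by higher Sobolev norms of either factor. In particular the ``swapped'' form of \eqref{hhalf41} must be proven to apply with the correct orientation of the Cauchy--Schwarz pairing, so that the $\dot H^{1/2}$-norm lands on $g$ while $f'$ (and $\mathbb H f'$) appear only through their $L^\infty$-norms.
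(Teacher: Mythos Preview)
Your reduction to the bilinear identity from Lemma~\ref{hhalf2} is correct and matches the paper; in particular your arguments for \eqref{hhalf41} and \eqref{hhalf42} are essentially the paper's (the paper phrases \eqref{hhalf42} as ``Cauchy--Schwarz plus Hardy \eqref{eq:77}'' rather than Hardy--Hilbert, but both work).

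The gap is in \eqref{hhalf43}. Your bilinear form
\[
\iint \frac{|u(\aa)-u(\bb)|^2}{(\aa-\bb)^2}\,v(\bb)\,\bar v(\aa)\,d\aa\,d\bb
\]
is genuinely asymmetric in $u$ and $v$: the $u$-factor appears as a squared difference while $v$ appears as an undifferenced product. There is no Cauchy--Schwarz split that extracts $\|v\|_{\dot H^{1/2}}^2$ times $\|u\|_{L^\infty}^2$ from it---pulling out $\|u\|_{L^\infty}$ leaves $\iint |v(\aa)||v(\bb)|/(\aa-\bb)^2$, which diverges. So the ``swapped \eqref{hhalf41}'' you invoke for $[\partial_\aa f,\mathbb H]g$ is not available from this representation. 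Worse, your other summand $\partial_\aa[f,\mathbb H]g$ expands as $\mathbb P_A(\bar u' v)+\mathbb P_A(\bar u v')$, and the second piece is exactly $[f,\mathbb H]\partial_\aa g$ again---so the decomposition is circular.

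The paper avoids this by using a different identity: the integration-by-parts relation
\[
[f,\mathbb H]\partial_\aa g + [g,\mathbb H]\partial_\aa f = \mathbf r:=\frac1{\pi i}\int\frac{(f(\aa)-f(\bb))(g(\aa)-g(\bb))}{(\aa-\bb)^2}\,d\bb.
\]
Now $[g,\mathbb H]\partial_\aa f$ is estimated by \eqref{hhalf41} \emph{applied verbatim} to the pair $(g,\partial_\aa f)$, which legitimately gives $\|g\|_{\dot H^{1/2}}\|\partial_\aa f\|_{L^\infty}$. For $\mathbf r$ one computes $\partial_\aa\mathbf r$, uses the holomorphicity cancellation $f_\aa\mathbb H g_\aa + g_\aa\mathbb H f_\aa = 0$ (valid after the reduction to $f$ antiholomorphic, $g$ holomorphic), and then a duality--symmetrization argument bounds $\|\mathbf r\|_{\dot H^{1/2}}$ by $\|\partial_\aa f\|_{L^\infty}\|g\|_{\dot H^{1/2}}$. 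The key point is that swapping happens at the level of the \emph{commutator slots}, not inside the fixed bilinear form.
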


\begin{proof}
By Proposition~\ref{prop:comm-hilbe} and the decompositions $f=\mathbb P_A f+\mathbb P_H f$, $g=\mathbb P_A g+\mathbb P_H g$, 
\begin{equation}\label{2318}
\bracket{f,\mathbb H}g=\bracket{\mathbb P_A f,\mathbb H}\mathbb P_H g+\bracket{\mathbb P_H f,\mathbb H}\mathbb P_A g.
\end{equation}
So without loss of generality, we assume $f$ is anti-holomorphic and $g$ is holomorphic, i.e. $f=-\mathbb H f$, $g=\mathbb Hg$.
\eqref{hhalf41} is straightforward from \eqref{halfholo}, Remark~\ref{hhalf3} and the definition \eqref{def-hhalf}; and 
\eqref{hhalf42} can be easily obtained by applying Cauchy-Schwarz inequality and Hardy's inequality \eqref{eq:77} to \eqref{halfholo}. We are left with \eqref{hhalf43}.

By integration by parts, we know
\begin{equation}\label{2314}
 [ f,\mathbb H] \partial_\aa g+[g,\mathbb H]\partial_\aa f=\frac1{\pi i}\int \frac{( f(\aa)-f(\bb))(g(\aa)-g(\bb))}{(\aa-\bb)^2}\,d\bb:={\bf r};
\end{equation}
and by \eqref{hhalf41},
$$\nm{[g,\mathbb H]\partial_\aa f}_{\dot H^{1/2}}\lec \|g\|_{\dot H^{1/2}}\|\partial_\aa f\|_{L^\infty}.$$ For the  term $\bf r$ in the right hand side of \eqref{2314}, we have
\begin{equation}\label{2315}
\partial_\aa {\bf r}=\frac{-2}{\pi i}\int \frac{( f(\aa)- f(\bb))(g(\aa)-g(\bb))}{(\aa-\bb)^3}\,d\bb+ f_\aa \mathbb H g_\aa+g_\aa \mathbb H  f_\aa;
\end{equation}
and using $f=-\mathbb Hf$, $g=\mathbb H g$, we find
$$ f_\aa \mathbb H g_\aa+g_\aa \mathbb H  f_\aa= f_\aa   g_\aa-g_\aa   f_\aa=0.$$
Let $p\in C_0^\infty(\mathbb R)$. We have, using the symmetry of the integrand, 
\begin{equation}\label{2316}
\int p\partial_\aa {\bf r}\,d\aa=\frac{-1}{\pi i}\iint \frac{( f(\aa)- f(\bb))(g(\aa)-g(\bb))(p(\aa)-p(\bb))}{(\aa-\bb)^3}\,d\aa d\bb;
\end{equation}
applying Cauchy-Schwarz inequality and the definition \eqref{def-hhalf}, we get
\begin{equation}\label{2317}
\abs{\int p\partial_\aa {\bf r}\,d\aa}\lec \|\partial_\aa f\|_{L^\infty} \|g\|_{\dot H^{1/2}}\|p\|_{\dot H^{1/2}},
\end{equation}
so $ \|{\bf r}\|_{\dot H^{1/2}}\lec \|\partial_\aa f\|_{L^\infty} \|g\|_{\dot H^{1/2}}$. This finishes the proof for \eqref{hhalf43}.

\end{proof}

\begin{proposition}\label{dl21}
Assume  $f,\ g, \ f_1, \ g_1\in H^1(\mathbb R)$, and $l:\mathbb R\to \mathbb R$ is a diffeomorphism, with $l_\aa-1\in L^2$. Then
\begin{equation}\label{dl21-inq}
\begin{aligned}
&\nm{\bracket{f,\mathbb H}\partial_\aa g-U_l\bracket{f_1,\mathbb H}\partial_\aa g_1}_{L^2}\lec \nm{f-f_1\circ l}_{\dot H^{1/2}}\|\partial_\aa g\|_{L^2}\\&\qquad\qquad+ \|\partial_\aa f_1\|_{L^2} \|l_\aa\|_{L^\infty}^{\frac12} \nm{g-g_1\circ l}_{\dot H^{1/2}}+\|\partial_\aa f_1\|_{L^2}  \|\partial_\aa g_1\|_{L^2}\nm{l_\aa-1}_{L^2}.
\end{aligned}
\end{equation} 
\end{proposition}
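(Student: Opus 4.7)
The plan is to decompose the difference into three pieces, one for each of the discrepancies in $f$, in $g$, and in $l$. Writing $\tilde f := f_1\circ l$, $\tilde g := g_1\circ l$, and performing the change of variable $\beta'=l(\beta)$ inside $U_l[f_1,\mathbb H]\partial_{\alpha'} g_1$ (so that $(g_1)'(l)\,l_\beta = \tilde g_\beta$) gives
\[
U_l[f_1,\mathbb H]\partial_{\alpha'}g_1(\alpha) = \frac{1}{\pi i}\int\frac{\tilde f(\alpha)-\tilde f(\beta)}{l(\alpha)-l(\beta)}\tilde g_\beta(\beta)\,d\beta.
\]
Adding and subtracting $[\tilde f,\mathbb H]\partial_\alpha\tilde g$ then yields the clean splitting
\[
[f,\mathbb H]\partial_\alpha g - U_l[f_1,\mathbb H]\partial_{\alpha'}g_1 = [f-\tilde f,\mathbb H]\partial_\alpha g + [\tilde f,\mathbb H]\partial_\alpha(g-\tilde g) + \mathcal R,
\]
where the remainder is
\[
\mathcal R(\alpha) = \frac{1}{\pi i}\int(\tilde f(\alpha)-\tilde f(\beta))\,\tilde g_\beta(\beta)\,\frac{l(\alpha)-l(\beta)-(\alpha-\beta)}{(\alpha-\beta)(l(\alpha)-l(\beta))}\,d\beta.
\]

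For the first two pieces I would invoke the standard $L^2$ commutator estimate $\|[\phi,\mathbb H]\partial_\alpha\psi\|_{L^2}\lesssim \|\phi\|_{\dot H^{1/2}}\|\partial_\alpha\psi\|_{L^2}$ and its dual companion $\|[\phi,\mathbb H]\partial_\alpha\psi\|_{L^2}\lesssim \|\partial_\alpha\phi\|_{L^2}\|\psi\|_{\dot H^{1/2}}$, both of which follow from the $\dot H^{1/2}$-commutator bound \eqref{hhalf42} combined with the identity $[\phi,\mathbb H]\partial_\alpha\psi = \partial_\alpha[\phi,\mathbb H]\psi - [\partial_\alpha\phi,\mathbb H]\psi$ and integration by parts on the kernel. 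The first estimate, applied with $\phi=f-\tilde f$, $\psi=g$, produces the first term of \eqref{dl21-inq}. The second, applied with $\phi=\tilde f$ and $\psi=g-\tilde g$, together with the elementary change-of-variables inequality $\|\partial_\alpha\tilde f\|_{L^2}^2 = \int|f_1'(l)|^2 l_\alpha^2\,d\alpha \leq \|l_\alpha\|_{L^\infty}\|\partial_\alpha f_1\|_{L^2}^2$, produces the second.

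The main work lies in $\mathcal R$, which must deliver the third term $\|l_\alpha-1\|_{L^2}\|\partial_\alpha f_1\|_{L^2}\|\partial_\alpha g_1\|_{L^2}$. Writing $l(\alpha)-l(\beta)-(\alpha-\beta) = \int_\beta^\alpha (l_\tau-1)\,d\tau$ and using the diffeomorphism lower bound $|l(\alpha)-l(\beta)|\gtrsim \|(l^{-1})_\alpha\|_{L^\infty}^{-1}|\alpha-\beta|$, I would then perform the further change of variables $\beta\mapsto l(\beta)$ (exploiting $\tilde g_\beta\,d\beta = (g_1)'(\beta')\,d\beta'$) to recognize $\mathcal R$, up to precomposition with $l^{-1}$, as the second Calderón commutator with symbols $f_1$ and $l^{-1}-\mathrm{id}$ acting on $(g_1)'\in L^2$. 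The Coifman-McIntosh-Meyer $L^2$-boundedness of this commutator, in its trilinear $L^2\times L^2\to L^2$ form encoded in Appendix~\ref{ineq} through inequalities of the type \eqref{eq:b15}-\eqref{eq:b16}, then gives $\|\mathcal R\|_{L^2}\lesssim \|\partial_\alpha f_1\|_{L^2}\|\partial_\alpha(l^{-1}-\mathrm{id})\|_{L^2}\|(g_1)'\|_{L^2}$, and a final change of variables $\|\partial_\alpha(l^{-1}-\mathrm{id})\|_{L^2}\lesssim \|(l^{-1})_\alpha\|_{L^\infty}\|l_\alpha-1\|_{L^2}$ closes the estimate. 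The hard part will be this last trilinear bound: a naive Cauchy-Schwarz in $\beta$ on the kernel of $\mathcal R$ leaves a divergent $\int d\beta/|\alpha-\beta|$ when integrated over $\alpha$, so genuinely exploiting the symmetric distribution of three $L^2$-norms among two kernel difference-quotients and one free factor requires the full Coifman-McIntosh-Meyer machinery rather than any elementary pointwise manipulation.
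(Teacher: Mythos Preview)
Your decomposition into the three pieces $[f-\tilde f,\mathbb H]\partial_\alpha g$, $[\tilde f,\mathbb H]\partial_\alpha(g-\tilde g)$, and the remainder $\mathcal R$ is exactly the paper's splitting (equations \eqref{2320}--\eqref{2321}), and your treatment of the first two pieces via the $L^2$ commutator estimates is the same as the paper's use of Proposition~\ref{prop:half-dir}. (Incidentally, the two estimates you need are stated directly as \eqref{eq:b10} and \eqref{eq:b11}; there is no need to derive them from the $\dot H^{1/2}$ bound \eqref{hhalf42}.)

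Where you diverge from the paper is in the remainder $\mathcal R$, and here you are overcomplicating matters. Your claim that ``a naive Cauchy--Schwarz leaves a divergent $\int d\beta/|\alpha-\beta|$'' and that the full Coifman--McIntosh--Meyer machinery is required is incorrect. The kernel of $\mathcal R$ contains \emph{two} difference quotients, $\frac{f_1(l(\alpha))-f_1(l(\beta))}{l(\alpha)-l(\beta)}$ and $\frac{L(\alpha)-L(\beta)}{\alpha-\beta}$ with $L=l-\mathrm{id}$, so Cauchy--Schwarz in $\beta$ (weighted by $l_\beta^{1/2}$) gives
\[
|\mathcal R(\alpha)|^2\le \Big(\int\frac{|f_1(l(\alpha))-f_1(l(\beta))|^2}{|l(\alpha)-l(\beta)|^2}l_\beta\,d\beta\Big)\Big(\int\frac{|L(\alpha)-L(\beta)|^2}{|\alpha-\beta|^2}|g_1'(l(\beta))|^2 l_\beta\,d\beta\Big).
\]
The first factor is $\le C\|\partial_{\alpha'}f_1\|_{L^2}^2$ by Hardy's inequality \eqref{eq:77} after the substitution $\gamma=l(\beta)$; integrating the second factor in $\alpha$ and applying Fubini plus Hardy again gives $C\|l_\alpha-1\|_{L^2}^2\|\partial_{\alpha'}g_1\|_{L^2}^2$. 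This is entirely elementary and is what the paper does in one line at \eqref{2323}--\eqref{2324}. Note also that the inequalities \eqref{eq:b15}--\eqref{eq:b16} you cite are $L^\infty$ bounds, not the trilinear $L^2\times L^2\times L^2\to L^2$ bound you want; the relevant statement is \eqref{eq:b12}, whose proof is again just Cauchy--Schwarz, Hardy, and Fubini---not CMM.
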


\begin{proof}
We know
\begin{equation}\label{2320}
\begin{aligned}
\bracket{f,\mathbb H}\partial_\aa g&-U_l\bracket{f_1,\mathbb H}\partial_\aa g_1=\bracket{f,\mathbb H}\partial_\aa g-\bracket{f_1\circ l,U_l\mathbb HU_{l^{-1}}(l_\aa)^{-1}}\partial_\aa (g_1\circ l)\\&=
\bracket{f,\mathbb H}\partial_\aa g-\bracket{f_1\circ l,\mathbb H}\partial_\aa (g_1\circ l)+\bracket{f_1\circ l,\mathbb H-U_l\mathbb HU_{l^{-1}}(l_\aa)^{-1}}\partial_\aa (g_1\circ l);
\end{aligned}
\end{equation}
applying Proposition~\ref{prop:half-dir} to the term
\begin{equation}\label{2321}
\bracket{f,\mathbb H}\partial_\aa g-\bracket{f_1\circ l,\mathbb H}\partial_\aa (g_1\circ l)=\bracket{f-f_1\circ l,\mathbb H}\partial_\aa g+\bracket{f_1\circ l,\mathbb H}\partial_\aa (g-g_1\circ l)
\end{equation}
gives
\begin{equation}\label{2322}
\nm{\bracket{f,\mathbb H}\partial_\aa g-\bracket{f_1\circ l,\mathbb H}\partial_\aa (g_1\circ l)}_{L^2}\lec \nm{f-f_1\circ l}_{\dot H^{1/2}}\|\partial_\aa g\|_{L^2}+ \|\partial_\aa (f_1\circ l)\|_{L^2}  \nm{g-g_1\circ l}_{\dot H^{1/2}}.
\end{equation}
Now by \eqref{2305},
\begin{equation}\label{2323}
\begin{aligned}
&\bracket{f_1\circ l,\mathbb H-U_l\mathbb HU_{l^{-1}}(l_\aa)^{-1}}\partial_\aa (g_1\circ l)\\&\qquad\qquad=\frac1{\pi i} \int\frac{(f_1\circ l(\aa)-f_1\circ l(\bb))(l(\aa)-\aa-l(\bb)+\bb)}{(l(\aa)-l(\bb))(\aa-\bb)}\, \partial_\bb (g_1\circ l)(\bb) \,d\bb;
\end{aligned}
\end{equation}
applying Cauchy-Schwarz inequality and Hardy's inequality \eqref{eq:77} we get
\begin{equation}\label{2324}
\nm{\bracket{f_1\circ l,\mathbb H-U_l\mathbb HU_{l^{-1}}(l_\aa)^{-1}}\partial_\aa (g_1\circ l)}_{L^2}\lec 
\|\partial_\aa f_1\|_{L^2}\|l_\aa-1\|_{L^2}\|\partial_\aa g_1\|_{L^2}.\end{equation}
This finishes the proof for \eqref{dl21-inq}.
\end{proof}

\begin{proposition}\label{dl22}
Assume that $f,\ g,\ f_1, \ g_1$ are smooth and decay  at infinity, and $l:\mathbb R\to \mathbb R$ is a diffeomorphism, with $l_\aa-1\in L^2$. Then there is a constant $c(\|l_\aa\|_{L^\infty}, \|(l^{-1})_\aa\|_{L^\infty})$, depending on $\|l_\aa\|_{L^\infty}, \|(l^{-1})_\aa\|_{L^\infty}$, such that
\begin{equation}\label{dl221}
\begin{aligned}
&\nm{\bracket{f,\mathbb H}\partial_\aa g-U_l\bracket{f_1,\mathbb H}\partial_\aa g_1}_{L^2}\le c(\|l_\aa\|_{L^\infty}, \|(l^{-1})_\aa\|_{L^\infty})\nm{\partial_\aa f-\partial_\aa(f_1\circ l)}_{L^2}\|g\|_{L^\infty}\\&\qquad\qquad+ c(\|l_\aa\|_{L^\infty}, \|(l^{-1})_\aa\|_{L^\infty})(\|\partial_\aa f_1\|_{L^\infty}  \nm{g-g_1\circ l}_{L^2}+\|\partial_\aa f_1\|_{L^\infty} \| g_1\|_{L^\infty}\nm{l_\aa-1}_{L^2}).
\end{aligned}
\end{equation}
\begin{equation}\label{dl222}
\begin{aligned}
&\nm{\bracket{f,\mathbb H}\partial_\aa g-U_l\bracket{f_1,\mathbb H}\partial_\aa g_1}_{L^2}\lec \nm{\partial_\aa f-\partial_\aa(f_1\circ l)}_{L^2}\|g\|_{\dot H^{1/2}}\\&\qquad\qquad+ c(\|l_\aa\|_{L^\infty}, \|(l^{-1})_\aa\|_{L^\infty})(\|\partial_\aa f_1\|_{L^\infty}  \nm{g-g_1\circ l}_{L^2}+\|\partial_\aa f_1\|_{L^\infty} \| g_1\|_{\dot H^{1/2}}\nm{l_\aa-1}_{L^2}).
\end{aligned}
\end{equation}
\end{proposition}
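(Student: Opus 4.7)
The strategy mirrors that of Proposition~\ref{dl21}: first pull both commutators into a common coordinate via the change of variables $l$, then split the resulting expression into (a) a ``true'' commutator difference and (b) an error measuring the discrepancy between $\mathbb{H}$ and $U_l\mathbb{H}U_{l^{-1}}(l_\aa)^{-1}$. Using
$$U_l[f_1,\mathbb{H}]\partial_\aa g_1 = [f_1\circ l, U_l\mathbb{H}U_{l^{-1}}]\paren{l_\aa^{-1}\partial_\aa(g_1\circ l)},$$
I decompose
$$[f,\mathbb{H}]\partial_\aa g - U_l[f_1,\mathbb{H}]\partial_\aa g_1 = I_1 + I_2 + I_3,$$
with $I_1 := [f-f_1\circ l, \mathbb{H}]\partial_\aa g$, $I_2 := [f_1\circ l, \mathbb{H}]\partial_\aa(g - g_1\circ l)$, and
$I_3 := [f_1\circ l, \mathbb{H} - U_l\mathbb{H}U_{l^{-1}}(l_\aa)^{-1}]\partial_\aa(g_1\circ l)$.

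For $I_1$ I apply the symmetric rearrangement obtained by integration by parts,
$$[F,\mathbb{H}]\partial_\aa G = -[G,\mathbb{H}]\partial_\aa F + [F, G; 1],$$
with $F = f-f_1\circ l$, $G = g$. The first piece is handled by the $L^2$-boundedness of $\mathbb{H}$: $\|[g,\mathbb{H}]\partial_\aa(f-f_1\circ l)\|_{L^2}\lec \|g\|_{L^\infty}\|\partial_\aa(f-f_1\circ l)\|_{L^2}$, giving \eqref{dl221}; for \eqref{dl222} one invokes the sharper Calder\'on commutator bound $\|[g,\mathbb{H}]h\|_{L^2}\lec \|g\|_{\dot{H}^{1/2}}\|h\|_{L^2}$ recorded in the appendix. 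The symmetric piece $[F,G;1]$ is bounded by the Calder\'on estimates of the appendix, namely $\|[F,G;1]\|_{L^2}\lec \|\partial_\aa F\|_{L^2}\|G\|_{L^\infty}$ (for \eqref{dl221}) and $\|[F,G;1]\|_{L^2}\lec \|\partial_\aa F\|_{L^2}\|G\|_{\dot{H}^{1/2}}$ (for \eqref{dl222}).

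For $I_2$ the same identity yields $-[g-g_1\circ l,\mathbb{H}]\partial_\aa(f_1\circ l)+[f_1\circ l, g-g_1\circ l; 1]$. Since $|\partial_\aa(f_1\circ l)| \le \|\partial_\aa f_1\|_{L^\infty}\|l_\aa\|_{L^\infty}$ is bounded, both pieces are controlled in $L^2$ by $c(\|l_\aa\|_{L^\infty})\|\partial_\aa f_1\|_{L^\infty}\|g-g_1\circ l\|_{L^2}$ via $L^2$-boundedness of $\mathbb{H}$ and the classical Calder\'on commutator $\|C_1[a]h\|_{L^2}\lec \|\partial_\aa a\|_{L^\infty}\|h\|_{L^2}$. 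For $I_3$ I write the explicit kernel
$$I_3(\aa) = \f{1}{\pi i}\int\f{\paren{(f_1\circ l)(\aa)-(f_1\circ l)(\bb)}\paren{l(\aa)-l(\bb)-(\aa-\bb)}}{(\aa-\bb)(l(\aa)-l(\bb))}\,\partial_\bb(g_1\circ l)(\bb)\,d\bb,$$
integrate by parts in $\bb$ to move the derivative off $g_1\circ l$, then estimate using $|(f_1\circ l)(\aa)-(f_1\circ l)(\bb)|/|l(\aa)-l(\bb)|\le \|\partial_\aa f_1\|_{L^\infty}$, the pointwise bound $|l(\aa)-l(\bb)-(\aa-\bb)|\le \int_\bb^\aa |l_s-1|\,ds$, Hardy's inequality \eqref{eq:77} and H\"older. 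The result is controlled by $c(\|l_\aa\|_{L^\infty},\|(l^{-1})_\aa\|_{L^\infty})\|\partial_\aa f_1\|_{L^\infty}\|l_\aa-1\|_{L^2}\|g_1\|_{L^\infty}$ for \eqref{dl221}, and by replacing $\|g_1\|_{L^\infty}$ with $\|g_1\|_{\dot{H}^{1/2}}$ for \eqref{dl222} (the latter via a further application of the $\dot{H}^{1/2}$ Hardy-type bound).

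The main obstacle is the $\dot{H}^{1/2}$ version \eqref{dl222}: after pushing the derivative onto $f-f_1\circ l$, one must control $[g,\mathbb{H}]h$ in $L^2$ by $\|g\|_{\dot{H}^{1/2}}\|h\|_{L^2}$ and $[F,G;1]$ in $L^2$ by $\|\partial_\aa F\|_{L^2}\|G\|_{\dot{H}^{1/2}}$. These are the sharper Coifman--McIntosh--Meyer-type commutator estimates; assuming they are available (as duals/analogues of \eqref{hhalf42}), the rest is bookkeeping. The same obstacle does not arise in the $L^\infty$ version \eqref{dl221}, for which ordinary $L^2$-boundedness of $\mathbb{H}$ suffices.
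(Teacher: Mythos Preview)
Your decomposition into $I_1,I_2,I_3$ is exactly the paper's (it is the computation \eqref{2320}--\eqref{2321}--\eqref{2323} from the proof of Proposition~\ref{dl21}), and your treatment of $I_3$ matches the paper's. The difference is in how you handle $I_1$ and $I_2$: you first invoke the swap identity $[F,\mathbb H]\partial_\aa G=-[G,\mathbb H]\partial_\aa F+[F,G;1]$ and then bound each piece, whereas the paper applies the commutator estimates \emph{directly}. For \eqref{dl221} the paper simply quotes Proposition~\ref{B2} (specifically \eqref{3.21} for $I_1$ and \eqref{3.20} for $I_2$), and for \eqref{dl222} it quotes \eqref{eq:b11} for $I_1$ and \eqref{3.20} for $I_2$; no swap is needed. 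Similarly, for $I_3$ in \eqref{dl221} the paper recognizes \eqref{2323} as a $C_2$-type operator and applies Proposition~\ref{B2} without integrating by parts. Your route is correct but circuitous, and the uncertainty you express at the end is misplaced: the bound $\|[g,\mathbb H]h\|_{L^2}\lec\|g\|_{\dot H^{1/2}}\|h\|_{L^2}$ is exactly \eqref{eq:b10}, and the estimate $\|[F,G;1]\|_{L^2}\lec\|\partial_\aa F\|_{L^2}\|G\|_{\dot H^{1/2}}$ follows from Cauchy--Schwarz plus Hardy's inequality~\eqref{eq:77} and the definition \eqref{def-hhalf}. But you can sidestep all of this by applying \eqref{eq:b11} to $I_1$ in one stroke.
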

\begin{proof}
We use the same computation as in the proof for Proposition~\ref{dl21}, and apply Proposition~\ref{B2} to the terms in \eqref{2321} and \eqref{2323} to get \eqref{dl221}. To obtain \eqref{dl222} we apply \eqref{eq:b11} and \eqref{3.20} to \eqref{2321}; and for the term in \eqref{2323},  we first integrate by parts, then apply Cauchy-Schwarz inequality and Hardy's inequality \eqref{eq:77}. 

\end{proof}

\begin{proposition}\label{dhhalf1}
Assume that $f,\ g,\ f_1, \ g_1$ are smooth and decay  at infinity, and $l:\mathbb R\to \mathbb R$ is a diffeomorphism, with $l_\aa-1\in L^2$. Then there is a constant $c:=c(\|l_\aa\|_{L^\infty}, \|(l^{-1})_\aa\|_{L^\infty})$, depending on $\|l_\aa\|_{L^\infty}, \|(l^{-1})_\aa\|_{L^\infty}$, such that
\begin{equation}\label{dhhalf1-inq}
\begin{aligned}
&\nm{\bracket{f,\mathbb H}\partial_\aa g-U_l\bracket{f_1,\mathbb H}\partial_\aa g_1}_{\dot H^{1/2}}\lec \nm{\partial_\aa f-\partial_\aa(f_1\circ l)}_{L^2}\|\partial_\aa g_1\|_{L^2}\|l_\aa\|_{L^\infty}^{\frac12}\\&\qquad+ (\|\partial_\aa f\|_{L^\infty}+\|\partial_\aa \mathbb H f\|_{L^\infty})  \nm{g-g_1\circ l}_{\dot H^{1/2}}+c\|\partial_\aa f_1\|_{L^\infty} \| \partial_\aa g_1\|_{L^2}\nm{l_\aa-1}_{L^2}.
\end{aligned}
\end{equation}

\end{proposition}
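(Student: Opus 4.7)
The approach parallels Propositions~\ref{dl21} and~\ref{dl22}, trading the $L^2$ commutator estimates for the $\dot H^{1/2}$ ones of Proposition~\ref{hhalf4}. Following the proof of Proposition~\ref{dl21}, I would split
\begin{equation*}
\bracket{f,\mathbb H}\partial_\aa g - U_l\bracket{f_1,\mathbb H}\partial_\aa g_1=I_1+I_2+I_3,
\end{equation*}
with
$I_1=\bracket{f-f_1\circ l,\mathbb H}\partial_\aa(g_1\circ l)$,
$I_2=\bracket{f,\mathbb H}\partial_\aa(g-g_1\circ l)$,
and
$I_3=\bracket{f_1\circ l,\mathbb H - U_l\mathbb H U_{l^{-1}}(l_\aa)^{-1}}\partial_\aa(g_1\circ l)$.
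This splitting is engineered so that the three pieces contribute, in order, the three summands on the right of \eqref{dhhalf1-inq}; telescoping verifies the identity as in the proof of Proposition~\ref{dl21}.

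For $I_1$, inequality \eqref{hhalf42} gives $\|I_1\|_{\dot H^{1/2}}\lec\|\partial_\aa(f-f_1\circ l)\|_{L^2}\|\partial_\aa(g_1\circ l)\|_{L^2}$, and the change-of-variable bound $\|\partial_\aa(g_1\circ l)\|_{L^2}\le\|l_\aa\|_{L^\infty}^{1/2}\|\partial_\aa g_1\|_{L^2}$ reproduces the first summand. For $I_2$, inequality \eqref{hhalf43} applied directly (with $f$ in the commutator slot and $g-g_1\circ l$ in the $\dot H^{1/2}$ slot) produces the second summand.

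The main work, and the main obstacle, is the $\dot H^{1/2}$ estimate for $I_3$, whose explicit kernel representation, obtained as in \eqref{2323}, is
\begin{equation*}
I_3(\aa)=\frac{1}{\pi i}\int\frac{(f_1\circ l(\aa)-f_1\circ l(\bb))(l(\aa)-l(\bb)-\aa+\bb)}{(\aa-\bb)(l(\aa)-l(\bb))}\partial_\bb(g_1\circ l)(\bb)\,d\bb.
\end{equation*}
Morally $I_3$ behaves like $\|\partial_\aa f_1\|_{L^\infty}\cdot\bracket{q,\mathbb H}\partial_\aa(g_1\circ l)$ with $q_\aa=l_\aa-1$: the Lipschitz ratio $|(f_1\circ l(\aa)-f_1\circ l(\bb))/(l(\aa)-l(\bb))|\le\|\partial_\aa f_1\|_{L^\infty}$ pulls out as a uniform constant, and the lower bound $|l(\aa)-l(\bb)|\ge\|(l^{-1})_\aa\|_{L^\infty}^{-1}|\aa-\bb|$ converts the residual kernel into a Calder\'on-type commutator of $q$ with $\mathbb H$, to which \eqref{hhalf42} would give exactly the target factor $\|l_\aa-1\|_{L^2}\|\partial_\aa(g_1\circ l)\|_{L^2}$.

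Making this rigorous is the technical heart of the proof. The plan is to adapt the double-integral $\dot H^{1/2}$ framework of Lemma~\ref{hhalf2} and Proposition~\ref{hhalf4} together with the symmetrization trick from the proof of \eqref{q1}. Concretely, use the identity $\frac{l(\aa)-l(\bb)-\aa+\bb}{(\aa-\bb)(l(\aa)-l(\bb))}=\frac{1}{\aa-\bb}-\frac{1}{l(\aa)-l(\bb)}$ to express $I_3$ as a difference of two commutator-type objects, then insert a test-function factor $(p(\aa)-p(\bb))$ via $\mathbb H 1=0$ exactly as in \eqref{2308}, and apply Cauchy--Schwarz together with Hardy's inequality \eqref{eq:77}. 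All geometric and $l$-dependent constants (including the Jacobian $\|l_\aa\|_{L^\infty}^{1/2}$) get absorbed into $c(\|l_\aa\|_{L^\infty},\|(l^{-1})_\aa\|_{L^\infty})$. The hard part is executing the symmetrization faithfully at the $\dot H^{1/2}$ level rather than $L^2$, so that the $(p(\aa)-p(\bb))$ weight combines with the pointwise Lipschitz bound on the prefactor to deliver both the smallness $\|l_\aa-1\|_{L^2}$ and the $\dot H^{1/2}$-duality pairing simultaneously; once that is organized, the rest is parallel to Propositions~\ref{dl21}--\ref{dl22}.
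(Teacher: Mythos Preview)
Your decomposition $I_1+I_2+I_3$ and the estimates for $I_1$ and $I_2$ via \eqref{hhalf42} and \eqref{hhalf43} match the paper exactly.

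For $I_3$ the paper takes a shorter path than the direct symmetrization you outline. Rather than adapting the \eqref{2308} trick at the $\dot H^{1/2}$ level, the paper observes that the kernel of $\bracket{f_1\circ l,\mathbb H-U_l\mathbb H U_{l^{-1}}(l_\aa)^{-1}}$ is symmetric, so pairing with a test function $p$ gives
\[
\int\partial_\aa p\cdot I_3\,d\aa=\int\partial_\aa(g_1\circ l)\cdot\bracket{f_1\circ l,\mathbb H-U_l\mathbb H U_{l^{-1}}(l_\aa)^{-1}}\partial_\aa p\,d\aa,
\]
which reduces the $\dot H^{1/2}$ bound to the $L^2$ estimate $\|\bracket{f_1\circ l,\ldots}\partial_\aa p\|_{L^2}\le c\|\partial_\aa f_1\|_{L^\infty}\|l_\aa-1\|_{L^2}\|p\|_{\dot H^{1/2}}$ already established in the proof of \eqref{dl222} (integrate by parts in the explicit kernel \eqref{2323}, then Cauchy--Schwarz and Hardy). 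Your route should succeed, but note that the ``insert $p(\aa)-p(\bb)$ via $\mathbb H1=0$'' step as you describe it does not apply verbatim, since the duality pairing already carries $\partial_\aa p$ rather than $p$; you would first need to integrate by parts to move the derivative off $p$, and the subsequent bookkeeping is heavier. The paper's self-adjointness observation sidesteps this and reuses work already done.
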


\begin{proof}
We begin with \eqref{2320} and write the first two terms on the right hand side as 
\begin{equation}\label{2325}
\bracket{f,\mathbb H}\partial_\aa g-\bracket{f_1\circ l,\mathbb H}\partial_\aa (g_1\circ l)=\bracket{f-f_1\circ l,\mathbb H}\partial_\aa (g_1\circ l)+\bracket{f,\mathbb H}\partial_\aa (g-g_1\circ l);
\end{equation}
applying \eqref{hhalf42} and \eqref{hhalf43} to \eqref{2325} we get
\begin{equation}\label{2326}
\begin{aligned}
\nm{\bracket{f,\mathbb H}\partial_\aa g-\bracket{f_1\circ l,\mathbb H}\partial_\aa (g_1\circ l)}_{\dot H^{1/2}}&\lec \nm{\partial_\aa(f-f_1\circ l)}_{L^2}\|\partial_\aa (g_1\circ l)\|_{L^2}\\&+ (\|\partial_\aa f\|_{L^\infty} + \|\partial_\aa \mathbb Hf\|_{L^\infty})\nm{g-g_1\circ l}_{\dot H^{1/2}}.
\end{aligned}
\end{equation}
Consider the last term on the right hand side of \eqref{2320}. For any $p\in C_0^\infty(\mathbb R)$, 
\begin{equation}\label{2327}
\begin{aligned}
\int\partial_\aa p \bracket{f_1\circ l,\mathbb H-U_l\mathbb HU_{l^{-1}}(l_\aa)^{-1}}&\partial_\aa (g_1\circ l)\,d\aa\\&=\int\partial_\aa (g_1\circ l)\bracket{f_1\circ l,\mathbb H-U_l\mathbb HU_{l^{-1}}(l_\aa)^{-1}}\partial_\aa p \,d\aa;
\end{aligned}
\end{equation}
the same argument as in the proof of \eqref{dl222}, that is, integrating by parts, then applying Cauchy-Schwarz inequality and Hardy's inequality \eqref{eq:77}  gives
$$\|\bracket{f_1\circ l,\mathbb H-U_l\mathbb HU_{l^{-1}}(l_\aa)^{-1}}\partial_\aa p\|_{L^2}\le c\,\|\partial_\aa f_1\|_{L^\infty} \|l_\aa-1\|_{L^2}\|p\|_{\dot H^{1/2}}, $$
where $c:= c(\|l_\aa\|_{L^\infty}, \|(l^{-1})_\aa\|_{L^\infty})$ is a constant depending on $\|l_\aa\|_{L^\infty}$ and $ \|(l^{-1})_\aa\|_{L^\infty}$; so 
$$\abs{\int\partial_\aa p \bracket{f_1\circ l,\mathbb H-U_l\mathbb HU_{l^{-1}}(l_\aa)^{-1}}\partial_\aa (g_1\circ l)\,d\aa}\le c\|\partial_\aa (g_1\circ l)\|_{L^2}\|\partial_\aa f_1\|_{L^\infty} \|l_\aa-1\|_{L^2}\|p\|_{\dot H^{1/2}}.$$
This finishes the proof for \eqref{dhhalf1-inq}.

\end{proof}

\begin{proposition}\label{dhhalf2}
Assume that $f,\ g,\ f_1, \ g_1$ are smooth and decay at infinity, and $l:\mathbb R\to \mathbb R$ is a diffeomorphism, with $l_\aa-1\in L^2$. Then there is a constant $c:=c(\|l_\aa\|_{L^\infty}, \|(l^{-1})_\aa\|_{L^\infty})$, depending on $\|l_\aa\|_{L^\infty}, \|(l^{-1})_\aa\|_{L^\infty}$, such that
\begin{equation}\label{dhhalf2-inq}
\begin{aligned}
&\nm{\bracket{f,\mathbb H} g-U_l\bracket{f_1,\mathbb H} g_1}_{\dot H^{1/2}}\lec \nm{ f-f_1\circ l}_{\dot H^{1/2}}(\| g\|_{L^\infty}+\|\mathbb H g\|_{L^\infty}) \\&\qquad+ \|\partial_\aa f_1\|_{L^2}\|l_\aa\|_{L^\infty}^{\frac12} \nm{g-g_1\circ l}_{L^2}+c\|\partial_\aa f_1\|_{L^2} \| g_1\|_{L^\infty}\nm{l_\aa-1}_{L^2}.
\end{aligned}
\end{equation}

\end{proposition}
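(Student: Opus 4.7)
The plan is to mimic the decomposition used in Propositions~\ref{dl21}--\ref{dhhalf1}, adapted to the absence of an outer derivative on $g$. Using the change-of-variables identity $U_l[f_1,\mathbb H]g_1=[f_1\circ l,\, U_l\mathbb H U_{l^{-1}}](g_1\circ l)$, I would write
\begin{equation*}
[f,\mathbb H]g - U_l[f_1,\mathbb H]g_1 = [f-f_1\circ l,\mathbb H]g + [f_1\circ l,\mathbb H](g-g_1\circ l) - [f_1\circ l,\mathcal Q_l](g_1\circ l),
\end{equation*}
where $\mathcal Q_l:=U_l\mathbb H U_{l^{-1}}-\mathbb H$ is the operator studied before Proposition~\ref{dl21}.

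For the first piece, \eqref{hhalf41} of Proposition~\ref{hhalf4} gives $\|[f-f_1\circ l,\mathbb H]g\|_{\dot H^{1/2}}\lec \|f-f_1\circ l\|_{\dot H^{1/2}}(\|g\|_{L^\infty}+\|\mathbb H g\|_{L^\infty})$, which is the first summand of \eqref{dhhalf2-inq}. For the second piece, \eqref{hhalf42} together with the elementary change-of-variables bound $\|\partial_\aa(f_1\circ l)\|_{L^2}=\|l_\aa(\partial_\aa f_1)\circ l\|_{L^2}\le \|l_\aa\|_{L^\infty}^{1/2}\|\partial_\aa f_1\|_{L^2}$ yields $\|[f_1\circ l,\mathbb H](g-g_1\circ l)\|_{\dot H^{1/2}}\lec \|l_\aa\|_{L^\infty}^{1/2}\|\partial_\aa f_1\|_{L^2}\|g-g_1\circ l\|_{L^2}$, matching the second summand.

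The main obstacle is the third piece $[f_1\circ l,\mathcal Q_l](g_1\circ l)$, which I would estimate by duality, in the spirit of the proof of \eqref{q1} and the treatment of the third term in Proposition~\ref{dhhalf1}. Setting $F=f_1\circ l$ and $G=g_1\circ l$, for $p\in C_0^\infty$ the pairing becomes the double integral
\begin{equation*}
\int\partial_\aa p\cdot[F,\mathcal Q_l]G\,d\aa = \frac{1}{\pi i}\iint \partial_\aa p(\aa)(F(\aa)-F(\bb))G(\bb)\kappa(\aa,\bb)\,d\aa\,d\bb
\end{equation*}
with the explicit kernel
\begin{equation*}
\kappa(\aa,\bb)=\frac{l_\bb-1}{l(\aa)-l(\bb)}+\frac{(\aa-\bb)-(l(\aa)-l(\bb))}{(\aa-\bb)(l(\aa)-l(\bb))},
\end{equation*}
as derived in \eqref{2306}. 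After integrating by parts in $\aa$ and using $|l(\aa)-l(\bb)|\gec|\aa-\bb|/\|(l^{-1})_\aa\|_{L^\infty}$, the two resulting double integrals are controlled by Cauchy--Schwarz and Hardy's inequality \eqref{eq:77}. The delicate point is to apportion Cauchy--Schwarz factors so that $G$ only enters through its $L^\infty$ norm while $l_\aa-1$ and $\partial_\aa F$ stay in $L^2$: one $(\aa-\bb)^{-1}$-type factor is paired with the difference quotient of $p$ (yielding $\|p\|_{\dot H^{1/2}}$), another with the difference quotient of $F$ (yielding $\|\partial_\aa F\|_{L^2}\le\|l_\aa\|_{L^\infty}^{1/2}\|\partial_\aa f_1\|_{L^2}$), while the factor $|l_\bb-1|$ in the first kernel and the Hardy-type bound $\frac{1}{|\aa-\bb|}|\int_\bb^\aa (1-l_\gamma)\,d\gamma|$ controlling the second kernel are absorbed into $\|l_\aa-1\|_{L^2}$ (the remaining piece contributing $\|G\|_{L^\infty}$). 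This yields the third summand $c\|\partial_\aa f_1\|_{L^2}\|g_1\|_{L^\infty}\|l_\aa-1\|_{L^2}$ of \eqref{dhhalf2-inq}, completing the proof.
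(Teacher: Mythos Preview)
Your decomposition into three pieces and your treatment of the first two are identical to the paper's \eqref{2328}--\eqref{2330}, using \eqref{hhalf41} and \eqref{hhalf42} exactly as you do.

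For the third piece $[f_1\circ l,\mathcal Q_l](g_1\circ l)$, your direct attack differs from the paper's and is somewhat less explicit. The paper inserts an extra splitting \eqref{2331}: it peels off the piece $[f_1\circ l,\,U_l\mathbb H U_{l^{-1}}]((l_\aa^{-1}-1)(g_1\circ l))=U_l[f_1,\mathbb H](((l^{-1})_\aa-1)g_1)$, which is handled immediately by \eqref{hhalf42}, leaving $[f_1\circ l,\,\mathbb H-U_l\mathbb H U_{l^{-1}}l_\aa^{-1}](g_1\circ l)$ whose kernel $\frac{1}{\aa-\bb}-\frac{1}{l(\aa)-l(\bb)}$ is \emph{antisymmetric}. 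That antisymmetry is what permits the clean duality swap \eqref{2334}, so that $\partial_\aa p$ lands inside the commutator; the ensuing integration by parts in $\bb$ and the Hardy inequalities \eqref{eq:77} and \eqref{eq:771} give \eqref{2336}--\eqref{2337}.

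Your direct route---writing $\partial_\aa p(\aa)=\partial_\aa(p(\aa)-p(\bb))$, integrating by parts in $\aa$, and estimating the resulting pieces---is also correct, but you should be aware that differentiating the full kernel $\kappa$ in $\aa$ produces more than two sub-terms, and one of them (the one with three difference quotients $\frac{|p(\aa)-p(\bb)|\,|F(\aa)-F(\bb)|\,|m(\aa)-m(\bb)|}{|\aa-\bb|^3}$, where $m=l-\mathrm{id}$) needs the quartic Hardy inequality \eqref{eq:771} in addition to \eqref{eq:77}. Your sketch mentions only \eqref{eq:77}. The paper's splitting avoids confronting the non-antisymmetric part of $\kappa$ head-on and keeps the bookkeeping shorter; your route saves the splitting step but pays for it in kernel differentiation.
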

\begin{proof}
Similar to the proof of Proposition~\ref{dl21}, we have  
\begin{equation}\label{2328}
\bracket{f,\mathbb H} g-U_l\bracket{f_1,\mathbb H} g_1=
\bracket{f,\mathbb H} g-\bracket{f_1\circ l,\mathbb H}(g_1\circ l) +\bracket{f_1\circ l,\mathbb H-U_l\mathbb HU_{l^{-1}}} (g_1\circ l);
\end{equation}
writing
\begin{equation}\label{2329}
\bracket{f,\mathbb H} g-\bracket{f_1\circ l,\mathbb H} (g_1\circ l)=\bracket{f-f_1\circ l,\mathbb H} g+\bracket{f_1\circ l,\mathbb H}(g-g_1\circ l)
\end{equation}
and applying \eqref{hhalf41} and \eqref{hhalf42} gives,
\begin{equation}\label{2330}
\begin{aligned}
\nm{\bracket{f,\mathbb H} g-\bracket{f_1\circ l,\mathbb H} (g_1\circ l)}_{\dot H^{1/2}}&\lec 
\nm{ f-f_1\circ l}_{\dot H^{1/2}}(\| g\|_{L^\infty}+\|\mathbb H g\|_{L^\infty})\\&+ \|\partial_\aa (f_1\circ l)\|_{L^2} \nm{g-g_1\circ l}_{L^2}.
\end{aligned}
\end{equation}
Consider the second term on the right hand side of \eqref{2328}.  We write
\begin{equation}\label{2331}
\bracket{f_1\circ l,\mathbb H-U_l\mathbb HU_{l^{-1}}} (g_1\circ l)=\bracket{f_1\circ l,\mathbb H-U_l\mathbb HU_{l^{-1}} \frac1{l_\aa}} (g_1\circ l)+\bracket{f_1\circ l, U_l\mathbb HU_{l^{-1}}}(\frac1{l_\aa}-1) (g_1\circ l).
\end{equation}
Now
\begin{equation}\label{2332}
\bracket{f_1\circ l, U_l\mathbb HU_{l^{-1}}}((l_\aa)^{-1}-1) (g_1\circ l)=U_l [f_1,\mathbb H]\paren{((l^{-1})_\aa-1)g_1}.
\end{equation}
Changing variables, and then using \eqref{hhalf42} yields
\begin{equation}\label{2333}
\nm{ U_l [f_1,\mathbb H]\paren{((l^{-1})_\aa-1)g_1}  }_{\dot H^{1/2}}\le 
c\|\partial_\aa f_1\|_{L^2} \| g_1\|_{L^\infty}\nm{l_\aa-1}_{L^2}
\end{equation}
for some constant $c$ depending on $\|l_\aa\|_{L^\infty}, \|(l^{-1})_\aa\|_{L^\infty}$. 

 For the first term on the right hand side of \eqref{2331} we use the duality argument in \eqref{2327}. Let $p\in C^\infty_0(\mathbb R)$,
\begin{equation}\label{2334}
\int \partial_\aa p \bracket{f_1\circ l,\mathbb H-U_l\mathbb HU_{l^{-1}} (l_\aa)^{-1}} (g_1\circ l)\,d\aa=\int  g_1\circ l\bracket{f_1\circ l,\mathbb H-U_l\mathbb HU_{l^{-1}} (l_\aa)^{-1}} \partial_\aa p \,d\aa,
\end{equation}
and
\begin{equation}\label{2335}
\begin{aligned}
&\bracket{f_1\circ l,\mathbb H-U_l\mathbb HU_{l^{-1}}(l_\aa)^{-1}}\partial_\aa p\\&\qquad\qquad=\frac1{\pi i} \int\frac{(f_1\circ l(\aa)-f_1\circ l(\bb))(l(\aa)-\aa-l(\bb)+\bb)}{(l(\aa)-l(\bb))(\aa-\bb)}\, \partial_\bb p(\bb) \,d\bb.
\end{aligned}
\end{equation}
Integrating by parts, then apply Cauchy-Schwarz inequality and Hardy's inequalities \eqref{eq:77} and \eqref{eq:771} gives
\begin{equation}\label{2336}
\nm{\bracket{f_1\circ l,\mathbb H-U_l\mathbb HU_{l^{-1}}(l_\aa)^{-1}}\partial_\aa p}_{L^1}\le c \|\partial_\aa f_1\|_{L^2} \nm{l_\aa-1}_{L^2}\|p\|_{\dot H^{1/2}},
\end{equation}
for some constant $c$ depending on $\|l_\aa\|_{L^\infty}, \|(l^{-1})_\aa\|_{L^\infty}$, so
\begin{equation}\label{2337}
\abs{\int \partial_\aa p \bracket{f_1\circ l,\mathbb H-U_l\mathbb HU_{l^{-1}} (l_\aa)^{-1}} (g_1\circ l)\,d\aa}\le c \|g_1\|_{L^\infty}\|\partial_\aa f_1\|_{L^2} \nm{l_\aa-1}_{L^2}\|p\|_{\dot H^{1/2}}.
\end{equation}
This finishes the proof for \eqref{dhhalf2-inq}.
\end{proof}

We define $$[f, m; \partial_\aa g]_n:=\frac1{\pi i}\int\frac{(f(\aa)-f(\bb))(m(\aa)-m(\bb))^n}{(\aa-\bb)^{n+1}}\partial_\bb g(\bb)\,d\bb.$$ So $[f,m;\partial_\aa g]=[f, m; \partial_\aa g]_1$, and $[f,\mathbb H]\partial_\aa g=[f, m; \partial_\aa g]_0$.
\begin{proposition}\label{d32}
Assume that $f,\ m,\ g,\ f_1,\ m_1,\ g_1$ are smooth and $f,\ g,\ f_1,\ g_1$ decay at infinity, and $l:\mathbb R\to\mathbb R$ is a diffeomorphism, with $l_\aa-1\in L^2$. Then there is a constant $c$, depending on $\|l_\aa\|_{L^\infty}, \|(l^{-1})_\aa\|_{L^\infty}$, such that
\begin{equation}\label{d32inq}
\begin{aligned}
&\nm{[f, m; \partial_\aa g]_n-U_l[f_1, m_1; \partial_\aa g_1]_n}_{L^2}\le c
\nm{f-f_1\circ l}_{\dot H^{1/2}}\nm{\partial_\aa m}^n_{L^\infty}\|\partial_\aa g\|_{L^2}\\&+c \|\partial_\aa f_1\|_{L^2} (\nm{\partial_\aa m}_{L^\infty}+\nm{\partial_\aa m_1}_{L^\infty})^{n-1}\nm{\partial_\aa(m-m_1\circ l)}_{L^2}\nm{\partial_\aa g}_{L^2}\\&+c\|\partial_\aa f_1\|_{L^2} \nm{\partial_\aa m_1}^n_{L^\infty} \nm{g-g_1\circ l}_{\dot H^{1/2}}+c\|\partial_\aa f_1\|_{L^2}\nm{\partial_\aa m_1}^n_{L^\infty}   \|\partial_\aa g_1\|_{L^2}\nm{l_\aa-1}_{L^2}.
\end{aligned}
\end{equation}
\end{proposition}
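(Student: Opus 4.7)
The plan is to prove \eqref{d32inq} by replacing $f$, $m$, $g$, and finally the denominator $(\aa-\bb)^{n+1}$ one at a time, in the spirit of the four-step decomposition used in Propositions~\ref{dl21}--\ref{dhhalf2} for the cases $n=0,1$. Writing $\tilde f:=f_1\circ l$, $\tilde m:=m_1\circ l$, $\tilde g:=g_1\circ l$, and performing the change of variables $\bb\to l(\bb)$ inside $U_l[f_1,m_1;\partial_\aa g_1]_n$, I would decompose
\begin{equation*}
[f,m;\partial_\aa g]_n-U_l[f_1,m_1;\partial_\aa g_1]_n = T_f+T_m+T_g+T_l,
\end{equation*}
where $T_f=[f-\tilde f,m;\partial_\aa g]_n$, $T_m=[\tilde f,m;\partial_\aa g]_n-[\tilde f,\tilde m;\partial_\aa g]_n$, $T_g=[\tilde f,\tilde m;\partial_\aa(g-\tilde g)]_n$, and $T_l$ is the remainder in which only the denominator is replaced by $(l(\aa)-l(\bb))^{n+1}$. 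The four terms are designed to match the four terms on the right of \eqref{d32inq}.

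The bounds on $T_f$, $T_m$ and $T_l$ are essentially routine. For $T_f$ one majorises $n$ of the factors $\bigl|\tfrac{m(\aa)-m(\bb)}{\aa-\bb}\bigr|$ by $\|\partial_\aa m\|_{L^\infty}$ and applies a Cauchy--Schwarz estimate together with Hardy's inequality \eqref{eq:77}, which produces $\|f-\tilde f\|_{\dot H^{1/2}}$ and $\|\partial_\aa g\|_{L^2}$. For $T_m$ one expands
\begin{equation*}
(m(\aa)-m(\bb))^n-(\tilde m(\aa)-\tilde m(\bb))^n = \bigl[(m-\tilde m)(\aa)-(m-\tilde m)(\bb)\bigr]\sum_{j=0}^{n-1}(m(\aa)-m(\bb))^j(\tilde m(\aa)-\tilde m(\bb))^{n-1-j},
\end{equation*}
absorbs $n-1$ of the difference quotients into $(\|\partial_\aa m\|_{L^\infty}+\|\partial_\aa\tilde m\|_{L^\infty})^{n-1}$, and applies Cauchy--Schwarz--Hardy with $\partial_\aa\tilde f$ and $\partial_\aa(m-\tilde m)$ in $L^2$; the bound $\|\partial_\aa\tilde f\|_{L^2}\le\|l_\aa\|_{L^\infty}^{1/2}\|\partial_\aa f_1\|_{L^2}$ and the analogous one for $\tilde m$ are absorbed into the overall constant $c$. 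For $T_l$, the algebraic identity
\begin{equation*}
\frac{1}{(\aa-\bb)^{n+1}}-\frac{1}{(l(\aa)-l(\bb))^{n+1}} = \frac{(l(\aa)-\aa)-(l(\bb)-\bb)}{(\aa-\bb)(l(\aa)-l(\bb))}\sum_{k=0}^{n}\frac{1}{(\aa-\bb)^{n-k}(l(\aa)-l(\bb))^{k}}
\end{equation*}
turns the denominator-mismatch into an extra factor whose Hardy-type pairing with $p\in L^2$ produces $\|l_\aa-1\|_{L^2}$, yielding the $\|l_\aa-1\|_{L^2}\|\partial_\aa f_1\|_{L^2}\|\partial_\aa m_1\|_{L^\infty}^n\|\partial_\aa g_1\|_{L^2}$ contribution.

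The main obstacle is $T_g=[\tilde f,\tilde m;\partial_\aa(g-\tilde g)]_n$, because $g-\tilde g$ is controlled only in $\dot H^{1/2}$, not $\dot H^1$. I would treat it by duality: testing against $p\in C_0^\infty$ and integrating by parts in $\bb$ gives
\begin{equation*}
\int p\,T_g\,d\aa = \frac{-1}{\pi i}\iint p(\aa)(g-\tilde g)(\bb)\,\partial_\bb\!\left\{\frac{(\tilde f(\aa)-\tilde f(\bb))(\tilde m(\aa)-\tilde m(\bb))^n}{(\aa-\bb)^{n+1}}\right\}d\bb\,d\aa.
\end{equation*}
Distributing the $\partial_\bb$ produces three kernels: one with $\partial_\bb\tilde f$, one with $n\,\partial_\bb\tilde m$ replacing a factor $\tilde m(\aa)-\tilde m(\bb)$, and one with an extra $(n+1)/(\aa-\bb)$ coming from $\partial_\bb(\aa-\bb)^{-(n+1)}$. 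In each, symmetrising the integrand under $\aa\leftrightarrow\bb$ in the spirit of the proofs of \eqref{hhalf43} and \eqref{dhhalf1-inq} cancels the pointwise contributions and leaves only genuine commutator expressions; a final Cauchy--Schwarz with Hardy, this time pairing $p(\aa)-p(\bb)$ with $\aa-\bb$, extracts a factor $\|p\|_{\dot H^{1/2}}$ and yields by duality $\|T_g\|_{L^2}\lesssim\|\partial_\aa f_1\|_{L^2}\|\partial_\aa m_1\|_{L^\infty}^n\|g-\tilde g\|_{\dot H^{1/2}}$. Summing the four estimates and collecting the constants depending on $\|l_\aa\|_{L^\infty}$ and $\|(l^{-1})_\aa\|_{L^\infty}$ completes the proof of \eqref{d32inq}.
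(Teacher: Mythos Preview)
Your decomposition $T_f+T_m+T_g+T_l$ is exactly the right one, and your treatment of $T_f$, $T_m$ and $T_l$ is correct and matches what the paper has in mind (it simply says ``similarly as for Proposition~\ref{dl21}'').

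There is, however, a genuine slip in your handling of $T_g$. You test against $p$ and end by saying ``extracts a factor $\|p\|_{\dot H^{1/2}}$ and yields by duality $\|T_g\|_{L^2}\lesssim\cdots$''. But a bound $\bigl|\int p\,T_g\bigr|\le C\|p\|_{\dot H^{1/2}}$ only gives $\|T_g\|_{\dot H^{-1/2}}\le C$, not $\|T_g\|_{L^2}\le C$. The symmetrisation-in-$(\aa,\bb)$ device you borrow from the proofs of \eqref{hhalf43} and \eqref{dhhalf1-inq} is tailored to $\dot H^{1/2}$ estimates, not to $L^2$ estimates, so it is the wrong template here.

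The fix is simpler than what you wrote. No duality or symmetrisation is needed. After integrating by parts in $\bb$ exactly as you do, use that $\int\partial_\bb K(\aa,\bb)\,d\bb=0$ (the kernel $K(\aa,\bb)=(\tilde f(\aa)-\tilde f(\bb))(\tilde m(\aa)-\tilde m(\bb))^n/(\aa-\bb)^{n+1}$ vanishes at $\bb=\pm\infty$) to replace $(g-\tilde g)(\bb)$ by $(g-\tilde g)(\bb)-(g-\tilde g)(\aa)$. Then bound each of the three pieces of $\partial_\bb K$ directly: majorise the $\tilde m$-difference-quotients by $\|\partial_\aa\tilde m\|_{L^\infty}^n$, apply Cauchy--Schwarz in $\bb$ and Hardy's inequality \eqref{eq:77} to the remaining $\tilde f$-difference-quotient factor, and finally take the $L^2$ norm in $\aa$ to pick up $\|g-\tilde g\|_{\dot H^{1/2}}$ from the definition \eqref{def-hhalf}. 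This is exactly how \eqref{eq:b111} is proved for $n=1$ and generalises immediately to arbitrary $n$, giving
\[
\|T_g\|_{L^2}\lesssim \|\partial_\aa\tilde f\|_{L^2}\,\|\partial_\aa\tilde m\|_{L^\infty}^n\,\|g-\tilde g\|_{\dot H^{1/2}}
\lesssim c\,\|\partial_\aa f_1\|_{L^2}\,\|\partial_\aa m_1\|_{L^\infty}^n\,\|g-g_1\circ l\|_{\dot H^{1/2}}.
\]
With this correction the four estimates sum to \eqref{d32inq}.
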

Proposition~\ref{d32} can be proved  similarly  as for Proposition~\ref{dl21}, we omit the details. 

\begin{proposition}\label{d33}
Assume that $f,\ m,\ g,\ f_1,\ m_1,\ g_1$ are smooth and decay at infinity, and $l:\mathbb R\to\mathbb R$ is a diffeomorphism, with $l_\aa-1\in L^2$. Then there is a constant $c$, depending on $\|l_\aa\|_{L^\infty}, \|(l^{-1})_\aa\|_{L^\infty}$, such that
\begin{equation}\label{d33inq}
\begin{aligned}
&\nm{[f, g;  m]-U_l[f_1, g_1; m_1]}_{L^2}\le c
\nm{f-f_1\circ l}_{\dot H^{1/2}}\|\partial_\aa g\|_{L^2}\nm{ m}_{L^\infty}\\&+c\|\partial_\aa f_1\|_{L^2}  \nm{g-g_1\circ l}_{\dot H^{1/2}}\nm{ m}_{L^\infty}+
c\|\partial_\aa f_1\|_{L^2}\nm{\partial_\aa g_1}_{L^2}\nm{m-m_1\circ l}_{L^2}\\&
+c\|\partial_\aa f_1\|_{L^2}\nm{ m_1}_{L^\infty}   \|\partial_\aa g_1\|_{L^2}\nm{l_\aa-1}_{L^2}.
\end{aligned}
\end{equation}

\end{proposition}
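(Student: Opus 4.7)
The proof will follow the template established by Propositions~\ref{dl21}, \ref{dl22}, \ref{dhhalf1}, \ref{dhhalf2}, and \ref{d32}. The plan is to split the difference $[f,g;m]-U_l[f_1,g_1;m_1]$ into two conceptually distinct pieces: a ``trilinear'' piece that keeps the same base kernel $(\aa-\bb)^{-2}$ but changes the arguments, and a ``change of variables'' piece that captures the discrepancy between $\mathbb H$ and $U_l \mathbb H U_{l^{-1}}(l_\aa)^{-1}$. More precisely, first I would write
\begin{equation*}
[f,g;m]-U_l[f_1,g_1;m_1] = \Bigl\{[f,g;m]-[f_1\circ l, g_1\circ l; m_1\circ l]\Bigr\}+\Bigl\{[f_1\circ l, g_1\circ l; m_1\circ l]-U_l[f_1,g_1;m_1]\Bigr\}.
\end{equation*}

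For the first bracket, I use trilinearity of $[\cdot,\cdot;\cdot]$ in its three arguments to split:
\begin{equation*}
[f-f_1\circ l, g; m] + [f_1\circ l, g-g_1\circ l; m] + [f_1\circ l, g_1\circ l; m-m_1\circ l],
\end{equation*}
and estimate each term with the basic $L^2$ bounds for the double commutator from Appendix~\ref{ineq} (the same $\dot H^{1/2}\times \dot H^1\times L^\infty\to L^2$ and $\dot H^1\times \dot H^1\times L^2\to L^2$ bounds used in the proof of \eqref{eq:b15} and in the estimates in \S\ref{dtdabl2}). The first term is controlled by $\|f-f_1\circ l\|_{\dot H^{1/2}}\|\partial_\aa g\|_{L^2}\|m\|_{L^\infty}$; the second, after using $\|\partial_\aa(f_1\circ l)\|_{L^2}\lec \|l_\aa\|_{L^\infty}^{1/2}\|\partial_\aa f_1\|_{L^2}$, by $\|\partial_\aa f_1\|_{L^2}\|g-g_1\circ l\|_{\dot H^{1/2}}\|m\|_{L^\infty}$; and the third by $\|\partial_\aa f_1\|_{L^2}\|\partial_\aa g_1\|_{L^2}\|m-m_1\circ l\|_{L^2}$, each with a multiplicative constant $c=c(\|l_\aa\|_{L^\infty},\|(l^{-1})_\aa\|_{L^\infty})$.

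For the second (change of variables) bracket, I change variable $\bb=l(\gamma)$ inside $U_l[f_1,g_1;m_1](\aa)=[f_1,g_1;m_1](l(\aa))$, obtaining the integrand
\begin{equation*}
\frac{1}{\pi i}(f_1\circ l(\aa)-f_1\circ l(\gamma))(g_1\circ l(\aa)-g_1\circ l(\gamma))\, m_1\circ l(\gamma)\, \frac{l_\gamma(\gamma)}{(l(\aa)-l(\gamma))^2}\, d\gamma,
\end{equation*}
so that
\begin{equation*}
[f_1\circ l, g_1\circ l; m_1\circ l](\aa)-U_l[f_1,g_1;m_1](\aa) = \frac{1}{\pi i}\int \Delta_l f\,\Delta_l g\, (m_1\circ l)(\gamma)\, K(\aa,\gamma)\, d\gamma,
\end{equation*}
where $\Delta_l f := f_1\circ l(\aa)-f_1\circ l(\gamma)$, similarly for $g$, and
\[
K(\aa,\gamma)=\frac{1}{(\aa-\gamma)^2}-\frac{l_\gamma(\gamma)}{(l(\aa)-l(\gamma))^2}.
\]
The key is to rewrite $K(\aa,\gamma)$ as the telescoping combination
\[
\frac{1}{(\aa-\gamma)^2}-\frac{1}{(l(\aa)-l(\gamma))^2}+\frac{1-l_\gamma(\gamma)}{(l(\aa)-l(\gamma))^2}
\]
and then factor the first difference as $\frac{(l(\aa)-\aa-l(\gamma)+\gamma)(l(\aa)+\gamma-\aa-l(\gamma)+2(\aa-\gamma))}{(\aa-\gamma)^2(l(\aa)-l(\gamma))^2}$, so that the smallness of $l_\aa-1$ in $L^2$ can be harvested via Hardy's inequality \eqref{eq:77}. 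Pairing this with $\|m_1\|_{L^\infty}$ outside the integral and using Cauchy--Schwarz in $\gamma$ then yields the bound $c\,\|\partial_\aa f_1\|_{L^2}\|\partial_\aa g_1\|_{L^2}\|m_1\|_{L^\infty}\|l_\aa-1\|_{L^2}$, which is exactly the last term on the right-hand side of \eqref{d33inq}.

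The main obstacle I anticipate is the kernel-level analysis of $K(\aa,\gamma)$ in the change-of-variables term: one needs to distribute the factor $l_\aa-1$ in a way that still allows independent $L^2$ absorption of the two difference quotients $\Delta_l f/(\aa-\gamma)$ and $\Delta_l g/(\aa-\gamma)$ by Hardy's inequality, while the $m_1$ factor stays in $L^\infty$. Once this decomposition is in hand, the remaining work is bookkeeping: gather the four contributions, verify uniform dependence of the constants on $\|l_\aa\|_{L^\infty}$ and $\|(l^{-1})_\aa\|_{L^\infty}$, and conclude \eqref{d33inq}. The structure is essentially parallel to the proof of Proposition~\ref{d32}, with the single derivative $\partial_\aa g$ replaced by the multiplier $m$, which forces us to place the $L^2$-type norm on $m-m_1\circ l$ rather than on a derivative.
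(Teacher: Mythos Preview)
Your proposal is correct and is precisely the natural elaboration of the paper's one-line proof, which merely states that the result ``straightforwardly follows from Cauchy--Schwarz inequality, Hardy's inequality and the definition of $\dot H^{1/2}$ norm.'' Your two-step decomposition (trilinear splitting plus change-of-variables remainder) mirrors the template of Propositions~\ref{dl21} and~\ref{d32}, and each of your term-by-term estimates reduces to exactly those three tools; in particular, the kernel analysis of $K(\aa,\gamma)$ for the change-of-variables piece can be closed by pairing the $L$-difference quotient via Hardy's inequality \eqref{eq:77} against the product of the $f_1$ and $g_1$ difference quotients controlled through \eqref{eq:771} and Cauchy--Schwarz.
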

Proposition~\ref{d33} straightforwardly follows from Cauchy-Schwarz inequality, Hardy's inequality and the definition of $\dot H^{1/2}$ norm.

\begin{proposition}\label{half-product}
Assume $f\in \dot H^{1/2}(\mathbb R)\cap L^\infty(\mathbb R)$, $g\in \dot H^{1/2}(\mathbb R)$, and $g$ can be decomposed by
\begin{equation}\label{decomp}
g=g_1+ pq
\end{equation}
with $g_1\in L^\infty(\mathbb R)$, $q\in L^2(\mathbb R)$, and $\partial_\aa p\in L^2(\mathbb R)$, satisfying $\partial_\aa(pf)\in L^2(\mathbb R)$. Then $fg\in \dot H^{1/2}(\mathbb R)$, and
\begin{equation}\label{product-inq}
\nm{fg}_{\dot H^{1/2}}\lec \nm{f}_{L^\infty}\nm{g}_{\dot H^{1/2}}+ \nm{g_1}_{L^\infty}\nm{f}_{\dot H^{1/2}}+\nm{q}_{L^2}\nm{ \partial_\aa (pf)   }_{L^2}+\nm{q}_{L^2}\nm{ \partial_\aa p   }_{L^2}\nm{f}_{L^\infty}.
\end{equation}
\end{proposition}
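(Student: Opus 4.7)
The plan is to work directly from the double-integral characterization of the $\dot H^{1/2}$ norm,
\[
\|u\|_{\dot H^{1/2}}^2 \;\sim\; \iint \frac{|u(\aa)-u(\bb)|^2}{(\aa-\bb)^2}\,d\aa\,d\bb,
\]
(definition \eqref{def-hhalf} as used throughout Section~\ref{prepare}), and to split the integrand via the elementary identity
\[
f(\aa)g(\aa)-f(\bb)g(\bb) \;=\; f(\aa)\bigl(g(\aa)-g(\bb)\bigr)+g(\bb)\bigl(f(\aa)-f(\bb)\bigr).
\]
The first piece is pointwise bounded by $\|f\|_{L^\infty}\,|g(\aa)-g(\bb)|$, which immediately yields the term $\|f\|_{L^\infty}\|g\|_{\dot H^{1/2}}$.

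For the second piece I would substitute $g = g_1 + pq$. The $g_1$ part is trivially dominated by $\|g_1\|_{L^\infty}\,|f(\aa)-f(\bb)|$, giving $\|g_1\|_{L^\infty}\|f\|_{\dot H^{1/2}}$. For the remaining $pq$ part, the key algebraic step is the identity obtained by adding and subtracting $p(\aa)f(\aa)$,
\[
p(\bb)\bigl(f(\aa)-f(\bb)\bigr) \;=\; \bigl[(pf)(\aa)-(pf)(\bb)\bigr] \;-\; f(\aa)\bigl(p(\aa)-p(\bb)\bigr),
\]
which transfers the $(\aa,\bb)$-difference off of $f$ and onto $pf$ and $p$ separately, and these are precisely the two quantities whose derivatives lie in $L^2$ by hypothesis. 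Expanding the square gives
\begin{align*}
\iint\frac{|p(\bb)q(\bb)(f(\aa)-f(\bb))|^2}{(\aa-\bb)^2}\,d\aa\,d\bb \lec{}& \iint\frac{|q(\bb)|^2\,|(pf)(\aa)-(pf)(\bb)|^2}{(\aa-\bb)^2}\,d\aa\,d\bb \\
& +\; \|f\|_{L^\infty}^2\iint\frac{|q(\bb)|^2\,|p(\aa)-p(\bb)|^2}{(\aa-\bb)^2}\,d\aa\,d\bb.
\end{align*}

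The closing ingredient is the \emph{uniform-in-$\bb$} one-dimensional Hardy inequality \eqref{eq:77}: for any $u$ with $\partial_\aa u\in L^2(\mathbb R)$ and any $\bb\in\mathbb R$,
\[
\int_{\mathbb R}\frac{|u(\aa)-u(\bb)|^2}{(\aa-\bb)^2}\,d\aa \;\lec\; \|\partial_\aa u\|_{L^2}^2,
\]
obtained by applying the classical Hardy inequality to $F(t) = u(\bb+t)-u(\bb)$ on each half-line (note $F(0)=0$). Taking $u=pf$ and $u=p$ respectively, and then pairing the resulting constants against $|q(\bb)|^2\,d\bb$ via $q\in L^2$, produces $\|q\|_{L^2}^2\|\partial_\aa(pf)\|_{L^2}^2$ and $\|f\|_{L^\infty}^2\|q\|_{L^2}^2\|\partial_\aa p\|_{L^2}^2$, accounting for the last two terms of \eqref{product-inq}.

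The one place where care is needed is not to Cauchy--Schwarz in $\aa$ prematurely: the naive pointwise bound $|u(\aa)-u(\bb)|^2 \le |\aa-\bb|\int_{\bb}^{\aa}|\partial_\aa u|^2$ combined with the singular denominator $(\aa-\bb)^{-2}$ produces a divergent logarithm in the $\aa$-integration. The correct move is to keep the inner $\aa$-integral intact as a singular integral and invoke Hardy globally at that stage; the resulting bound is uniform in $\bb$ and hence is compatible with the outer $L^2(d\bb)$ pairing against $|q|^2$. This is the same philosophy already used in the proofs of Lemma~\ref{hhalf2} and Propositions~\ref{dl21}--\ref{dhhalf2} earlier in Section~\ref{prepare}.
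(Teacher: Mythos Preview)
Your proof is correct and follows essentially the same approach as the paper's own proof: split the product difference, bound the $f$-difference against $\|f\|_{L^\infty}\|g\|_{\dot H^{1/2}}$, decompose $g=g_1+pq$ in the other piece, use the add-and-subtract identity to replace $p\cdot(f(\aa)-f(\bb))$ by differences of $pf$ and of $p$, and close with Hardy's inequality \eqref{eq:77} applied in one variable and Fubini against $|q|^2$ in the other. The only cosmetic difference is that the paper writes the initial split as $f(\bb)\bigl(g(\aa)-g(\bb)\bigr)+g(\aa)\bigl(f(\aa)-f(\bb)\bigr)$, so the factor $q$ ends up evaluated at $\aa$ rather than $\bb$; by symmetry this changes nothing.
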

\begin{proof}
The proof is straightforward by definition. We have
\begin{equation}
\begin{aligned}
&\nm{fg}_{\dot H^{1/2}}^2\lec \iint\frac{|f(\bb)|^2|g(\aa)-g(\bb)|^2}{(\aa-\bb)^2}\,d\aa d\bb+ \iint\frac{|g(\aa)|^2|f(\aa)-f(\bb)|^2}{(\aa-\bb)^2}\,d\aa d\bb\\&\lec
\nm{f}^2_{L^\infty}\nm{g}^2_{\dot H^{1/2}}+ \nm{g_1}^2_{L^\infty}\nm{f}^2_{\dot H^{1/2}}+\iint\frac{|q(\aa)|^2|p(\aa)f(\aa)-p(\bb)f(\bb)|^2}{(\aa-\bb)^2}\,d\aa d\bb\\&+\iint\frac{|q(\aa)|^2|p(\aa)-p(\bb)|^2|f(\bb)|^2}{(\aa-\bb)^2}\,d\aa d\bb\\&\lec \nm{f}^2_{L^\infty}\nm{g}^2_{\dot H^{1/2}}+ \nm{g_1}^2_{L^\infty}\nm{f}^2_{\dot H^{1/2}}+\nm{q}^2_{L^2}\nm{ \partial_\aa (pf)   }^2_{L^2}+\nm{q}^2_{L^2}\nm{ \partial_\aa p   }^2_{L^2}\nm{f}^2_{L^\infty},
\end{aligned}
\end{equation}
where in the last step we used Fubini's Theorem and Hardy's inequality \eqref{eq:77}.

\end{proof}
\subsection{The proof of Theorem~\ref{unique}}\label{proof3.5} In addition to what have already been given, we use the following convention in this section, \S\ref{proof3.5}: we write $A\lec B$ if there is a constant $c$, depending only on $\sup_{[0, T]}\frak E(t)$ and $\sup_{[0, T]}\tEf (t)$, such that $A\le cB$. We assume the reader is familiar with the quantities that are controlled by the functional $\frak E(t)$, see \S\ref{proof} and Appendix~\ref{quantities}. We don't always give precise references on these estimates. 

Let $Z=Z(\aa,t)$, $\Zf=\Zf(\aa,t)$, $t\in [0, T]$  be solutions of  the system \eqref{interface-r}-\eqref{interface-holo}-\eqref{b}-\eqref{a1}, satisfying the assumptions of Theorem~\ref{unique}. 
Recall we defined in \eqref{def-l}
\begin{equation}\label{2338}
l=l(\aa,t)=\th\circ h^{-1}(\aa, t)=\th(h^{-1}(\aa,t),t).
\end{equation}
We will apply Lemma~\ref{dlemma1} to $\Theta=\bar Z_t,\ \frac1{Z_{,\aa}}-1,\bar Z_{tt}$ and Lemma~\ref{basic-e2} to $l_\aa-1$ to construct an energy functional $\mathcal F(t)$, and show that the time derivative $\mathcal F'(t)$ can be controlled by $\mathcal F(t)$ and the initial data. 
We begin with computing the evolutionary equations for these quantities. We have
\begin{equation}\label{2339}
\partial_t ( l_\aa\circ h)=\partial_t\paren{\frac{\th_\a}{h_\a}}=\frac{\th_\a}{h_\a}\paren{\frac{\th_{t\a}}{\th_\a}-\frac{h_{t\a}}{h_\a}}=(l_\aa\circ h)(\tb_\aa\circ \th-b_\aa\circ h);
\end{equation}
precomposing with $h^{-1}$ yields 
\begin{equation}\label{2340}
(\partial_t +b\partial_\a) l_\aa=l_\aa(\tb_\aa\circ l-b_\aa).
\end{equation}
The equation for $\bar Z_t$ is given by \eqref{quasi-r}-\eqref{aux}.
To find the equation for $\bar Z_{tt}$ we take a  derivative to $t$ to \eqref{quasi-l}:
\begin{equation}\label{2341}
\begin{aligned}
(\partial_t^2+i\frak a\partial_\a)\bar z_{tt}&=-i\frak a_t \bar z_{t\a}+\partial_t\paren{\frac{\af_t}{\af}}(\bar z_{tt}-i)+\frac{\af_t}{\af}\bar z_{ttt}\\&=\partial_t\paren{\frac{\af_t}{\af}}(\bar z_{tt}-i)+\frac{\af_t}{\af}(\bar z_{ttt}-i\frak a \bar z_{t\a})\\&=(\bar z_{tt}-i)\paren{\partial_t\paren{\frac{\af_t}{\af}}+\paren{\frac{\af_t}{\af}}^2+2\paren{\frac{\af_t}{\af}}\bar{D_\a z_t}},
\end{aligned}
\end{equation}
here we used equation \eqref{quasi-l} and substituted by \eqref{interface-l}: $-i\frak a \bar z_\a=\bar z_{tt}-i$ in the last step. Precomposing with $h^{-1}$, and then substituting $\bar Z_{tt}-i$ by \eqref{aa1}, yields, for $\mathcal P=(\partial_t+b\partial_\aa)^2+i\mathcal A\partial_\aa$, 
\begin{equation}\label{eqztt}
\mathcal P\bar Z_{tt}= -i\,\frac{A_1}{Z_{,\aa}} \paren{(\partial_t+b\partial_\aa)\paren{\frac{\af_t}{\af}\circ h^{-1}}+\paren{\frac{\af_t}{\af}\circ h^{-1}}^2+2\paren{\frac{\af_t}{\af}\circ h^{-1}}\bar{D_\aa Z_t}}:=G_3.
\end{equation}
To find the equation for $\frac1{Z_{,\aa}}$ we begin with \eqref{eq:dza}. Precomposing with $h$, then differentiate with respect to $t$ gives
\begin{equation}\label{2342}
\partial_t^2\paren{\frac {h_\a}{z_\a}}=\frac {h_\a}{z_\a}\paren{(b_\aa\circ h-D_\a z_t)^2+\partial_t(b_\aa\circ h-2\Re D_\a z_t)+\partial_t \bar{D_\a z_t}}
\end{equation}
here  we subtracted $\partial_t\bar{D_\a z_t}$  and then added $\partial_t\bar{D_\a z_t}$  in the second factor on the right hand side to take advantage of the formula \eqref{ba}. We compute
\begin{equation}\label{2343}
\partial_t\bar {D_\a z_t}=\bar{D_\a z_{tt}}-(\bar{D_\a z_t})^2,
\end{equation}
replacing $\bar Z_{tt}$ by \eqref{aa1} yields
\begin{equation}\label{2344}
\bar{D_\aa Z_{tt}}=\frac1{\bar Z_{,\aa}}\partial_\aa\paren{-i\frac{A_1}{Z_{,\aa}}}=-i\frac{A_1}{\bar Z_{,\aa}}\partial_\aa\paren{\frac{1}{Z_{,\aa}}}-i\frac1{|Z_{,\aa}|^2}\partial_\aa A_1;
\end{equation}
precomposing equation \eqref{2342} with $h^{-1}$ and substitute in by \eqref{2343}-\eqref{2344}, we get
\begin{equation}\label{eqza}
\mathcal P\frac1{Z_{,\aa}}=\frac {1}{Z_\aa}\paren{(b_\aa-D_\aa Z_t)^2+(\partial_t+b\partial_\aa)(b_\aa-2\Re D_\aa Z_t)-\paren{\bar {D_\aa Z_t}}^2-i\frac{\partial_\aa A_1}{|Z_{,\aa}|^2}}:=G_2.
\end{equation}

We record here the equation for $\bar Z_t$, which is the first equation in \eqref{quasi-r}, in which we substituted in by \eqref{aa1},
\begin{equation}\label{eqzt}
\mathcal P\bar Z_t=-i\,\frac{\frak a_t}{\frak a}\circ h^{-1} \frac{A_1}{Z_{,\aa}}:=G_1.
\end{equation}
\subsubsection{The energy functional $\mathcal F(t)$} The energy functional $\mathcal F(t)$ for the differences of the solutions will consist of $\|l_\aa(t)-1\|_{L^2(\mathbb R)}^2$ and the functionals $\mathfrak F(t)$ when applied to $\Theta=\bar Z_t(t)$, $\frac1{Z_{,\aa}}-1$
and $\bar Z_{tt}$, taking $\frak c=-i, \ 0, \ 0$ respectively. Let
\begin{equation}\label{f0}
\frak F_0(t)=\|l_\aa(t)-1\|_{L^2(\mathbb R)}^2;
\end{equation}
\begin{equation}\label{f1}
\frak F_1(t):=\int \frac{\kappa}{A_1}\abs{ Z_{,\aa}\paren{\bar Z_{tt}-i}- \Zf_{,\aa}\circ l\paren{\bar \Zf_{tt}\circ l-i}}^2+i\partial_\aa(\bar Z_t-\bar {\Zf}_t\circ l)\bar{(\bar Z_t-\bar {\Zf}_t\circ l)}\,d\aa;
\end{equation}
\begin{equation}\label{f2}
\begin{aligned}
\frak F_2(t):&=\int \frac{\kappa}{A_1}\abs{ Z_{,\aa}(\partial_t+b\partial_\aa)\paren{\frac1{Z_{,\aa}}}- \Zf_{,\aa}\circ l(\partial_t+\tb\partial_\aa)\paren{\frac1{\Zf_{,\aa}}}\circ l}^2\,d\aa\\&\quad\qquad+i\int \partial_\aa\paren{\frac1{Z_{,\aa}}- \frac1{\Zf_{,\aa}}\circ l}\bar{\paren{\frac1{Z_{,\aa}}- \frac1{\Zf_{,\aa}}\circ l }}\,d\aa;
\end{aligned}
\end{equation}
and
\begin{equation}\label{f3}
\frak F_3(t):=\int \frac{\kappa}{A_1}\abs{ Z_{,\aa}\bar Z_{ttt}- \Zf_{,\aa}\circ l\bar \Zf_{ttt}\circ l}^2+i\partial_\aa(\bar Z_{tt}-\bar {\Zf}_{tt}\circ l)\bar{(\bar Z_{tt}-\bar {\Zf}_{tt}\circ l)}\,d\aa.
\end{equation}
Substituting the evolutionary equations \eqref{eq:dzt}, \eqref{eq:dza} and \eqref{eq:dztt} in the functionals $\frak F_i$ we get
\begin{equation}\label{f11}
\frak F_1(t)=\int \frac{\kappa}{A_1}\abs{ A_1- \tAone\circ l}^2+i\partial_\aa(\bar Z_t-\bar {\Zf}_t\circ l)\bar{(\bar Z_t-\bar {\Zf}_t\circ l)}\,d\aa;
\end{equation}
\begin{equation}\label{f22}
\frak F_2(t)=\int \frac{\kappa}{A_1}\abs{ (b_\aa-D_\aa Z_t)-(\tb_\aa-\tD_\aa \Zf_t)\circ l   }^2+ i\partial_\aa(\frac1{Z_{,\aa}}- \frac1{\Zf_{,\aa}}\circ l)\bar{(\frac1{Z_{,\aa}}- \frac1{\Zf_{,\aa}}\circ l )}\,d\aa;
\end{equation}
and
\begin{equation}\label{f33}
\begin{aligned}
\frak F_3(t)&=\int \frac{\kappa}{A_1}\abs{ A_1\paren{\frac{\frak a_t}{\frak a}\circ h^{-1}+\bar{D_\aa Z_t}}- \paren{\tAone\paren{\frac{\taf_t}{\taf}\circ \th^{-1}+\bar{\tD_\aa \Zf_t}}}\circ l }^2\,d\aa\\&\qquad\qquad+i\int\partial_\aa(\bar Z_{tt}-\bar {\Zf}_{tt}\circ l)\bar{(\bar Z_{tt}-\bar {\Zf}_{tt}\circ l)}\,d\aa.
\end{aligned}
\end{equation}

\begin{remark}\label{remark512}
Assume that the assumption of Theorem~\ref{unique} holds.  Because $h_t=b(h,t)$, and $h(\a, 0)=\a$, where $b$ is given by \eqref{b}, we have $h_{t\a}=h_\a b_\aa\circ h$, and 
\begin{equation}\label{2345}
h_\a(\cdot, t)=e^{\int_0^t b_\aa\circ h(\cdot, \tau)\, d\tau}.
\end{equation}
So there are constants $c_1>0$, $c_2>0$, depending only on $\sup_{[0, T]}\frak E(t)$, such that 
\begin{equation}\label{2346}
c_1\le h_\a(\a, t)\le c_2,\qquad\qquad \text{for all } \a\in \mathbb R, t\in [0, T]. 
\end{equation}
 Consequently, because $l_\aa=\frac{\th_a}{h_\a}\circ h^{-1}$,  there is a constant $0<c<\infty$, depending only on $\sup_{[0, T]}\frak E(t)$ and $\sup_{[0, T]}\tEf(t)$, such that 
\begin{equation}\label{2346-1}
c^{-1}\le l_\aa(\aa, t) \le c,  \qquad\qquad \text{for all } \a\in \mathbb R, t\in [0, T].
\end{equation}
It is easy to check that for each $t\in [0, T]$, $b_\aa(t)\in L^2(\mathbb R)$, so $h_\a(t)-1\in L^2(\mathbb R)$, and hence $l_\aa(t)-1\in L^2(\mathbb R)$.
It is clear that under the assumption of Theorem~\ref{unique}, the functionals $\frak F_i(t)$, $i=1,2,3$ are well-defined. 
 \end{remark}
 
Notice that the functionals $\frak F_i(t)$,  $i=1,2,3$ are not necessarily positive definite, see Lemma~\ref{hhalf1}. We prove the following
\begin{lemma}\label{dominate1}
There is a constant $M_0$, depending only on $\sup_{[0, T]}\frak E(t)$ and $\sup_{[0, T]}\tEf(t)$, such that for all $M\ge M_0$, and $t\in [0, T]$, 
\begin{align}
\|l_\aa(t)-1\|_{L^2}+\|(\bar Z_t-\bar \Zf_t\circ l)(t)\|_{\dot H^{1/2}}^2+\nm{\paren{\frac1{Z_{,\aa}}- \frac1{\Zf_{,\aa}}\circ l}(t)}_{\dot H^{1/2}}^2\le M\frak F_0(t)+\frak F_1(t)+\frak F_2(t);\label{dominate11}\\
\nm{(A_1-\tAone\circ l)(t)}_{L^2}^2+\nm{(D_\aa Z_t-\tD_\aa \Zf_t\circ l) (t)}_{L^2}^2+ \nm{b_\aa-\tb_\aa\circ l (t)}_{L^2}^2\lec M\frak F_0(t)+\frak F_1(t)+\frak F_2(t).\label{dominate12}
\end{align}
 
 \end{lemma}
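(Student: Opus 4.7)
The plan exploits two structural facts: (a) $\bar Z_t$ and $\frac{1}{Z_{,\aa}}$ are boundary values of holomorphic functions in $\mathscr P_-$ (so $\mathbb P_A\bar Z_t=0$ and $\mathbb P_A\frac{1}{Z_{,\aa}}=0$ by Proposition~\ref{prop:hilbe} and \eqref{interface-holo}); and (b) for such a holomorphic $g$, the reparametrization $g\circ l$ differs from a holomorphic function only by an anti-holomorphic error of size $\|l_\aa-1\|_{L^2}$. The quantitative input is the identity
\[
i\int \partial_\aa f\,\bar f\,d\aa \;=\; \|\mathbb P_H f\|_{\dot H^{1/2}}^2-\|\mathbb P_A f\|_{\dot H^{1/2}}^2,\qquad f\in H^1(\mathbb R),
\]
which follows from Parseval's theorem using the Fourier symbol $-\sgn(\xi)$ of $\mathbb H$; equivalently, the cross terms $\int i\partial_\aa(\mathbb P_H f)\,\overline{\mathbb P_A f}\,d\aa$ and its conjugate vanish by Cauchy's theorem. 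Moreover, for any $g$ with $\mathbb H g=g$, the identity $g\circ l=U_l\mathbb H U_{l^{-1}}(g\circ l)$ gives $U_l g-\mathbb H U_l g=\mathcal Q_l(g\circ l)$, hence $\mathbb P_A(g\circ l)=\frac12\mathcal Q_l(g\circ l)$; by \eqref{q1},
\[
\|\mathbb P_A(g\circ l)\|_{\dot H^{1/2}}\lec\|l_\aa-1\|_{L^2}\,\|\partial_\aa g\|_{L^2}.
\]

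To prove \eqref{dominate11}, apply this to $f=\bar Z_t-\bar\Zf_t\circ l$. Since $\mathbb P_A\bar Z_t=0$, we have $\mathbb P_A f=-\frac12\mathcal Q_l(\bar\Zf_t\circ l)$, and using the control of $\|\partial_\aa\bar\Zf_t\|_{L^2}$ by $\sup_{[0,T]}\tEf$ gives $\|\mathbb P_A f\|_{\dot H^{1/2}}^2\lec \frak F_0$. Combining this with the nonnegativity of the first integrand of $\frak F_1$,
\[
\|\mathbb P_H f\|_{\dot H^{1/2}}^2\le\frak F_1+\|\mathbb P_A f\|_{\dot H^{1/2}}^2\lec\frak F_1+\frak F_0,
\]
so $\|\bar Z_t-\bar\Zf_t\circ l\|_{\dot H^{1/2}}^2\lec\frak F_1+\frak F_0$. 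The identical reasoning applied to $\frac{1}{Z_{,\aa}}-\frac{1}{\Zf_{,\aa}}\circ l$ using $\frak F_2$ in place of $\frak F_1$ gives $\|\frac{1}{Z_{,\aa}}-\frac{1}{\Zf_{,\aa}}\circ l\|_{\dot H^{1/2}}^2\lec\frak F_2+\frak F_0$. Since $\|l_\aa-1\|_{L^2}^2=\frak F_0$, choosing $M_0$ larger than the implicit constants (which depend only on $\sup\frak E$, $\sup\tEf$) yields \eqref{dominate11}.

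For \eqref{dominate12}, note first that by Remark~\ref{remark512}, $\frac\kappa{A_1}=\sqrt{l_\aa/(A_1\,\tAone\circ l)}$ is bounded above and below by positive constants depending only on $\sup\frak E,\sup\tEf$. Therefore
\[
\|A_1-\tAone\circ l\|_{L^2}^2\lec\frak F_1+\frak F_0,\qquad \|(b_\aa-D_\aa Z_t)-(\tb_\aa-\tD_\aa\Zf_t)\circ l\|_{L^2}^2\lec\frak F_2+\frak F_0.
\]
To separate $b_\aa-\tb_\aa\circ l$ from $D_\aa Z_t-\tD_\aa\Zf_t\circ l$, subtract the version of \eqref{ba} written for $\Zf$ (and precomposed with $l$) from that for $Z$:
\[
(b_\aa-\tb_\aa\circ l)-2\Re(D_\aa Z_t-\tD_\aa\Zf_t\circ l)=\Re(\mathcal C_1+\mathcal C_2),
\]
where $\mathcal C_1:=[\tfrac{1}{Z_{,\aa}},\mathbb H]Z_{t,\aa}-U_l[\tfrac{1}{\Zf_{,\aa}},\mathbb H]\Zf_{t,\aa}$ and $\mathcal C_2:=[Z_t,\mathbb H]\partial_\aa\tfrac{1}{Z_{,\aa}}-U_l[\Zf_t,\mathbb H]\partial_\aa\tfrac{1}{\Zf_{,\aa}}$. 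Proposition~\ref{dl21} bounds $\|\mathcal C_1\|_{L^2}+\|\mathcal C_2\|_{L^2}$ in terms of $\|\tfrac{1}{Z_{,\aa}}-\tfrac{1}{\Zf_{,\aa}}\circ l\|_{\dot H^{1/2}}$, $\|Z_t-\Zf_t\circ l\|_{\dot H^{1/2}}=\|\bar Z_t-\bar\Zf_t\circ l\|_{\dot H^{1/2}}$, and $\|l_\aa-1\|_{L^2}$, all of which are $\lec\sqrt{\frak F_0+\frak F_1+\frak F_2}$ by \eqref{dominate11}. Coupling this identity with the trivial $(b_\aa-\tb_\aa\circ l)-(D_\aa Z_t-\tD_\aa\Zf_t\circ l)=(b_\aa-D_\aa Z_t)-(\tb_\aa-\tD_\aa\Zf_t)\circ l$ and separating real and imaginary parts yields a $2\times 2$ linear system which solves for both unknowns, giving $\|b_\aa-\tb_\aa\circ l\|_{L^2}^2+\|D_\aa Z_t-\tD_\aa\Zf_t\circ l\|_{L^2}^2\lec\frak F_0+\frak F_1+\frak F_2$.

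The main obstacle is that the quadratic terms $i\int\partial_\aa(\cdot)\,\overline{(\cdot)}\,d\aa$ appearing in $\frak F_1$ and $\frak F_2$ are \emph{not} positive-definite for the full differences; only the holomorphic projections contribute positively. The crux of the argument is therefore to show that the anti-holomorphic parts, which arise entirely from the reparametrization by $l$, are absorbed into $M\frak F_0$. This is precisely where the identity $\mathbb P_A(g\circ l)=\frac12\mathcal Q_l(g\circ l)$ together with the $\dot H^{1/2}$-bound \eqref{q1} play the essential role.
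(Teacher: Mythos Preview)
Your proof is correct and follows essentially the same route as the paper's. Both arguments hinge on Lemma~\ref{hhalf1} (the identity $i\int\partial_\aa f\,\bar f\,d\aa=\|\mathbb P_Hf\|_{\dot H^{1/2}}^2-\|\mathbb P_Af\|_{\dot H^{1/2}}^2$) together with $\mathbb P_A(g\circ l)=\tfrac12\mathcal Q_l(g\circ l)$ for holomorphic $g$ and the bound \eqref{q1}; for \eqref{dominate12} both use the two-sided bound on $\kappa/A_1$, the identity \eqref{ba}, and Proposition~\ref{dl21} to separate $b_\aa-\tb_\aa\circ l$ from $D_\aa Z_t-\tD_\aa\Zf_t\circ l$. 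Your phrasing of the last step as ``a $2\times 2$ linear system'' is a slight over-description---subtracting the two controlled combinations $(b_\aa-D_\aa Z_t)-(\tb_\aa-\tD_\aa\Zf_t)\circ l$ and $(b_\aa-2\Re D_\aa Z_t)-(\tb_\aa-2\Re\tD_\aa\Zf_t)\circ l$ directly gives $\overline{D_\aa Z_t}-\overline{\tD_\aa\Zf_t}\circ l$, from which both unknowns follow---but the content is identical to the paper's argument.
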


 \begin{proof}
 By Lemma~\ref{hhalf1},  
 \begin{align}
 \int i\partial_\aa(\bar Z_t-\bar {\Zf}_t\circ l)\bar{(\bar Z_t-\bar {\Zf}_t\circ l)}\,d\aa=\|\mathbb P_H (\bar Z_t-\bar {\Zf}_t\circ l)\|_{\dot H^{1/2}}^2-\|\mathbb P_A (\bar Z_t-\bar {\Zf}_t\circ l)\|_{\dot H^{1/2}}^2;\\
\text{and }\qquad \|\bar Z_t-\bar {\Zf}_t\circ l\|_{\dot H^{1/2}}^2=\int i\partial_\aa(\bar Z_t-\bar {\Zf}_t\circ l)\bar{(\bar Z_t-\bar {\Zf}_t\circ l)}\,d\aa+2\|\mathbb P_A (\bar Z_t-\bar {\Zf}_t\circ l)\|_{\dot H^{1/2}}^2.
 \end{align}
 Because $\bar Z_t=\mathbb H\bar Z_t$ and $\bar \Zf_t=\mathbb H\bar \Zf_t$, 
 $$2\mathbb P_A (\bar Z_t-\bar {\Zf}_t\circ l)=-2\mathbb P_A (\bar {\Zf}_t\circ l)=-\mathcal Q_l (\bar {\Zf}_t\circ l)$$
 and by \eqref{q1},
 $$\|\mathcal Q_l (\bar {\Zf}_t\circ l)\|_{\dot H^{1/2}}\le C(\|l_\aa\|_{L^\infty}, \|(l^{-1})_\aa\|_{L^\infty})\|\partial_\aa \bar {\Zf}_t\|_{L^2}\|l_\aa-1\|_{L^2}\lec \|l_\aa-1\|_{L^2}. $$
 So there is a constant $M_0$, depending only on $\sup_{[0, T]}\frak E(t)$ and $\sup_{[0, T]}\tEf(t)$, such that for all $t\in [0, T]$ and $M\ge M_0$, 
 \begin{equation}\label{2350}
 \|(\bar Z_t-\bar {\Zf}_t\circ l)(t)\|_{\dot H^{1/2}}^2\le \int i\partial_\aa(\bar Z_t-\bar {\Zf}_t\circ l)\bar{(\bar Z_t-\bar {\Zf}_t\circ l)}\,d\aa+M\|l_\aa-1\|_{L^2}^2\le \frak F_1(t)+M\frak F_0(t)
 \end{equation}
 A similar argument holds for $\nm{\paren{\frac1{Z_{,\aa}}- \frac1{\Zf_{,\aa}}\circ l}(t)}_{\dot H^{1/2}}^2$. This proves  \eqref{dominate11}.
 
 Now by $\frac{\kappa}{A_1}= \sqrt{\frac{l_\aa}{A_1\tAone\circ l}}$, Remark~\ref{remark512} and the estimate \eqref{2000}, there is a constant $0<c<\infty$, depending on $\sup_{[0, T]}\frak E(t)$ and $\sup_{[0, T]}\tEf(t)$, such that 
 \begin{equation}\label{2350-1}
 \frac1c\le \frac{\kappa}{A_1} \le c,
 \end{equation}
  so 
 \begin{equation}\label{2351} 
  \nm{(A_1-\tAone\circ l)(t)}_{L^2}^2+\nm{(b_\aa-D_\aa Z_t-(\tb_\aa-\tD_\aa \Zf_t\circ l)) (t)}_{L^2}^2\lec  M\frak F_0(t)+\frak F_1(t)+\frak F_2(t),
 \end{equation}
for large enough $M$, depending only on  $\sup_{[0, T]}\frak E(t)$ and $\sup_{[0, T]}\tEf(t)$.  Using \eqref{ba} we have, from Proposition~\ref{dl21}, 
 \begin{equation}\label{2352}
 \nm{b_\aa-2\Re D_\aa Z_t-(\tb_\aa-2\Re \tD_\aa \Zf_t\circ l)}_{L^2}\lec \|\bar Z_t-\bar \Zf_t\circ l\|_{\dot H^{1/2}}+\nm{\frac1{Z_{,\aa}}- \frac1{\Zf_{,\aa}}\circ l}_{\dot H^{1/2}}+ \|l_\aa-1\|_{L^2};
 \end{equation}
 combining with \eqref{2351} yields
$$ \nm{(D_\aa Z_t-\tD_\aa \Zf_t\circ l) (t)}_{L^2}^2+ \nm{(b_\aa-\tb_\aa\circ l )(t)}_{L^2}^2\lec M\frak F_0(t)+\frak F_1(t)+\frak F_2(t),$$
 for  large enough $M$, depending only on  $\sup_{[0, T]}\frak E(t)$ and $\sup_{[0, T]}\tEf(t)$. This proves \eqref{dominate12}.

 \end{proof}
 
  \begin{lemma}\label{dominate2}
Let  $M_0$ be the constant in Lemma~\ref{dominate1}.  Then for all $M\ge M_0$, and $t\in [0, T]$, 
\begin{equation}\label{dominate21}
\|\mathbb P_A(\bar Z_{tt}-\bar \Zf_{tt}\circ l)(t)\|_{\dot H^{1/2}}^2\lec M\frak F_0(t)+\frak F_1(t)+\frak F_2(t)
\end{equation}
 
 \end{lemma}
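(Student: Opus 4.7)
The plan is to exploit the holomorphicity $(I-\mathbb H)\bar Z_t=0$ to rewrite $\mathbb P_A\bar Z_{tt}$ as a commutator, then compare with the pulled-back quantity using $\mathcal Q_l$, and finally invoke Proposition~\ref{dhhalf1} together with \eqref{q1} and Lemma~\ref{dominate1}. Specifically, since $\mathbb H$ is time independent and $[\mathbb H,b\partial_{\alpha'}]\bar Z_t=-[b,\mathbb H]\bar Z_{t,\alpha'}$, differentiating $(I-\mathbb H)\bar Z_t=0$ along $\partial_t+b\partial_{\alpha'}$ yields
\[
2\mathbb P_A\bar Z_{tt}=[b,\mathbb H]\bar Z_{t,\alpha'},\qquad 2\mathbb P_A\bar \Zf_{tt}=[\tb,\mathbb H]\bar \Zf_{t,\alpha'}.
\]

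Next, to relate $\mathbb P_A\bar \Zf_{tt}$ to $\mathbb P_A(\bar \Zf_{tt}\circ l)$, I use the identity $U_l\mathbb H U_{l^{-1}}(\bar \Zf_{tt}\circ l)=(\mathbb H\bar \Zf_{tt})\circ l$ and the definition of $\mathcal Q_l$ to write
\[
(I-\mathbb H)(\bar \Zf_{tt}\circ l)=U_l\bigl([\tb,\mathbb H]\bar \Zf_{t,\alpha'}\bigr)+\mathcal Q_l(\bar \Zf_{tt}\circ l).
\]
Combining the two identities gives the master decomposition
\[
2\mathbb P_A(\bar Z_{tt}-\bar \Zf_{tt}\circ l)=\Bigl([b,\mathbb H]\partial_{\alpha'}\bar Z_t-U_l\bigl([\tb,\mathbb H]\partial_{\alpha'}\bar \Zf_t\bigr)\Bigr)-\mathcal Q_l(\bar \Zf_{tt}\circ l),
\]
so it remains to control each piece in $\dot H^{1/2}$.

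For the first piece I apply Proposition~\ref{dhhalf1} with $f=b$, $g=\bar Z_t$, $f_1=\tb$, $g_1=\bar \Zf_t$. The factor $\|\partial_{\alpha'}b-\partial_{\alpha'}(\tb\circ l)\|_{L^2}$ equals $\|(b_{\alpha'}-\tb_{\alpha'}\circ l)+\tb_{\alpha'}\circ l(1-l_{\alpha'})\|_{L^2}$, which is $\lec(M\mathfrak F_0+\mathfrak F_1+\mathfrak F_2)^{1/2}$ by \eqref{dominate12} and the definition of $\mathfrak F_0$. The factor $\|\bar Z_t-\bar \Zf_t\circ l\|_{\dot H^{1/2}}$ is controlled by \eqref{dominate11}. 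The remaining coefficients $\|\partial_{\alpha'}b\|_{L^\infty}$, $\|\partial_{\alpha'}\tb\|_{L^\infty}$, $\|\bar Z_{t,\alpha'}\|_{L^2}$, $\|\bar \Zf_{t,\alpha'}\|_{L^2}$, $\|l_{\alpha'}\|_{L^\infty}$ are on the list \eqref{2020} and Remark~\ref{remark512}. For the second piece, \eqref{q1} gives $\|\mathcal Q_l(\bar \Zf_{tt}\circ l)\|_{\dot H^{1/2}}\lec\|\bar \Zf_{tt,\alpha'}\|_{L^2}\|l_{\alpha'}-1\|_{L^2}\lec\mathfrak F_0^{1/2}$, again using \eqref{2020}.

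The main obstacle, though mild, is the quantity $\|\partial_{\alpha'}\mathbb H b\|_{L^\infty}$ that appears in \eqref{dhhalf1-inq} but is not on the standard list. Using \eqref{bb} together with $\mathbb H\mathbb P_A=-\mathbb P_A$ and $\mathbb H\mathbb P_H=\mathbb P_H$,
\[
\mathbb H b=-\mathbb P_A\paren{\frac{Z_t}{Z_{,\alpha'}}}+\mathbb P_H\paren{\frac{\bar Z_t}{\bar Z_{,\alpha'}}},
\]
so $\partial_{\alpha'}\mathbb H b$ is a sum of $D_{\alpha'}Z_t$-type terms (together with their conjugates) plus commutators of the form $[Z_t,\mathbb H]\partial_{\alpha'}(1/Z_{,\alpha'})$ (obtained by using $(I-\mathbb H)\partial_{\alpha'}(1/Z_{,\alpha'})=0$ to rewrite $\mathbb P_A(Z_t\partial_{\alpha'}(1/Z_{,\alpha'}))$ as a commutator). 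All of these are bounded in $L^\infty$ by $C(\mathfrak E)$ via \eqref{eq:b13} and the estimates of \S\ref{basic-quantities}, including \eqref{2005}. Assembling everything yields $\|\mathbb P_A(\bar Z_{tt}-\bar \Zf_{tt}\circ l)\|_{\dot H^{1/2}}^2\lec M\mathfrak F_0+\mathfrak F_1+\mathfrak F_2$ for all $M\ge M_0$, which is precisely \eqref{dominate21}.
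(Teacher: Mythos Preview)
Your proof is correct and follows essentially the same approach as the paper: the same decomposition via $\mathcal Q_l$ and $2\mathbb P_A\bar Z_{tt}=[b,\mathbb H]\bar Z_{t,\alpha'}$, the same application of Proposition~\ref{dhhalf1} and \eqref{q1}, and the same identification of $\|\mathbb H b_{\alpha'}\|_{L^\infty}$ as the only nonstandard factor. The only cosmetic difference is that the paper bounds $\|\mathbb H b_{\alpha'}\|_{L^\infty}$ starting from \eqref{ba} (reducing to $\mathbb H$ of commutators and $\mathbb H D_{\alpha'}Z_t$, cf.\ \eqref{2355}--\eqref{2358-1}), while you start from \eqref{bb}; both routes land on the same commutator estimates via \eqref{eq:b13}.
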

\begin{proof}
We have
\begin{equation}\label{2353}
2\mathbb P_A(\bar Z_{tt}-\bar \Zf_{tt}\circ l)=2\mathbb P_A(\bar Z_{tt})-2U_l \mathbb P_A (\bar \Zf_{tt})-\mathcal Q_l (\bar \Zf_{tt}\circ l);
\end{equation}
and by \eqref{q1}, 
$$\nm{\mathcal Q_l (\bar \Zf_{tt}\circ l)}_{\dot H^{1/2}}\lec \|l_\aa-1\|_{L^2}.$$
Consider the first two terms on the right hand side of \eqref{2353}. We use \eqref{eq:c21} and the fact that $\bar Z_t=\mathbb H\bar Z_t$ to rewrite
\begin{equation}\label{2354}
2\mathbb P_A(\bar Z_{tt})=[\partial_t+b\partial_\aa, \mathbb H]\bar Z_{t}=[b, \mathbb H]\partial_\aa \bar Z_t.
\end{equation}
We would like to use \eqref{dhhalf1-inq} to estimate $\nm{2\mathbb P_A(\bar Z_{tt})-2U_l \mathbb P_A (\bar \Zf_{tt})}_{\dot H^{1/2}}$, 
observe that we have controlled all the quantities  
on the right hand side of \eqref{dhhalf1-inq},  
except for $\|\mathbb H b_\aa\|_{L^\infty}$. 

By \eqref{ba},
\begin{equation}\label{2355}
\begin{aligned}
b_\aa-2\Re D_\aa Z_t=\Re \paren{\bracket{ \frac1{Z_{,\aa}}, \mathbb H}  Z_{t,\alpha'}+ \bracket{Z_t, \mathbb H}\partial_\aa \frac1{Z_{,\aa}}  }, 
\end{aligned}
\end{equation}
and by the fact $Z_{t,\aa}=-\mathbb H Z_{t,\aa}$, 
\begin{equation}\label{2356}
\bracket{ \frac1{Z_{,\aa}}, \mathbb H}  Z_{t,\alpha'}=-(I+\mathbb H)D_\aa Z_t,
\end{equation}
so
\begin{equation}\label{2357}
\mathbb H \bracket{ \frac1{Z_{,\aa}}, \mathbb H}  Z_{t,\alpha'}=\bracket{ \frac1{Z_{,\aa}}, \mathbb H}  Z_{t,\alpha'}.
\end{equation}
This gives,  by \eqref{eq:b13},
\begin{equation}\label{2358}
\nm{\mathbb H \bracket{ \frac1{Z_{,\aa}}, \mathbb H}  Z_{t,\alpha'}}_{L^\infty}\lec \nm{\partial_\aa \frac1{Z_{,\aa}}}_{L^2}\nm{ Z_{t,\alpha'}}_{L^2}\lec C(\frak E(t));
\end{equation}
 similarly
 $\nm{\mathbb H \bracket{Z_t, \mathbb H}\partial_\aa \frac1{Z_{,\aa}} }_{L^\infty}\lec C(\frak E(t))$, therefore $\nm{\mathbb H(b_\aa-2\Re D_\aa Z_t)}_{L^\infty}\lec C(\frak E(t))$. Observe that the argument from \eqref{2356}-\eqref{2358} also shows that $$\nm{(I+\mathbb H)D_\aa Z_t}_{L^\infty}\lec C(\frak E(t)),$$ therefore 
 \begin{equation}\label{2358-1}
 \nm{\mathbb H D_\aa Z_t}_{L^\infty}\le \nm{(I+\mathbb H)D_\aa Z_t}_{L^\infty}+\nm{D_\aa Z_t}_{L^\infty}
 \lec C(\frak E(t)),
 \end{equation}
 and hence $\nm{\mathbb Hb_\aa}_{L^\infty}\lec C(\frak E(t))$. Notice that
 $$\nm{ \partial_\aa(b-\tb\circ l)}_{L^2}\lec \nm{ b_\aa-\tb_\aa\circ l}_{L^2}+\|l_\aa-1\|_{L^2}.$$
 Applying \eqref{dhhalf1-inq} to \eqref{2354} we get
 $$\nm{2\mathbb P_A(\bar Z_{tt})-2U_l \mathbb P_A (\bar \Zf_{tt})}_{\dot H^{1/2}}\lec \nm{ b_\aa-\tb_\aa\circ l}_{L^2}+\|l_\aa-1\|_{L^2}+\nm{\bar Z_t-\bar \Zf_t\circ l}_{\dot H^{1/2}}.$$
 A further application of Lemma~\ref{dominate1} yields Lemma~\ref{dominate2}.

\end{proof}

 As a consequence of Lemmas~\ref{dominate1}, and \ref{dominate2} we have
\begin{proposition}\label{denergy}
There is a constant $M_0>0$, depending only on $\sup_{[0, T]}\frak E(t)$ and $\sup_{[0, T]}\tEf(t)$, such that for all $M\ge M_0$, and $t\in [0, T]$, 
\begin{equation}\label{2359}
\begin{aligned}
&\|(\bar Z_t-\bar \Zf_t\circ l)(t)\|_{\dot H^{1/2}}^2+\nm{\paren{\frac1{Z_{,\aa}}- \frac1{\Zf_{,\aa}}\circ l}(t)}_{\dot H^{1/2}}^2+\|(\bar Z_{tt}-\bar \Zf_{tt}\circ l)(t)\|_{\dot H^{1/2}}^2\\ &+
\nm{(A_1-\tAone\circ l)(t)}_{L^2}^2+\nm{(D_\aa Z_t-\tD_\aa \Zf_t\circ l) (t)}_{L^2}^2+ \nm{(b_\aa-\tb_\aa\circ l) (t)}_{L^2}^2+\nm{l_\aa(t)-1}_{L^2}^2\\&
+\nm{\paren{\frac{\frak a_t}{\frak a}\circ h^{-1}-\frac{\taf_t}{\taf}\circ h^{-1}}(t)}_{L^2}^2\lec M\frak F_0(t)+\frak F_1(t)+\frak F_2(t)+M^{-1} \frak F_3(t).
\end{aligned}
 \end{equation}
 \end{proposition}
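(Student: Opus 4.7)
Proposition~\ref{denergy} extends Lemmas~\ref{dominate1} and \ref{dominate2} by adding two quantities to the left-hand side: the full $\dot H^{1/2}$-norm of $\bar Z_{tt}-\bar\Zf_{tt}\circ l$, and the $L^2$-norm of $\bigl(\tfrac{\frak a_t}{\frak a}\bigr)\circ h^{-1}-U_l\bigl((\tfrac{\taf_t}{\taf})\circ\th^{-1}\bigr)$. Both of these must be produced out of the new functional $\frak F_3$, while the six Sobolev differences already appearing in Lemmas~\ref{dominate1}, \ref{dominate2}, together with $\|l_\aa-1\|_{L^2}^2=\frak F_0\le M\frak F_0$, are controlled directly by those lemmas.

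For $\|\bar Z_{tt}-\bar\Zf_{tt}\circ l\|_{\dot H^{1/2}}^2$, set $f:=\bar Z_{tt}-\bar\Zf_{tt}\circ l$ and apply the orthogonal decomposition
\[
\|f\|_{\dot H^{1/2}}^2=\int i\,\partial_\aa f\,\bar f\,d\aa+2\|\mathbb P_A f\|_{\dot H^{1/2}}^2.
\]
The second summand is bounded by $M\frak F_0+\frak F_1+\frak F_2$ thanks to \eqref{dominate21}; the first summand is precisely the $\dot H^{1/2}$-piece appearing in the definition of $\frak F_3$ and is therefore at most $\frak F_3$, since the other, $L^2$-like piece of $\frak F_3$ is nonnegative (using $\kappa/A_1\gec 1$).

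For the difference of the logarithmic accelerations, I would rearrange \eqref{2104} into
\[
A_1\bigl(\tfrac{\frak a_t}{\frak a}\circ h^{-1}\bigr)=iZ_{,\aa}\bar Z_{ttt}-A_1\bar{D_\aa Z_t},
\]
and likewise for $\Zf$ after precomposing with $l$. Subtracting and dividing by $A_1$ gives
\begin{align*}
\bigl(\tfrac{\frak a_t}{\frak a}\bigr)\circ h^{-1}-U_l\bigl(\tfrac{\taf_t}{\taf}\circ\th^{-1}\bigr)&=\tfrac{i}{A_1}\bigl(Z_{,\aa}\bar Z_{ttt}-U_l(\Zf_{,\aa}\bar\Zf_{ttt})\bigr)\\
&\quad-\bigl(\bar{D_\aa Z_t}-U_l(\bar{\tD_\aa\Zf_t})\bigr)+R,
\end{align*}
with $R=\bigl(\tfrac{1}{A_1}-\tfrac{1}{\tAone\circ l}\bigr)\,U_l(\Zf_{,\aa}\bar\Zf_{ttt})$. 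Using $A_1\ge 1$ and $\kappa/A_1\gec 1$, the $L^2$-norm of the first term is controlled by the nonnegative piece of $\frak F_3$; Lemma~\ref{dominate1} handles the second; and $R$ is bounded by $\|A_1-\tAone\circ l\|_{L^2}$ times an $L^\infty$-bound on $\Zf_{,\aa}\bar\Zf_{ttt}/\tAone$ coming from the quantities controlled by $\tEf$.

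The main obstacle will be sharpening the coefficient of $\frak F_3$ from the naive $1$ down to the claimed $M^{-1}$. To achieve this I expect to exploit the quantitative nonnegativity of the $L^2$-piece of $\frak F_3$ to absorb a $(1-M^{-1})$-fraction of $\|\mathbb P_H f\|_{\dot H^{1/2}}^2$, offsetting the ``sacrificed'' $M^{-1}\frak F_3$ by the elastic $M\frak F_0$ summand present in~\eqref{dominate21}. Choosing $M_0$ large enough that this trade balances will close the estimate and yield the stated inequality.
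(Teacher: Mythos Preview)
Your plan for extracting the two new quantities from $\frak F_3$ is correct and is precisely the mechanism the paper has in mind (the paper itself gives no details beyond ``As a consequence of Lemmas~\ref{dominate1} and~\ref{dominate2}''). The decomposition $\|f\|_{\dot H^{1/2}}^2=\int i\partial_\aa f\,\bar f\,d\aa+2\|\mathbb P_Af\|_{\dot H^{1/2}}^2$ together with Lemma~\ref{dominate2} handles $\|\bar Z_{tt}-\bar\Zf_{tt}\circ l\|_{\dot H^{1/2}}^2$, and your rearrangement of \eqref{2104} into $A_1\bigl(\tfrac{\frak a_t}{\frak a}\circ h^{-1}\bigr)=iZ_{,\aa}\bar Z_{ttt}-A_1\overline{D_\aa Z_t}$ is exactly what the alternative form \eqref{f33} of $\frak F_3$ encodes, so the $\tfrac{\frak a_t}{\frak a}$-difference falls out as you describe.

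The ``main obstacle'' you identify, however, is not a genuine obstacle, and your proposed absorption argument would not work. Your argument naturally produces the bound
\[
\text{LHS}\ \lesssim\ M\frak F_0+\frak F_1+\frak F_2+\frak F_3,
\]
with coefficient $1$ on $\frak F_3$, and that is all that is needed. The point is that immediately after the proposition, $M$ is \emph{fixed} once and for all (depending only on the energies) and the right-hand side is renamed $\mathcal F(t)$; since $M$ then depends only on $\sup\frak E$, $\sup\tEf$, the factor $M$ needed to write $\frak F_3=M\cdot M^{-1}\frak F_3$ is absorbed into the implied constant in $\lesssim$. In other words, the coefficient $M^{-1}$ on $\frak F_3$ is a notational device so that the right-hand side reads $\mathcal F(t)$; it is not a genuine sharpening, and no balancing trick is required. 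Your suggested absorption---using the nonnegative $L^2$-piece of $\frak F_3$ to soak up part of $\|\mathbb P_H f\|_{\dot H^{1/2}}^2$---cannot succeed anyway, because those two quantities are unrelated: the $L^2$-piece measures $Z_{,\aa}\bar Z_{ttt}-\Zf_{,\aa}\circ l\,\bar\Zf_{ttt}\circ l$, not the holomorphic part of $\bar Z_{tt}-\bar\Zf_{tt}\circ l$. Drop that paragraph and the proof is complete.
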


 \begin{remark}\label{remark517}
 It is clear that the reverse of  inequality \eqref{2359} also holds. Observe that by \eqref{a1} and Proposition~\ref{dl21}, we have for any $t\in [0, T]$, 
 \begin{equation}
 \nm{(A_1-\tAone\circ l)(t)}_{L^2}\lec \|(\bar Z_t-\bar \Zf_t\circ l)(t)\|_{\dot H^{1/2}}+ \|l_\aa(t)-1\|_{L^2}.
 \end{equation}
 and by \eqref{at}-\eqref{ba}-\eqref{dta1} and Propositions~\ref{dl21}, ~\ref{d32}, 
 \begin{equation}
 \begin{aligned}
& \nm{  \paren{\frac{\frak a_t}{\frak a}\circ h^{-1}-\frac{\taf_t}{\taf}\circ h^{-1}}(t) }_{L^2}=\nm{  \paren{\frac{\frak a_t}{\frak a}\circ h^{-1}-\frac{\taf_t}{\taf}\circ \th^{-1}\circ l}(t) }_{L^2} \\& \lec \|(\bar Z_t-\bar \Zf_t\circ l)(t)\|_{\dot H^{1/2}}+ \nm{\paren{\frac1{Z_{,\aa}}- \frac1{\Zf_{,\aa}}\circ l}(t)}_{\dot H^{1/2}}+\|(\bar Z_{tt}-\bar \Zf_{tt}\circ l)(t)\|_{\dot H^{1/2}}\\&\qquad+\nm{(b_\aa-\tb_\aa\circ l) (t)}_{L^2}+\|l_\aa(t)-1\|_{L^2}.
 \end{aligned}
 \end{equation}
 This, together with \eqref{2352} shows that for all $t\in [0, T]$,
 \begin{equation}\label{2360}
 \begin{aligned}
 M\frak F_0(t)+\frak F_1(t)+&\frak F_2(t)+M^{-1} \frak F_3(t)\lec \|(\bar Z_t-\bar \Zf_t\circ l)(t)\|_{\dot H^{1/2}}^2+\nm{\paren{\frac1{Z_{,\aa}}- \frac1{\Zf_{,\aa}}\circ l}(t)}_{\dot H^{1/2}}^2\\&+\|(\bar Z_{tt}-\bar \Zf_{tt}\circ l)(t)\|_{\dot H^{1/2}}^2+\nm{(D_\aa Z_t-\tD_\aa \Zf_t\circ l) (t)}_{L^2}^2
+\nm{l_\aa(t)-1}_{L^2}^2.
\end{aligned}
 \end{equation}
 
 \end{remark}

 Now fix a constant $M$,  with $M\ge M_0>0$, so that \eqref{2359} holds. We define
 \begin{equation}\label{dfunctional}
 \mathcal F(t):= M\frak F_0(t)+\frak F_1(t)+\frak F_2(t)+M^{-1}\frak F_3(t).
 \end{equation}
 We have
 \begin{proposition}\label{denergy-est}
 Assume that $Z=Z(\aa, t)$, $\Zf=\Zf(\aa,t)$ are solutions of the system \eqref{interface-r}-\eqref{interface-holo}-\eqref{a1}-\eqref{b}, satisfying the assumption of Theorem~\ref{unique}. Then there is a constant $C$,  depending only on $T$, $\sup_{[0, T]}\frak E(t)$ and $\sup_{[0, T]}\tEf(t)$, such that for $t\in [0, T]$,
 \begin{equation}\label{denergy-inq}
 \frac d{dt}\mathcal F(t)\le C\paren{ \mathcal F(t)+ \int_0^t \mathcal F(\tau)\,d\tau + \nm{\paren{\frac1{Z_{,\aa}}-\frac1{\Zf_{,\aa}}}(0)}_{L^\infty}\mathcal F(t)^{\frac12}}.
 \end{equation}

 \end{proposition}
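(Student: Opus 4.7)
The strategy is to differentiate $\mathcal F(t)=M\frak F_0(t)+\frak F_1(t)+\frak F_2(t)+M^{-1}\frak F_3(t)$ term by term, showing that each resulting summand is dominated by $\mathcal F(t)$ plus the additional scalar Gronwall-input produced by the sole genuine low-order obstruction (the $L^\infty$ norm of $\frac{1}{Z_{,\aa}}-\frac{1}{\Zf_{,\aa}}$). For $\frak F_0$ I would apply Lemma~\ref{basic-e2} to $\Theta=l_\aa-1$ using the evolution equation \eqref{2340}: $\frak F_0'(t)$ is then bounded by $\|b_\aa\|_{L^\infty}\,\frak F_0(t)+\|l_\aa\|_{L^\infty}\|b_\aa-\tb_\aa\circ l\|_{L^2}\,\frak F_0(t)^{1/2}$, and both pieces are $\lec\mathcal F(t)$ by Proposition~\ref{denergy} combined with the $L^\infty$ control of $b_\aa$ from \S\ref{basic-quantities}. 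For $\frak F_1,\frak F_2,\frak F_3$ I apply Lemma~\ref{dlemma1} to the triples $(\Theta,\tTheta,\frak c)=(\bar Z_t,\bar\Zf_t,-i)$, $(\frac{1}{Z_{,\aa}}-1,\frac{1}{\Zf_{,\aa}}-1,0)$, $(\bar Z_{tt},\bar\Zf_{tt},0)$, using the evolution equations \eqref{eqzt}, \eqref{eqza}, \eqref{eqztt}. The auxiliary weight $\kappa=\sqrt{A_1 l_\aa/(\tAone\circ l)}$ is controlled from above and below by $\frak E,\tEf$ (Remark~\ref{remark512} and \eqref{2000}), and $(\partial_t+b\partial_\aa)\kappa/\kappa$ reduces via \eqref{dta1}, \eqref{2340} to combinations of quantities already bounded by $\frak E,\tEf$ plus factors $\frac{1}{2}\bigl(\frac{(\partial_t+b\partial_\aa)A_1}{A_1}-\frac{(\partial_t+\tb\partial_\aa)\tAone}{\tAone}\circ l\bigr)+\frac12(b_\aa-\tb_\aa\circ l)$, whose $L^2$ norms are $\lec\mathcal F(t)^{1/2}$ by Proposition~\ref{denergy} (after expanding by \eqref{dta1} and applying Propositions~\ref{dl21}, \ref{d32}).

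The bulk of the labor lies in estimating $\|Z_{,\aa}G_i-\Zf_{,\aa}\circ l\,(\tilde G_i\circ l)\|_{L^2}$ for $i=1,2,3$. Writing the $G_i$ explicitly via \eqref{eqzt}, \eqref{eqza}, \eqref{eqztt}, every term is a product (or commutator) of the basic quantities $\bar Z_t$, $1/Z_{,\aa}$, $A_1$, $D_\aa Z_t$, $b_\aa$, $\frak a_t/\frak a\circ h^{-1}$ and their $\Zf$-counterparts. Differences of such products are handled termwise by Propositions~\ref{dl21}--\ref{d33} for the Cauchy-type kernels and by Propositions~\ref{dhhalf1}--\ref{dhhalf2} for the $\dot H^{1/2}$-valued commutators, always reducing each difference to $\mathcal F(t)^{1/2}$ via Proposition~\ref{denergy}. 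For the $\frak F_3$ estimate, which involves the highest-order object $\bar Z_{ttt}-\bar\Zf_{ttt}\circ l$, the factor $M^{-1}$ ensures that the contribution of $\frak F_3^{1/2}\cdot\mathcal F^{1/2}$ appearing in the other estimates can be absorbed back into $\mathcal F(t)$; conversely, the $\frak F_3'$ estimate by Lemma~\ref{dlemma1} produces a factor $\mathcal F(t)^{1/2}\cdot\mathcal F(t)^{1/2}=\mathcal F(t)$.

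The main obstacle, and the only source of the inhomogeneous terms on the right of \eqref{denergy-inq}, is the final line of \eqref{dlemma1-inq}: it contains $\frac{1}{Z_{,\aa}}-U_l\frac{1}{\Zf_{,\aa}}$ paired against $\Theta_\aa$ and $(\tTheta\circ l)_\aa$, quantities which (for $\Theta=\bar Z_{tt}$ in particular) live only in $L^2$ and are controlled merely by $\frak E,\tEf$. This term cannot be closed by the $\dot H^{1/2}$-controlled energy $\mathcal F(t)$ alone: one needs an $L^\infty$ bound on $\frac{1}{Z_{,\aa}}-U_l\frac{1}{\Zf_{,\aa}}$. I would obtain it by taking the difference of \eqref{eq:dza} for $Z$ and for $\Zf$ (precomposed with $l$), yielding the pointwise transport identity
\begin{equation*}
(\partial_t+b\partial_\aa)\!\left(\tfrac{1}{Z_{,\aa}}-U_l\tfrac{1}{\Zf_{,\aa}}\right)
=\left(\tfrac{1}{Z_{,\aa}}-U_l\tfrac{1}{\Zf_{,\aa}}\right)(b_\aa-D_\aa Z_t)+U_l\tfrac{1}{\Zf_{,\aa}}\bigl[(\tb_\aa\circ l-\tD_\aa\Zf_t\circ l)-(b_\aa-D_\aa Z_t)\bigr].
\end{equation*}
Integrating this linear ODE along the flow of $b$, using $\|b_\aa-D_\aa Z_t\|_{L^\infty}\lec1$ and $\|1/\Zf_{,\aa}\|_{L^\infty}\lec1$, and bounding the inhomogeneity by Proposition~\ref{denergy} gives
\begin{equation*}
\bigl\|\tfrac{1}{Z_{,\aa}}-U_l\tfrac{1}{\Zf_{,\aa}}\bigr\|_{L^\infty}(t)\lec \bigl\|\tfrac{1}{Z_{,\aa}}-\tfrac{1}{\Zf_{,\aa}}\bigr\|_{L^\infty}(0)+\int_0^t\mathcal F(\tau)^{1/2}\,d\tau.
\end{equation*}

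Plugging this $L^\infty$ bound into the last line of \eqref{dlemma1-inq}, together with $\|\Theta_\aa\|_{L^2},\|(\tTheta\circ l)_\aa\|_{L^2}\lec C(\frak E,\tEf)$, produces a contribution of the form $\bigl(\|(\tfrac{1}{Z_{,\aa}}-\tfrac{1}{\Zf_{,\aa}})(0)\|_{L^\infty}+\int_0^t\mathcal F^{1/2}\bigr)\mathcal F(t)^{1/2}$. An application of Cauchy--Schwarz in $t$ replaces $\mathcal F(t)^{1/2}\int_0^t\mathcal F^{1/2}\,d\tau$ by $\mathcal F(t)+T\int_0^t\mathcal F(\tau)\,d\tau$, exactly matching the three terms on the right-hand side of \eqref{denergy-inq}. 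Summing all the estimates and choosing $M$ once and for all (large enough so that the constants from Lemmas~\ref{dominate1}, \ref{dominate2} apply and small enough $M^{-1}$ factors absorb the $\frak F_3^{1/2}$-cross-terms) completes the proof.
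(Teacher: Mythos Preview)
Your overall architecture matches the paper's: differentiate $\mathcal F$ termwise, handle $\frak F_0'$ by Lemma~\ref{basic-e2}, and handle $\frak F_1',\frak F_2',\frak F_3'$ by Lemma~\ref{dlemma1}. Your treatment of $\kappa$, $(\partial_t+b\partial_\aa)\kappa/\kappa$, and $1-\kappa$ is correct. However, your handling of the last line of \eqref{dlemma1-inq} contains a genuine gap, and your sketch of the $G_i$ estimates substantially underestimates the work involved (for $G_3$ in particular, see \S\ref{ddtat} where the term $[Z_t,(\partial_t+b\partial_\aa)b;\bar Z_{t,\aa}]$ requires the elaborate algebraic manipulations \eqref{2417}--\eqref{2449}).

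The main problem is your claimed $L^\infty$ estimate for $\frac{1}{Z_{,\aa}}-U_l\frac{1}{\Zf_{,\aa}}$. The transport ODE you wrote is correct, but the inhomogeneity $U_l\frac{1}{\Zf_{,\aa}}\cdot\bigl[(b_\aa-D_\aa Z_t)-(\tb_\aa\circ l-\tD_\aa\Zf_t\circ l)\bigr]$ is only controlled in $L^2$ by Proposition~\ref{denergy}, not in $L^\infty$; pointwise integration along characteristics therefore cannot yield an $L^\infty$ bound. What the ODE (equivalently, the exponential formula \eqref{2100}) actually gives is the decomposition \eqref{2399}: an $L^\infty$ piece carrying the initial data, plus a piece of the form $\frac{1}{Z_{,\aa}}\cdot q$ with $q\in L^2$ and $\|q\|_{L^2}\lec\int_0^t\mathcal F^{1/2}$. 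The factor $\frac{1}{Z_{,\aa}}$ in front is essential and cannot be replaced by an $L^\infty$ bound.

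Even granting a suitable decomposition, your plan to ``plug the $L^\infty$ bound into the last line'' does not close. Splitting that integral as in \eqref{2472} into $I+II$, the piece $II=\int\overline{g}\,\overline{Z_{,\aa}((\partial_t+b\partial_\aa)\Theta+\frak c)}\,(\Theta-\tTheta\circ l)_\aa\,d\aa$ contains $(\Theta-\tTheta\circ l)_\aa$, which is only controlled in $\dot H^{-1/2}$ (since $\mathcal F$ controls $\|\Theta-\tTheta\circ l\|_{\dot H^{1/2}}$). An $L^\infty$ bound on $g$ paired with $L^2$ of the rest does not produce a factor of $\mathcal F(t)^{1/2}$. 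The paper instead uses $\dot H^{1/2}$--$\dot H^{1/2}$ duality for $II$ and invokes Proposition~\ref{half-product} to bound $\|g\cdot Z_{,\aa}((\partial_t+b\partial_\aa)\Theta+\frak c)\|_{\dot H^{1/2}}$: the decomposition $g=g_1+\frac{1}{Z_{,\aa}}q$ is tailored to this, since $p=\frac{1}{Z_{,\aa}}$ gives $\partial_\aa(pf)=\partial_\aa((\partial_t+b\partial_\aa)\Theta)\in L^2$ (see \eqref{2473} and the paragraph following it). For $I$, the same decomposition lets the $\frac{1}{Z_{,\aa}}$ factor combine with $\Theta_\aa$ to form $D_\aa\Theta\in L^\infty$, after which Cauchy--Schwarz applies. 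This structural use of the $\frac{1}{Z_{,\aa}}$ factor is the missing ingredient in your argument.
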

 
Assuming Proposition~\ref{denergy-est} holds, we have,  by \eqref{denergy-inq},
 \begin{equation}\label{2460}
 \frac d{dt}\paren{\mathcal F(t)+\int_0^t \mathcal F(\tau)\,d\tau}\le C\paren{ \mathcal F(t)+ \int_0^t \mathcal F(\tau)\,d\tau + \nm{\paren{\frac1{Z_{,\aa}}-\frac1{\Zf_{,\aa}}}(0)}_{L^\infty}^2};
   \end{equation}
and by Gronwall's inequality,
\begin{equation}\label{2461}
\mathcal F(t)+\int_0^t \mathcal F(\tau)\,d\tau\le C\paren{\mathcal F(0)+ \nm{\paren{\frac1{Z_{,\aa}}-\frac1{\Zf_{,\aa}}}(0)}_{L^\infty}^2},\qquad \text{for }t\in [0, T],
\end{equation}
for some constant $C$ depending on $T$, $\sup_{[0, T]}\frak E(t)$ and $\sup_{[0, T]}\tEf(t)$. 
This together with \eqref{2359} and \eqref{2360} gives \eqref{stability}.

We now give the proof for Proposition~\ref{denergy-est}.
 
 \begin{proof}
 To prove Proposition~\ref{denergy-est} we apply Lemma~\ref{basic-e2} to $\Theta=l_\aa-1$, and Lemma~\ref{dlemma1} to $\Theta=\bar Z_t,\ \frac1{Z_{,\aa}}-1,\ \bar Z_{tt}$. We have, by Lemma~\ref{basic-e2} and \eqref{2340},
 \begin{equation}\label{2361}
 \frak F'_0(t)\le 2\nm{l_\aa (b_\aa-\tb_\aa\circ l)}_{L^2}\frak F_0(t)^{1/2}+\|b_\aa\|_{L^\infty}\frak F_0(t)\lec \mathcal F(t),
 \end{equation} 
 here we used \eqref{2346-1}, \eqref{2359}, and \S\ref{basic-quantities}, \eqref{2020}.
 
 Now we apply Lemma~\ref{dlemma1} to $\Theta=\bar Z_t,\ \frac1{Z_{,\aa}}-1,\ \bar Z_{tt}$ to get the estimates for $\frak F_1'(t)$, $\frak F_2'(t)$ and $\frak F_3'(t)$. Checking through the right hand sides of the inequalities \eqref{dlemma1-inq} for $\Theta=\bar Z_t,\ \frac1{Z_{,\aa}}-1,\ \bar Z_{tt}$, we find that we have controlled almost all of the quantities, respectively by $\mathcal F(t)$ or $\frak E(t)$, $\tEf(t)$, except for the following:
 
 \begin{itemize}
 \item
1.  $\nm{\frac{(\partial_t+b\partial_\aa)\kappa }{\kappa}}_{L^2}$;

 \item 
2.  $\nm{1-\kappa}_{L^2}$;

\item
 3. $2\Re i\int \bar{\paren{\frac1{Z_{,\aa}}-\frac1{\Zf_{,\aa}}\circ l }} \paren{\bar{\Zf_{,\aa}\circ l((\partial_t+\tb\partial_\aa)\tTheta\circ l+\frak c)}\Theta_\aa-\bar{Z_{,\aa}((\partial_t+b\partial_\aa)\Theta+\frak c)}(\tTheta\circ l)_\aa}\,d\aa$, \newline
 for $\Theta=\bar Z_t,\ \frac1{Z_{,\aa}}-1,\ \bar Z_{tt}$; with $\frak c=-i$ for $\Theta=\bar Z_t$, and $\frak c=0$ for $\Theta=\frac1{Z_{,\aa}}-1$ and  $\bar Z_{tt}$;

\item
4. $\nm{ Z_{,\aa}G_i- \Zf_{,\aa}\circ l\tilde G_i\circ l}_{L^2}$, for $i=1,2,3$.

 \end{itemize}
 
We begin with items 1. and 2.  By definition $\kappa=\sqrt{\frac{A_1}{\tAone\circ l} l_\aa}$, so
 \begin{equation}\label{2362}
2 \frac{(\partial_t+b\partial_\aa)\kappa }{\kappa}= \frac{(\partial_t+b\partial_\aa)A_1}{A_1}-\frac{(\partial_t+\tb \partial_\aa)\tAone }{\tAone}\circ l+(\tb_\aa\circ l-b_\aa);
  \end{equation}
 and by \eqref{at},  
  \begin{equation}\label{2363}
\frac{(\partial_t +b\partial_\aa) A_1}{A_1}=\dfrac{\frak a_t}{\frak a}\circ h^{-1}-(b_\aa -2\Re D_\aa Z_t);
\end{equation}
  therefore
  \begin{equation}\label{2364}
  \nm{\frac{(\partial_t+b\partial_\aa)\kappa }{\kappa}}_{L^2}\lec  \nm{\frac{\frak a_t}{\frak a}\circ h^{-1}-\frac{\taf_t}{\taf}\circ h^{-1}}_{L^2}+ \nm{b_\aa-\tb_\aa\circ l}_{L^2}+\nm{D_\aa Z_t-\tD_\aa\Zf_t \circ l}_{L^2}\lec \mathcal F(t)^{\frac12}.
  \end{equation}
 And it is clear that by the definition of $\kappa$,
 \begin{equation}\label{2365}
 \nm{1-\kappa}_{L^2}\lec \nm{A_1-\tAone\circ l}_{L^2}+\nm{l_\aa-1}_{L^2}\lec \mathcal F(t)^{\frac12}.
 \end{equation}
  
  What remains to be controlled are the quantities in items 3. and 4. We first consider item 4.  We have, by \eqref{eqzt}, ${Z_{,\aa}}G_1=-i\,\frac{\frak a_t}{\frak a}\circ h^{-1} A_1$, so
  \begin{equation}\label{2366}
  \nm{ Z_{,\aa}G_1- \Zf_{,\aa}\circ l\tilde G_1\circ l}_{L^2}\lec \nm{\frac{\frak a_t}{\frak a}\circ h^{-1}-\frac{\taf_t}{\taf}\circ h^{-1}}_{L^2}+\nm{A_1-\tAone\circ l}_{L^2}\lec \mathcal F(t)^{\frac12};
  \end{equation}
  and by \eqref{eqza}, ${Z_\aa}G_2=(b_\aa-D_\aa Z_t)^2+(\partial_t+b\partial_\aa)(b_\aa-2\Re D_\aa Z_t)-\paren{\bar {D_\aa Z_t}}^2-i\frac{\partial_\aa A_1}{|Z_{,\aa}|^2}$, so 
   \begin{equation}\label{2367}
   \begin{aligned}
 & \nm{ Z_{,\aa}G_2- \Zf_{,\aa}\circ l\tilde G_2\circ l}_{L^2}\lec \nm{b_\aa-\tb_\aa\circ l}_{L^2}+\nm{D_\aa Z_t-\tD_\aa\Zf_t \circ l}_{L^2}+\\&\nm{(\partial_t+b\partial_\aa)(b_\aa-2\Re D_\aa Z_t)-(\partial_t+\tb\partial_\aa)(\tb_\aa- 2\Re \tD_\aa\Zf_t )\circ l}_{L^2}+\nm{\frac{\partial_\aa A_1}{|Z_{,\aa}|^2}-\frac{\partial_\aa \tAone}{|\Zf_{,\aa}|^2}\circ l}_{L^2}; 
  \end{aligned}
  \end{equation}
observe that  we have controlled all but the last two quantities on the right hand side of \eqref{2367} by $\mathcal F(t)^{1/2}$. By \eqref{eqztt}, 
  $Z_{,\aa}G_3= -i\,A_1 \paren{(\partial_t+b\partial_\aa)\paren{\frac{\af_t}{\af}\circ h^{-1}}+\paren{\frac{\af_t}{\af}\circ h^{-1}}^2+2\paren{\frac{\af_t}{\af}\circ h^{-1}}\bar{D_\aa Z_t}}$, so 
   \begin{equation}\label{2368}
   \begin{aligned}
  &\nm{ Z_{,\aa}G_3- \Zf_{,\aa}\circ l\tilde G_3\circ l}_{L^2}\lec \nm{\frac{\frak a_t}{\frak a}\circ h^{-1}-\frac{\taf_t}{\taf}\circ h^{-1}}_{L^2}+\nm{D_\aa Z_t-\tD_\aa\Zf_t \circ l}_{L^2}\\&+\nm{A_1-\tAone\circ l}_{L^2}\paren{\nm{(\partial_t+b\partial_\aa)\paren{\frac{\frak a_t}{\frak a}\circ h^{-1} } }_{L^\infty}+1}
  \\&\qquad+
  \nm{(\partial_t+b\partial_\aa)\paren{\frac{\frak a_t}{\frak a}\circ h^{-1}}-(\partial_t+\tb\partial_\aa)\paren{\frac{\taf_t}{\taf} \circ \th^{-1} }\circ l}_{L^2}.
  \end{aligned}
  \end{equation}
  We have controlled all but the factor in the third quantity and the very last quantity on the right hand side of \eqref{2368}. 
  
 In the remaining part of the proof for Proposition~\ref{denergy-est}, we will 
  show the following inequalities 
  \begin{itemize}
  
    \item
 $\nm{\frac{\partial_\aa A_1}{|Z_{,\aa}|^2}-\frac{\partial_\aa \tAone}{|\Zf_{,\aa}|^2}\circ l}_{L^2}\lec \mathcal F(t)^{\frac12}$;
 
  \item
  $\nm{(\partial_t+b\partial_\aa)(b_\aa-2\Re D_\aa Z_t)-(\partial_t+\tb\partial_\aa)(\tb_\aa- 2\Re \tD_\aa\Zf_t )\circ l}_{L^2}\lec \mathcal F(t)^{\frac12}$;

\item
$\nm{(\partial_t+b\partial_\aa)\paren{\frac{\frak a_t}{\frak a}\circ h^{-1} } }_{L^\infty}\le C(\frak E(t));$

  \item
  $ \nm{(\partial_t+b\partial_\aa)\paren{\frac{\frak a_t}{\frak a}\circ h^{-1}}-(\partial_t+\tb\partial_\aa)\paren{\frac{\taf_t}{\taf} \circ \th^{-1} }\circ l}_{L^2}\lec \mathcal F(t)^{\frac12}$;
  
  \item
  and control the quantities in item 3. 
  \end{itemize}
  Our main strategy is the same as always, that is, to rewrite the quantities in forms to which the results in \S\ref{prepare} can be applied.

 \subsubsection{Some additional quantities controlled by $\frak E(t)$ and by $\mathcal F(t)$}\label{additional}
 
 We begin with deriving some additional estimates that will be used in the proof. First we record the conclusions from the computations of \eqref{2355}-\eqref{2358-1}, 
 \begin{equation}\label{2380}
\nm{\mathbb H D_\aa Z_t}_{L^\infty}\le C(\frak E(t)),\qquad \nm{\mathbb H b_\aa }_{L^\infty}\le C(\frak E(t)).
\end{equation}

 Because $\partial_\aa \frac1{Z_{,\aa}} =\mathbb H\paren{\partial_\aa \frac1{Z_{,\aa} }}$,
 \begin{equation}\label{2381}
 2\mathbb P_A\paren{Z_{t} \partial_\aa \frac1{Z_{,\aa} }}=\bracket{Z_{t},\mathbb H}\partial_\aa \frac1{Z_{,\aa} };
 \end{equation}
and we have, by \eqref{eq:b13} and \eqref{dl21-inq}, 
\begin{align}
\nm{\mathbb P_A\paren{Z_{t} \partial_\aa \frac1{Z_{,\aa} }}}_{L^\infty}
\lec \nm{Z_{t,\aa}}_{L^2}\nm{ \partial_\aa \frac1{Z_{,\aa} }}_{L^2}\le C(\frak E(t));\label{2382}\\
\nm{\mathbb P_A\paren{Z_{t} \partial_\aa \frac1{Z_{,\aa} }}-U_l \mathbb P_A\paren{\Zf_{t} \partial_\aa \frac1{\Zf_{,\aa} }}}_{L^2}\lec \mathcal F(t)^{1/2}.\label{2383}
\end{align}
Similarly we have
\begin{equation}\label{2384}
\nm{\mathbb P_A\paren{Z_{t} \partial_\aa \bar Z_t}-U_l \mathbb P_A\paren{\Zf_{t} \partial_\aa \bar{\Zf}_t}}_{L^2}\lec \mathcal F(t)^{1/2}.
\end{equation}

 By \eqref{a1}, $iA_1=i-\mathbb P_A(Z_t\bar Z_{t,\aa})+\mathbb P_H(\bar Z_t Z_{t,\aa})$,
 and by \eqref{aa1},
 \begin{equation}\label{2385}
 \bar Z_{tt}-i=-\frac{i}{Z_{,\aa}}+\frac{\mathbb P_A(Z_t\bar Z_{t,\aa})}{Z_{,\aa}}-\frac{\mathbb P_H(\bar Z_t Z_{t,\aa})}{Z_{,\aa}};
 \end{equation}
applying $\mathbb P_H$ to both sides of \eqref{2385} and rewriting the second term on the right hand side as a commutator gives
 \begin{equation}\label{2386}
 \frac{\mathbb P_H(\bar Z_t Z_{t,\aa})}{Z_{,\aa}}=-i\paren{\frac{1}{Z_{,\aa}}-1}-\frac12\bracket{\frac{1}{Z_{,\aa}},\mathbb H}\mathbb P_A(Z_t\bar Z_{t,\aa})-\mathbb P_H(\bar Z_{tt}).
 \end{equation}
 Now we apply \eqref{dhhalf2-inq}, \eqref{q1}, \eqref{2359} and \eqref{2384} to get
 \begin{align}
\nm{\bracket{\frac{1}{Z_{,\aa}},\mathbb H}\mathbb P_A(Z_t\bar Z_{t,\aa})-U_l \bracket{\frac{1}{\Zf_{,\aa}},\mathbb H}\mathbb P_A(\Zf _t\bar{ \Zf}_{t,\aa})     }_{\dot H^{1/2}}
\lec \mathcal F(t)^{1/2};\label{2387}\\
\nm{\frac{\mathbb P_H(\bar Z_t Z_{t,\aa})}{Z_{,\aa}}-U_l\frac{\mathbb P_H(\bar {\Zf}_t \Zf_{t,\aa})}{\Zf_{,\aa}}   }_{\dot H^{1/2}}\lec \mathcal F(t)^{1/2};\label{2388}
\end{align}
consequently by \eqref{2385} and \eqref{2388}, \eqref{2359},
 \begin{equation}\label{2389}
\nm{\frac{\mathbb P_A(\bar Z_t Z_{t,\aa})}{Z_{,\aa}}-U_l\frac{\mathbb P_A(\bar {\Zf}_t \Zf_{t,\aa})}{\Zf_{,\aa}}   }_{\dot H^{1/2}}\lec \mathcal F(t)^{1/2}.
\end{equation}
 Similarly we have
 \begin{align}
\nm{\bracket{\frac{1}{Z_{,\aa}},\mathbb H}\mathbb P_A\paren{Z_t \partial_\aa\frac1{Z_{,\aa}}}-U_l \bracket{\frac{1}{\Zf_{,\aa}},\mathbb H}\mathbb P_A\paren{\Zf _t  \partial_\aa\frac1{\Zf_{,\aa}}   }    }_{\dot H^{1/2}}
\lec \mathcal F(t)^{1/2};\label{2390}\\
\nm{\bracket{\frac{1}{Z_{,\aa}}, \mathbb H}\paren{ A_1\paren{\bar{D_\aa Z_t}+\frac{\frak a_t}{\frak a}\circ h^{-1}}  }-U_l\bracket{\frac{1}{\Zf_{,\aa}}, \mathbb H}\paren{ \tAone\paren{\bar{\tD_\aa \Zf_t}+\frac{\taf_t}{\taf }\circ \th^{-1}}  }}_{\dot H^{1/2}}\lec \mathcal F(t)^{1/2};\label{2390-1}
\end{align}
provided we can show that \begin{equation}\label{2390-2}
\nm{ \mathbb H\paren{  A_1\paren{\bar{D_\aa Z_t}+\frac{\frak a_t}{\frak a}\circ h^{-1}}} }_{L^\infty}\lec C(\frak E).\end{equation}

We now prove \eqref{2390-2}. It suffices to show $\nm{ \mathbb P_A\paren{ A_1\paren{\bar{D_\aa Z_t}+\frac{\frak a_t}{\frak a}\circ h^{-1}}} }_{L^\infty}\lec C(\frak E)$, since we have  $ \nm{  A_1\paren{\bar{D_\aa Z_t}+\frac{\frak a_t}{\frak a}\circ h^{-1}} }_{L^\infty}\lec C(\frak E)$.
We know
$$2\mathbb P_A (A_1 \bar{D_\aa Z_t})=\bracket{\frac{A_1}{\bar Z_{,\aa}},\mathbb H}\bar Z_{t,\aa}=-i[Z_{tt},\mathbb H]\bar Z_{t,\aa},$$
hence by Cauchy-Schwarz inequality and Hardy's inequality \eqref{eq:77},
\begin{equation}\label{2390-3}
\nm{2\mathbb P_A (A_1 \bar{D_\aa Z_t})}_{L^\infty}\lec \|Z_{tt,\aa}\|_{L^2}\|Z_{t,\aa}\|_{L^2}\lec C(\frak E).
\end{equation}
For the second term we use the formula (2.23) of \cite{wu6}, \footnote{This formula can be checked directly from \eqref{at}-\eqref{ba}-\eqref{dta1} via similar manipulations as in \eqref{2446-1}-\eqref{2448}.}
\begin{equation}\label{2390-4}
{A_1}\frac{\frak a_t}{\frak a}\circ h^{-1}=-\Im( 2[Z_t,\mathbb H]{\bar Z}_{tt,\alpha'}+2[Z_{tt},\mathbb H]\partial_{\alpha'} \bar Z_t-
[Z_t, Z_t; D_{\alpha'} \bar Z_t]),
\end{equation}
observe that the quantities $[Z_t,\mathbb H]{\bar Z}_{tt,\alpha'}$, $[Z_{tt},\mathbb H]\partial_{\alpha'} \bar Z_t$ are anti-holomorphic by \eqref{comm-hilbe}, and $[Z_t, Z_t; D_{\alpha'} \bar Z_t]$ is anti-holomorphic by integration by parts and \eqref{comm-hilbe}, so 
\begin{equation}\label{2390-5}
\mathbb P_A\paren{{A_1}\frac{\frak a_t}{\frak a}\circ h^{-1}}=i\paren{ [Z_t,\mathbb H]{\bar Z}_{tt,\alpha'}+[Z_{tt},\mathbb H]\partial_{\alpha'} \bar Z_t-\frac12
[Z_t, Z_t; D_{\alpha'} \bar Z_t]};
\end{equation}
therefore
\begin{equation}\label{2390-6}
\nm{\mathbb P_A\paren{{A_1}\frac{\frak a_t}{\frak a}\circ h^{-1}}}_{L^\infty}\lec C(\frak E)
\end{equation}
by Cauchy-Schwarz inequality and Hardy's inequality \eqref{eq:77}.  This proves \eqref{2390-2}. 

In what follows we  will need the bound for $\nm{Z_{ttt,\aa}}_{L^2}$. We begin with \eqref{eq:dztt} and calculate $\bar Z_{ttt,\aa}$. We have
\begin{equation}\label{2410}
\bar Z_{ttt,\aa}=\bar Z_{tt,\aa} (\bar{D_\aa Z_t}+\frac{\frak a_t}{\frak a}\circ h^{-1})-iA_1D_\aa(\bar{D_\aa Z_t}+\frac{\frak a_t}{\frak a}\circ h^{-1})
\end{equation}
where we substituted the factor $\bar Z_{tt}-i$ in the second term by $-\frac{iA_1}{Z_{,\aa}}$, see \eqref{eq:dzt}. We know from \S\ref{proof} that all the quantities in \eqref{2410} are controlled and we have
\begin{equation}\label{2411}
\nm{Z_{ttt,\aa}}_{L^2}\le C(\frak E(t)).
\end{equation}

 \subsubsection{Controlling the $\dot H^{1/2}$ norms of $Z_{,\aa}((\partial_t+b\partial_\aa)\Theta+\frak c)$ for $\Theta=\bar Z_t, \frac1{Z_{,\aa}}-1, \bar Z_{tt}$, with $\frak c=-i, 0, 0$ respectively}\label{hhalf-norm}
 We will use Proposition~\ref{half-product} to control the item 3 above. To do so we need to check that
 the assumptions of the proposition hold. One of them is $Z_{,\aa}((\partial_t+b\partial_\aa)\Theta+\frak c)\in \dot H^{1/2}\cap L^\infty$, for $\Theta=\bar Z_t, \frac1{Z_{,\aa}}-1, \bar Z_{tt}$;  $\frak c=-i, 0, 0$ respectively; with the norms bounded by $C(\frak E(t))$. By \eqref{eq:dzt}, \eqref{eq:dztt} and \eqref{eq:dza},
 \begin{equation}\label{2391}
 Z_{,\aa}(\bar Z_{tt}-i)=-iA_1,\quad Z_{,\aa}(\partial_t+b\partial_\aa)\frac1{Z_{,\aa}}=b_\aa-D_\aa Z_t,\quad Z_{,\aa} \bar Z_{ttt}=-iA_1\paren{\bar {D_\aa Z_t}+\frac{\frak a_t}{\frak a}\circ h^{-1}}.
 \end{equation}
In \S\ref{proof}, we have shown that these quantities are in $L^\infty$, with their $L^\infty$ norms controlled by $C(\frak E(t))$. So we only need to estimate their $\dot H^{1/2}$ norms. 
  
Applying Proposition~\ref{hhalf4}, \eqref{hhalf42} to \eqref{a1} and \eqref{ba},  we get $A_1, b_\aa-2\Re D_\aa Z_t\in \dot H^{1/2}$, with
  \begin{align}
 & \|A_1\|_{\dot H^{1/2}}\lec \nm{\partial_\aa Z_t}_{L^2}^2\lec C(\frak E(t));\label{2392}
   \\
&   \|b_\aa-2\Re D_\aa Z_t\|_{\dot H^{1/2}}\lec \nm{\partial_\aa Z_t}_{L^2}\nm{\partial_\aa \frac1{Z_{,\aa}}}_{L^2}\lec C(\frak E(t)).\label{2393}
  \end{align}
  
  We next compute $\|D_\aa Z_t(t)\|_{\dot H^{1/2}}$. By definition, 
  \begin{equation}\label{2394}
  \begin{aligned}
 \|D_\aa Z_t(t)\|_{\dot H^{1/2}}^2&= \int (i\partial_\aa \mathbb H D_\aa Z_t)\, \bar{D_\aa Z_t }\,d\aa\\&= \int i\partial_\aa \bracket{\mathbb H, \frac1{Z_{,\aa}}}  Z_{t,\aa} \, \bar{D_\aa Z_t }\,d\aa+ \int i\partial_\aa \paren{\frac1{Z_{,\aa}} \mathbb H  Z_{t,\aa}} \bar{D_\aa Z_t }\,d\aa\\&=
 \int i\bar{D_\aa}\bracket{\mathbb H, \frac1{Z_{,\aa}}}  Z_{t,\aa}\, \partial_\aa\bar{ Z_t }\,d\aa+ \int i   Z_{t,\aa} (D_\aa \bar{D_\aa Z_t })\,d\aa
 \end{aligned}
  \end{equation}
where in the last step we used integration by parts and the fact $\mathbb H Z_{t,\aa}=-Z_{t,\aa}$.
Recall in \eqref{2042}, we have shown $\nm{D_\aa\bracket{\mathbb H, \frac1{Z_{,\aa}}}  Z_{t,\aa} }_{L^2}\le C(\frak E(t))$. So by Cauchy-Schwarz inequality, we have
\begin{equation}\label{2395}
 \|D_\aa Z_t(t)\|_{\dot H^{1/2}}^2\lec \nm{D_\aa\bracket{\mathbb H, \frac1{Z_{,\aa}}}  Z_{t,\aa} }_{L^2}\nm{Z_{t,\aa} }_{L^2}+\nm{Z_{t,\aa} }_{L^2}\nm{D_\aa^2Z_{t} }_{L^2}\le C(\frak E(t)).
\end{equation}

Now we consider $\nm{\frac{\frak a_t}{\frak a}\circ h^{-1}}_{\dot H^{1/2}}$. By \eqref{at}-\eqref{ba}-\eqref{dta1}, we know Proposition~\ref{hhalf4}, \eqref{hhalf42} can be used to handle all terms, except for  $[Z_t, b; \bar Z_{t,\aa}]$. 

Let $p\in C_0^\infty(\mathbb R)$, we have, by duality,
\begin{equation}\label{2396}
\abs{\int \partial_\aa p [Z_t, b; \bar Z_{t,\aa}]\,d\aa} =\abs{\int [Z_t, b;  \partial_\aa p]\bar Z_{t,\aa}\,d\aa}\lec \|Z_{t,\aa}\|_{L^2}^2\|b_\aa\|_{L^\infty}\|p\|_{\dot H^{1/2}},
\end{equation}
where in the last step we used Cauchy-Schwarz inequality and \eqref{eq:b111}. Therefore
$\nm{[Z_t, b; \bar Z_{t,\aa}]}_{\dot H^{1/2}}\le C(\frak E(t))$. Applying Proposition~\ref{hhalf4}, \eqref{hhalf42} to the remaining terms and using \eqref{hhalf-1} yields 
\begin{equation}\label{2397}
\nm{ \frac{\frak a_t}{\frak a}\circ h^{-1} }_{\dot H^{1/2}}\le C(\frak E(t)).
\end{equation}

We can now conclude that for $\Theta=\bar Z_t, \ \frac1{Z_{,\aa}}-1,\ \bar Z_{tt}$, with $\frak c=i, 0, 0$ respectively, 
\begin{equation}\label{2398}
\nm{Z_{,\aa}((\partial_t+b\partial_\aa)\Theta+\frak c)}_{L^\infty}+\nm{Z_{,\aa}((\partial_t+b\partial_\aa)\Theta+\frak c)}_{\dot H^{1/2}}\le C(\frak E(t)).
\end{equation}

\subsubsection{Controlling $\int \bar{\paren{\frac1{Z_{,\aa}}-\frac1{\Zf_{,\aa}}\circ l }} \paren{\bar{\Zf_{,\aa}\circ l((\partial_t+\tb\partial_\aa)\tTheta\circ l+\frak c)}\Theta_\aa-\bar{Z_{,\aa}((\partial_t+b\partial_\aa)\Theta+\frak c)}(\tTheta\circ l)_\aa}\,d\aa$}
We begin with studying $\frac1{Z_{,\aa}}-\frac1{\Zf_{,\aa}}\circ l$. By \eqref{2100}, 
$\frac1{Z_{,\aa}}(h(\a,t),t)=\frac1{Z_{,\aa}}(\a,0)e^{\int_0^t (b_\aa\circ h(\a,\tau)-D_\a z_t(\a,\tau))\,d\tau}$, so
\begin{equation}\label{2399}
\begin{aligned}
\paren{\frac1{Z_{,\aa}}-\frac1{\Zf_{,\aa}}\circ l}\circ h&=\paren{\frac1{Z_{,\aa}}(0)-\frac1{\Zf_{,\aa}}(0)} e^{\int_0^t (\tb_\aa-\tD_\aa \Zf_t)\circ \th (\tau)\,d\tau}\\&+\frac1{Z_{,\aa}}\circ h\paren{1-  e^{\int_0^t ((\tb_\aa-\tD_\aa \Zf_t)\circ \th-(b_\aa-D_\aa Z_t)\circ h) (\tau)\,d\tau}}.
\end{aligned}
\end{equation}
We know for $t\in [0, T]$, 
\begin{equation}\label{2470}
\nm{\paren{\frac1{Z_{,\aa}}(0)-\frac1{\Zf_{,\aa}}(0)} e^{\int_0^t (\tb_\aa-\tD_\aa \Zf_t)\circ \th (\tau)\,d\tau}}_{L^\infty}\le C(\sup_{[0, T]}\tEf (t))\nm{\frac1{Z_{,\aa}}(0)-\frac1{\Zf_{,\aa}}(0)}_{L^\infty};
\end{equation}
and
\begin{equation}\label{2471}
\nm{  1-  e^{\int_0^t ((\tb_\aa-\tD_\aa \Zf_t)\circ \th-(b_\aa-D_\aa Z_t)\circ h) (\tau)\,d\tau}  }_{L^2}\lec \int_0^t  \mathcal F(\tau)^{1/2}\,d\tau.
\end{equation}

Now we rewrite 
\begin{equation}\label{2472}
\begin{aligned}
&\int \bar{\paren{\frac1{Z_{,\aa}}-\frac1{\Zf_{,\aa}}\circ l }} \paren{\bar{\Zf_{,\aa}\circ l((\partial_t+\tb\partial_\aa)\tTheta\circ l+\frak c)}\Theta_\aa-\bar{Z_{,\aa}((\partial_t+b\partial_\aa)\Theta+\frak c)}(\tTheta\circ l)_\aa}\,d\aa\\&
=\int \bar{\paren{\frac1{Z_{,\aa}}-\frac1{\Zf_{,\aa}}\circ l }}\Theta_\aa \paren{\bar{\Zf_{,\aa}\circ l((\partial_t+\tb\partial_\aa)\tTheta\circ l+\frak c)}-\bar{Z_{,\aa}((\partial_t+b\partial_\aa)\Theta+\frak c)}}\,d\aa\\&+
\int \bar{\paren{\frac1{Z_{,\aa}}-\frac1{\Zf_{,\aa}}\circ l }} \bar{Z_{,\aa}((\partial_t+b\partial_\aa)\Theta+\frak c)}\,(\Theta-\tTheta\circ l)_\aa\,d\aa=I+II.
\end{aligned}
\end{equation}
We apply Proposition~\ref{half-product} to $II$, with $g=\frac1{Z_{,\aa}}-\frac1{\Zf_{,\aa}}\circ l$, and $f=Z_{,\aa}((\partial_t+b\partial_\aa)\Theta+\frak c)$, where $\Theta=\bar Z_t, \ \frac1{Z_{,\aa}}-1,\ \bar Z_{tt}$, with $\frak c=i, 0, 0$ respectively. We know
$$\partial_\aa\paren{\frac1{Z_{,\aa}} f}= \bar Z_{tt,\aa},\quad \partial_\aa(\partial_t+b\partial_\aa)\frac1{Z_{,\aa}},\quad \bar Z_{ttt,\aa},\qquad \text{for  }\Theta=\bar Z_t, \ \frac1{Z_{,\aa}}-1,\ \bar Z_{tt},$$
so $\nm{\partial_\aa\paren{\frac1{Z_{,\aa}} f}}_{L^2}\le C(\frak E(t))$, by \S\ref{proof} and \eqref{2411}. Applying Proposition~\ref{half-product} to the $g$ and $f$ given  above yields
\begin{equation}\label{2473}
\begin{aligned}
&\nm{\paren{\frac1{Z_{,\aa}}-\frac1{\Zf_{,\aa}}\circ l}\, Z_{,\aa}((\partial_t+b\partial_\aa)\Theta+\frak c)}_{\dot H^{1/2}}\\&\qquad \lec \nm{\frac1{Z_{,\aa}}-\frac1{\Zf_{,\aa}}\circ l}_{\dot H^{1/2}}+ \nm{\frac1{Z_{,\aa}}(0)-\frac1{\Zf_{,\aa}}(0)}_{L^\infty}+\int_0^t \mathcal F(\tau)^{1/2}\,d\tau;
\end{aligned}
\end{equation}
consequently
\begin{equation}\label{2474}
\abs{II}\lec \mathcal F(t)+T\int_0^t\mathcal F(\tau)\,d\tau+\nm{\frac1{Z_{,\aa}}(0)-\frac1{\Zf_{,\aa}}(0)}_{L^\infty}\mathcal F(t)^{1/2}.
\end{equation}

We apply the decomposition \eqref{2399} and Cauchy-Schwarz inequality to $I$, notice that $\nm{\Theta_\aa}_{L^2}\le C(\frak E(t))$, and $\nm{D_\aa \Theta}_{L^\infty}\le C(\frak E(t))$, for $\Theta=\bar Z_t, \ \frac1{Z_{,\aa}}-1,\ \bar Z_{tt}$. We have
\begin{equation}\label{2475}
\abs{I}\lec \nm{\frac1{Z_{,\aa}}(0)-\frac1{\Zf_{,\aa}}(0)}_{L^\infty}\mathcal F(t)^{1/2}+\mathcal F(t)^{1/2}\int_0^t\mathcal F(\tau)^{1/2}\,d\tau.
\end{equation}
This shows that for $\Theta=\bar Z_t, \ \frac1{Z_{,\aa}}-1,\ \bar Z_{tt}$, with $\frak c=i, 0, 0$ respectively, 
\begin{equation}\label{2476}
\begin{aligned}
&\abs{\int \bar{\paren{\frac1{Z_{,\aa}}-\frac1{\Zf_{,\aa}}\circ l }} \paren{\bar{\Zf_{,\aa}\circ l((\partial_t+\tb\partial_\aa)\tTheta\circ l+\frak c)}\Theta_\aa-\bar{Z_{,\aa}((\partial_t+b\partial_\aa)\Theta+\frak c)}(\tTheta\circ l)_\aa}\,d\aa}\\&\qquad\qquad\lec  \mathcal F(t)+T\int_0^t\mathcal F(\tau)\,d\tau+\nm{\frac1{Z_{,\aa}}(0)-\frac1{\Zf_{,\aa}}(0)}_{L^\infty}\mathcal F(t)^{1/2}.
\end{aligned}
\end{equation}

\subsubsection{Controlling   $\nm{\frac{\partial_\aa A_1}{|Z_{,\aa}|^2}-\frac{\partial_\aa \tAone}{|\Zf_{,\aa}|^2}\circ l}_{L^2}$}\label{da1z2}

 We will take advantage of the fact that $\frac{\partial_\aa A_1}{|Z_{,\aa}|^2}$ is purely real to use $(I+\mathbb H)$ to convert it to some commutator forms to which the Propositions in \S\ref{prepare} can be applied. 

Observe that
\begin{equation}\label{2369}
i\,\frac{\partial_\aa A_1}{|Z_{,\aa}|^2}= \frac{1}{Z_{,\aa}}\partial_\aa\frac{ i A_1}{\bar Z_{,\aa}}-\frac{ i A_1}{Z_{,\aa}}\partial_\aa\frac{1}{\bar Z_{,\aa}}=\frac{1}{Z_{,\aa}}\partial_\aa Z_{tt}+(\bar Z_{tt}-i)\partial_\aa\frac{1}{\bar Z_{,\aa}};
\end{equation}
  we apply $(I+\mathbb H)$ to \eqref{2369}, and use the fact $\partial_\aa\frac{1}{\bar Z_{,\aa}}=-\mathbb H\paren{\partial_\aa\frac{1}{\bar Z_{,\aa}}}$  to write the second term in a commutator form.  We have 
  \begin{equation}\label{2370}
i\,(I+\mathbb H)\frac{\partial_\aa A_1}{|Z_{,\aa}|^2}=(I+\mathbb H)\paren{\frac{1}{Z_{,\aa}}\partial_\aa Z_{tt}}-\bracket{\bar Z_{tt}, \mathbb H}\partial_\aa\frac{1}{\bar Z_{,\aa}}.
\end{equation}
For the first term on the right hand side, we commute out $\frac1{Z_{,\aa}}$, then use the fact $Z_t=-\mathbb H Z_t$ to write $(I+\mathbb H)Z_{tt}$ as a commutator (see \eqref{2354}),
 \begin{equation}\label{2371}
(I+\mathbb H)\paren{\frac{1}{Z_{,\aa}}\partial_\aa Z_{tt}}=\bracket{\mathbb H, \frac{1}{Z_{,\aa}}}\partial_\aa Z_{tt}-\frac{1}{Z_{,\aa}}\partial_\aa [b,\mathbb H]Z_{t,\aa};
\end{equation}
we compute
\begin{equation}\label{2372}
\begin{aligned}
\frac{1}{Z_{,\aa}}\partial_\aa [b,\mathbb H]Z_{t,\aa}&=\frac{1}{Z_{,\aa}} b_\aa \mathbb H Z_{t,\aa}-\frac1{\pi i Z_{,\aa}} \int \frac{b(\aa,t)-b(\bb,t)}{(\aa-\bb)^2}Z_{t,\bb}\,d\bb\\&
=-b_\aa D_\aa Z_t-\bracket{\frac1{Z_{,\aa}}, b; Z_{t,\aa}}-\frac1{\pi i } \int \frac{b(\aa,t)-b(\bb,t)}{(\aa-\bb)^2}D_\bb Z_{t}\,d\bb\\&=
-b_\aa D_\aa Z_t-\bracket{\frac1{Z_{,\aa}}, b; Z_{t,\aa}}+[b,\mathbb H]\partial_\aa D_\aa Z_t-\mathbb H(b_\aa D_\aa Z_t),
\end{aligned}
\end{equation}
in the last step we performed integration by parts. 
We have converted the right hand side of \eqref{2370} in the desired forms. Applying \eqref{dl21-inq}, \eqref{dl221}, \eqref{d32inq}, \eqref{q3} and \eqref{2359}, then take the imaginary parts gives
\begin{equation}\label{2373}
\nm{\frac{\partial_\aa A_1}{|Z_{,\aa}|^2}-\frac{\partial_\aa \tAone}{|\Zf_{,\aa}|^2}\circ l}_{L^2}\lec \mathcal F(t)^{\frac12}.
\end{equation}

In what follows we will use the following identities in the calculations: for $f,\ g,\ p$, 
satisfying $g=\mathbb H g$ and $p=\mathbb H p$,
 \begin{align}
 [ f, \mathbb H] (gp)&=[ f g,\mathbb H]p=[\mathbb P_A( f g),\mathbb H] p;\label{H1}\\
 [ f, \mathbb H]\partial_\aa(gp)&=[ f\partial_\aa g, \mathbb H] p+[ f g,\mathbb H]\partial_\aa p=[\mathbb P_A( f\partial_\aa g), \mathbb H] p+[\mathbb P_A( f g),\mathbb H]\partial_\aa p
 \label{H2}
 \end{align}
\eqref{H1} is obtained by using the fact that the product of holomorphic functions is holomorphic, and \eqref{comm-hilbe};  \eqref{H2} is a consequence of \eqref{H1} and the product rules.

\subsubsection{Controlling   $\nm{(\partial_t+b\partial_\aa)(b_\aa-2\Re D_\aa Z_t)-(\partial_t+\tb\partial_\aa)(\tb_\aa- 2\Re \tD_\aa\Zf_t )\circ l}_{L^2}$  }\label{ddtba}

We begin with \eqref{dba-1}, 
\begin{equation}\label{2374}
\begin{aligned}
&(\partial_t+b\partial_\aa)\paren{b_\aa-2\Re D_\aa Z_t}=\Re \paren{\bracket{ (\partial_t+b\partial_\aa)\frac1{Z_{,\aa}}, \mathbb H}  Z_{t,\alpha'} + \bracket{Z_{t}, \mathbb H}\partial_\aa (\partial_t+b\partial_\aa)\frac1{Z_{,\aa}}}
\\&\qquad+\Re\paren{ \bracket{ \frac1{Z_{,\aa}}, \mathbb H}  Z_{tt,\alpha'}+ \bracket{Z_{tt}, \mathbb H}\partial_\aa \frac1{Z_{,\aa}}  -\bracket{  \frac1{Z_{,\aa}}, b;  Z_{t,\alpha'}   }    -\bracket{ Z_{t}, b; \partial_\aa \frac1{Z_{,\aa}}   } };
\end{aligned}
\end{equation}
observe that using Propositions~\ref{dl21}, ~\ref{d32} and \ref{denergy} we are able to get the desired estimates for the last four terms on the right hand side of \eqref{2374}. We need to rewrite the first two terms in order to apply the results in \S\ref{prepare}. 
First, by \eqref{eq:dza} we have 
\begin{equation}\label{2375}
(\partial_t+b\partial_{\alpha'})\paren{\frac1{Z_{,\aa}}}=\frac1{Z_{,\aa}} \paren{b_\aa-D_\aa Z_t};
\end{equation}
and by $\mathbb HZ_{t,\aa}=-Z_{t,\aa}$, 
\begin{equation}\label{2376}
\bracket{ (\partial_t+b\partial_\aa)\frac1{Z_{,\aa}}, \mathbb H}  Z_{t,\alpha'}=-(I+\mathbb H) \paren{(D_\aa Z_t )(b_\aa-D_\aa Z_t)};
\end{equation}
so we can conclude from \eqref{q3} and \eqref{2359}  that
\begin{equation}\label{2377}
\nm{\bracket{ (\partial_t+b\partial_\aa)\frac1{Z_{,\aa}}, \mathbb H}  Z_{t,\alpha'}-U_l \bracket{ (\partial_t+\tb\partial_\aa)\frac1{\Zf_{,\aa}}, \mathbb H}  \Zf_{t,\alpha'}}_{L^2}\lec \mathcal F(t)^{1/2}.
\end{equation}

For the second term on the right hand side of \eqref{2374}, we use \eqref{b} to further rewrite \eqref{2375},
 \begin{equation}\label{2378}
 \begin{aligned}
 &(\partial_t+b\partial_{\alpha'})\paren{\frac1{Z_{,\aa}}}=\frac1{Z_{,\aa}} \paren{\partial_\aa{\Re (I-\mathbb H)\frac {Z_t}{Z_{,\aa}} } -D_\aa Z_t}\\&=
 \frac1{Z_{,\aa}} \paren{\mathbb P_A\frac {Z_{t,\aa}}{Z_{,\aa}}+\mathbb P_H\frac {\bar Z_{t,\aa}}{\bar Z_{,\aa}} +\Re (I-\mathbb H)\paren{Z_t\partial_\aa\frac1{Z_{,\aa}}}  -D_\aa Z_t}\\&=
 \frac1{Z_{,\aa}} \paren{\mathbb P_H\paren{\bar{D_\aa Z_t}-D_\aa Z_t}+\mathbb P_A\paren{Z_{t} \partial_\aa \frac1{Z_{,\aa} }}+\bar{\mathbb P_A\paren{Z_{t} \partial_\aa \frac1{Z_{,\aa} }}}}.
 \end{aligned}
 \end{equation}
 We substitute  the right hand side of \eqref{2378} in the second term, $\bracket{Z_{t}, \mathbb H}\partial_\aa (\partial_t+b\partial_\aa)\frac1{Z_{,\aa}}$ of \eqref{2374}, term by term. For the first term we have, by \eqref{H2},
 \begin{equation}\label{2379}
 \begin{aligned}
& \bracket{Z_{t}, \mathbb H}\partial_\aa \paren{  \frac1{Z_{,\aa}}\mathbb P_H\paren{\Im\bar{ D_\aa Z_t}} }=\bracket{\mathbb P_A\paren{Z_{t} \partial_\aa \frac1{Z_{,\aa} }}, \mathbb H}\mathbb P_H\paren{\Im \bar{D_\aa Z_t}} \\&\qquad\qquad\qquad+\bracket{\mathbb P_A\paren{\frac{Z_{t} }{Z_{,\aa} }}, \mathbb H} \partial_\aa\mathbb P_H\paren{\Im\bar{D_\aa Z_t}}\\&
= (I-\mathbb H)\paren{ \mathbb P_A\paren{Z_{t} \partial_\aa \frac1{Z_{,\aa} }}\mathbb P_H\paren{\Im\bar{D_\aa Z_t}}} +\bracket{b, \mathbb H} \partial_\aa\mathbb P_H\paren{\Im\bar{D_\aa Z_t}};
 \end{aligned}
 \end{equation}
in the last step we used \eqref{bb} and \eqref{comm-hilbe}.    Therefore by \eqref{2382}-\eqref{2383}, \eqref{2359}, \eqref{q3} and \eqref{dl221}, 
 \begin{equation}\label{2400}
 \nm{\bracket{Z_{t}, \mathbb H}\partial_\aa \paren{  \frac1{Z_{,\aa}}\mathbb P_H\paren{\Im\bar{D_\aa Z_t}} }- U_l\bracket{\Zf_{t}, \mathbb H}\partial_\aa \paren{  \frac1{\Zf_{,\aa}}\mathbb P_H\paren{\Im \bar{\tD_\aa \Zf_t}} }}_{L^2}\lec \mathcal F(t)^{\frac12}.
 \end{equation}
We substitute in the second term and rewrite further by \eqref{comm-hilbe},
\begin{equation}\label{2401}
\begin{aligned}
\bracket{Z_{t}, \mathbb H}\partial_\aa \paren{ \frac1{Z_{,\aa}}\mathbb P_A\paren{Z_{t} \partial_\aa \frac1{Z_{,\aa} }}}&=\bracket{Z_{t}, \mathbb H}\partial_\aa \mathbb P_H\paren{ \frac1{Z_{,\aa}}\mathbb P_A\paren{Z_{t} \partial_\aa \frac1{Z_{,\aa} }}}\\&=-\frac12\bracket{Z_{t}, \mathbb H}\partial_\aa \paren{ \bracket{\frac1{Z_{,\aa}},\mathbb H}\mathbb P_A\paren{Z_{t} \partial_\aa \frac1{Z_{,\aa} }}}
\end{aligned}
\end{equation} 
 This allows us to conclude, by \eqref{dl21-inq}, and \eqref{2390}, \eqref{2359},\footnote{For the estimate $\nm{\partial_\aa \paren{ \frac1{Z_{,\aa}}\mathbb P_A\paren{Z_{t} \partial_\aa \frac1{Z_{,\aa} }}} }_{L^2}\le C(\frak E(t))$, see \eqref{2043}-\eqref{2044}.}
 \begin{equation}\label{2402}
 \nm{\bracket{Z_{t}, \mathbb H}\partial_\aa \paren{ \frac1{Z_{,\aa}}\mathbb P_A\paren{Z_{t} \partial_\aa \frac1{Z_{,\aa} }}}-U_l\bracket{\Zf_{t}, \mathbb H}\partial_\aa \paren{ \frac1{\Zf_{,\aa}}\mathbb P_A\paren{\Zf_{t} \partial_\aa \frac1{\Zf_{,\aa} }}}
 }_{L^2}\lec \mathcal F(t)^{\frac12}.
 \end{equation}
 Now we substitute in the last term and rewrite further by \eqref{H2}, 
 \begin{equation}\label{2403}
\begin{aligned}
&\bracket{Z_{t}, \mathbb H}\partial_\aa \paren{ \frac1{Z_{,\aa}}\bar{\mathbb P_A\paren{Z_{t} \partial_\aa \frac1{Z_{,\aa} }}}}=\bracket{\mathbb P_A\paren{Z_{t}\partial_\aa\frac1{Z_{,\aa}}}, \mathbb H}\bar{\mathbb P_A\paren{Z_{t} \partial_\aa \frac1{Z_{,\aa} }}}\\&\qquad\qquad\qquad+ \bracket{\mathbb P_A\paren{\frac{Z_{t}}{Z_{,\aa}}}, \mathbb H}\partial_\aa\bar{\mathbb P_A\paren{Z_{t} \partial_\aa \frac1{Z_{,\aa} }}}\\&=(I-\mathbb H)\paren{ \mathbb P_A\paren{Z_{t}\partial_\aa\frac1{Z_{,\aa}}}\bar{\mathbb P_A\paren{Z_{t} \partial_\aa \frac1{Z_{,\aa} }}}   }  + \bracket{b, \mathbb H}\partial_\aa\bar{\mathbb P_A\paren{Z_{t} \partial_\aa \frac1{Z_{,\aa} }}}.
\end{aligned}
\end{equation} 
 Again, this puts it in the right form to allow us to conclude, from \eqref{2382}-\eqref{2383}, \eqref{q3}, and \eqref{dl221}, that
 \begin{equation}\label{2404}
 \nm{\bracket{Z_{t}, \mathbb H}\partial_\aa \paren{ \frac1{Z_{,\aa}}\bar{\mathbb P_A\paren{Z_{t} \partial_\aa \frac1{Z_{,\aa} }}}}-U_l\bracket{\Zf_{t}, \mathbb H}\partial_\aa \paren{ \frac1{\Zf_{,\aa}}\bar{\mathbb P_A\paren{\Zf_{t} \partial_\aa \frac1{\Zf_{,\aa} }}}}
 }_{L^2}\lec \mathcal F(t)^{\frac12}.
 \end{equation}
This finishes the proof of 
\begin{equation}\label{2405}
\nm{(\partial_t+b\partial_\aa)(b_\aa-2\Re D_\aa Z_t)-(\partial_t+\tb\partial_\aa)(\tb_\aa- 2\Re \tD_\aa\Zf_t )\circ l}_{L^2}\lec \mathcal F(t)^{1/2}.
\end{equation}
 
 \subsubsection{Controlling $ \nm{(\partial_t+b\partial_\aa)\paren{\frac{\frak a_t}{\frak a}\circ h^{-1}}}_{L^\infty}$} \label{dtati}
 We begin with \eqref{at} and take a $\partial_t+b\partial_\aa$ derivative. We get
 \begin{equation}\label{2406}
 (\partial_t+b\partial_\aa)\paren{\frac{\frak a_t}{\frak a}\circ h^{-1}}=\frac{(\partial_t+b\partial_\aa)^2A_1}{A_1}-\paren{\frac{(\partial_t+b\partial_\aa)A_1}{A_1}}^2+(\partial_t+b\partial_\aa)(b_\aa-2\Re D_\aa Z_t).
 \end{equation}
 We have controlled all the quantities on the right hand side of \eqref{2406} in \S\ref{proof}, except for $\|(\partial_t+b\partial_\aa)^2A_1\|_{L^\infty}$. 
 We proceed  from \eqref{dta1} and use \eqref{eq:c14} to compute,  
 \begin{equation}\label{2407}
 \begin{aligned}
(\partial_t +b\partial_\aa)^2 A_1&= -\Im \paren{\bracket{2\bracket{Z_{tt},\mathbb H}\bar Z_{tt,\alpha'}-[Z_{tt}, b; \bar Z_{t,\alpha'}]-[Z_{t}, b; \bar Z_{tt,\alpha'}]}}\\&
-\Im \paren{\bracket{Z_{ttt},\mathbb H}\bar Z_{t,\alpha'}+\bracket{Z_t,\mathbb H}\partial_\aa \bar Z_{ttt}-  (\partial_t +b\partial_\aa)[Z_t, b; \bar Z_{t,\aa}]},
\end{aligned}
\end{equation}
and we expand similarly
 \begin{equation}\label{2408}
 \begin{aligned}
(\partial_t +b\partial_\aa)[Z_t, b; \bar Z_{t,\aa}]&=[Z_{tt}, b; \bar Z_{t,\aa}]+[Z_t, (\partial_t +b\partial_\aa)b; \bar Z_{t,\aa}]+[Z_t, b; \bar Z_{tt,\aa}]\\&-\frac2{\pi i}\int \frac{(b(\aa,t)-b(\bb,t))^2(Z_t(\aa,t)-Z_t(\bb,t))   }{(\aa-\bb)^3} \bar Z_{t,\bb}\,d\bb
\end{aligned}
\end{equation}
Applying Cauchy-Schwarz inequality and Hardy's inequality, we get
 \begin{equation}\label{2409}
 \begin{aligned}
\nm{(\partial_t +b\partial_\aa)^2 A_1}_{L^\infty}&\lec \nm{Z_{tt,\aa}}_{L^2}^2+ \nm{Z_{tt,\aa}}_{L^2}\nm{b_\aa}_{L^\infty}\nm{Z_{t,\aa}}_{L^2}+\nm{Z_{ttt,\aa}}_{L^2}\nm{Z_{t,\aa}}_{L^2}\\&+\nm{\partial_\aa(\partial_t +b\partial_\aa)b}_{L^\infty}\nm{Z_{t,\aa}}_{L^2}^2+\nm{b_\aa}_{L^\infty}^2\nm{Z_{t,\aa}}_{L^2}^2
\end{aligned}
\end{equation}
Observe that all quantities on the right hand side of \eqref{2409} are controlled in \S\ref{proof} and in 
\eqref{2411}. 
This shows that
\begin{equation}\label{2412}
\nm{(\partial_t +b\partial_\aa)^2 A_1}_{L^\infty}\le C(\frak E(t)),\qquad  \nm{(\partial_t+b\partial_\aa)\paren{\frac{\frak a_t}{\frak a}\circ h^{-1}}}_{L^\infty}\le C(\frak E(t)).
\end{equation}

\subsubsection{Controlling $ \nm{(\partial_t+b\partial_\aa)\paren{\frac{\frak a_t}{\frak a}\circ h^{-1}}-(\partial_t+\tb\partial_\aa)\paren{\frac{\taf_t}{\taf} \circ \th^{-1} }\circ l}_{L^2}$ } \label{ddtat}
 By the expansions \eqref{dta1}, \eqref{2406}, and \eqref{2407}, \eqref{2408}, we see that by the results in \S\ref{prepare} and by \eqref{2405}, we can directly conclude the desired estimates for all but the following three 
 \begin{itemize}
 \item
 $ \nm{\bracket{Z_{ttt},\mathbb H}\bar Z_{t,\alpha'}-(\bracket{\Zf_{ttt},\mathbb H}\bar {\Zf}_{t,\alpha'})   \circ l}_{L^2}$;
 \item
 $ \nm{\bracket{Z_t,\mathbb H}\partial_\aa \bar Z_{ttt}-(\bracket{\Zf_t,\mathbb H}\partial_\aa \bar {\Zf}_{ttt} )  \circ l}_{L^2}$;
 \item 
 $ \nm{[Z_t, (\partial_t +b\partial_\aa)b; \bar Z_{t,\aa}]-U_l [\Zf_t, (\partial_t +b\partial_\aa)b; \bar {\Zf}_{t,\aa}]}_{L^2}$. 
 \end{itemize}
 The first two items can be analyzed similarly as in \S\ref{ddtba}. We begin with $\bracket{Z_{ttt},\mathbb H}\bar Z_{t,\alpha'}$ and rewrite it using $\mathbb H\bar Z_{t,\alpha'}=\bar Z_{t,\alpha'}$, and substitute in by \eqref{eq:dztt}, \eqref{eq:dzt},
 \begin{equation}\label{2413}
 \bracket{Z_{ttt},\mathbb H}\bar Z_{t,\alpha'}=(I-\mathbb H)(Z_{ttt} \bar Z_{t,\aa})=(I-\mathbb H)\paren{iA_1 \bar {D_\aa Z_t}\paren{D_\aa Z_t+\frac{\frak a_t}{\frak a}\circ h^{-1}}}.
 \end{equation}
From here we are ready to conclude from \eqref{q3}, \eqref{2359} that
  \begin{equation}\label{2414} \nm{\bracket{Z_{ttt},\mathbb H}\bar Z_{t,\alpha'}-(\bracket{\Zf_{ttt},\mathbb H}\bar {\Zf}_{t,\alpha'})   \circ l}_{L^2}\lec \mathcal F(t)^{1/2}.
  \end{equation}
 
 Now substitute in by \eqref{eq:dztt}, \eqref{eq:dzt}, and use the identity $\mathbb P_H+\mathbb P_A=I$, then use  \eqref{H2} and \eqref{comm-hilbe},
  \begin{equation}\label{2415}
  \begin{aligned}
  \bracket{Z_t,\mathbb H}\partial_\aa \bar Z_{ttt}&  = -i\bracket{Z_t,\mathbb H}\partial_\aa \paren{\frac{1}{Z_{,\aa}}(\mathbb P_H+\mathbb P_A)\paren{ A_1\paren{\bar{D_\aa Z_t}+\frac{\frak a_t}{\frak a}\circ h^{-1}}  } }\\&=
  -i\bracket{\mathbb P_A\paren{Z_t\partial_\aa\frac{1}{Z_{,\aa}}},\mathbb H}\mathbb P_H\paren{ A_1\paren{\bar{D_\aa Z_t}+\frac{\frak a_t}{\frak a}\circ h^{-1}}  }\\&-i\bracket{\mathbb P_A\paren{\frac{Z_t}{Z_{,\aa}}},\mathbb H}\partial_\aa\mathbb P_H\paren{ A_1\paren{\bar{D_\aa Z_t}+\frac{\frak a_t}{\frak a}\circ h^{-1}}  }\\&
  -i\bracket{Z_t,\mathbb H}\partial_\aa \mathbb P_H\paren{\frac{1}{Z_{,\aa}}\mathbb P_A\paren{ A_1\paren{\bar{D_\aa Z_t}+\frac{\frak a_t}{\frak a}\circ h^{-1}}  } }\\&
  =-i(I-\mathbb H)\paren{\mathbb P_A\paren{Z_t\partial_\aa\frac{1}{Z_{,\aa}}}\mathbb P_H\paren{ A_1\paren{\bar{D_\aa Z_t}+\frac{\frak a_t}{\frak a}\circ h^{-1}}  }}\\&-i\bracket{b,\mathbb H}\partial_\aa\mathbb P_H\paren{ A_1\paren{\bar{D_\aa Z_t}+\frac{\frak a_t}{\frak a}\circ h^{-1}}  }\\&
 -i\bracket{Z_t,\mathbb H}\partial_\aa \bracket{\mathbb P_H, \frac{1}{Z_{,\aa}}}\mathbb P_A\paren{ A_1\paren{\bar{D_\aa Z_t}+\frac{\frak a_t}{\frak a}\circ h^{-1}}  }. 
  \end{aligned}
  \end{equation}
 From here we can apply the Propositions in \S\ref{prepare} and \eqref{2382}-\eqref{2383}, \eqref{2390-1} to conclude
   \begin{equation}\label{2416} 
   \nm{\bracket{Z_{t},\mathbb H}\partial_\aa\bar Z_{ttt}-(\bracket{\Zf_{t},\mathbb H}\partial_\aa\bar {\Zf}_{ttt})   \circ l}_{L^2}\lec \mathcal F(t)^{1/2}.
  \end{equation}
  
  Now we consider the last term,  $[Z_t, (\partial_t +b\partial_\aa)b; \bar Z_{t,\aa}]$. The problem with this term is that we don't yet have the  estimate, $\nm{\partial_\aa(\partial_t +b\partial_\aa)b-(\partial_\aa(\partial_t +\tb\partial_\aa)\tb)\circ l}_{L^2}\lec \mathcal F(t)^{1/2}$, to apply Proposition~\ref{d32}. We will not prove this estimate. Instead, we will identify the trouble term in $\partial_\aa(\partial_t +b\partial_\aa)b$, and handle it differently. We compute, by \eqref{eq:c7}, \eqref{eq:c1-1}, 
  \begin{equation}\label{2417}
  \begin{aligned}
  \partial_\aa&(\partial_t +b\partial_\aa)b=b_\aa^2+(\partial_t +b\partial_\aa)b_\aa\\&=b_\aa^2+(\partial_t +b\partial_\aa)(b_\aa-2\Re D_\aa Z_t)-2\Re \{(D_\aa Z_t)^2\}+2\Re D_\aa Z_{tt};
  \end{aligned}
  \end{equation}
  observe that we have the  estimate for the first three terms. We expand the last term by substituting in \eqref{aa1}, 
  \begin{equation}\label{2418}
  2\Re D_\aa Z_{tt}= 2\Re\frac1{Z_{,\aa}}\partial_\aa{\frac{iA_1}{\bar Z_{,\aa}}}
    =\partial_\aa\paren{\frac{iA_1}{|Z_{,\aa}|^2}}-\frac{\partial_\aa\paren{iA_1}}{|Z_{,\aa}|^2}-2\frac{iA_1}{\bar Z_{,\aa}}\partial_\aa\frac1{Z_{,\aa}}.
  \end{equation}
 Substitute \eqref{2418} in \eqref{2417}, and then apply $\mathbb P_A$, writing the last term as a commutator; we get
 \begin{equation}\label{2419}
 \begin{aligned}
& \mathbb P_A \partial_\aa \paren{(\partial_t +b\partial_\aa)b - \frac{iA_1}{|Z_{,\aa}|^2}  }\\&=\mathbb P_A\paren{ b_\aa^2+(\partial_t +b\partial_\aa)(b_\aa-2\Re D_\aa Z_t)-2\Re \{(D_\aa Z_t)^2\} -i\frac{\partial_\aa A_1}{|Z_{,\aa}|^2}   } -\bracket{\frac{iA_1}{\bar Z_{,\aa}},\mathbb H}\partial_\aa\frac1{Z_{,\aa}};
 \end{aligned}
 \end{equation}
a direct application of the results in \S\ref{prepare}, \S\ref{ddtba} and \S\ref{da1z2} to the right hand side of \eqref{2419} yields  
 \begin{equation}\label{2420}
 \nm{\mathbb P_A \partial_\aa \paren{(\partial_t +b\partial_\aa)b - \frac{iA_1}{|Z_{,\aa}|^2}  }-U_l \mathbb P_A \partial_\aa \paren{(\partial_t +\tb\partial_\aa)\tb - \frac{i\tAone}{|\Zf_{,\aa}|^2}  }}_{L^2}\lec \mathcal F(t)^{1/2},
 \end{equation}
 which of course holds also for its real part. We know the real part
$$\Re\mathbb P_A \partial_\aa \paren{(\partial_t +b\partial_\aa)b- \frac{iA_1}{|Z_{,\aa}|^2}  }=\frac12\partial_\aa \paren{(\partial_t +b\partial_\aa)b+\mathbb H\paren{\frac{iA_1}{|Z_{,\aa}|^2}} }.
 $$
We split $[Z_t, (\partial_t +b\partial_\aa)b; \bar Z_{t,\aa}]$ in two:
 \begin{equation}\label{2421}
[Z_t, (\partial_t +b\partial_\aa)b; \bar Z_{t,\aa}]=[Z_t, (\partial_t +b\partial_\aa)b+\mathbb H\paren{\frac{iA_1}{|Z_{,\aa}|^2}}; \bar Z_{t,\aa}]-[Z_t,   \mathbb H\paren{\frac{iA_1}{|Z_{,\aa}|^2}}  ; \bar Z_{t,\aa}] 
 \end{equation}
 and  we can conclude from Proposition~\ref{d32} for the first term that,\footnote{The fact that $\nm{\partial_\aa\paren{(\partial_t +b\partial_\aa)b+\mathbb H\paren{\frac{iA_1}{|Z_{,\aa}|^2}}}}_{L^\infty}\le C(\frak E(t))$  follows from \eqref{2039-1} and \eqref{2052}.}
 \begin{equation}\label{2422}
  \nm{ [Z_t, (\partial_t +b\partial_\aa)b+\mathbb H\paren{\frac{iA_1}{|Z_{,\aa}|^2}}; \bar Z_{t,\aa}]-U_l[\Zf_t, (\partial_t +\tb\partial_\aa)\tb+\mathbb H\paren{\frac{i\tAone}{|\Zf_{,\aa}|^2}}; \bar {\Zf}_{t,\aa}] }_{L^2}\lec \mathcal F(t)^{1/2}.
 \end{equation}
 We are left with the term $[Z_t,   \mathbb H\paren{\frac{iA_1}{|Z_{,\aa}|^2}}  ; \bar Z_{t,\aa}] $. We will convert it to a form so that on which we can directly apply known results to conclude the desired estimate, $$\nm{ [Z_t, \mathbb H(\frac{iA_1}{|Z_{,\aa}|^2}); \bar Z_{t,\aa}]-U_l[\Zf_t, \mathbb H(\frac{i\tAone}{|\Zf_{,\aa}|^2}); \bar {\Zf}_{t,\aa}] }_{L^2}\lec \mathcal F(t)^{1/2}.$$    
 
 We need the following basic identities: 1. for $f$, $g$ satisfying $f=\mathbb H f$, $g=\mathbb H g$, 
  \begin{equation}\label{2446-1}
 [ f, g; 1]=0;
\end{equation}

 2. for $f, p, g$, satisfying  
 $g=\mathbb Hg$ and $p=\mathbb Hp$,
 \begin{equation}\label{2424}
 [\bar p, \mathbb P_H f; g]=[\mathbb P_H f, \bar p g; 1]= [f, \mathbb P_A(\bar p g); 1]
 \end{equation}
 \eqref{2446-1} can be verified by \eqref{comm-hilbe} and integration by parts. \eqref{2424} can be verified by  \eqref{2446-1}.

 We split
 \begin{equation}\label{2423}
 \bracket{Z_t,   \mathbb H\paren{\frac{iA_1}{|Z_{,\aa}|^2}}  ; \bar Z_{t,\aa}}=\bracket{Z_t,   2\mathbb P_H\paren{\frac{iA_1}{|Z_{,\aa}|^2}}  ; \bar Z_{t,\aa}}-\bracket{Z_t,   \frac{iA_1}{|Z_{,\aa}|^2}; \bar Z_{t,\aa}}=2I-II.
 \end{equation}
 Applying  \eqref{2424} to $I$ yields
 \begin{equation}\label{2425}
 I:=\bracket{Z_t,   \mathbb P_H\paren{\frac{iA_1}{|Z_{,\aa}|^2}}  ; \bar Z_{t,\aa}} 
 =\bracket{\frac{iA_1}{|Z_{,\aa}|^2}, \mathbb P_A(Z_t\bar Z_{t,\aa});1};
 \end{equation}
 substituting in \eqref{2425} the identity 
 \begin{equation}\label{2440}
 \frac{iA_1(\aa)}{|Z_{,\aa}|^2}-\frac{iA_1(\bb)}{|Z_{,\bb}|^2}=\paren{\frac{iA_1(\aa)}{Z_{,\aa}}-\frac{iA_1(\bb)}{Z_{,\bb}}}\frac1{\bar Z_{,\bb}}+\frac{iA_1(\aa)}{Z_{,\aa}}\paren{\frac1{\bar Z_{,\aa}}-\frac1{\bar Z_{,\bb}}};
 \end{equation}
gives
\begin{equation}\label{2441}
I=\frac1{\pi i}\int\frac{\paren{\mathbb P_A(Z_t\bar Z_{t,\aa})(\aa)-\mathbb P_A(Z_t\bar Z_{t,\bb})(\bb)}\paren{\frac{iA_1(\aa)}{Z_{,\aa}}-\frac{iA_1(\bb)}{Z_{,\bb}}    }\frac1{\bar Z_{,\bb}}}{(\aa-\bb)^2}\,d\bb;
\end{equation}
 here the second term disappears  because of  
 the fact \eqref{2446-1}. 
 Using the identity
 \begin{equation}\label{2442}
\frac{\mathbb P_A(Z_t\bar Z_{t,\aa})-\mathbb P_A(Z_t\bar Z_{t,\bb})}{\bar Z_{,\bb}}= \frac{\mathbb P_A(Z_t\bar Z_{t,\aa})}{\bar Z_{,\aa}}-\frac{\mathbb P_A(Z_t\bar Z_{t,\bb})}{\bar Z_{,\bb}}-\mathbb P_A(Z_t\bar Z_{t,\aa})\paren{\frac1 {\bar Z_{,\aa}}-\frac1{\bar Z_{,\bb}}}  
 \end{equation}
 we get
  \begin{equation}\label{2443}
I=\bracket{\frac{\mathbb P_A(Z_t\bar Z_{t,\aa})}{\bar Z_{,\aa}}, \frac{iA_1}{Z_{,\aa}}; 1}-\mathbb P_A(Z_t\bar Z_{t,\aa})\bracket{\frac1 {\bar Z_{,\aa}}, \frac{iA_1}{Z_{,\aa}}; 1   }
\end{equation}
 from here we are readily to conclude from Proposition~\ref{d33}. We now work on $II$. By \eqref{2440},
 \begin{equation}\label{2444}
 II:= \bracket{Z_t,   \frac{iA_1}{|Z_{,\aa}|^2}; \bar Z_{t,\aa}}=\bracket{Z_t,   \frac{iA_1}{Z_{,\aa}}; \bar {D_\aa Z_{t}}}+\frac{iA_1}{Z_{,\aa}} \bracket{Z_t,   \frac{1}{\bar Z_{,\aa}}; \bar Z_{t,\aa}};
 \end{equation}
 the first term can be handled by Proposition~\ref{d33}. We focus on the second term. By a \eqref{2442} type identity, we have
 \begin{equation}\label{2445}
 \begin{aligned}
 &\frac{1}{Z_{,\aa}} \bracket{Z_t,   \frac{1}{\bar Z_{,\aa}}; \bar Z_{t,\aa}}=\bracket{\frac{Z_t}{Z_{,\aa}},   \frac{1}{\bar Z_{,\aa}}; \bar Z_{t,\aa}} -\bracket{\frac{1}{Z_{,\aa}},   \frac{1}{\bar Z_{,\aa}}; Z_t\bar Z_{t,\aa}}\\&\qquad=\bracket{\mathbb P_A\paren{\frac{Z_t}{Z_{,\aa}}},   \frac{1}{\bar Z_{,\aa}}; \bar Z_{t,\aa}} -\bracket{\frac{1}{Z_{,\aa}},   \frac{1}{\bar Z_{,\aa}}; \mathbb P_A\paren{Z_t\bar Z_{t,\aa}}}\\&\qquad\qquad+\bracket{\mathbb P_H\paren{\frac{Z_t}{Z_{,\aa}}},   \frac{1}{\bar Z_{,\aa}}; \bar Z_{t,\aa}} -\bracket{\frac{1}{Z_{,\aa}},   \frac{1}{\bar Z_{,\aa}}; \mathbb P_H\paren{Z_t\bar Z_{t,\aa}}}=I_1-I_2+I_3-I_4
 \end{aligned}
 \end{equation}
 The first two terms $I_1$, $I_2$ in \eqref{2445} can be handed by Propositions~\ref{d32} and \ref{d33}, because $\mathbb P_A \paren{\frac{Z_t}{Z_{,\aa}}}=\mathbb P_A b$. We need to manipulate further the last two terms. We begin with $I_4$, and use the first equality in \eqref{2424},  then use the identity $\mathbb P_H=-\mathbb P_A+I$,
 \begin{equation}\label{2446}
 \begin{aligned}
 &I_4:= \bracket{   \frac{1}{\bar Z_{,\aa}}, \frac{1}{Z_{,\aa}}; \mathbb P_H\paren{Z_t\bar Z_{t,\aa}}}
 =\bracket{\frac{1}{Z_{,\aa}}, \frac{ \mathbb P_H\paren{Z_t\bar Z_{t,\aa}}}{ \bar Z_{,\aa} }       ;1 }\\&=
- \bracket{\frac{1}{Z_{,\aa}}, \frac{ \mathbb P_A\paren{Z_t\bar Z_{t,\aa}}}{ \bar Z_{,\aa} }       ;1 }+\bracket{\frac{1}{Z_{,\aa}}, \mathbb P_A\paren{Z_t\mathbb P_H\bar {D_\aa Z_t}}    ;1 }+\bracket{\frac{1}{Z_{,\aa}}, Z_t\mathbb P_A\bar {D_\aa Z_t}    ;1 }
 \end{aligned}
 \end{equation}
 because of the fact \eqref{2446-1},  the $\mathbb P_A$ can be inserted in the second term.
 Now the first two terms on the right hand side of \eqref{2446} can be handled by Propositions~\ref{d33} and \ref{dhhalf2}, we need to work further on the last term, $$I_{43}:=\bracket{\frac{1}{Z_{,\aa}}, Z_t\mathbb P_A\bar {D_\aa Z_t}    ;1 }.$$ 
 We consider it together with $I_3$. 
By \eqref{2424}, 
 \begin{equation}\label{2447}
 I_3: =\bracket{ \frac{1}{\bar Z_{,\aa}}, \mathbb P_H\paren{\frac{Z_t}{Z_{,\aa}}}; \bar Z_{t,\aa}}=\bracket{\frac{Z_t}{Z_{,\aa}}, \mathbb P_A(\bar {D_\aa Z_t}); 1   }.
 \end{equation}
Sum up $I_3$ and $-I_{43}$ gives 
\begin{equation}\label{2448}
I_3-I_{43}=\frac1{\pi i} \int\frac{ \paren{\frac{\mathbb P_A(\bar {D_\aa Z_t})}{Z_{,\bb}}-   \frac{\mathbb P_A(\bar {D_\bb Z_t})}{Z_{,\aa}}   }(Z_t(\aa,t)-Z_t(\bb,t))     } {(\aa-\bb)^2}\,d\bb=
-\mathbb P_A(\bar{D_\aa Z_t})\bracket{Z_t, \frac1{Z_{,\aa}};1}
\end{equation}
 here  we used \eqref{2446-1} in the second step.
 
Through the steps in \eqref{2423}--\eqref{2448}, we have converted  $\bracket{Z_t,   \mathbb H\paren{\frac{iA_1}{|Z_{,\aa}|^2}}  ; \bar Z_{t,\aa}}$  into a sum of terms that can be handled with known results in \S\ref{prepare}-\S\ref{additional}. We can conclude now that
 \begin{equation}\label{2449}
 \nm{\bracket{Z_t,   \mathbb H\paren{\frac{iA_1}{|Z_{,\aa}|^2}}  ; \bar Z_{t,\aa}}-U_l\bracket{\Zf_t,   \mathbb H\paren{\frac{i\tAone}{|\Zf_{,\aa}|^2}}  ; \bar {\Zf}_{t,\aa}}}_{L^2}\lec \mathcal F(t)^{1/2}.
 \end{equation}
 Combine with \eqref{2422}, we obtain
  \begin{equation}\label{2450}
 \nm{\bracket{Z_t,   (\partial_t+b\partial_\aa)b ; \bar Z_{t,\aa}}-U_l\bracket{\Zf_t,    (\partial_t+\tb\partial_\aa)\tb   ; \bar {\Zf}_{t,\aa}}}_{L^2}\lec \mathcal F(t)^{1/2}.
 \end{equation}
 Now combine all the steps in \S\ref{ddtat}, we get
 \begin{equation}\label{2451}
  \nm{(\partial_t+b\partial_\aa)\paren{\frac{\frak a_t}{\frak a}\circ h^{-1}}-(\partial_t+\tb\partial_\aa)\paren{\frac{\taf_t}{\taf} \circ \th^{-1} }\circ l}_{L^2}\lec \mathcal F(t)^{1/2}.
 \end{equation}
 
 Combine all the steps above we have \eqref{denergy-inq}. This finishes the proof for Proposition~\ref{denergy-est},  and Theorem~\ref{unique}.

 \end{proof}
\section{The proof of Theorem~\ref{th:local}}\label{proof2}

For the data given in \S\ref{id}, we construct the solution of the Cauchy problem in the class where $\mathcal E<\infty$ via a sequence of approximating solutions  obtained by mollifying the initial data by the Poisson kernel, where we use Theorem~\ref{unique} and a compactness argument to prove the convergence of the sequence.
To prove the uniqueness of the solutions we use Theorem~\ref{unique}. 

 In what follows, we denote $z'=x'+iy'$, where $x', y'\in\mathbb R$.  $K$ is the Poisson kernel as defined by \eqref{poisson},  $f\ast g$ is the convolution in the spatial variable. For any function $\phi$, $\phi_\epsilon(x)=\frac1{\epsilon}\phi(\frac x\epsilon)$ for $x\in \mathbb R$.

\subsection{Some basic preparations}\label{analysis-2}    
Observe that in inequality \eqref{stability}, the stability is proved for the difference of the solutions in Lagrangian coordinates. We begin with some inequalities that will allow us to control the difference in Riemann mapping coordinates.

 We have

 \begin{lemma}\label{lemma4}
 Let $l:\mathbb R\to \mathbb R$ be a diffeomorphism with $l-\aa\in H^1(\mathbb R)$. Then  
 
 1. for any $f\in \dot H^1(\mathbb R)$, 
 \begin{equation}\label{lemma4-inq}
 \nm{f\circ l-f}_{\dot H^{1/2}}\lec \|\partial_\aa f\|_{L^2}\|l-\aa\|_{L^2}^{1/4}\|l_\aa-1\|_{L^2}^{1/4}
 +
 C(\nm{(l^{-1})_\aa}_{L^\infty}, \nm{l_\aa}_{L^\infty})\|l_\aa-1\|_{L^2}\|\partial_\aa f\|_{L^2}.
 \end{equation}
 
 2. for any  function $b:\mathbb R\to\mathbb R$, with $b_\aa\in H^{1/2}(\mathbb R)\cap L^\infty(\mathbb R)$,
 \begin{equation}\label{lemma4-inq2}
 \|b_\aa\circ l-b_\aa\|_{L^2}^2\lec \|b_\aa\|_{L^2}\|b_\aa\|_{L^\infty}\|l_\aa\|_{L^\infty}^{1/2}\|l_\aa-1\|_{L^2}+\|b_\aa\|_{\dot H^{1/2}}\|b\circ l-b\|_{\dot H^{1/2}}+\|b_\aa\|_{L^\infty}^2\|l_\aa-1\|_{L^2}^2.
 \end{equation}
 \end{lemma}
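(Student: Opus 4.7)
The approach is to combine an $L^2$ estimate with a $\dot H^1$ estimate for $g := f\circ l - f$ through the Fourier-side Cauchy-Schwarz interpolation $\|g\|_{\dot H^{1/2}}^2 \le \|g\|_{L^2}\|g\|_{\dot H^1}$. For the $L^2$ half I will write $f(l(\aa)) - f(\aa) = \int_{\aa}^{l(\aa)} f'(s)\,ds$, apply Cauchy-Schwarz in $s$ to obtain $|g(\aa)|^2 \le |l(\aa) - \aa|\int_{\min(\aa,l(\aa))}^{\max(\aa,l(\aa))}|f'(s)|^2\,ds$, and then swap the order of integration: for each $s$, the set $\{\aa: s \in [\min(\aa,l(\aa)), \max(\aa,l(\aa))]\}$ is contained in $[s-\|l-\aa\|_{L^\infty},\,s+\|l-\aa\|_{L^\infty}]$, so $\|g\|_{L^2}^2 \lec \|l-\aa\|_{L^\infty}^2\|f'\|_{L^2}^2$. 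A one-dimensional Sobolev interpolation gives $\|l-\aa\|_{L^\infty} \lec \|l-\aa\|_{L^2}^{1/2}\|l_\aa-1\|_{L^2}^{1/2}$. For the $\dot H^1$ half, I differentiate $g$ and estimate by change of variables: $\|g\|_{\dot H^1} = \|f'(l)l_\aa - f'\|_{L^2} \le \|l_\aa\|_{L^\infty}\|f'(l)\|_{L^2} + \|f'\|_{L^2}$, with $\|f'(l)\|_{L^2}^2 = \int|f'(s)|^2 (l^{-1})_s\,ds \le \|(l^{-1})_\aa\|_{L^\infty}\|f'\|_{L^2}^2$. Interpolating yields the first term of \eqref{lemma4-inq}. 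The second term is produced by a separate, direct argument parallel to the proof of \eqref{q1}: splitting $\partial_\aa g = f'(l_\aa-1) + (f'(l) - f')$ and running a $\mathcal Q_l$-type duality estimate delivers the bound $\lec C(\|l_\aa\|_{L^\infty},\|(l^{-1})_\aa\|_{L^\infty})\|l_\aa-1\|_{L^2}\|f'\|_{L^2}$, which is the sharper estimate when $\|l-\aa\|_{L^2}$ is not small.

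\textbf{Plan for Part 2.} The centerpiece is the elementary chain-rule identity
\[
B\, l_\aa \;=\; \partial_\aa(b\circ l - b) \;-\; b_\aa(l_\aa - 1),\qquad B := b_\aa\circ l - b_\aa.
\]
I will first expand $\|B\|_{L^2}^2 = \int b_\aa(l)^2 - 2\int b_\aa(l)b_\aa + \int b_\aa^2$ and use the change of variables $s = l(\aa)$ together with $\int b_\aa(l)^2 l_\aa\,d\aa = \|b_\aa\|_{L^2}^2$ to obtain
\[
\|B\|_{L^2}^2 \;=\; -2\int B\, b_\aa\,d\aa \;+\; \int b_\aa(l)^2(1-l_\aa)\,d\aa,
\]
where the last integral is bounded by Cauchy-Schwarz and $\|b_\aa(l)\|_{L^2} \lec \|l_\aa\|_{L^\infty}^{1/2}\|b_\aa\|_{L^2}$, giving the first term on the right of \eqref{lemma4-inq2}. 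To bound $\int B\,b_\aa\,d\aa$, I multiply the key identity by $b_\aa$ and integrate:
\[
\int B\, b_\aa\, l_\aa\,d\aa \;=\; \int b_\aa\, \partial_\aa(b\circ l - b)\,d\aa \;-\; \int b_\aa^2(l_\aa - 1)\,d\aa,
\]
then apply the $\dot H^{1/2}$--$\dot H^{-1/2}$ duality $|\int u\,\partial_\aa v| \le \|u\|_{\dot H^{1/2}}\|v\|_{\dot H^{1/2}}$ to the first integral (yielding the middle RHS term of \eqref{lemma4-inq2}) and Cauchy-Schwarz to the second. The discrepancy $\int B\, b_\aa - \int B\, b_\aa\, l_\aa = -\int B\, b_\aa(l_\aa - 1)$ is $O(\|B\|_{L^2}\|b_\aa\|_{L^\infty}\|l_\aa-1\|_{L^2})$, which via AM-GM contributes the third RHS term and a piece absorbed into $\tfrac12\|B\|_{L^2}^2$ on the left.

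\textbf{Main obstacle.} The delicate step is the bookkeeping in Part 2: several boundary terms of the form $\int B\cdot(\text{small})\,d\aa$ must be absorbed into $\|B\|_{L^2}^2$ via AM-GM while keeping the coefficient of $\|B\|_{L^2}^2$ on the right strictly less than one, and the chain-rule identity must be applied in exactly the right order so that the duality estimate produces the factor $\|b\circ l - b\|_{\dot H^{1/2}}$ rather than an $L^2$ norm of $\partial_\aa(b\circ l - b)$ (which would require second derivatives of $b$). This compatibility between the dual pairing and the composition-difference quantity $\|b\circ l - b\|_{\dot H^{1/2}}$ is precisely what makes the lemma suitable for passing the Lagrangian stability bound \eqref{stability} over to estimates in the Riemann mapping variable later in \S\ref{proof2}.
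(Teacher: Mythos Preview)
\textbf{Part 2} is essentially the paper's argument with a slightly different bookkeeping: the paper writes
\[
b_\aa\circ l-b_\aa=\partial_\aa(b\circ l-b)+b_\aa\circ l\,(1-l_\aa)
\]
and computes $\|\partial_\aa(b\circ l-b)\|_{L^2}^2$ directly via the change-of-variables identity $\int (b_\aa\circ l)^2 l_\aa\,d\aa=\|b_\aa\|_{L^2}^2$, so no absorption step is needed. Your version is fine, though note that $\|b_\aa\circ l\|_{L^2}\le\|(l^{-1})_\aa\|_{L^\infty}^{1/2}\|b_\aa\|_{L^2}$, not $\|l_\aa\|_{L^\infty}^{1/2}$; the paper recovers the factor $\|l_\aa\|_{L^\infty}^{1/2}$ by instead bounding $\int (b_\aa\circ l)^2 l_\aa(l_\aa-1)\,d\aa$ and using $\|(b_\aa\circ l)\,l_\aa^{1/2}\|_{L^2}=\|b_\aa\|_{L^2}$.

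\textbf{Part 1} is where your plan diverges and has a genuine gap. Your interpolation $\|g\|_{\dot H^{1/2}}^2\le\|g\|_{L^2}\|g'\|_{L^2}$, together with $\|g\|_{L^2}\lec\|l-\aa\|_{L^\infty}\|f'\|_{L^2}$ and $\|g'\|_{L^2}\le C(\|l_\aa\|_{L^\infty},\|(l^{-1})_\aa\|_{L^\infty})\|f'\|_{L^2}$, gives a single bound
\[
\|f\circ l-f\|_{\dot H^{1/2}}\le C(\|l_\aa\|_{L^\infty},\|(l^{-1})_\aa\|_{L^\infty})^{1/2}\,\|f'\|_{L^2}\,\|l-\aa\|_{L^2}^{1/4}\|l_\aa-1\|_{L^2}^{1/4},
\]
which is a \emph{different} inequality from the stated one (the first term there carries a universal constant), though it would equally serve the application in \S\ref{proof2}. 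The paper obtains the universal constant on the first term via the cancellation identity
\[
i\!\int\partial_\aa(f\circ l-f)\,\overline{(f\circ l-f)}\,d\aa=2\Re\, i\!\int\partial_\aa f\,\overline{(f-f\circ l)}\,d\aa,
\]
which follows from $\int\partial_\aa(f\circ l)\,\overline{f\circ l}\,d\aa=\int f'\bar f\,d\aa$ (change of variable), and then uses the decomposition $\|g\|_{\dot H^{1/2}}^2=\int i\partial_\aa g\,\bar g\,d\aa+2\|\mathbb P_A g\|_{\dot H^{1/2}}^2$ together with $2\mathbb P_A g=(2\mathbb P_A f)\circ l-2\mathbb P_A f+\mathcal Q_l(f\circ l)$ and the estimate \eqref{q1}. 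That is how the two terms in \eqref{lemma4-inq} arise as a genuine \emph{sum}, not as alternatives.

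Your claim that the second term $C(\ldots)\|l_\aa-1\|_{L^2}\|f'\|_{L^2}$ is by itself an upper bound for $\|f\circ l-f\|_{\dot H^{1/2}}$ is \emph{false}. Take $l(\aa)=\aa+R^{-1/2}\psi(\aa/R)$ with $\psi$ a fixed smooth bump, $\psi(0)=1$, and $f$ with $f'=\phi$ a smooth bump near $0$. Then $\|l_\aa\|_{L^\infty},\|(l^{-1})_\aa\|_{L^\infty}\approx 1$, $\|l_\aa-1\|_{L^2}\sim R^{-1}$, while for large $R$ one has $f\circ l-f\approx R^{-1/2}\phi$, hence $\|f\circ l-f\|_{\dot H^{1/2}}\sim R^{-1/2}\gg R^{-1}$. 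The ``$\mathcal Q_l$-type duality'' you sketch cannot close: after writing $\partial_\aa g=f'(l)(l_\aa-1)+(f'(l)-f')$, the piece $f'(l)-f'$ paired against a test function in $\dot H^{1/2}$ is not controlled by $\|l_\aa-1\|_{L^2}\|f'\|_{L^2}$ alone. So either drop that claim and present the interpolation inequality as your (slightly different but adequate) version of Part~1, or adopt the paper's route through the holomorphic/antiholomorphic splitting and \eqref{q1}.
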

 \begin{proof}
 We know
 \begin{equation}\label{3000}
i \int\partial_\aa(f\circ l-f)\overline {(f\circ l-f)}\,d\aa=2\Re i\int\partial_\aa f\overline{(f-f\circ l)}\,d\aa
 \end{equation}
 so
 \begin{equation}\label{3001}
 \abs{i \int\partial_\aa(f\circ l-f)\overline {(f\circ l-f)}\,d\aa}\le 2\|\partial_\aa f\|_{L^2}\|f-f\circ l\|_{L^2}.
 \end{equation}
 Now
 \begin{equation}\label{3002}
 \int |f(\aa)-f(l(\aa))|^2\,d\aa\le \|l-\aa\|^2_{L^\infty}\int |\mathcal M(\partial_\aa f))(\aa)|^2\,d\aa\lec \|l-\aa\|^2_{L^\infty}\|\partial_\aa f\|^2_{L^2},
 \end{equation}
where $\mathcal M$ is the Hardy-Littlewood maximal operator. Therefore by  Sobolev embedding \eqref{eq:sobolev} and Lemma~ \ref{hhalf1},  
 \begin{align}
 \nm{f\circ l-f}_{\dot H^{1/2}}\lec \|\partial_\aa f\|_{L^2}\|l-\aa\|_{L^2}^{1/4}\|l_\aa-1\|_{L^2}^{1/4}
 +\nm{\mathbb P_A(f\circ l-f)}_{\dot H^{1/2}},\label{3006}\\
 \nm{f\circ l-f}_{\dot H^{1/2}}\lec \|\partial_\aa f\|_{L^2}\|l-\aa\|_{L^2}^{1/4}\|l_\aa-1\|_{L^2}^{1/4}
 +\nm{\mathbb P_H(f\circ l-f)}_{\dot H^{1/2}}.\label{3007}
 \end{align}
 Now 
 $$2\mathbb P_A (f\circ l-f)=(2\mathbb P_A f)\circ l-2\mathbb P_A f+\mathcal Q_l(f\circ l).$$
Applying \eqref{3007} to $(\mathbb P_A f)\circ l-\mathbb P_A f$   and using \eqref{q1} gives 
$$\|\mathbb P_A (f\circ l-f)\|_{\dot H^{1/2}}\lec  \|\partial_\aa f\|_{L^2}\|l-\aa\|_{L^2}^{1/4}\|l_\aa-1\|_{L^2}^{1/4}
+
 C(\nm{(l^{-1})_\aa}_{L^\infty}, \nm{l_\aa}_{L^\infty})\|l_\aa-1\|_{L^2}\|\partial_\aa f\|_{L^2}.
 $$
 This proves \eqref{lemma4-inq}. 
 
 To prove \eqref{lemma4-inq2}, we begin with
 \begin{equation}\label{3019}
 b_\aa\circ l-b_\aa=\partial_\aa(b\circ l-b)+b_\aa\circ l(1-l_\aa);
 \end{equation}
 and by expanding the integral, we have
 \begin{equation}\label{3018}
\|\partial_\aa(b\circ l-b)\|_{L^2}^2=\int (b_\aa\circ l)^2(l_\aa^2-l_\aa)\,d\aa+2\int b_\aa\partial_\aa(b-b\circ l)\,d\aa.
 \end{equation}
 \eqref{lemma4-inq2} follows directly from the Triangular, Cauchy-Schwarz and H\"older's inequalities. 
  \end{proof}
  
  \begin{lemma}\label{lemma3}
For any $\phi\in C^\infty(\mathbb R)$, with $\int\phi(x)\,dx=1$ and $\int |x\phi(x)|^2\,dx<\infty$, and  for any $f\in \dot H^1(\mathbb R)$, 
\begin{equation}\label{lemma3-inq}
\|\phi_\epsilon\ast f-f\|_{L^\infty}\lec \epsilon^{1/2}\|\partial_x f\|_{L^2}\|x\phi\|_{L^2}.
\end{equation}
\end{lemma}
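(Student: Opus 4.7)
The strategy is to represent $\phi_\epsilon\ast f-f$ as a weighted integral of $\partial_x f$ and then estimate the $L^2$ norm of the weight. Since $\int\phi_\epsilon=1$, I would start from
$$\phi_\epsilon\ast f(x)-f(x)=\int_{\mathbb R}\phi_\epsilon(y)\bigl(f(x-y)-f(x)\bigr)\,dy,$$
use $f(x-y)-f(x)=-\int_0^y\partial_x f(x-s)\,ds$ from the fundamental theorem of calculus, and apply Fubini to swap the order of integration. Splitting the $y$-integral into the half-lines $y>0$ and $y<0$, this yields the identity
$$\phi_\epsilon\ast f(x)-f(x)=-\int_{\mathbb R}\partial_x f(x-s)\,K_\epsilon(s)\,ds,\qquad K_\epsilon(s):=\begin{cases}\int_s^\infty\phi_\epsilon(y)\,dy, & s>0,\\[1mm] -\int_{-\infty}^s\phi_\epsilon(y)\,dy, & s<0.\end{cases}$$
Then Cauchy--Schwarz gives $\|\phi_\epsilon\ast f-f\|_{L^\infty}\le \|\partial_x f\|_{L^2}\|K_\epsilon\|_{L^2}$, so the lemma reduces to proving $\|K_\epsilon\|_{L^2}\lec \epsilon^{1/2}\|x\phi\|_{L^2}$.

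The change of variables $s=\epsilon u$ together with the scaling $\phi_\epsilon(y)=\epsilon^{-1}\phi(y/\epsilon)$ gives $\|K_\epsilon\|_{L^2}^2=\epsilon\|K_1\|_{L^2}^2$, so it suffices to treat $\epsilon=1$. Setting $F(u):=\int_u^\infty\phi(v)\,dv$ for $u>0$, one has $F'=-\phi$, and integration by parts gives
$$\int_0^\infty F(u)^2\,du=\bigl[uF(u)^2\bigr]_0^\infty+2\int_0^\infty uF(u)\phi(u)\,du.$$
Cauchy--Schwarz on the right hand side bounds it by $2\|F\|_{L^2(0,\infty)}\|x\phi\|_{L^2(0,\infty)}$, yielding $\|F\|_{L^2(0,\infty)}\le 2\|x\phi\|_{L^2(0,\infty)}$. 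The symmetric argument on $(-\infty,0)$ combines to give $\|K_1\|_{L^2}\le 2\|x\phi\|_{L^2}$, so that $\|\phi_\epsilon\ast f-f\|_{L^\infty}\le 2\epsilon^{1/2}\|\partial_x f\|_{L^2}\|x\phi\|_{L^2}$.

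The only point that requires care is the vanishing of the boundary term $[uF(u)^2]_0^\infty$. At $u=0$ it is immediate since $F(0)=\int_0^\infty\phi$ is finite. At $u=\infty$, Cauchy--Schwarz gives $|F(u)|\le \|v\phi\|_{L^2(u,\infty)}\,u^{-1/2}$, hence $uF(u)^2\le \|v\phi\|_{L^2(u,\infty)}^2$, which tends to $0$ as $u\to\infty$ by the hypothesis $x\phi\in L^2$. This is the only non-routine input; everything else in the argument is bookkeeping.
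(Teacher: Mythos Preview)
Your proof is correct, but it takes a different route from the paper. The paper's one-line argument is to write
\[
\phi_\epsilon\ast f(x)-f(x)=\int y\,\phi_\epsilon(y)\cdot\frac{f(x-y)-f(x)}{y}\,dy,
\]
apply Cauchy--Schwarz, note that $\|y\phi_\epsilon(y)\|_{L^2}=\epsilon^{1/2}\|x\phi\|_{L^2}$ by scaling, and bound the difference-quotient factor by $C\|\partial_x f\|_{L^2}$ via Hardy's inequality \eqref{eq:77}. Your approach instead pushes the derivative onto $f$ via the fundamental theorem of calculus and Fubini, applies Cauchy--Schwarz to pair $\partial_x f$ with the tail kernel $K_\epsilon$, and then bounds $\|K_1\|_{L^2}$ by integration by parts. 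That last step is exactly the classical one-dimensional Hardy inequality $\|F\|_{L^2(0,\infty)}\le 2\|uF'\|_{L^2(0,\infty)}$ in disguise, so you are in effect reproving the ingredient the paper simply cites. Your route is more self-contained and yields an explicit constant; the paper's is shorter because Hardy's inequality is already available in the appendix. One minor point of rigor: to justify dividing through by $\|F\|_{L^2(0,\infty)}$ after Cauchy--Schwarz you should first run the integration by parts on $[0,R]$ (where $F$ is bounded, hence $L^2$) and then let $R\to\infty$ using your boundary-term estimate $RF(R)^2\to 0$; otherwise the finiteness of $\|F\|_{L^2(0,\infty)}$ is assumed rather than proved.
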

The proof is straightforward by Cauchy-Schwarz inequality and Hardy's inequality \eqref{eq:77}. We omit the details. 
 
 Let $Z, \frak Z$ be solutions of the system \eqref{interface-r}-\eqref{interface-holo}-\eqref{a1}-\eqref{b}, satisfying the assumptions of Theorem~\ref{unique},  and let $l$ be given by \eqref{def-l}. 
 We know
$$(\partial_t+b\partial_\aa)(l-\aa)=U_{h^{-1}}(\th_t-h_t)=\tb\circ l-b,$$ and  $l(\aa, 0)=\aa$ for $\aa\in\mathbb R$. By Lemma~\ref{basic-e2},  \begin{equation}\label{3003}
\frac d{dt}\|l(t)-\aa\|^2_{L^2}\le 2\|\tb\circ l(t)-b(t)\|_{L^2}\|l(t)-\aa\|_{L^2}+\|b_\aa(t)\|_{L^\infty}\|l(t)-\aa\|_{L^2}^2,
\end{equation}
and from \eqref{b} and Sobolev embedding,
\begin{equation}\label{3010}
\|b(t)\|_{H^1(\mathbb R)}\lec \|Z_{t}(t)\|_{H^1(\mathbb R)}\paren{\nm{\frac1{Z_{,\aa}}(t)-1}_{H^1(\mathbb R)}+1}.
\end{equation}
Therefore by Gronwall's inequality, we have
 \begin{equation}\label{3004} 
\sup_{[0, T]}\|l(t)-\aa\|_{L^2(\mathbb R)}\le C,
\end{equation}
 where $C$ is a constant depending on $\sup_{[0, T]}\paren{\|Z_t(t)\|_{L^2}+\|\frak Z_t(t)\|_{L^2}+\nm{\frac1{Z_{,\aa}}(t)-1}_{L^2} +\nm{\frac1{\frak Z_{,\aa}}(t)-1}_{L^2}}$ and $\sup_{[0, T]} (\mathcal E(t)+\tilde{\mathcal E}(t) )$. 
 
 Let
 \begin{equation}\label{3008}
 \begin{aligned}
 \nm{(Z-\Zf)(0)}:=& \|\paren{\bar Z_t-\bar \Zf_t}(0)\|_{\dot{H}^{1/2}}+\|\paren{\bar Z_{tt}-\bar \Zf_{tt}}(0)\|_{\dot{H}^{1/2}}+\nm{\paren{\frac1{ Z_{,\aa}}-\frac 1{ \Zf_{,\aa}}}(0)}_{\dot{H}^{1/2}}\\&+\|\paren{D_\aa Z_t-(\tD_\aa \Zf_t)}(0)\|_{L^2}
 +\nm{\paren{\frac1{ Z_{,\aa}}-\frac 1{ \Zf_{,\aa}}}(0)}_{L^\infty}.
 \end{aligned}
 \end{equation}
Applying \eqref{lemma4-inq} to $f=\bar {\frak Z}_t$, $\frac1{\frak Z_{,\aa}}-1$ and $\bar {\frak Z}_{tt}$ and use \eqref{stability} gives
\begin{equation}\label{3005}
\begin{aligned}
\sup_{[0, T]}&\paren{\|\paren{\bar Z_t-\bar \Zf_t}(t)\|_{\dot{H}^{1/2}(\mathbb R)}+\nm{\paren{\frac1{ Z_{,\aa}}-\frac 1{ \Zf_{,\aa}}}(t)}_{\dot{H}^{1/2}(\mathbb R)}+\|\paren{\bar Z_{tt}-\bar \Zf_{tt}}(t)\|_{\dot{H}^{1/2}(\mathbb R)}}\\&\le C(\nm{(Z-\Zf) (0)}+  \nm{(Z-\Zf) (0)}  ^{1/4});
 \end{aligned}
\end{equation}
and applying \eqref{lemma4-inq2}, \eqref{lemma4-inq} to $\tb$ and use \eqref{3010}, \eqref{2393}, \eqref{2395}, Appendix~\ref{quantities} and \eqref{stability} yields
\begin{equation}\label{3020}
\sup_{[0, T]}\|(b_\aa-\tb_\aa)(t)\|_{L^2(\mathbb R)} \le C(\nm{(Z-\Zf) (0)}+  \nm{(Z-\Zf) (0)}  ^{1/8}),
\end{equation}
where $C$ is a constant depending on $\sup_{[0, T]}\paren{\|Z_t(t)\|_{L^2}+\|\frak Z_t(t)\|_{L^2}+\nm{\frac1{Z_{,\aa}}(t)-1}_{L^2} +\nm{\frac1{\frak Z_{,\aa}}(t)-1}_{L^2}}$ and $\sup_{[0, T]} (\mathcal E(t)+\tilde{\mathcal E}(t) )$. By Sobolev embedding \eqref{eq:sobolev}, 
\begin{equation}\label{3012}
\|l(\cdot,t)-\aa\|_{L^\infty(\mathbb R)}^2\lec \|l(\cdot,t)-\aa\|_{L^2(\mathbb R)}\|(l_\aa-1)(t)\|_{L^2(\mathbb R)},
\end{equation}
therefore by \eqref{3004}, \eqref{2345}-\eqref{2346}, and \eqref{stability},
\begin{equation}\label{3011}
\sup_{[0, T]}(\|h(t)-\th(t)\|^2_{L^\infty(\mathbb R)}+\|h^{-1}(t)-\th^{-1}(t)\|^2_{L^\infty(\mathbb R)})\le C \|(Z-\Zf)(0)\|,
\end{equation}
where C is a constant depending on $\sup_{[0, T]}\paren{\|Z_t(t)\|_{L^2}+\|\frak Z_t(t)\|_{L^2}+\nm{\frac1{Z_{,\aa}}(t)-1}_{L^2} +\nm{\frac1{\frak Z_{,\aa}}(t)-1}_{L^2}}$ and $\sup_{[0, T]} (\mathcal E(t)+\tilde{\mathcal E}(t) )$.

We also have, from Sobolev embedding \eqref{eq:sobolev}, \eqref{3010}, \eqref{3004}, \eqref{3012} and  \eqref{stability} that for $t\in [0, T]$, 
\begin{equation}\label{3013}
\begin{aligned}
&\nm{(b-\tb)(t)}_{L^\infty(\mathbb R)}^2\lec \nm{(b-\tb\circ l)(t)}_{L^\infty(\mathbb R)}^2+\nm{(\tb\circ l-\tb)(t)}_{L^\infty(\mathbb R)}^2\\&\lec \nm{(b-\tb\circ l)(t)}_{L^2(\mathbb R)}\nm{\partial_\aa(b-\tb\circ l)(t)}_{L^2(\mathbb R)}+\nm{l(t)-\aa}_{L^\infty(\mathbb R)}^2\nm{\tb_\aa(t)}_{L^\infty(\mathbb R)}^2\\&\le C(\|(Z-\Zf)(0)\|+\|(Z-\Zf)(0)\|^2),
\end{aligned}
\end{equation}
where C is a constant depending on $\sup_{[0, T]}\paren{\|Z_t(t)\|_{L^2}+\|\frak Z_t(t)\|_{L^2}+\nm{\frac1{Z_{,\aa}}(t)-1}_{L^2} +\nm{\frac1{\frak Z_{,\aa}}(t)-1}_{L^2}}$ and $\sup_{[0, T]} (\mathcal E(t)+\tilde{\mathcal E}(t) )$. 

 We have 
\begin{lemma}\label{lemma5}
1. Assume that $f\in H^{1/2}(\mathbb R)$. Then
\begin{equation}\label{lemma5-inq1}
\|f\|_{L^4(\mathbb R)}^2\lec \|f\|_{L^2(\mathbb R)}\|f\|_{\dot H^{1/2}(\mathbb R)}.
\end{equation}
2. Let $\phi\in C^\infty(\mathbb R)\cap L^q(\mathbb R)$, and $f\in L^p(\mathbb R)$, where $1\le p\le \infty$, $\frac1p+\frac1q=1$.  For any $y'<0$, $x'\in\mathbb R$,
\begin{equation}\label{lemma5-inq2}
|\phi_{y'}\ast f(x')| \le (-y')^{-1/p}\|\phi\|_{L^q(\mathbb R)}\|f\|_{L^p(\mathbb R)}.
\end{equation}
\end{lemma}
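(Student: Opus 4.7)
The plan for part (i) is to combine the endpoint one-dimensional Sobolev embedding $\dot H^{1/4}(\mathbb R)\hookrightarrow L^4(\mathbb R)$ with a Cauchy--Schwarz interpolation on the Fourier side. The first step is to invoke the Hardy--Littlewood--Sobolev inequality (equivalently the fractional integral representation $f=c|\nabla|^{-1/4}|\nabla|^{1/4}f$) to obtain $\|f\|_{L^4}\lec \|f\|_{\dot H^{1/4}}$. The second step is to interpolate the $\dot H^{1/4}$ norm between $L^2$ and $\dot H^{1/2}$: writing $|\xi|^{1/2}=|\xi|^{0}\cdot |\xi|^{1/2}$ and applying Cauchy--Schwarz to $\int|\xi|^{1/2}|\hat f(\xi)|^2\,d\xi$ immediately produces $\|f\|_{\dot H^{1/4}}^2\lec \|f\|_{L^2}\|f\|_{\dot H^{1/2}}$. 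Combining the two inequalities yields \eqref{lemma5-inq1}.

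For part (ii), the plan is a direct H\"older estimate combined with a scaling computation for the kernel. The first step is to bound $|\phi_{y'}\ast f(x')|\le \|\phi_{y'}\|_{L^q(\mathbb R)}\|f\|_{L^p(\mathbb R)}$ by H\"older's inequality with conjugate exponents $(p,q)$. The second step is to compute, via the change of variables $x=(-y')u$,
\begin{equation*}
\|\phi_{y'}\|_{L^q(\mathbb R)}^q=\int \frac{1}{(-y')^q}\abs{\phi\paren{\frac{x}{-y'}}}^q\,dx=(-y')^{1-q}\|\phi\|_{L^q(\mathbb R)}^q,
\end{equation*}
so that $\|\phi_{y'}\|_{L^q}=(-y')^{-(q-1)/q}\|\phi\|_{L^q}=(-y')^{-1/p}\|\phi\|_{L^q}$ using $1/p+1/q=1$. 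Substituting produces the stated bound \eqref{lemma5-inq2}.

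Neither step presents a genuine obstacle: both assertions are standard interpolation and scaling inequalities. The only care required is to use the \emph{homogeneous} Sobolev embedding in part (i) rather than its inhomogeneous counterpart, and to track the scaling exponent in part (ii) via the conjugacy $1/p+1/q=1$. An alternative route for part (i) that avoids invoking Hardy--Littlewood--Sobolev in one dimension is to write $f(x)^2=2\int_{-\infty}^{x}f(y)f'(y)\,dy$ for smooth compactly supported $f$, apply Cauchy--Schwarz, and invoke $L^2$--$\dot H^{1/2}$ duality, then pass to the limit by density; the Fourier interpolation above is the cleanest route.
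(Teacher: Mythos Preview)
Your proof is correct and follows essentially the same approach as the paper: for part (i) the paper also invokes the Sobolev embedding $\dot H^{1/4}(\mathbb R)\hookrightarrow L^4(\mathbb R)$ (citing Stein) together with Plancherel and Cauchy--Schwarz to interpolate $\dot H^{1/4}$ between $L^2$ and $\dot H^{1/2}$, and for part (ii) the paper simply says it is a direct consequence of H\"older's inequality, which is exactly what you carry out with the scaling computation made explicit.
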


\begin{proof}
By the Theorem 1 on page 119 of \cite{s}, Plancherel's Theorem and Cauchy-Schwarz inequality, we have, for any $f\in H^{1/2}(\mathbb R)$, 
$$\|f\|_{L^4(\mathbb R)}\lec \|\partial_x^{1/4}f\|_{L^2(\mathbb R)}\lec  \|f\|_{L^2(\mathbb R)}\|f\|_{\dot H^{1/2}(\mathbb R)}.$$
\eqref{lemma5-inq2} is a direct consequence of  H\"older's inequality.

\end{proof}
We need in addition the following compactness results in the proof of the existence of solutions.

\begin{lemma}\label{lemma1} Let $\{f_n\}$ be a sequence of smooth functions on $\mathbb R\times [0, T]$. Let $1<p\le\infty$. Assume that there is a constant $C$, independent of $n$, such that 
\begin{equation}
\sup_{[0, T]}\|f_n(t)\|_{L^\infty}+ \sup_{[0, T]}\|{\partial_x f_n}(t)\|_{L^p}+ \sup_{[0, T]}\|\partial_t f_n(t)\|_{L^\infty}\le C.
\end{equation}
Then there is a function $f$, continuous and bounded on $\mathbb R\times [0, T]$,  and a subsequence $\{f_{n_j}\}$, such that $f_{n_j}\to f$ uniformly on compact subsets of $\mathbb R\times [0, T]$.
\end{lemma}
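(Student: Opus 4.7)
The plan is a standard Arzelà--Ascoli argument combined with a diagonal extraction. First I will establish uniform equicontinuity of the family $\{f_n\}$ on $\mathbb{R}\times[0,T]$. For the time variable, the bound $\sup_{[0,T]}\|\partial_t f_n(t)\|_{L^\infty}\le C$ immediately yields
\begin{equation*}
|f_n(x,t_1)-f_n(x,t_2)|\le C|t_1-t_2|,\qquad x\in\mathbb{R},\ t_1,t_2\in[0,T].
\end{equation*}
For the spatial variable, I will use the fundamental theorem of calculus together with H\"older's inequality: writing $q=p/(p-1)\in[1,\infty)$ for the conjugate exponent,
\begin{equation*}
|f_n(x_1,t)-f_n(x_2,t)|=\left|\int_{x_1}^{x_2}\partial_x f_n(s,t)\,ds\right|\le \|\partial_x f_n(t)\|_{L^p}|x_1-x_2|^{1/q}\le C|x_1-x_2|^{1/q},
\end{equation*}
which gives uniform H\"older continuity in $x$ of exponent $1/q>0$ (when $p=\infty$, this is Lipschitz continuity). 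Combined, we obtain a uniform modulus of continuity on $\mathbb{R}\times[0,T]$.

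Next, since $\sup_{n,t}\|f_n(t)\|_{L^\infty}\le C$, the family is uniformly bounded. Let $K_m=[-m,m]\times[0,T]$ for $m\in\mathbb{N}$. On each compact $K_m$, by Arzel\`a--Ascoli, there is a subsequence of $\{f_n\}$ converging uniformly on $K_m$ to a continuous function. A standard diagonal extraction then produces a single subsequence $\{f_{n_j}\}$ and a function $f$, defined on $\mathbb{R}\times[0,T]$, such that $f_{n_j}\to f$ uniformly on every $K_m$, hence on every compact subset of $\mathbb{R}\times[0,T]$.

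Finally, I verify that $f$ has the claimed regularity. Uniform convergence on compact sets guarantees that $f$ is continuous on $\mathbb{R}\times[0,T]$, and the pointwise bound $|f(x,t)|=\lim_{j\to\infty}|f_{n_j}(x,t)|\le C$ shows $f$ is bounded. There is no real obstacle here; the one small point worth noting is that when $p=\infty$ the H\"older exponent $1/q$ should be interpreted as $1$ (Lipschitz), and when $p$ is close to $1$ the exponent $1/q$ degenerates but remains strictly positive, which is all that is needed for equicontinuity. No deeper tools are required.
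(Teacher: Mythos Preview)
Your proposal is correct and follows exactly the approach the paper indicates: the paper simply states that the lemma is an easy consequence of the Arzel\`a--Ascoli theorem and omits the proof, and your argument supplies precisely those standard details (uniform boundedness, equicontinuity in $x$ via H\"older and in $t$ via the Lipschitz bound, then diagonal extraction).
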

Lemma~\ref{lemma1} is an easy consequence of Arzela-Ascoli Theorem, we omit the proof.

\begin{lemma}\label{lemma2} 
Assume that $f_n\to f$ uniformly on compact subsets of $\mathbb R\times [0, T]$, and assume there is a constant $C$, such that $\sup_{n}\|f_n\|_{L^\infty(\mathbb R\times [0, T])}\le C$. Then $K_{y'}\ast f_n$ converges uniformly  to $K_{y'}\ast f$ on compact subsets of $\bar {\mathscr P}_-\times [0, T]$.
\end{lemma}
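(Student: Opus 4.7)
My plan is a standard truncation argument that exploits two features of the Poisson kernel $K_{y'}$: for every $y'<0$ it is an $L^1$ probability density ($\int K_{y'}=1$), and its tails decay like $|y'|/|u|$. These let me reduce the convergence of $K_{y'}\ast f_n$ on a compact subset of $\bar{\mathscr P}_-\times[0,T]$ to the uniform convergence of $f_n$ on a large but bounded set.

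First I would fix a compact set $K\subset\bar{\mathscr P}_-\times[0,T]$ and choose $M>0$ so that $|x'|\le M$, $-M\le y'\le 0$ and $t\in[0,T]$ for every $(x'+iy',t)\in K$. At the boundary $y'=0$ I interpret $K_0\ast g$ as the nontangential/continuous extension, which for a continuous bounded $g$ simply equals $g$. Given $\epsilon>0$, I would pick a cut-off radius $R>2M$ and decompose
\begin{equation*}
K_{y'}\ast(f_n-f)(x',t)=\int_{|s-x'|\le R}K_{y'}(x'-s)(f_n-f)(s,t)\,ds+\int_{|s-x'|> R}K_{y'}(x'-s)(f_n-f)(s,t)\,ds.
\end{equation*}

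For the tail, the explicit estimate
\begin{equation*}
\int_{|u|>A}K_{y'}(u)\,du=\int_{|u|>A}\frac{-y'/\pi}{u^2+y'^2}\,du\le\frac{2|y'|}{\pi A}\le\frac{2M}{\pi A},
\end{equation*}
together with $\|f_n-f\|_{L^\infty}\le 2C$, bounds the tail contribution by $\tfrac{4CM}{\pi(R-M)}$, uniformly in $y'\in[-M,0]$. I would choose $R$ large enough that this is $<\epsilon/2$. For the near part, the integration range forces $|s|\le M+R\le 2R$; by the hypothesis, $f_n\to f$ uniformly on the compact set $[-2R,2R]\times[0,T]$, so there exists $N$ with $\|f_n-f\|_{L^\infty([-2R,2R]\times[0,T])}<\epsilon/2$ for $n\ge N$, and using $\int K_{y'}=1$ the near contribution is $<\epsilon/2$. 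Hence for all $n\ge N$ and all $(x'+iy',t)\in K$ with $y'<0$ the total is $<\epsilon$; the boundary case $y'=0$ is immediate from the hypothesis.

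I do not foresee any genuine obstacle. The only mild subtlety to track is that the tail bound must be uniform in $y'\in[-M,0]$; but the explicit inequality above even improves as $y'\to 0^-$, so no separate treatment of the regime $y'\to 0^-$ is necessary, and the two cases glue together continuously to give uniform convergence on all of $K$.
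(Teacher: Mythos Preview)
Your proposal is correct and follows essentially the same approach the paper indicates: the paper merely says ``The proof follows easily by considering the convolution on the sets $|x'|<N$, and $|x'|\ge N$ separately. We omit the proof.'' Your near/far decomposition (in the variable $s-x'$ rather than $s$, which is equivalent since $x'$ is bounded on $K$) carries this out in full, with the Poisson tail estimate and the uniform $L^\infty$ bound handling the far piece and the compact uniform convergence handling the near piece.
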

The proof follows easily by considering the convolution on the sets $|x'|<N$, and $|x'|\ge N$ separately. We omit the proof.

\begin{definition} We write
\begin{equation}\label{unif-notation}
f_n\Rightarrow f\qquad \text{on }E
\end{equation}
if $f_n$ converge uniformly to $f$ on compact subsets of $E$.
\end{definition}

 \subsection{The proof of Theorem~\ref{th:local}} The uniqueness of the solution to the Cauchy problem is a direct consequence of \eqref{3005} and Definition~ \ref{de}. In what follows we  prove the existence of solutions to the Cauchy problem.
\subsubsection{The initial data}\label{ID}
 Let $U(z', 0)$ be the initial fluid velocity in the Riemann mapping coordinate, $\Psi(z',0):{\mathscr P}_-\to\Omega(0)$ be the Riemann mapping as given in \S\ref{id} with $Z(\alpha', 0)=\Psi(\alpha', 0)$  the initial interface. We note that by the assumption
$$
\begin{aligned}&\sup_{y'<0}\nm{\partial_{z'}\paren{\frac1{\Psi_{z'}(z',0)}}}_{L^2(\mathbb R, dx')}\le \mathcal E_1(0)<\infty,\quad \sup_{y'<0}\nm{\frac1{\Psi_{z'}(z',0)}-1}_{L^2(\mathbb R, dx')}\le c_0<\infty,\\&
\sup_{y'<0}\|U_{z'}(z',0)\|_{L^2(\mathbb R, dx')}\le \mathcal E_1(0)<\infty\quad \text{and } \ \sup_{y'<0}\|U(z',0)\|_{L^2(\mathbb R, dx')}\le c_0<\infty,
\end{aligned}
$$
$ \frac1{\Psi_{z'}}(\cdot, 0)$, $U(\cdot, 0)$ can be extended continuously  onto  $\bar {\mathscr P}_-$.   
So $Z(\cdot,0):= \Psi(\cdot+i0, 0)$ is continuous differentiable on the open set where $\frac1{\Psi_{z'}}(\alpha', 0)\ne 0$, and $\frac1{\Psi_{z'}}(\alpha', 0)=\frac1{Z_{,\alpha'}(\alpha', 0)}$ where
$\frac1{\Psi_{z'}}(\alpha', 0)\ne 0$.
By $\frac1{\Psi_{z'}}(\cdot, 0)-1\in H^1(\mathbb R)$ and Sobolev embedding, there is $N>0$ sufficiently large, such that for $|\alpha'|\ge N$, $|\frac1{\Psi_{z'}}(\alpha', 0)-1| \le 1/2$, so $Z=Z(\cdot, 0)$ is continuous differentiable on $(-\infty, -N)\cup (N, \infty)$, with $|Z_{,\alpha'}(\alpha', 0)|\le 2$, for all $ |\alpha'|\ge N$. Moreover, $Z_{,\alpha'}(\cdot, 0)-1\in H^1\{(-\infty, -N)\cup (N, \infty)\}$.

\subsubsection{The mollified data and the approximate solutions}\label{mo-ap}
Let $\epsilon>0$. We take
\begin{equation}\label{m-id}
\begin{aligned}
Z^\epsilon(\alpha', 0)&=\Psi(\alpha'-\epsilon i, 0),\quad \bar Z^\epsilon_t(\alpha', 0)=U(\alpha'-\epsilon i, 0),\quad
h^\epsilon(\alpha,0)=\alpha,\\& U^\epsilon(z',0)=U(z'-\epsilon i, 0),\quad \Psi^\epsilon(z',0)=\Psi(z'-\epsilon i,0).
\end{aligned}
\end{equation}
Notice that $U^\epsilon(\cdot, 0)$, $\Psi^\epsilon(\cdot, 0)$ are holomorphic on ${\mathscr P}_-$, $Z^\epsilon(0)$ satisfies \eqref{interface-holo} and $\bar Z^\epsilon_t(0)=\mathbb H \bar Z^\epsilon_t( 0)$. Let $Z_{tt}^\epsilon(0)$ be given by \eqref{interface-a1}. 
It is clear that $Z^\epsilon(0)$, $ Z_t^\epsilon(0)$ and $Z_{tt}^\epsilon(0)$ satisfy the assumption of Theorem~\ref{blow-up}. Let
$Z^\epsilon(t):=Z^\epsilon(\cdot,t)$, $t\in [0, T_\epsilon^*)$,   be the solution given by Theorem~\ref{blow-up},  with the maximal time of existence $T_\epsilon^*$, the diffeomorphism $h^\epsilon(t)=h^\epsilon(\cdot,t):\mathbb R\to\mathbb R$, the quantity $b^\epsilon:=h_t^\epsilon\circ (h^\epsilon)^{-1}$, and $z^\epsilon(\alpha,t)=Z^\epsilon(h^\epsilon(\alpha,t),t)$. We know $z_t^\epsilon(\alpha,t)=Z_t^\epsilon(h^\epsilon(\alpha,t),t)$.
Let  $$U^\epsilon(x'+iy', t)=K_{y'}\ast \bar Z^\epsilon_t(x', t),\quad \Psi_{z'}^\epsilon(x'+iy', t) =K_{y'}\ast Z^\epsilon_{,\alpha'}(x',t),\quad \Psi^\epsilon(\cdot, t)$$
be the holomorphic functions on ${\mathscr P}_-$ with boundary values $\bar Z^\epsilon_t( t)$, $Z^\epsilon_{,\alpha'}(t)$ and $Z^\epsilon(t)$; we have
$$\frac1{\Psi_{z'}^\epsilon}(x'+iy', t) =K_{y'}\ast \frac1{Z^\epsilon_{,\alpha'}}(x',t)$$
by uniqueness.\footnote{By the maximum principle, $\big(K_{y'}\ast \frac1{Z^\epsilon_{,\alpha'}}\big)\big(K_{y'}\ast {Z^\epsilon_{,\alpha'}}\big)\equiv 1$ on ${\mathscr P}_-$. }
We denote the energy functional $\mathcal E$  for $\paren{Z^\epsilon(t), \bar Z_t^\epsilon(t)}$ by $\mathcal E^\epsilon(t)$ and the energy functional $\mathcal E_1$ for $(U^\epsilon(t),\Psi^\epsilon(t))$ by $\mathcal E^\epsilon_1(t)$. It is clear that $\mathcal E^\epsilon_1(0)\le \mathcal E_1(0)$, and $\|Z^\epsilon_t(0)\|_{L^2}+\nm{\frac1{Z^\epsilon_{,\aa}(0)}-1}_{L^2}\le c_0$ for all $\epsilon>0$; and by the continuity of $\frac1{\Psi_{z'}}(\cdot, 0)$ on $\bar{\mathscr P}_-$, there is a $\epsilon_0>0$, such that for all $0<\epsilon\le\epsilon_0$, $|\frac1{Z^\epsilon_{,\aa}}(0,0)|^2\le |\frac1{Z_{,\aa}}(0,0)|^2+1$.
By Theorem~\ref{blow-up}, 
Theorem~\ref{prop:a priori} and Proposition~\ref{prop:energy-eq}, there exists $T_0>0$, $T_0$ depends only on $\mathcal E(0)=\mathcal E_1(0)+|\frac1{Z_{,\aa}}(0,0)|^2$,\footnote{By \eqref{domain-energy1}.} such that for all $0<\epsilon\le \epsilon_0$, 
$T^*_\epsilon> T_0$ 
and 
\begin{equation}\label{eq:400}
\sup_{[0, T_0]}\paren{\mathcal E_1^\epsilon(t)+\abs{\frac1{Z^\epsilon_{,\aa}}(0,t)}^2}=\sup_{[0, T_0]}\mathcal E^\epsilon(t)\le M\paren{\mathcal E(0) }<\infty;
\end{equation}
and by \eqref{interface-a1}, \eqref{2109} and \eqref{2110},  
\begin{equation}\label{eq:401}
\sup_{[0, T_0]}\paren{\|Z^\epsilon_t(t)\|_{L^2}+\|Z^\epsilon_{tt}(t)\|_{L^2}+\nm{\frac1{Z^\epsilon_{,\alpha'}(t)}-1}_{L^2}}\le c\paren{c_0, \mathcal E(0)},
\end{equation}
so there is a constant $C_0:=C(c_0, \mathcal E(0))>0$, such that 
\begin{equation}\label{eq:402}
\sup_{[0,T_0]}\{\sup_{y'<0}\|U^\epsilon(\cdot+iy', t)\|_{L^2(\mathbb R)}+\sup_{y'<0}\nm{\frac1{\Psi^\epsilon_{z'}(\cdot+iy',t)}-1}_{L^2(\mathbb R)}\}<C_0<\infty.
\end{equation}

\subsubsection{Uniformly bounded quantities}\label{ubound}
Besides \eqref{3005}, \eqref{3011} and \eqref{3013}, we would like to apply the compactness results Lemma~\ref{lemma1}, Lemma~\ref{lemma2} to pass to limits of some of the quantities. To this end we discuss  the boundedness properties of these quantities. We begin with two  inequalities. 

We have, from \eqref{eq:dza},
\begin{equation}\label{eq:411}
\nm{(\partial_t+b^\epsilon\partial_\aa) \frac1{Z^\epsilon_{,\alpha'}}(t)}_{L^\infty}\le \nm{\frac1{Z^\epsilon_{,\alpha'}}(t)}_{L^\infty}(\|b^\epsilon_{\alpha'}(t)\|_{L^\infty}+\|D_{\alpha'}Z^\epsilon_{t}(t)\|_{L^\infty})
\end{equation}
and by \eqref{eq:dztt}, 
\begin{equation}\label{eq:4400}
\|(\partial_t+b^\epsilon\partial_\aa)Z^\epsilon_{tt}(t)\|_{L^\infty}\le \|Z^\epsilon_{tt}(t)+i\|_{L^\infty}\paren{\|D_{\aa}Z^\epsilon_t(t)\|_{L^\infty}+\nm{\frac{\frak a^\epsilon_t}{\frak a^\epsilon}\circ (h^{\epsilon})^{-1}(t)}_{L^\infty}}.
\end{equation}

Let $0<\epsilon\le\epsilon_0$, and $M(\mathcal E(0))$, $c(c_0,\mathcal E(0))$, $C_0$ be the bounds in \eqref{eq:400}, \eqref{eq:401} and \eqref{eq:402}. 
By Proposition~\ref{prop:energy-eq}, Sobolev embedding, Appendix~\ref{quantities} and \eqref{eq:411}, \eqref{3010},  the following quantities are uniformly bounded with bounds depending only on $M(\mathcal E(0))$, $c(c_0,\mathcal E(0))$, $C_0$:
\begin{equation}\label{eq:404}
\begin{aligned}
&\sup_{[0, T_0]}\|Z^\epsilon_t(t)\|_{L^\infty}, \quad\sup_{[0, T_0]}\|Z^\epsilon_{t,\alpha'}(t)\|_{L^2}, \quad\sup_{[0, T_0]}\|Z^\epsilon_{tt}(t)\|_{L^\infty}, \quad\sup_{[0, T_0]}\|Z^\epsilon_{tt,\alpha'}(t)\|_{L^2},\\&
\sup_{[0, T_0]}\nm{\frac1{Z^\epsilon_{,\alpha'}}(t)}_{L^\infty}, \quad\sup_{[0, T_0]}\nm{\partial_{\alpha'}\frac1{Z^\epsilon_{,\alpha'}}(t)}_{L^2}, \quad\sup_{[0, T_0]}\nm{(\partial_t+b^\epsilon\partial_\aa)\frac1{Z^\epsilon_{,\alpha'}}(t)}_{L^\infty},\quad \sup_{[0, T_0]}\nm{b^\epsilon}_{L^\infty};
\end{aligned}
\end{equation}
and with a change of the variables and \eqref{2346}, \eqref{eq:4400} and Appendix~\ref{quantities},
\begin{equation}\label{eq:414}
\begin{aligned}
&\sup_{[0, T_0]}\|z^\epsilon_t(t)\|_{L^\infty}+ \sup_{[0, T_0]}\|z^\epsilon_{t\alpha}(t)\|_{L^2}+ \sup_{[0, T_0]}\|z^\epsilon_{tt}(t)\|_{L^\infty}\le C(c_0, \mathcal E(0)), \\&
\sup_{[0, T_0]}\nm{\frac{h^\epsilon_{\alpha}}{z^\epsilon_{\alpha}}(t)}_{L^\infty}+ \sup_{[0, T_0]}\nm{\partial_{\alpha}(\frac {h^\epsilon_{\alpha}}{z^\epsilon_{\alpha}})(t)}_{L^2}+ \sup_{[0, T_0]}\nm{\partial_t \frac{h^\epsilon_\alpha}{z^\epsilon_{\alpha}}(t)}_{L^\infty}\le C(c_0, \mathcal E(0)),\\&
\sup_{[0, T_0]}\|z^\epsilon_{tt}(t)\|_{L^\infty}+ \sup_{[0, T_0]}\|z^\epsilon_{tt\alpha}(t)\|_{L^2}+ \sup_{[0, T_0]}\|z^\epsilon_{ttt}(t)\|_{L^\infty}\le C(c_0, \mathcal E(0)).
\end{aligned}
\end{equation}
Observe that 
$h^\epsilon(\alpha, t)-\alpha=\int_0^t h^\epsilon_t(\alpha, s)\,ds$, so 
\begin{equation}\label{eq:425}
\sup_{\mathbb R\times [0, T_0]} |h^\epsilon(\alpha, t)-\alpha|\le T_0\sup_{[0, T_0]}\|h^\epsilon_t(t)\|_{L^\infty}\le T_0C(c_0,\mathcal E(0))<\infty.
\end{equation}
Furthermore by \eqref{2346} and Appendix~\ref{quantities}, there are $c_1, c_2>0$, depending only on $\mathcal E(0)$, such that
\begin{equation}\label{eq:416}
0<c_1\le\frac{h^\epsilon(\alpha,t)-h^\epsilon(\beta,t)}{\alpha-\beta}\le c_2<\infty,\qquad \forall \alpha,\beta\in \mathbb R, \ t\in [0, T_0].
\end{equation}

\subsubsection{Passing to the limit} 
It is easy to check by Lemma~\ref{lemma3} and \eqref{hhalf-1}, \eqref{hhalf42} that the sequence $(Z^\epsilon(0), \bar Z_t^\epsilon(0))$ converges in the norm $\|\cdot\|$ defined by \eqref{3008}, so 
by \eqref{3020}, \eqref{3011} and \eqref{3013}, there are functions $b$ and $h-\alpha$, continuous and bounded on $\mathbb R\times [0, T_0]$, with $h(\cdot, t):\mathbb R\to \mathbb R$ a homeomorphism for $t\in [0, T_0]$, $b_\aa\in L^\infty([0, T_0], L^2(\mathbb R))$, such that
\begin{equation}\label{3015}
\lim_{\epsilon\to 0} \paren{b^\epsilon,\, h^\epsilon,\, (h^\epsilon)^{-1}}=\paren{b,\, h, \, h^{-1}},\qquad \text{uniformly on } \mathbb R\times [0, T_0];
\end{equation}
\begin{equation}\label{3015-1}
\lim_{\epsilon\to 0} b^\epsilon_\aa=b_\aa \qquad \text{in } \ \ L^\infty([0, T_0], L^2(\mathbb R));
\end{equation}
and \eqref{eq:416} yields 
\begin{equation}\label{eq:420}
0<c_1\le\frac{h(\alpha,t)-h(\beta,t)}{\alpha-\beta}\le c_2<\infty,\qquad \forall \alpha,\beta\in \mathbb R, \ t\in [0, T_0].
\end{equation}

By Lemma~\ref{lemma1}, \eqref{eq:414} and \eqref{3005}, there are functions $w$, $u$, $q:=w_t$,  continuous and bounded on $\mathbb R\times [0, T_0]$, such that
\begin{equation}\label{eq:417}
  z^\epsilon_t\Rightarrow w,\quad \frac{h^\epsilon_{\alpha}}{z^\epsilon_\alpha}\Rightarrow u,\quad z^\epsilon_{tt}\Rightarrow q,\qquad \text{on } \mathbb R\times [0, T_0],
\end{equation}
as $ \epsilon\to 0$; 
this gives
\begin{equation}\label{eq:419}
\bar Z_t^\epsilon\Rightarrow w\circ h^{-1},\qquad \frac1{Z^\epsilon_{,\alpha'}}\Rightarrow u\circ h^{-1}, \quad\bar Z_{tt}^\epsilon\Rightarrow w_t\circ h^{-1},\qquad\text{on }\mathbb R\times [0, T_0]
\end{equation}
as  $ \epsilon\to 0$. \eqref{3005} also gives that
\begin{equation}\label{3016}
\lim_{\epsilon\to 0} \paren{\bar Z_t^\epsilon,\,  \frac1{Z^\epsilon_{,\alpha'}},\, \bar Z_{tt}^\epsilon}=  \paren{w\circ h^{-1}, \, u\circ h^{-1},\,  w_t\circ h^{-1}},\qquad\text{in } L^\infty([0, T_0], \dot H^{1/2}(\mathbb R)).
\end{equation}

Now 
 \begin{equation}\label{eq:421}
U^\epsilon(z',t)=K_{y'}\ast \bar Z_t^\epsilon,\qquad \frac1{\Psi^\epsilon_{z'}}(z',t)=K_{y'}\ast \frac1{Z^\epsilon_{,\alpha'}}.
\end{equation}
Let $U(z',t)=   K_{y'}\ast (w\circ h^{-1})(x', t)$, $\Lambda(z',t)=K_{y'}\ast (u\circ h^{-1})(x',t)$.
By Lemma~\ref{lemma2}, 
\begin{equation}\label{eq:422}
U^\epsilon(z',t)\Rightarrow U(z',t),\qquad \frac1{\Psi^\epsilon_{z'}}(z',t)\Rightarrow \Lambda(z',t)\qquad\text{on }\bar {\mathscr P}_-\times [0, T_0];
\end{equation}
as  $ \epsilon\to 0$. Moreover $U(\cdot,t)$, $\Lambda(\cdot,t)$ are holomorphic on ${\mathscr P}_-$ for each $t\in [0, T_0]$, and continuous on $\bar {\mathscr P}_-\times [0, T]$. Applying  Cauchy integral formula to the first limit in \eqref{eq:422} yields,  as  $ \epsilon\to 0$, 
\begin{equation}\label{eq:430}
U^\epsilon_{z'}(z',t)\Rightarrow U_{z'}(z',t) \qquad\text{on } {\mathscr P}_-\times [0, T_0].
\end{equation}

\subsubsection*{Step 1. The limit of $\Psi^\epsilon$}\label{step4.1}
We consider the limit of $\Psi^{\epsilon}$, as $\epsilon\to 0$.  Let $0<\epsilon\le\epsilon_0$. We know
\begin{equation}\label{eq:423}
\begin{aligned}
z^\epsilon(\alpha,t)&=z^\epsilon(\alpha,0)+\int_0^t z_t^\epsilon(\alpha, s)\,ds\\&
=\Psi(\alpha-\epsilon i, 0)+\int_0^t z_t^\epsilon(\alpha, s)\,ds,
\end{aligned}
\end{equation}
therefore
\begin{equation}\label{eq:424}
\begin{aligned}
Z^\epsilon(\alpha',t)-Z^\epsilon(\alpha',0)&
=\Psi((h^\epsilon)^{-1}(\alpha',t)-\epsilon i, 0)-\Psi(\alpha'-\epsilon i, 0)\\&+\int_0^t z_t^\epsilon((h^\epsilon)^{-1}(\alpha',t)   , s)\,ds.
\end{aligned}
\end{equation}
Let
\begin{equation}\label{eq:431}
W^\epsilon(\alpha',t):=\Psi((h^\epsilon)^{-1}(\alpha',t)-\epsilon i, 0)-\Psi(\alpha'-\epsilon i, 0)+\int_0^t z_t^\epsilon((h^\epsilon)^{-1}(\alpha',t)   , s)\,ds.
\end{equation}
Observe $Z^\epsilon(\alpha',t)-Z^\epsilon(\alpha',0)$ is the boundary value of the holomorphic function
$\Psi^\epsilon(z', t)-\Psi^\epsilon(z', 0)$. By \eqref{eq:417} and \eqref{3015}, $\int_0^t z_t^\epsilon((h^\epsilon)^{-1}(\alpha',t), s)\,ds\to \int_0^t w(h^{-1}(\alpha',t), s)\,ds$ uniformly on compact subsets of $\mathbb R\times [0, T_0]$, and by \eqref{eq:414}, $\int_0^t z_t^\epsilon((h^\epsilon)^{-1}(\alpha',t), s)\,ds$ is continuous and uniformly bounded in $L^\infty(\mathbb R\times [0, T_0])$. 
By the assumptions $\lim_{z'\to 0}\Psi_{z'}(z',0)=1$ and $\Psi(\cdot, 0)$ is continuous on $\bar {\mathscr P}_-$, and by \eqref{eq:425}, \eqref{3015}, $$\Psi((h^\epsilon)^{-1}(\alpha',t)-\epsilon i, 0)-\Psi(\alpha'-\epsilon i, 0)$$ is continuous and uniformly bounded in $L^\infty(\mathbb R\times [0, T_0])$ for $0<\epsilon<1$, and converges uniformly on compact subsets of $\mathbb R\times [0, T_0]$ as $\epsilon\to 0$. This gives\footnote{Because $W^\epsilon(\cdot,t)$ and $\partial_{\alpha'}W^\epsilon(\cdot,t):=Z^\epsilon_{,\alpha'}(\alpha',t)-Z^\epsilon_{,\alpha'}(\alpha',0)$ are continuous and bounded on $\mathbb R$, $\Psi^\epsilon_{z'}(z',t)-\Psi^\epsilon_{z'}(z',0)=K_{y'}\ast (\partial_{\alpha'}W^\epsilon)(x',t)=\partial_{z'}K_{y'}\ast W^\epsilon(x',t)$. \eqref{eq:426} holds because both sides of \eqref{eq:426} have the same value on $\partial{\mathscr P}_-$.}
\begin{equation}\label{eq:426}
\Psi^\epsilon(z', t)-\Psi^\epsilon(z', 0)=K_{y'}\ast W^\epsilon(x',t)
\end{equation}
and by Lemma~\ref{lemma2}, $\Psi^\epsilon(z', t)-\Psi^\epsilon(z', 0)$ converges uniformly on compact subsets of $\bar {\mathscr P}_-\times [0, T_0]$ to a function that is holomorphic on ${\mathscr P}_-$ for every $t\in [0, T_0]$ and continuous on $\bar {\mathscr P}_-\times [0, T_0]$. Therefore there is a function $\Psi(\cdot,t)$, holomorphic on ${\mathscr P}_-$ for every $t\in [0, T_0]$  and continuous on $\bar {\mathscr P}_-\times [0, T_0]$, such that
\begin{equation}\label{eq:427}
\Psi^\epsilon(z',t)\Rightarrow \Psi(z',t)\qquad\text{on }\bar {\mathscr P}_-\times [0, T_0]
\end{equation}
as $\epsilon\to 0$; as a consequence of the Cauchy integral formula, 
\begin{equation}\label{eq:428}
\Psi^\epsilon_{z'}(z',t)\Rightarrow \Psi_{z'}(z',t)\qquad\text{on } {\mathscr P}_-\times [0, T_0]
\end{equation} 
as $\epsilon\to 0$. Combining with \eqref{eq:422}, we have $\Lambda(z',t)=\frac1{\Psi_{z'}(z',t)}$, so $\Psi_{z'}(z',t)\ne 0$ for all $(z',t)\in \bar{\mathscr P}_-\times [0, T_0]$ and
\begin{equation}\label{eq:429}
\frac1{\Psi^\epsilon_{z'}(z',t)}\Rightarrow \frac1{\Psi_{z'}(z',t)}\qquad\text{on }\bar {\mathscr P}_-\times [0, T_0]
\end{equation}
as $\epsilon\to 0$.

Denote $Z(\alpha', t):=\Psi(\alpha', t)$, $\alpha'\in \mathbb R$, and $z(\alpha,t)=Z(h(\alpha,t),t)$. \eqref{eq:427} yields $Z^\epsilon(\alpha',t)\Rightarrow Z(\alpha',t)$, together with \eqref{3015} it implies
$z^\epsilon(\alpha,t)\Rightarrow z(\alpha,t)$ 
on $\mathbb R\times [0, T_0]$, as $\epsilon\to 0$.
Furthermore by \eqref{eq:423}, 
$$z(\alpha', t)=z(\alpha',0)+\int_0^t w(\alpha,s)\,ds,$$ so $w=z_t$. We denote $Z_t=z_t\circ h^{-1}$. 

\subsubsection*{Step 2. The limits of $\Psi^\epsilon_t$ and $U_t^\epsilon$} Observe that by \eqref{eq:431}, for fixed $\epsilon>0$,  $\partial_t W^\epsilon(\cdot, t)$ is a bounded function on $\mathbb R\times [0, T_0]$, so by \eqref{eq:426} and the dominated convergence Theorem, $\Psi^\epsilon_t=K_{y'}\ast \partial_t W^\epsilon$, hence $\Psi_t^\epsilon$ is bounded on ${\mathscr P}_-\times [0, T_0]$. 

Since for given $t\in [0, T_0]$ and $\epsilon>0$,  $\frac{\Psi^\epsilon_t}{\Psi^\epsilon_{z'}}$ is bounded and holomorphic on ${\mathscr P}_-$,  by \eqref{eq:271}, 
\begin{equation}\label{eq:432}
\frac{\Psi^\epsilon_t}{\Psi^\epsilon_{z'}}=K_{y'}\ast(\frac{Z^\epsilon_t}{Z^\epsilon_{,\alpha'}}-b^\epsilon).
\end{equation}
Therefore by \eqref{3015}, \eqref{eq:419} and Lemma~\ref{lemma2}, as $\epsilon\to 0$, $\frac{\Psi^\epsilon_t}{\Psi^\epsilon_{z'}}$ converges uniformly on compact subsets of $\bar {\mathscr P}_-\times [0, T_0]$ to a function that is holomorphic on ${\mathscr P}_-$ for each $t\in [0, T_0]$ and continuous on $\bar {\mathscr P}_-\times [0, T_0]$.  Hence we can conclude from \eqref{eq:427} and \eqref{eq:428} that $\Psi$ is continuously differentiable and 
\begin{equation}\label{eq:433}
\Psi^\epsilon_t\Rightarrow \Psi_t\qquad \text{on }{\mathscr P}_-\times [0, T_0]
\end{equation}
 as $\epsilon\to 0$.
 
Now we consider the limit of $U^\epsilon_t$ as $\epsilon\to 0$. Since for fixed $\epsilon>0$, 
$\partial_t Z_t^\epsilon=Z_{tt}^\epsilon-b^\epsilon Z_{t,\alpha'}^\epsilon$ is in $L^\infty(\mathbb R\times [0, T_0])$, by \eqref{eq:421} and the dominated convergence Theorem,
\begin{equation}\label{eq:434}
U^\epsilon_t(z',t)=K_{y'}\ast  \partial_t \bar Z_t^\epsilon=K_{y'}\ast (\bar Z_{tt}^\epsilon-b^\epsilon \bar Z_{t,\alpha'}^\epsilon).
\end{equation}
We rewrite
\begin{equation}\label{3017}
K_{y'}\ast (\bar Z_{tt}^\epsilon-b^\epsilon \bar Z_{t,\alpha'}^\epsilon)=K_{y'}\ast \bar Z_{tt}^\epsilon-(\partial_{x'} K_{y'})\ast (b^\epsilon \bar Z_{t}^\epsilon)+ K_{y'}\ast (b^\epsilon_\aa \bar Z_{t}^\epsilon).
\end{equation}
Now we apply \eqref{3015}, \eqref{3015-1}, \eqref{3016} and Lemma~\ref{lemma5} to each term on the right hand side of \eqref{3017}. We can conclude that 
 $U$ is continuously differentiable with respect to $t$,
  and 
\begin{equation}\label{eq:435}
U^\epsilon_t \Rightarrow U_t\qquad \text{on }{\mathscr P}_-\times [0, T_0]  
\end{equation}
as $ \epsilon\to 0$.

\subsubsection*{Step 3. The limit of $\mathfrak P^\epsilon$} By the calculation in \S\ref{general-soln}, we know 
there is a real valued function $\frak P^\epsilon$, such that
\begin{equation}\label{eq:437}
\Psi^\epsilon_{z'} 
U^\epsilon_t- {\Psi^\epsilon_t}U^\epsilon_{z'}+{\bar U^\epsilon}U^\epsilon_{z'} -i\Psi^\epsilon_{z'}=-(\partial_{x'}-i\partial_{y'})\frak P^\epsilon,\qquad\text{in }{\mathscr P}_-;
\end{equation}
and 
\begin{equation}\label{eq:438}
\frak P^\epsilon=constant,\qquad \text{on }\partial {\mathscr P}_-.
\end{equation}
Without loss of generality we take the $constant=0$. We now explore a few other properties of $\frak P^\epsilon$. Moving ${\bar U^\epsilon}U^\epsilon_{z'}=\partial_{z'}({\bar U^\epsilon}U^\epsilon)$ to the right of  \eqref{eq:437} gives
\begin{equation}\label{eq:440}
\Psi^\epsilon_{z'} 
U^\epsilon_t- {\Psi^\epsilon_t}U^\epsilon_{z'} -i\Psi^\epsilon_{z'}=-(\partial_{x'}-i\partial_{y'})(\frak P^\epsilon+ \frac12 |U^\epsilon|^2),\qquad\text{in }{\mathscr P}_-;
\end{equation}
Applying $(\partial_{x'}+i\partial_{y'})=2\bar \partial_{z'}$ to \eqref{eq:440} yields
\begin{equation}\label{eq:439}
-\Delta  (\frak P^\epsilon+ \frac12 |U^\epsilon|^2) =0,\qquad\text{in } {\mathscr P}_-.
\end{equation}
So $\frak P^\epsilon+ \frac12 |U^\epsilon|^2$ is a harmonic function on ${\mathscr P}_-$ with boundary value $\frac12 |\bar Z_t^\epsilon|^2$. On the other hand, it is easy to check that $\lim_{y'\to-\infty} (\Psi^\epsilon_{z'} 
U^\epsilon_t- {\Psi^\epsilon_t}U^\epsilon_{z'} -i\Psi^\epsilon_{z'})=-i$. Therefore 
\begin{equation}\label{eq:441}
\frak P^\epsilon(z',t)=- \frac12 |U^\epsilon(z',t)|^2-y + \frac12 K_{y'}\ast (|\bar Z_t^\epsilon|^2)(x',t).
\end{equation}
By \eqref{eq:422}, \eqref{eq:419} and Lemma~\ref{lemma2}, 
\begin{equation}\label{eq:442}
\frak P^\epsilon(z',t)\Rightarrow - \frac12 |U(z',t)|^2-y + \frac12 K_{y'}\ast (|\bar Z_t|^2)(x',t),\qquad \text{on }\bar {\mathscr P}_-\times [0, T_0] 
\end{equation}
as $\epsilon\to 0$. We write $$\frak P:=- \frac12 |U(z',t)|^2-y + \frac12 K_{y'}\ast (|\bar Z_t|^2)(x',t).$$ We have $\frak P$ is continuous on $\bar {\mathscr P}_-\times [0, T_0]$ with $\frak P \in C([0, T_0], C^\infty({\mathscr P}_-))$, and 
\begin{equation}\label{eq:443}
\frak P=0,\qquad \text{on }\partial {\mathscr P}_-.
\end{equation}
Moreover, since $K_{y'}\ast (|\bar Z_t^\epsilon|^2)(x',t)$ is harmonic on ${\mathscr P}_-$, by interior derivative estimate for harmonic functions and by \eqref{eq:422}, 
\begin{equation}\label{eq:4450}
(\partial_{x'}-i\partial_{y'})\frak P^\epsilon\Rightarrow (\partial_{x'}-i\partial_{y'})\frak P\qquad\text{on }{\mathscr P}_-\times [0, T_0]
\end{equation}
as $\epsilon\to 0$. And by \eqref{eq:441} and a similar argument as that in \eqref{eq:434}-\eqref{eq:435}, we have that $\frak P$ is continuously differentiable with respect to $t$ and 
\begin{equation}\label{eq:4451}
\partial_t \frak P^\epsilon\Rightarrow \partial_t\frak P\qquad\text{on }{\mathscr P}_-\times [0, T_0]
\end{equation}
as $\epsilon\to 0$.

\subsubsection*{Step 4. Conclusion} We now sum up Steps 1-3. We have shown that there are
functions $\Psi(\cdot, t)$ and $U(\cdot, t)$,  holomorphic on ${\mathscr P}_-$ for each fixed $t\in [0, T_0]$,  continuous on $\bar {\mathscr P}_-\times [0, T_0]$, and continuous differentiable on ${\mathscr P}_-\times [0, T_0]$, with $ \frac1{\Psi_{z'}}$ continuous on $\bar {\mathscr P}_-\times [0, T_0]$, 
 such that 
$\Psi^\epsilon \to \Psi$, $\frac1{\Psi^\epsilon_{z'}} \to \frac1{\Psi_{z'}}$, $ U^\epsilon\to U$ uniform on compact subsets of $\bar {\mathscr P}_-\times [0, T_0]$, $\Psi^\epsilon_t \to \Psi_t$, $\Psi^\epsilon_{z'}\to \Psi_{z'}$, $U^\epsilon_{z'}\to U_{z'}$ and $U^\epsilon_t\to U_t$ uniform on compact subsets of ${\mathscr P}_-\times [0, T_0]$,  as  $\epsilon\to 0$. We have also shown that there is $\frak P$,
continuous on $\bar {\mathscr P}_-\times [0, T_0]$ with $\frak P=0$ on $\partial {\mathscr P}_-$, and  continuous differentiable on ${\mathscr P}_-\times [0, T_0]$, such that $\frak P^\epsilon\to \frak P$ uniform on compact subsets of $\bar{\mathscr P}_-\times [0, T_0]$ and,  $(\partial_{x'}-i\partial_{y'})\frak P^\epsilon\to (\partial_{x'}-i\partial_{y'})\frak P$ and $\partial_t\frak P^\epsilon\to \partial_t \frak P$ uniformly on compact subsets of $  {\mathscr P}_-\times [0, T_0]$, as $\epsilon\to 0$. Let $\epsilon\to 0$ in equation \eqref{eq:437}, we have 
\begin{equation}\label{eq:444}
\Psi_{z'} 
U_t- {\Psi_t}U_{z'}+{\bar U}U_{z'} -i\Psi_{z'}=-(\partial_{x'}-i\partial_{y'})\frak P,\qquad\text{on }{\mathscr P}_-\times [0, T_0].
\end{equation}
This shows that $(U, \Psi, \frak P)$ is a solution of the Cauchy problem for the system \eqref{eq:273}-\eqref{eq:274}-\eqref{eq:275} in the sense of Definition~\ref{de}. Furthermore because of \eqref{eq:400}, \eqref{eq:402}, letting $\epsilon\to 0$ gives 
\begin{equation}
\sup_{[0, T_0]}\mathcal E(t)\le M(\mathcal E(0))<\infty.
\end{equation}
and
\begin{equation}
\sup_{[0,T_0]}\{\sup_{y'<0}\|U(x'+iy', t)\|_{L^2(\mathbb R, dx')}+\sup_{y'<0}\|\frac1{\Psi_{z'}(x'+iy',t)}-1\|_{L^2(\mathbb R,dx')}\}<C_0<\infty.
\end{equation}

By the argument at the end of \S\ref{general-soln}, if $\Sigma(t):=\{Z=\Psi(\aa,t)\,|\, \aa\in\mathbb R\}$ is a Jordan curve, then $\Psi(\cdot, t):{\mathscr P}_-\to \Omega(t)$, where $\Omega(t)$ is the domain bounded from the above by $\Sigma(t)$, is invertible; and the solution $(U,\Psi,\frak P)$ gives rise to a solution $(\bar{\bf v}, P):= (U\circ \Psi^{-1}, \frak P\circ \Psi^{-1})$ of the water wave equation \eqref{euler}. This finishes the proof for part 1 of Theorem~\ref{th:local}.

\subsection{The chord-arc interfaces} Now assume at time $t=0$, the interface $Z=\Psi(\alpha',0):=Z(\alpha',0)$, $\alpha'\in\mathbb R$ is chord-arc, that is,  there is $0<\delta<1$, such that
$$\delta \int_{\alpha'}^{\beta'} |Z_{,\alpha'}(\gamma,0)|\,d\gamma\le |Z(\alpha', 0)-Z(\beta', 0)|\le \int_{\alpha'}^{\beta'} |Z_{,\alpha'}(\gamma,0)|\,d\gamma,\quad \forall -\infty<\alpha'<  \beta'<\infty.$$
 We want to show there is $T_1>0$, depending only on $\mathcal E(0)$, such that for $t\in [0, \min\{T_0, \frac\delta{T_1}\}]$, the interface $Z=Z(\alpha',t):=\Psi(\alpha',t)$ remains chord-arc. We begin with
 \begin{equation}\label{eq:446}
- z^\epsilon(\alpha,t)+z^\epsilon(\beta,t)+z^\epsilon(\alpha,0)-z^\epsilon(\beta,0)=\int_0^t\int_{\alpha}^\beta z^\epsilon_{t\alpha}(\gamma,s)\,d\gamma\,ds
 \end{equation}
for $\alpha<\beta$. Because 
 \begin{equation}\label{eq:447}
 \frac d{dt} |z^\epsilon_{\alpha}|^2=2|z^\epsilon_{\alpha}|^2 \Re D_{\alpha}z^\epsilon_t,
 \end{equation}
 by  Gronwall's inequality, for $t\in [0, T_0]$, 
 \begin{equation}\label{eq:448}
 |z^\epsilon_{\alpha}(\alpha,t)|^2\le |z^\epsilon_{\alpha}(\alpha,0)|^2 e^{2\int_0^t |D_{\alpha}z^\epsilon_t(\alpha,\tau)|\,d\tau };
 \end{equation}
 so 
 \begin{equation}\label{eq:449}
 |z^\epsilon_{t\alpha}(\alpha,t)|\le |z^\epsilon_{\alpha}(\alpha,0)| |D_{\alpha}z^\epsilon_t(\alpha,t)|e^{\int_0^t |D_{\alpha}z^\epsilon_t(\alpha,\tau)|\,d\tau };
 \end{equation}
 by Appendix~\ref{quantities}, \eqref{eq:400} and Proposition~\ref{prop:energy-eq},
  \begin{equation}\label{eq:450}
\sup_{[0, T_0]} |z^\epsilon_{t\alpha}(\alpha,t)|\le |z^\epsilon_{\alpha}(\alpha,0)| C(\mathcal E(0));
 \end{equation}
 therefore for $t\in [0, T_0]$, 
 \begin{equation}\label{eq:451}
 \int_0^{t}\int_{\alpha}^\beta |z^\epsilon_{t\alpha}(\gamma,s)|\,d\gamma\,ds\le t C(\mathcal E(0))\int_{\alpha}^\beta |z^\epsilon_{\alpha}(\gamma,0)| \,d\gamma.
 \end{equation}
 Now $z^\epsilon(\alpha,0)=Z^\epsilon(\alpha,0)=\Psi(\alpha-\epsilon i, 0)$. Because $Z_{,\alpha'}(\cdot,0)\in L^1_{loc}(\mathbb R)$, and $Z_{,\alpha'}(\cdot,0)-1\in H^1(\mathbb R\setminus [-N, N])$ for some large $N$, 
 \begin{equation}\label{eq:452}
 \overline{\lim_{\epsilon\to 0}}\int_{\alpha}^\beta |\Psi_{z'}(\gamma-\epsilon i,0)|\,d\gamma\le \int_{\alpha}^\beta |Z_{,\alpha'}(\gamma, 0)|\,d\gamma.
 \end{equation}
 Let $\epsilon\to 0$ in \eqref{eq:446}. We get, for $t\in [0, T_0]$, 
 \begin{equation}\label{eq:453}
| |z(\alpha,t)-z(\beta,t)| -|Z(\alpha,0)-Z(\beta,0)||\le 
tC(\mathcal E_1(0))\int_{\alpha}^\beta |Z_{,\alpha'}(\gamma, 0)|\,d\gamma,
 \end{equation}
 hence for all $\alpha<\beta$ and $0\le t\le \min\{T_0, \frac{\delta}{2C(\mathcal E(0))}\}$, 
 \begin{equation}\label{eq:454}
\frac12\delta \int_\alpha^\beta |Z_{,\alpha'}(\gamma,0)|\,d\gamma\le  |z(\alpha,t)-z(\beta,t)|\le 2 \int_\alpha^\beta |Z_{,\alpha'}(\gamma,0)|\,d\gamma.
 \end{equation}
 This show that for $\le t\le \min\{T_0, \frac{\delta}{2C(\mathcal E(0))}\}$,
 $z=z(\cdot,t)$ is absolute continuous on compact intervals of $\mathbb R$, with $z_{\alpha}(\cdot,t)\in L_{loc}^1(\mathbb R)$, and is chord-arc.  So $\Sigma(t)=\{z(\alpha,t) \ | \ \alpha\in \mathbb R\}$ is Jordan. This finishes the proof of Theorem~\ref{th:local}.
 
\begin{appendix}

\section{Basic analysis preparations}\label{ineq}
We present in this section some basic analysis results that will be used in this paper. 
First we have, as a consequence of the fact that product of holomorphic functions is holomorphic, the following identity.

\begin{proposition}\label{prop:comm-hilbe}
Assume that $f,\ g \in L^2(\mathbb R)$. Assume either both $f$, $g$ are holomorphic: $f=\mathbb H f$, $g=\mathbb H g$, or both are anti-holomorphic: $f=-\mathbb H f$, $g=-\mathbb H g$. Then
\begin{equation}\label{comm-hilbe}
[f, \mathbb H]g=0.
\end{equation}
\end{proposition}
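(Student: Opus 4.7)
The plan is to expand the commutator and reduce to a statement about holomorphic extensions via Proposition~\ref{prop:hilbe}. Let $\epsilon \in \{+1,-1\}$ denote the common sign, so that the hypotheses read $\mathbb{H}f = \epsilon f$ and $\mathbb{H}g = \epsilon g$. Expanding,
\begin{equation*}
[f,\mathbb{H}]g \;=\; f\,\mathbb{H}g - \mathbb{H}(fg) \;=\; \epsilon fg - \mathbb{H}(fg),
\end{equation*}
so the identity \eqref{comm-hilbe} is equivalent to showing $\mathbb{H}(fg) = \epsilon fg$.

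By Proposition~\ref{prop:hilbe}(a), applied with the sign $\epsilon$, there exist holomorphic (when $\epsilon=+1$) or anti-holomorphic (when $\epsilon=-1$) extensions $F, G$ of $f, g$ to $\mathscr{P}_-$, each decaying to $0$ at infinity. The product $FG$ is of the same type on $\mathscr{P}_-$ and still vanishes at infinity, with non-tangential boundary value $fg$. If $fg$ belonged to $L^p$ for some $1<p<\infty$, Proposition~\ref{prop:hilbe} would immediately yield $\mathbb{H}(fg) = \epsilon fg$, finishing the proof.

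The only obstacle is the endpoint integrability: Cauchy--Schwarz only gives $fg \in L^1$, which is exactly the case Proposition~\ref{prop:hilbe} does not cover. I would bypass this by a mollification argument. Set $g_n(x) := G(x + \epsilon i/n)$, the trace of the (anti-)holomorphic extension of $g$ shifted slightly into the open half-plane. Then $g_n \in L^2 \cap L^\infty$ (the $L^\infty$ bound from Poisson-kernel estimates in the interior), $g_n$ still satisfies $\mathbb{H}g_n = \epsilon g_n$ since translating an $H^2$ function stays in the same Hardy class, and $g_n \to g$ in $L^2$. Hence $fg_n \in L^2$, and Proposition~\ref{prop:hilbe} applies directly to give $\mathbb{H}(fg_n) = \epsilon fg_n$.

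Finally, I would pass to the limit: by Cauchy--Schwarz $fg_n \to fg$ in $L^1$, so the right-hand side $\epsilon fg_n \to \epsilon fg$ in $L^1$; since $\mathbb{H}$ is continuous on tempered distributions, the left-hand side converges accordingly and we conclude $\mathbb{H}(fg) = \epsilon fg$. The main care point is precisely this $L^1$ endpoint passage, which is essentially the Hardy-space embedding $H^2 \cdot H^2 \subset H^1$ encoded through the approximation $g_n$.
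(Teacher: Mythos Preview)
Your argument is correct and matches the paper's one-line justification (``as a consequence of the fact that product of holomorphic functions is holomorphic''), while additionally handling the $L^1$ endpoint via the mollification/Hardy-space step that the paper leaves implicit. One cosmetic slip: with the paper's convention $\mathscr{P}_-=\{\Im z'<0\}$, the shift into the half-plane should be $g_n(x)=G(x-i/n)$ rather than $G(x+\epsilon i/n)$; your stated intent (``shifted slightly into the open half-plane'') is right, only the sign in the formula is reversed.
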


Let $f :\mathbb R\to\mathbb C$ be a function in $\dot H^{1/2}(\mathbb R)$, we define
\begin{equation}\label{def-hhalf}
\|f\|_{\dot H^{1/2}}^2=\|f\|_{\dot H^{1/2}(\mathbb R)}^2:= \int i\mathbb H \partial_x f(x) \bar f(x)\,dx=\frac1{2\pi}\iint\frac{|f(x)-f(y)|^2}{(x-y)^2}\,dx\,dy.
\end{equation}
We have the following results on $\dot H^{1/2}$ norms and $\dot H^{1/2}$ functions.

\begin{lemma}\label{hhalf1}
For any function $f\in \dot H^{1/2}(\mathbb R)$,
\begin{align}
\nm{f}_{\dot H^{1/2}}^2&=\nm{\mathbb P_H f}_{\dot H^{1/2}}^2+\nm{\mathbb P_A f}_{\dot H^{1/2}}^2;\label{hhalfp}\\
\int i\partial_\aa f \, \bar f\,d\aa&=\nm{\mathbb P_H f}_{\dot H^{1/2}}^2-\nm{\mathbb P_A f}_{\dot H^{1/2}}^2.\label{hhalfn}
\end{align}

\end{lemma}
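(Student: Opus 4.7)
The plan is to prove both identities simultaneously by decomposing $f=\mathbb P_H f+\mathbb P_A f$ and exploiting the behavior of $\mathbb H$ on each component. First I would recall that $\mathbb H\mathbb P_H = \mathbb P_H$ and $\mathbb H\mathbb P_A = -\mathbb P_A$, which are immediate from the definitions in \eqref{proj} together with the relation $\mathbb H^2 = I$ on the relevant class of functions. Consequently, using the definition \eqref{def-hhalf}, one has
\begin{equation*}
\|\mathbb P_H f\|_{\dot H^{1/2}}^2 = \int i\partial_\alpha (\mathbb P_H f)\,\overline{\mathbb P_H f}\,d\alpha,\qquad
\|\mathbb P_A f\|_{\dot H^{1/2}}^2 = -\int i\partial_\alpha (\mathbb P_A f)\,\overline{\mathbb P_A f}\,d\alpha,
\end{equation*}
the minus sign in the second coming from $\mathbb H\mathbb P_A f = -\mathbb P_A f$.

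Next, I would expand $\int i\mathbb H\partial_\alpha f\,\bar f\,d\alpha$ and $\int i\partial_\alpha f\,\bar f\,d\alpha$ using $f=\mathbb P_H f+\mathbb P_A f$ and $\mathbb H f=\mathbb P_H f-\mathbb P_A f$. Matching the resulting diagonal terms with the two displayed expressions above reduces both \eqref{hhalfp} and \eqref{hhalfn} to showing that the cross terms vanish, namely
\begin{equation*}
\int i\partial_\alpha(\mathbb P_H f)\,\overline{\mathbb P_A f}\,d\alpha = 0,\qquad \int i\partial_\alpha(\mathbb P_A f)\,\overline{\mathbb P_H f}\,d\alpha = 0.
\end{equation*}
This is the key point: by Proposition~\ref{prop:hilbe}, $\mathbb P_H f$ is the boundary value of a holomorphic function in $\mathscr P_-$, and $\overline{\mathbb P_A f}$ is the boundary value of $\overline{(\mathbb P_A f)(\bar z)}$, which is also holomorphic in $\mathscr P_-$; hence $\partial_\alpha(\mathbb P_H f)\,\overline{\mathbb P_A f}$ is the trace of a holomorphic $L^1$ function on $\mathscr P_-$ and its integral over $\mathbb R$ vanishes by Cauchy's theorem (equivalently, by Proposition~\ref{prop:hilbe}(a), $(I-\mathbb H)$ applied to it is zero, and integrating a holomorphic boundary value gives zero). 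The symmetric assertion follows by swapping the roles of $\mathbb P_H$ and $\mathbb P_A$ with $\mathscr P_-$ replaced by the upper half plane.

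The main obstacle I anticipate is justifying the vanishing of the cross terms rigorously for a general $f\in\dot H^{1/2}$, since such $f$ need not decay at infinity and $\mathbb P_H f$, $\mathbb P_A f$ need not be in $L^2$. I would handle this by first proving the identities for $f\in C_c^\infty(\mathbb R)$ (where all manipulations, including the Cauchy-integral vanishing of cross terms, are classical), and then extending to $\dot H^{1/2}$ by density, using the boundedness of $\mathbb P_H,\mathbb P_A$ on $\dot H^{1/2}$ (which itself follows from the Fourier-multiplier description $\widehat{\mathbb P_H f}(\xi)=\mathbf 1_{\xi\ge 0}\hat f(\xi)$ up to the sign convention fixed by \eqref{ht}) and continuity of the quadratic form $f\mapsto\int i\mathbb H\partial_\alpha f\,\bar f\,d\alpha$ on $\dot H^{1/2}$. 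Alternatively, one can verify both identities directly on the Fourier side, where $\mathbb P_H$ and $\mathbb P_A$ become indicator functions of complementary half-lines and the claim reduces to the trivial decomposition $|\xi|=\xi\mathbf 1_{\xi\ge 0}+(-\xi)\mathbf 1_{\xi<0}$ applied to $\int |\xi|\,|\hat f(\xi)|^2\,d\xi$.
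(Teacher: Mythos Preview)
Your proposal is correct and follows essentially the same approach as the paper: the paper's proof simply states that the lemma is an easy consequence of the decomposition $f=\mathbb P_H f+\mathbb P_A f$, the definition \eqref{def-hhalf}, and the Cauchy integral theorem, omitting the details. You have filled in precisely those omitted details, including the vanishing of the cross terms via Cauchy's theorem, so there is nothing to add.
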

\begin{proof}
Lemma~\ref{hhalf1} is an easy consequence of the decomposition $f=\mathbb P_H f+\mathbb P_A f$, the 
definition \eqref{def-hhalf} and the Cauchy integral Theorem. We omit the details.
\end{proof}

\begin{proposition}\label{prop:Hhalf}
Let $f,\ g\in C^1(\mathbb R)$. Then
\begin{align}
\nm{fg}_{\dot H^{1/2}}\lec \|f\|_{L^\infty}\|g\|_{\dot H^{1/2}}+\|g\|_{L^\infty}\|f\|_{\dot H^{1/2}};\label{hhalf-1}\\
\|g\|_{\dot H^{1/2}}\lesssim \|f^{-1}\|_{L^\infty}(\|fg\|_{\dot H^{1/2}}+\|f'\|_{L^2}\|g\|_{L^2}).\label{Hhalf}
\end{align}

\end{proposition}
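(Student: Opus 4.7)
\emph{Proof plan for Proposition~\ref{prop:Hhalf}.} Both inequalities should follow directly from the integral definition
\[
\|h\|_{\dot H^{1/2}}^2 = \frac{1}{2\pi}\iint \frac{|h(x)-h(y)|^2}{(x-y)^2}\,dx\,dy
\]
together with one elementary algebraic identity and (for the second inequality) Hardy's inequality \eqref{eq:77}. Neither part requires the Hilbert transform or projection machinery; the proof is purely by direct manipulation of the double integral.

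For \eqref{hhalf-1}, I will split the product by the standard identity
\[
f(x)g(x)-f(y)g(y) = f(x)\bigl(g(x)-g(y)\bigr) + g(y)\bigl(f(x)-f(y)\bigr),
\]
square, and use $|a+b|^2\lesssim |a|^2+|b|^2$. After substituting into the definition of $\|fg\|_{\dot H^{1/2}}^2$, the first term contributes at most $\|f\|_{L^\infty}^2\|g\|_{\dot H^{1/2}}^2$ and the second at most $\|g\|_{L^\infty}^2\|f\|_{\dot H^{1/2}}^2$, giving \eqref{hhalf-1}.

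For \eqref{Hhalf}, I will run the same game in reverse: since $g=(fg)\cdot f^{-1}$, write
\[
g(x)-g(y) = \frac{1}{f(x)}\bigl((fg)(x)-(fg)(y)\bigr) + \frac{g(y)}{f(x)}\bigl(f(y)-f(x)\bigr),
\]
square, and plug into $\|g\|_{\dot H^{1/2}}^2$. The first resulting double integral is bounded by $\|f^{-1}\|_{L^\infty}^2\|fg\|_{\dot H^{1/2}}^2$. For the second double integral
\[
\iint \frac{|g(y)|^2\,|f(x)-f(y)|^2}{|f(x)|^2\,(x-y)^2}\,dx\,dy,
\]
I pull out $\|f^{-1}\|_{L^\infty}^2$ and use Fubini to integrate in $x$ first. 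The key observation is that for each fixed $y$, the function $x\mapsto f(x)-f(y)$ vanishes at $x=y$, so Hardy's inequality \eqref{eq:77} yields
\[
\int \frac{|f(x)-f(y)|^2}{(x-y)^2}\,dx \;\lesssim\; \|f'\|_{L^2}^2,
\]
uniformly in $y$. Integrating then in $y$ against $|g(y)|^2$ produces $\|f'\|_{L^2}^2\|g\|_{L^2}^2$, and combining the two pieces gives \eqref{Hhalf}.

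There is no real obstacle here; the only small point of care is ensuring Hardy's inequality is applied correctly in the $y$-fixed slice, which is fine since $f\in C^1$ and the translated function $f(\cdot)-f(y)$ does vanish at the singular point.
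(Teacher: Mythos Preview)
Your proposal is correct and follows precisely the approach the paper indicates: the paper states only that ``the proof is straightforward from the definition of $\dot H^{1/2}$ and the Hardy's inequality'' and omits the details, and your argument is exactly a fleshed-out version of that sketch. Both the product splitting for \eqref{hhalf-1} and the reverse splitting plus Hardy's inequality \eqref{eq:77} for \eqref{Hhalf} are the intended steps.
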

The proof is straightforward from the definition of $\dot H^{1/2}$ and the Hardy's inequality. We omit the
details.

We next present the basic estimates we will rely on for this paper.  We start with the Sobolev inequality.

\begin{proposition}[Sobolev inequality]\label{sobolev}
Let $f\in C^1_0(\mathbb R)$. Then
\begin{equation}\label{eq:sobolev}
\|f\|_{L^\infty}^2\le 2\|f\|_{L^2}\|f'\|_{L^2}.
\end{equation}
\end{proposition}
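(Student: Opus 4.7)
The plan is to use the fundamental theorem of calculus together with the Cauchy--Schwarz inequality. Since $f \in C^1_0(\mathbb R)$, the function $f$ decays at spatial infinity, so $f^2$ does as well. For any fixed $x \in \mathbb R$, I would write
\[
f(x)^2 = \int_{-\infty}^x \frac{d}{dt}(f(t)^2)\, dt = 2\int_{-\infty}^x f(t) f'(t)\, dt.
\]
Then I would estimate the absolute value by
\[
|f(x)|^2 \le 2\int_{-\infty}^x |f(t)||f'(t)|\, dt \le 2\int_{-\infty}^\infty |f(t)||f'(t)|\, dt,
\]
and apply the Cauchy--Schwarz inequality to the right-hand side to obtain $|f(x)|^2 \le 2\|f\|_{L^2}\|f'\|_{L^2}$. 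Since this bound is uniform in $x$, taking the supremum yields \eqref{eq:sobolev}.

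This is the classical one-dimensional Sobolev/Gagliardo--Nirenberg embedding $H^1(\mathbb R)\hookrightarrow L^\infty(\mathbb R)$, and there is no real obstacle to overcome: the argument consists only of the fundamental theorem of calculus applied to $(f^2)'=2ff'$ and a single application of Cauchy--Schwarz. The decay hypothesis $f \in C^1_0$ is used precisely to ensure that $f(t)^2 \to 0$ as $t \to -\infty$, so that the integral representation of $f(x)^2$ is valid. One could equally well start from $+\infty$ and integrate backwards, which would introduce a factor of $\tfrac{1}{2}$ by averaging the two representations, but the stated constant $2$ is already obtained directly from the one-sided argument.
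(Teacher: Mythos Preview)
Your proof is correct and is the standard argument for this classical one-dimensional Sobolev embedding. The paper itself states the proposition without proof, so there is nothing to compare; your fundamental-theorem-of-calculus plus Cauchy--Schwarz argument is exactly the expected justification. (One trivial remark: since the paper works with complex-valued functions, strictly speaking you should apply the argument to $|f|^2$ with $(|f|^2)'=2\Re(\bar f f')$ rather than to $f^2$, but the estimate $|f(x)|^2\le 2\int |f||f'|$ and the conclusion are unchanged.)
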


\begin{proposition}[Hardy's inequalities]
\label{hardy-inequality}  
Let $f \in C^1(\mathbb R)$, 
with $f' \in L^2(\mathbb R)$. Then there exists $C > 0$ independent of $f$ such that for any $x \in \mathbb R$,
\begin{equation}
  \label{eq:77}
\abs{\int \f{(f(x) - f(y))^2}{(x-y)^2} dy} \le C \nm{f'}_{L^2}^2;
\end{equation}
and
\begin{equation}
  \label{eq:771}
\iint \f{|f(x) - f(y)|^4}{|x-y|^4} \,dx dy \le C \nm{f'}_{L^2}^4.
\end{equation}
\end{proposition}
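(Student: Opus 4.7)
The plan is to prove \eqref{eq:77} by reduction to the classical one-dimensional Hardy inequality, and to prove \eqref{eq:771} by a pointwise bound via the Hardy--Littlewood maximal operator followed by its $L^2$ boundedness.

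For \eqref{eq:77}, I would fix $x \in \mathbb R$ and set $g(t) := f(x+t) - f(x)$, so that $g(0) = 0$ and $g'(t) = f'(x+t)$, giving $\|g'\|_{L^2(\mathbb R)} = \|f'\|_{L^2(\mathbb R)}$. With the change of variable $y = x+t$, the left side of \eqref{eq:77} becomes $\int_{\mathbb R} g(t)^2/t^2\,dt$, which is bounded by $4\|g'\|_{L^2}^2 = 4\|f'\|_{L^2}^2$ via the classical Hardy inequality applied separately on $(0,\infty)$ and $(-\infty,0)$, each of which is valid because $g$ vanishes at $0$. The uniformity of the resulting bound in $x$, without any decay assumption on $f$, is the nontrivial content of \eqref{eq:77}.

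For \eqref{eq:771}, the key observation is that
\[
\frac{|f(x) - f(y)|}{|x-y|} \le \frac{1}{|x-y|}\int_{\min(x,y)}^{\max(x,y)} |f'(s)|\,ds \le C\,\mathcal M(|f'|)(x),
\]
where $\mathcal M$ denotes the Hardy--Littlewood maximal operator, because the interval of integration has length $|x-y|$ and contains $x$ as an endpoint; by symmetry the same estimate holds with $y$ in place of $x$. Taking the geometric mean of these two bounds and squaring yields the pointwise inequality $|f(x)-f(y)|^4/|x-y|^4 \le C^2\,\mathcal M(|f'|)(x)^2\,\mathcal M(|f'|)(y)^2$. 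Integrating and separating the $x$ and $y$ variables gives
\[
\iint \frac{|f(x)-f(y)|^4}{|x-y|^4}\,dx\,dy \le C^2\,\|\mathcal M(|f'|)\|_{L^2(\mathbb R)}^4 \le C'\,\|f'\|_{L^2}^4,
\]
where the final inequality is the strong $L^2$ bound for $\mathcal M$.

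The only step requiring any thought is the pointwise bound of the difference quotient by the maximal function, which rests on recognising $(f(x)-f(y))/(x-y)$ as the mean value of $f'$ over an interval that has $x$ (respectively $y$) as an endpoint. The main obstacle, in my view, is to avoid the tempting but fatal approach of writing $|f(x)-f(y)|^2 \le |x-y|\int_y^x|f'(s)|^2\,ds$ by Cauchy--Schwarz and then integrating: every such route leads to a logarithmic divergence. Keeping one factor of the difference quotient intact and using the maximal function to distribute the weight symmetrically between $x$ and $y$ is exactly what enables the separation of variables in the final integration.
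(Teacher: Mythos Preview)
Your proof is correct. The paper does not actually supply a proof of this proposition: it is stated in Appendix~\ref{ineq} as a basic preparatory result and then used freely throughout, so there is nothing to compare against on the paper's side. Your argument for \eqref{eq:77} via reduction to the classical Hardy inequality on each half-line is the standard one, and your argument for \eqref{eq:771} via the pointwise bound $|f(x)-f(y)|/|x-y|\lesssim \mathcal M(|f'|)(x)$ (and symmetrically in $y$) followed by the $L^2$ boundedness of the maximal operator is also standard and clean. Both steps are valid as written.
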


Let $ H\in C^1(\mathbb R; \mathbb R^d)$, $A_i\in C^1(\mathbb R)$, $i=1,\dots m$, and $F\in C^\infty(\mathbb R)$. Define
\begin{equation}\label{3.15}
C_1(A_1,\dots, A_m, f)(x)=\text{pv.}\int F\paren{\frac{H(x)-H(y)}{x-y}} \frac{\Pi_{i=1}^m(A_i(x)-A_i(y))}{(x-y)^{m+1}}f(y)\,dy.
\end{equation}

\begin{proposition}\label{B1} There exist  constants $c_1=c_1(F, \|H'\|_{L^\infty})$, $c_2=c_2(F, \|H'\|_{L^\infty})$, such that 

1. For any $f\in L^2,\ A_i'\in L^\infty, \ 1\le i\le m, $
\begin{equation}\label{3.16}
\|C_1(A_1,\dots, A_m, f)\|_{L^2}\le c_1\|A_1'\|_{L^\infty}\dots\|A_m'\|_{L^\infty}\|f\|_{L^2}. 
\end{equation}
2. For any $ f\in L^\infty, \ A_i'\in L^\infty, \ 2\le i\le m,\ A_1'\in L^2$, 
\begin{equation}\label{3.17}
\|C_1(A_1,\dots, A_m, f)\|_{L^2}\le c_2\|A_1'\|_{L^2}\|A'_2\|_{L^\infty}\dots\|A_m'\|_{L^\infty}\|f\|_{L^\infty}.
\end{equation}
\end{proposition}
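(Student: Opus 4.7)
\textbf{Proof proposal for Proposition~\ref{B1}.} The estimate is a Calder\'on commutator-type bound of Coifman--McIntosh--Meyer flavor, and the plan is to reduce both parts to the standard $L^2$ boundedness of Calder\'on commutators and then use duality to upgrade from $L^\infty$ to $L^2$ on one of the $A_i$ slots.

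The first step is to handle the nonlinear factor $F(\cdot)$. Since $\frac{H(x)-H(y)}{x-y}$ takes values in a bounded set of $\R^d$ of radius $\|H'\|_{L^\infty}$, we may replace $F$ by a Schwartz function of compact support agreeing with $F$ on a neighborhood of the closed ball of radius $\|H'\|_{L^\infty}$, then expand
\[
F(\zeta)=\sum_{k=0}^{\infty}\sum_{|\alpha|=k}c_\alpha\,\zeta^\alpha,
\]
with the multi-index coefficients $c_\alpha$ decaying faster than any $(1+k)^{-N}$. Substituting into \eqref{3.15} expresses $C_1$ as a sum of operators with kernels
\[
\frac{\bigl(H(x)-H(y)\bigr)^\alpha \prod_{i=1}^m\bigl(A_i(x)-A_i(y)\bigr)}{(x-y)^{m+|\alpha|+1}},
\]
i.e.\ $(m+|\alpha|)$-th order Calder\'on commutators with respect to the $(m+|\alpha|)$-tuple $(A_1,\dots,A_m,H_{j_1},\dots,H_{j_{|\alpha|}})$. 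The Coifman--McIntosh--Meyer theorem then yields an $L^2\to L^2$ bound of the form $C^{m+|\alpha|}\prod_i\|A_i'\|_{L^\infty}\cdot\|H'\|_{L^\infty}^{|\alpha|}\|f\|_{L^2}$, and summing in $\alpha$ using the rapid decay of $c_\alpha$ gives \eqref{3.16} with $c_1$ depending only on $F$ and $\|H'\|_{L^\infty}$.

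For \eqref{3.17} the plan is to trade an $L^\infty$ norm on $A_1'$ for an $L^2$ norm, compensated by placing $f\in L^\infty$ instead of $f\in L^2$. By duality,
\[
\|C_1(A_1,\dots,A_m,f)\|_{L^2}=\sup_{\|g\|_{L^2}=1}\Bigl|\!\iint F\!\left(\tfrac{H(x)-H(y)}{x-y}\right)\!\frac{\prod_{i=1}^m\bigl(A_i(x)-A_i(y)\bigr)}{(x-y)^{m+1}}f(y)g(x)\,dx\,dy\Bigr|.
\]
Writing $A_1(x)-A_1(y)=\int_0^1 A_1'\bigl(sx+(1-s)y\bigr)(x-y)\,ds$ and changing variables $t=sx+(1-s)y$ rewrites the paired integrand as an integral of $A_1'(t)$ against a kernel in $(t,x,y)$. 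After absorbing one factor of $(x-y)$, it takes the form
\[
\int A_1'(t)\bigl[T_s(f)\cdot g\bigr](t)\,ds,
\]
where, for each $s\in[0,1]$, $T_s$ is an operator of the same type as $C_1$ but with $m-1$ commutator factors $A_2,\dots,A_m$ and one additional commutator structure coming from the $s$-shift in the argument. By part 1 (applied to $m-1$ slots, with $f\in L^2$ replaced by the $L^2$-function $g$, and using $f\in L^\infty$ as a multiplier), the operator $T_s$ maps $L^2\to L^2$ with a bound independent of $s$ and proportional to $\|A_2'\|_{L^\infty}\cdots\|A_m'\|_{L^\infty}\|f\|_{L^\infty}$. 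Cauchy--Schwarz in $t$ then absorbs $A_1'$ in $L^2$, giving \eqref{3.17}.

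The main technical obstacle will be verifying the uniformity in $s$ and handling the nonlocal shift $sx+(1-s)y$ in the argument of $A_1'$; the cleanest way is to swap the roles of the variables $(x,y,t)$ so that $A_1'(t)$ multiplies an $L^2$-kernel in $(x,y)$ of Calder\'on--commutator type to which Coifman--McIntosh--Meyer applies with uniform constants. All other ingredients --- the series expansion of $F$, the commutator estimates, and duality --- are standard; the only care needed is bookkeeping the dependence of the constants on $F$ and $\|H'\|_{L^\infty}$.
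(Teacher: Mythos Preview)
Your plan for Part~1 has a gap: a compactly supported $C^\infty$ function is never real-analytic (unless identically zero), so the power-series expansion $F(\zeta)=\sum c_\alpha\zeta^\alpha$ with rapidly decaying coefficients simply does not exist. By Borel's theorem the Taylor coefficients of a smooth function can be prescribed arbitrarily. The paper does not attempt any such reduction; it cites \eqref{3.16} directly as the Coifman--McIntosh--Meyer theorem \cite{cmm}. If you insist on reducing to pure Calder\'on commutators you must use Fourier inversion rather than Taylor expansion, and even then summing the series forces analyticity-type hypotheses on $F$ that are not assumed.

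The gap in Part~2 is more serious, and it is precisely the ``main technical obstacle'' you flag in your last paragraph. After the change of variables $u=sx+(1-s)y$, $v=x-y$, the quantity whose $L^2(du)$ norm you need is
\[
\Phi_s(u)=\int \tilde K_s(u,v)\,f(u-sv)\,g\bigl(u+(1-s)v\bigr)\,\frac{dv}{v},
\]
with $\tilde K_s$ a bounded Calder\'on-type symbol. At $s=0$ the factor $f(u)$ pulls out and Part~1 controls the rest; but at $s=1$ you get $g(u)$ times a singular integral of $f\in L^\infty$, which lands only in $BMO$, not $L^\infty$, and the $L^2(du)$ bound blows up. For intermediate $s$ the map $(f,g)\mapsto\Phi_s$ is a rescaled bilinear Hilbert transform, and the uniform-in-$s$ bound you need is the Lacey--Thiele theorem---a result far deeper than \eqref{3.17} itself. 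So the uniformity issue is not bookkeeping; it is the entire content, and this route is circular at best.

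The paper's approach to \eqref{3.17} is completely different: it is obtained as a consequence of the $Tb$ theorem (with details in \cite{wu3}), viewing the map $A_1'\mapsto C_1(A_1,\dots,A_m,f)$ as a Calder\'on--Zygmund operator whose $Tb$ hypotheses can be checked directly. That argument avoids any bilinear structure.
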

\eqref{3.16} is a result of Coifman, McIntosh and Meyer \cite{cmm}. \eqref{3.17} is a consequence of the Tb Theorem, a proof  is given in \cite{wu3}.

Let  $H$, $A_i$ $F$ satisfy the same assumptions as in \eqref{3.15}. Define
\begin{equation}\label{3.19}
C_2(A, f)(x)=\int F\paren{\frac{H(x)-H(y)}{x-y}}\frac{\Pi_{i=1}^m(A_i(x)-A_i(y))}{(x-y)^m}\partial_y f(y)\,dy.
\end{equation}
We have the following inequalities.
\begin{proposition}\label{B2} There exist constants $c_3$, $c_4$ and $c_5$, depending on $F$ and $\|H'\|_{L^\infty}$, such that 

1. For any $f\in L^2,\ A_i'\in L^\infty, \ 1\le i\le m, $
\begin{equation}\label{3.20}
\|C_2(A, f)\|_{L^2}\le c_3\|A_1'\|_{L^\infty}\dots\|A_m'\|_{L^\infty}\|f\|_{L^2}.
\end{equation}

2. For any $ f\in L^\infty, \ A_i'\in L^\infty, \ 2\le i\le m,\ A_1'\in L^2$,
\begin{equation}\label{3.21}
\|C_2(A, f)\|_{L^2}\le c_4\|A_1'\|_{L^2}\|A'_2\|_{L^\infty}\dots\|A_m'\|_{L^\infty}\|f\|_{L^\infty}.\end{equation}

3. For any $f'\in L^2, \ A_1\in L^\infty,\ \ A_i'\in L^\infty, \ 2\le i\le m, $
\begin{equation}\label{3.22}
\|C_2(A, f)\|_{L^2}\le c_5\|A_1\|_{L^\infty}\|A'_2\|_{L^\infty}\dots\|A_m'\|_{L^\infty}\|f'\|_{L^2}.\end{equation}

\end{proposition}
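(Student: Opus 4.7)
The plan is to derive each of the three inequalities of Proposition~\ref{B2} from Proposition~\ref{B1}; the route differs between parts (1)--(2) and part (3). For (1) and (2) my tactic is integration by parts in $y$; for (3), where $A_1$ is only bounded, I cannot afford to differentiate $A_1$ and instead use a direct algebraic splitting. Writing the kernel of $C_2$ as
\[
K(x,y):=F\!\left(\tfrac{H(x)-H(y)}{x-y}\right)\frac{\prod_{i=1}^m(A_i(x)-A_i(y))}{(x-y)^m},
\]
the Lipschitz bounds $|A_i(x)-A_i(y)|\le \|A_i'\|_{L^\infty}|x-y|$ make $K$ bounded near the diagonal $y=x$, so that
\[
C_2(A,f)(x)=-\int\partial_y K(x,y)\,f(y)\,dy
\]
is legitimate as a principal value (the pv symmetry kills the diagonal contribution and decay of $f$ handles the endpoints at infinity).

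Now $\partial_y K$ splits by the product rule into three families: (a) from $\partial_y(x-y)^{-m}=m(x-y)^{-m-1}$, a $C_1$-kernel with the same $A_i$'s and denominator $(x-y)^{m+1}$; (b) from $-\sum_k A_k'(y)\prod_{i\ne k}(A_i(x)-A_i(y))$, a sum of $C_1$-kernels with $m-1$ difference factors, each multiplied pointwise by $A_k'(y)$; and (c) from the chain rule combined with the identity
\[
\partial_y\frac{H(x)-H(y)}{x-y}=\frac{1}{x-y}\!\left[\frac{H(x)-H(y)}{x-y}-H'(y)\right],
\]
a $C_1$-kernel built with admissible smooth functions ($u\mapsto F'(u)u$ or $F'(u)$) and an $H'(y)\in L^\infty$ multiplier. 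Part~(1) then follows termwise from \eqref{3.16}, absorbing the multipliers $A_k'$ and $H'$ in $L^\infty$. For part~(2), \eqref{3.17} applies instead, with $A_1'\in L^2$ as the distinguished factor; when $k=1$ in (b), the $A_1'(y)$ pairs pointwise with $f$ and $\|A_1'f\|_{L^2}\le\|A_1'\|_{L^2}\|f\|_{L^\infty}$ reduces that piece to a $C_1$-operator with only Lipschitz factors, to which \eqref{3.16} then applies.

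For part~(3) I abandon integration by parts and simply split the $A_1$-difference in the original kernel via $A_1(x)-A_1(y)=A_1(x)-A_1(y)$, giving
\[
C_2(A,f)(x)=A_1(x)\,T(\partial_y f)(x)-T(A_1\,\partial_y f)(x),
\]
where
\[
T(g)(x):=\int F\!\left(\tfrac{H(x)-H(y)}{x-y}\right)\frac{\prod_{i=2}^m(A_i(x)-A_i(y))}{(x-y)^m}\,g(y)\,dy.
\]
The kernel of $T$ has $m-1$ Lipschitz difference factors against $(x-y)^{(m-1)+1}$ in the denominator---exactly the $C_1$ form of \eqref{3.15} with $m-1$ in place of $m$. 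Hence \eqref{3.16} yields $\|T(g)\|_{L^2}\lec\|A_2'\|_{L^\infty}\cdots\|A_m'\|_{L^\infty}\|g\|_{L^2}$, and applying this with $g=\partial_y f$ and $g=A_1\partial_y f$ (using $\|A_1\partial_yf\|_{L^2}\le\|A_1\|_{L^\infty}\|f'\|_{L^2}$) delivers \eqref{3.22}.

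The main technical obstacle will be the bookkeeping in (a)--(c): verifying that each resulting operator truly has the $C_1$-form of \eqref{3.15} with an admissible $F$ and the same $H$, so that the constants depend only on $F$ and $\|H'\|_{L^\infty}$ as claimed. Justifying the integration by parts rigorously in the principal-value sense is the other subtlety, but this is routine given the local boundedness of $K$ at $y=x$. Part~(3) is actually the cleanest step since it sidesteps both issues entirely.
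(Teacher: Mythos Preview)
Your proposal is correct and follows essentially the same route as the paper's proof. For parts~(1) and~(2) the paper also integrates by parts to convert $C_2$ into a sum of $C_1$-type operators and then invokes \eqref{3.16} and \eqref{3.17}; for part~(3) the paper performs precisely your splitting, writing $C_2(A,f)=A_1\,C_1(A_2,\dots,A_m,f')-C_1(A_2,\dots,A_m,A_1 f')$ and applying \eqref{3.16} to each piece.
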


Using integration by parts, the operator $C_2(A, f)$ can be easily converted into a sum of operators of the form $C_1(A,f)$. \eqref{3.20} and \eqref{3.21} follow from \eqref{3.16} and \eqref{3.17}.  To get \eqref{3.22}, we rewrite $C_2(A,f)$ as the difference of the two terms $A_1C_1(A_2,\dots, A_m, f')$ and $C_1(A_2,\dots, A_m, A_1f')$ and apply \eqref{3.16} to each term.

\begin{proposition}\label{prop:half-dir}
 There exists a constant $C > 0$ such that for any $f, g, m$ smooth and decays fast at infinity,
\begin{align} 
 &\nm{[f,\HH]  g}_{L^2} \le C \nm{f}_{\dot{H}^{1/2}}\nm{g}_{L^2};\label{eq:b10}\\&
    \nm{[f,\HH] g}_{L^\infty} \le C \nm{f'}_{L^2} \nm{g}_{L^2};  \label{eq:b13}\\&
  \nm{[f,\HH] \partial_\aa g}_{L^2} \le C \nm{f'}_{L^2} \nm{g}_{\dot{H}^{1/2}};\label{eq:b11}\\&
  \nm{[f, m; \partial_\aa g]}_{L^2}\le C \nm{f'}_{L^2} \nm{m'}_{L^\infty}\nm{g}_{\dot{H}^{1/2}}.\label{eq:b111}
  \end{align}
\end{proposition}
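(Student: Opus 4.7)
The plan is to prove the four estimates sequentially, using \eqref{eq:b10} and \eqref{eq:b13} as building blocks for \eqref{eq:b11} and \eqref{eq:b111}. The unifying device throughout is the pairing between the Gagliardo kernel $(x-y)^{-2}$ and the norms appearing in each inequality, combined with Hardy's inequality \eqref{eq:77}.

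For \eqref{eq:b10}, the approach is duality. For any $h \in L^2(\mathbb{R})$, I would write
\[
\int h(x)[f,\HH]g(x)\,dx = \frac{1}{\pi i}\iint \frac{f(x)-f(y)}{x-y}\,h(x)g(y)\,dx\,dy,
\]
and apply Cauchy--Schwarz to the double integral by splitting the integrand into the factors $\frac{f(x)-f(y)}{x-y}$ and $h(x)g(y)$. The first has $L^2(\mathbb R^2)$-norm equal to $\sqrt{2\pi}\,\|f\|_{\dot H^{1/2}}$ by the representation \eqref{def-hhalf}, while the second has $L^2(\mathbb R^2)$-norm $\|h\|_{L^2}\|g\|_{L^2}$. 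For \eqref{eq:b13}, I would instead bound pointwise, first taking absolute values and then using Cauchy--Schwarz in $y$:
\[
|[f,\HH]g(x)| \le C\Big(\int \frac{|f(x)-f(y)|^2}{(x-y)^2}\,dy\Big)^{1/2}\|g\|_{L^2},
\]
and apply Hardy's inequality \eqref{eq:77} to the first factor, which gives $\lesssim \|f'\|_{L^2}$ uniformly in $x$.

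For \eqref{eq:b11}, the key is the symmetrization identity
\[
[f,\HH]\partial_\aa g + [g,\HH]\partial_\aa f = \mathbf{r}, \qquad \mathbf{r}(x) := \frac{1}{\pi i}\int \frac{(f(x)-f(y))(g(x)-g(y))}{(x-y)^2}\,dy,
\]
obtained by integration by parts. The $L^2$ norm of $\mathbf{r}$ is controlled by Cauchy--Schwarz in $y$ (splitting into $\frac{f(x)-f(y)}{x-y}$ and $\frac{g(x)-g(y)}{x-y}$), after which the first factor is bounded pointwise by $C\|f'\|_{L^2}$ via Hardy and the second integrates in $x$ to $\sqrt{2\pi}\,\|g\|_{\dot H^{1/2}}$. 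The remaining term $[g,\HH]f'$ is then estimated by \eqref{eq:b10}.

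The hard part will be \eqref{eq:b111}, where the target has $\|f'\|_{L^2}$ and $\|g\|_{\dot H^{1/2}}$ simultaneously on the right-hand side while the differentiation falls on $g$; a crude Cauchy--Schwarz would lose $\|g'\|_{L^2}$ instead. My plan is to integrate by parts in $y$:
\[
[f, m; \partial_\aa g](x) = -\frac{1}{\pi i}\int \partial_y\Big[\frac{(f(x)-f(y))(m(x)-m(y))}{(x-y)^2}\Big] g(y)\,dy = J_1(x)+J_2(x)+J_3(x),
\]
where $J_1, J_2, J_3$ carry the derivatives $m'(y)$, $f'(y)$, and the cubic factor $(x-y)^{-3}$ respectively. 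In each $J_i$, I would insert $\pm g(x)$ in the integrand and split: the ``$g(y)-g(x)$'' pieces combine with the other factors via Cauchy--Schwarz in $y$, bounding $|m(x)-m(y)|/|x-y|\le \|m'\|_{L^\infty}$ where it appears, to produce the Gagliardo kernel of $g$ and the claimed bound $\|f'\|_{L^2}\|m'\|_{L^\infty}\|g\|_{\dot H^{1/2}}$. The essential algebraic observation is that the remaining boundary pieces $g(x)\{N_1(x)+N_2(x)-2N_3(x)\}$ must vanish: by construction $N_1+N_2-2N_3$ is the integral over $y$ of a perfect $y$-derivative of $(f(x)-f(y))(m(x)-m(y))/(x-y)^2$, whose endpoint values are zero because $\frac{f(x)-f(y)}{x-y}\to 0$ as $|y|\to\infty$ (since $f$ decays at infinity) while $\frac{m(x)-m(y)}{x-y}$ stays bounded. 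This cancellation is what allows the estimate to close with $\|g\|_{\dot H^{1/2}}$ rather than $\|g\|_{L^\infty}$, a norm that $\dot H^{1/2}(\mathbb R)$ does not control.
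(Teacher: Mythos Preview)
Your proposal is correct and follows essentially the same approach as the paper, which simply sketches that \eqref{eq:b10} uses Cauchy--Schwarz and the definition of $\dot H^{1/2}$, \eqref{eq:b13} uses Cauchy--Schwarz and Hardy's inequality \eqref{eq:77}, and \eqref{eq:b11}--\eqref{eq:b111} follow from integration by parts together with Cauchy--Schwarz, Hardy, and the $\dot H^{1/2}$ definition; your symmetrization identity for \eqref{eq:b11} is exactly the one the paper records elsewhere as \eqref{2314}. One minor streamlining for \eqref{eq:b111}: since $\int \partial_y\bigl[(f(x)-f(y))(m(x)-m(y))/(x-y)^{2}\bigr]\,dy$ vanishes (the decay at infinity you noted), you can replace $g(y)$ by $g(y)-g(x)$ \emph{before} expanding into $J_1,J_2,J_3$, which makes the cancellation $N_1+N_2-2N_3=0$ automatic rather than something to verify separately.
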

Here $[f,g;h]$ is as given in \eqref{eq:comm}. \eqref{eq:b10} is straightforward  by Cauchy-Schwarz inequality and the definition of $\dot H^{1/2}$. \eqref{eq:b13} is straightforward from Cauchy-Schwarz inequality and Hardy's inequality \eqref{eq:77}. \eqref{eq:b11} and \eqref{eq:b111} follow from integration by parts,
then Cauchy-Schwarz inequality, Hardy's inequality \eqref{eq:77}, and the definition of $\dot H^{1/2}$.

\begin{proposition}
 There exists a constant $C > 0$ such that for any $f, \ g, \ h$, smooth and decay fast at spatial infinity, 
  \begin{align}
    \label{eq:b12}
    \nm{[f,g;h]}_{L^2} &
    \le C \nm{f'}_{L^2} \nm{g'}_{L^2} \nm{h}_{L^2};\\ \label{eq:b15}
     \nm{[f,g;h]}_{L^\infty}&\le C \nm{f'}_{L^2} \nm{g'}_{L^\infty} \nm{h}_{L^2};\\
     \label{eq:b16}
     \nm{[f,g;h]}_{L^\infty}&\le C \nm{f'}_{L^2} \nm{g'}_{L^2} \nm{h}_{L^\infty}.
  \end{align}
\end{proposition}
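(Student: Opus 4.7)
All three inequalities follow from the key observation that the kernel factors as
\[
\frac{(f(x)-f(y))(g(x)-g(y))}{(x-y)^2} \;=\; \frac{f(x)-f(y)}{x-y}\cdot\frac{g(x)-g(y)}{x-y},
\]
after which Cauchy--Schwarz and Hardy's inequality \eqref{eq:77} do all the work. The Hardy bound says that, uniformly in $x$,
\[
\int \frac{|f(x)-f(y)|^2}{(x-y)^2}\,dy \;\le\; C\|f'\|_{L^2}^2,
\]
so this bounded "square" is the main tool.

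For \eqref{eq:b15} I would pull out $\|g'\|_{L^\infty}$ via the Lipschitz bound $|g(x)-g(y)|/|x-y|\le \|g'\|_{L^\infty}$, leaving $\frac{1}{\pi}\int \frac{|f(x)-f(y)|}{|x-y|}|h(y)|\,dy$. Cauchy--Schwarz in $y$ plus Hardy gives the pointwise bound $|[f,g;h](x)|\le C\|g'\|_{L^\infty}\|f'\|_{L^2}\|h\|_{L^2}$. For \eqref{eq:b16} I would instead pull out $\|h\|_{L^\infty}$ and apply Cauchy--Schwarz on the two difference quotients in $y$, using Hardy twice to obtain $|[f,g;h](x)|\le C\|h\|_{L^\infty}\|f'\|_{L^2}\|g'\|_{L^2}$.

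For the $L^2$ estimate \eqref{eq:b12}, I would apply Cauchy--Schwarz inside the integral in the split form
\[
|[f,g;h](x)|^2 \;\le\; \frac{1}{\pi^2}\left(\int \frac{|f(x)-f(y)|^2}{(x-y)^2}\,dy\right)\left(\int \frac{|g(x)-g(y)|^2}{(x-y)^2}|h(y)|^2\,dy\right).
\]
The first factor is $\le C\|f'\|_{L^2}^2$ by Hardy, uniformly in $x$, so it pulls out of the outer integral. Integrating the remaining term in $x$ and using Fubini then Hardy again,
\[
\int\!\int \frac{|g(x)-g(y)|^2}{(x-y)^2}|h(y)|^2\,dy\,dx \;=\; \int |h(y)|^2\!\!\int\!\frac{|g(x)-g(y)|^2}{(x-y)^2}dx\,dy \;\le\; C\|g'\|_{L^2}^2\|h\|_{L^2}^2,
\]
which yields $\|[f,g;h]\|_{L^2}\le C\|f'\|_{L^2}\|g'\|_{L^2}\|h\|_{L^2}$.

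There is essentially no serious obstacle; the only point requiring care is the application of Fubini in \eqref{eq:b12}, where one must arrange the Cauchy--Schwarz split so that $h$ sits inside the $g$-factor (with the $f$-factor purely independent of $y$ in the $L^2_y$ sense) in order to separate the $x$- and $y$-integrations cleanly. No appeal to the Coifman--McIntosh--Meyer machinery of Proposition~\ref{B1} is needed for this lemma.
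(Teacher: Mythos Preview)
Your proof is correct and follows exactly the approach the paper indicates: Cauchy--Schwarz, Hardy's inequality \eqref{eq:77}, and Fubini for \eqref{eq:b12}; the mean value theorem (your Lipschitz bound) together with Cauchy--Schwarz and Hardy for \eqref{eq:b15}; and Cauchy--Schwarz with Hardy applied twice for \eqref{eq:b16}. Your remark about arranging the Cauchy--Schwarz split in \eqref{eq:b12} so that $h$ pairs with the $g$-factor is precisely the point that makes the Fubini step go through cleanly.
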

\eqref{eq:b12} follows directly from Cauchy-Schwarz inequality, Hardy's inequality \eqref{eq:77} and Fubini Theorem; \eqref{eq:b15} follows from Cauchy-Schwarz inequality, Hardy's inequality \eqref{eq:77} and the mean value Theorem; \eqref{eq:b16} follows from Cauchy-Schwarz inequality and Hardy's inequality \eqref{eq:77}.

\section{Identities}\label{iden}

\subsection{Commutator identities}\label{comm-iden}

We include here various commutator identities that are necessary for the proofs. The first set: \eqref{eq:c1}-\eqref{eq:c4} has already appeared in \cite{kw}.
\begin{align}
  \label{eq:c1}
  [\partial_t,D_\a] &= - (D_\a z_t) D_\a;\\ 
  \label{eq:c2}
    \bracket{\partial_t,D_\a^2} &   
        = -2(D_\a z_t) D_\a^2 - (D_\a^2 z_t) D_\a;\\
  \label{eq:c3}
     \bracket{\partial_t^2,D_\a} &=(-D_\a z_{tt}) D_\a + 2(D_\a z_t)^2 D_\a - 2(D_\a z_t) D_\a \partial_t;\\
  \label{eq:c5}
   \bracket{\partial_t^2 + i \mathfrak{a} \partial_\a, D_\a} &=  (-2D_\a z_{tt}) D_\a -2(D_\a z_t)\partial_t D_\a;
   \end{align}
   and
   \begin{equation} \label{eq:c4}
   \begin{aligned}
 \bracket{(\partial_t^2 + i \af \partial_\a),D_\a^2} & =(-4D_\a z_{tt}) D_\a^2 + 6(D_\a z_t)^2 D_\a^2  - (2D_\a^2 z_{tt}) D_\a \\&+ 6(D_\a z_t) (D_\a^2 z_t) D_\a - 2(D_\a^2 z_t) D_\a \partial_t - 4(D_\a z_t) D_\a^2 \partial_t.
   \end{aligned}
   \end{equation}

Let $$\mathcal P:=(\partial_t+b\partial_\aa)^2+i\mathcal A\partial_\aa.$$   Notice that $U_h^{-1}\partial_t U_h=\partial_t+b\partial_\aa$, $U_h^{-1}D_\a U_h=D_\aa$ and 
$\mathcal P=U_h^{-1}(\partial_t^2+i\frak a\partial_\a)U_h$, we precompose with $h^{-1}$ to equations  \eqref{eq:c1}-\eqref{eq:c4}, and get
\begin{align}
  \label{eq:c1-1}
  [\partial_t+b\partial_\aa,D_\aa] &= - (D_\aa Z_t) D_\aa;\\ 
  \label{eq:c2-1}
    \bracket{\partial_t+b\partial_\aa,D_\aa^2} &   
        = -2(D_\aa Z_t) D_\aa^2 - (D_\aa^2 Z_t) D_\aa;\\
  \label{eq:c3-1}
     \bracket{(\partial_t+b\partial_\aa)^2,D_\aa} &=(-D_\aa Z_{tt}) D_\aa + 2(D_\aa Z_t)^2 D_\aa - 2(D_\aa Z_t) D_\aa (\partial_t+b\partial_\aa);\\
  \label{eq:c5-1}
   \bracket{\mathcal P, D_\aa} &=  (-2D_\aa Z_{tt}) D_\aa -2(D_\aa Z_t)(\partial_t+b\partial_\aa) D_\aa;
   \end{align}
   and
   \begin{equation} \label{eq:c4-1}
   \begin{aligned}
 \bracket{\mathcal P,D_\aa^2} & =(-4D_\aa Z_{tt}) D_\aa^2 + 6(D_\aa Z_t)^2 D_\aa^2  - (2D_\aa^2 Z_{tt}) D_\aa \\&+ 6(D_\aa Z_t) (D_\aa^2 Z_t) D_\aa - 2(D_\aa^2 Z_t) D_\aa (\partial_t+b\partial_\aa) - 4(D_\aa Z_t) D_\aa^2 (\partial_t+b\partial_\aa).
   \end{aligned}
   \end{equation}

We need some additional commutator identities. In general,  for operators $A, B$ and $C$,
\begin{equation}\label{eq:c12}
[A, BC^k]=[A, B]C^k+ B[A, C^k]=[A, B]C^k+ \sum_{i=1}^k BC^{i-1}[A, C]C^{k-i}.
\end{equation}
We have 
\begin{align}
\label{eq:c7}
[\partial_t+b\partial_\aa, \partial_\aa]f&=-b_\aa\partial_\aa f;\\
\label{eq:c8}[(\partial_t+b\partial_\aa)^2, \partial_\aa]f&=-(\partial_t+b\partial_\aa)(b_\aa\partial_\aa f)-b_\aa\partial_\aa (\partial_t+b\partial_\aa)f;\\
\label{eq:c9}[i\mathcal A\partial_\aa,\partial_\aa]f&=-i\mathcal A_\aa \partial_\aa f;\\
\label{eq:c10}
[\mathcal P, \partial_\aa]f&=-(\partial_t+b\partial_\aa)(b_\aa\partial_\aa f)-b_\aa\partial_\aa (\partial_t+b\partial_\aa)f-i\mathcal A_\aa \partial_\aa f;\\
\label{eq:c11}[\partial_t+b\partial_\aa, \partial_\aa^2]f&=-\partial_\aa(b_\aa\partial_\aa f)-b_\aa\partial_\aa^2 f.
\end{align}
Here \eqref{eq:c8}, \eqref{eq:c11} are obtained by \eqref{eq:c12} and \eqref{eq:c7}. 
We also have
\begin{equation}\label{eq:c21}
[\partial_t+b\partial_\aa, \mathbb H]=[b,\mathbb H]\partial_{\alpha'}
\end{equation}
We compute
$$
\begin{aligned}
(\partial_t+b\partial_\aa) [f,\mathbb H]g&=[(\partial_t+b\partial_\aa) f,\mathbb H]g+\bracket{f, \bracket{\partial_t+b\partial_\aa, \mathbb H}}g+ [f,\mathbb H](\partial_t+b\partial_\aa) g\\&
=[(\partial_t+b\partial_\aa) f,\mathbb H]g+\bracket{f, \bracket{b, \mathbb H}\partial_\aa}g+ [f,\mathbb H](\partial_t+b\partial_\aa) g
\\&
=[(\partial_t+b\partial_\aa) f,\mathbb H]g+ [f,\mathbb H]\paren{(\partial_t+b\partial_\aa) g+b_\aa g}\\&
\qquad+\bracket{f, \bracket{b, \mathbb H}}\partial_\aa g -[b,\mathbb H](f_\aa g)-[f,\mathbb H](b_\aa g).
\end{aligned}
$$
It can be checked easily,  by  integration by parts,  that
 $$\bracket{f, \bracket{b, \mathbb H}}\partial_\aa g -[b,\mathbb H](f_\aa g)-[f,\mathbb H](b_\aa g)=-[f,b;g].$$
So
\begin{equation}\label{eq:c14'}
\begin{aligned}
(\partial_t+b\partial_\aa)& [f,\mathbb H]g=[(\partial_t+b\partial_\aa) f,\mathbb H]g\\&+ [f,\mathbb H]((\partial_t+b\partial_\aa) g+b_{\alpha'} g)-[f, b; g];
\end{aligned}
\end{equation}
with an application of \eqref{eq:c7} yields
\begin{equation}\label{eq:c14}
\begin{aligned}
(\partial_t+b\partial_\aa)& [f,\mathbb H]\partial_{\alpha'}g=
[(\partial_t+b\partial_\aa) f,\mathbb H]\partial_{\alpha'}g\\&+ [f,\mathbb H]\partial_{\alpha'}(\partial_t+b\partial_\aa) g-[f, b; \partial_{\alpha'}g].
\end{aligned}
\end{equation}
We compute, by \eqref{eq:c21}, \eqref{eq:c12} and \eqref{eq:c14} that
\begin{equation}\label{eq:c23}
\begin{aligned}
\bracket{(\partial_t+b\partial_\aa)^2, \mathbb H}f&=(\partial_t+b\partial_\aa)\bracket{b,\mathbb H}\partial_\aa f+\bracket{b,\mathbb H}\partial_\aa (\partial_t+b\partial_\aa)f
\\&=\bracket{(\partial_t+b\partial_\aa)b,\mathbb H}\partial_\aa f+2\bracket{b,\mathbb H}\partial_\aa (\partial_t+b\partial_\aa)f-[b,b; \partial_\aa f].
\end{aligned}
\end{equation}
 We also have
\begin{equation}\label{eq:c24}
\bracket{i\mathcal A\partial_\aa, \mathbb H}f=\bracket{i\mathcal A,\mathbb H}\partial_\aa f.
\end{equation}
Sum up \eqref{eq:c23} and \eqref{eq:c24} yields
\begin{equation}\label{eq:c25}
\bracket{\mathcal P, \mathbb H}f=\bracket{(\partial_t+b\partial_\aa)b,\mathbb H}\partial_\aa f+2\bracket{b,\mathbb H}\partial_\aa (\partial_t+b\partial_\aa)f-[b,b; \partial_\aa f]+\bracket{i\mathcal A,\mathbb H}\partial_\aa f.
\end{equation}

 We have, by product rules, that
\begin{equation}\label{eq:c15}
[(\partial_t+b\partial_\aa)^2, \frac{1}{Z_{,\alpha'}}]f=(\partial_t+b\partial_\aa)^2\paren{\frac{1}{Z_{,\alpha'}}}f+2(\partial_t+b\partial_\aa)\paren{\frac{1}{Z_{,\alpha'}}}(\partial_t+b\partial_\aa)f; 
\end{equation}
and 
\begin{equation}\label{eq:c17}
[i\mathcal A\partial_\aa, \frac{1}{Z_{,\alpha'}}]f=i\mathcal A\partial_\aa \paren{\frac{1}{Z_{,\alpha'}}} f;
\end{equation}
so
\begin{equation}\label{eq:c16}
[\mathcal P, \frac{1}{Z_{,\alpha'}}]f=(\partial_t+b\partial_\aa)^2\paren{\frac{1}{Z_{,\alpha'}}}f+2(\partial_t+b\partial_\aa)\paren{\frac{1}{Z_{,\alpha'}}}(\partial_t+b\partial_\aa)f+i\mathcal A\partial_\aa \paren{\frac{1}{Z_{,\alpha'}}} f.
\end{equation}
And we compute, by \eqref{eq:dza},
\begin{align}\label{eq:c26}
(\partial_t+b\partial_\aa)\paren{\frac{1}{Z_{,\alpha'}}}&=\frac{1}{Z_{,\alpha'}}(b_\aa-D_\aa Z_t);\\
\label{eq:c27}
(\partial_t+b\partial_\aa)^2\paren{\frac{1}{Z_{,\alpha'}}}&=\frac{1}{Z_{,\alpha'}}(b_\aa-D_\aa Z_t)^2+\frac{1}{Z_{,\alpha'}}(\partial_t+b\partial_\aa)(b_\aa-D_\aa Z_t). 
\end{align}

\section{Main quantities controlled by $\frak E$} \label{quantities}
We have shown in \S\ref{basic-quantities} that the following quantities are controlled by a polynomial of $\frak E$ (or equivalently by $\mathcal E$):
\begin{equation}\label{2020-1}
\begin{aligned}
&\nm{ D_\aa\bar Z_t}_{\dot H^{1/2}}, \quad\nm{\frac{1}{ Z_{,\alpha'}} D_\aa^2\bar Z_t}_{\dot H^{1/2}}, \quad \|\bar Z_{t,\aa}\|_{L^2},\quad \|D_\aa^2\bar Z_t\|_{L^2},\quad \nm{\partial_\aa \frac{1}{ Z_{,\alpha'}}}_{L^2},\quad \abs{\frac{1}{ Z_{,\alpha'}}(0,t)},\\&
\|A_1\|_{L^\infty}, \quad \|b_\aa\|_{L^\infty}, \quad \|D_\aa Z_t\|_{L^\infty},\quad \|D_\aa Z_{tt}\|_{L^\infty},
\quad \|(\partial_t+b\partial_\aa)\bar Z_{t,\aa}\|_{L^2} \\&
\|Z_{tt,\aa}\|_{L^2}, \quad \|D_\aa^2 \bar Z_{tt}\|_{L^2},\quad \|D_\aa^2  Z_{tt}\|_{L^2},\quad \|(\partial_t+b\partial_\aa)D_\aa^2\bar Z_t\|_{L^2},\quad \|D_\aa^2 Z_t\|_{L^2}.
\end{aligned}
\end{equation}
  In the remainder of \S\ref{proof0} we have controlled the following quantities by a polynomila of $\frak E$ (or equivalently by $\mathcal E$):
\begin{equation}\label{2020-2}
\begin{aligned}
& \nm{\frac{\frak a_t}{\frak a}\circ h^{-1}}_{L^\infty},\quad \nm{(\partial_t+b\partial_\aa)A_1}_{L^\infty},\quad \nm{\mathcal A_{\aa}}_{L^\infty},\quad \nm{\frac1{Z_{,\aa}}\partial_\aa \frac1{Z_{,\aa}}}_{L^\infty},\\& \nm{\partial_\aa(\partial_t+b\partial_\aa)\frac1{Z_{,\aa}}}_{L^2},\quad \nm{(\partial_t+b\partial_\aa)\partial_\aa\frac1{Z_{,\aa}}}_{L^2},\quad \nm{\partial_\aa(\partial_t+b\partial_\aa)b}_{L^\infty},\quad \nm{(\partial_t+b\partial_\aa)b_\aa}_{L^\infty},\\& \nm{(\partial_t+b\partial_\aa)D_\aa Z_t}_{L^\infty}, \quad
\nm{D_\aa\paren{\frac{\frak a_t}{\frak a}\circ h^{-1}}}_{L^2}.
\end{aligned}
\end{equation}

As a consequence of \eqref{2045} and \eqref{2020-1} we have 
$$\nm{D_{\alpha'}b_{\alpha'}}_{L^2}\lesssim C(\frak E).$$

\end{appendix}

\end{document}